\documentclass[10pt]{amsart}

\usepackage{anysize}
\usepackage[utf8x]{inputenc}
\usepackage[english]{babel}
\usepackage{amssymb, amsmath, amsthm}
\usepackage{enumitem}
\usepackage{mathtools}
\usepackage{url}
\usepackage[all,cmtip]{xy}
\usepackage[hidelinks,bookmarksnumbered]{hyperref}
\usepackage{tikz-cd}

\usepackage{cleveref} 

\let\cal\mathcal
\def\AA{{\cal A}}
\def\BB{{\cal B}}
\def\CC{{\cal C}}
\def\DD{{\cal D}}
\def\EE{{\cal E}}
\def\FF{{\cal F}}
\def\GG{{\cal G}}

\def\II{{\cal I}}

\def\OO{{\cal O}}
\def\PP{{\cal P}}
\def\QQ{{\cal Q}}

\def\SS{{\cal S}}
\def\TT{{\cal T}}
\def\UU{{\cal U}}

\def\XX{{\cal X}}
\def\YY{{\cal Y}}

\let\blb\mathbb
\def\bC{{\blb C}}

\def\bX{{\blb X}}

\def\bP{{\blb P}}

\def\bZ{{\blb Z}}

\def\bZ{{\blb Z}}

\let\frak\mathfrak

\def\fe{\frak{e}}
\def\ff{\frak{f}}

\def\tt{\frak{t}}

\DeclareMathOperator{\Tr}{Tr}

\DeclareMathOperator{\gr}{gr}

\newcommand{\Yoneda}{\mathbb{Y}}

\DeclareMathOperator{\Coh}{Coh}
\DeclareMathOperator{\QCoh}{QCoh}
\DeclareMathOperator{\Gr}{Gr}
\DeclareMathOperator{\Tor}{Tor}
\DeclareMathOperator{\QGr}{QGr}

\title{Auslander's formula and correspondence for exact categories}
\author{Ruben Henrard}
\address{Ruben Henrard, Hasselt University, Campus Diepenbeek, Department WNI, 3590 Diepenbeek, Belgium}
\email{ruben.henrard@uhasselt.be}

\author{Sondre Kvamme}
\address{Sondre Kvamme, Department of Mathematical Sciences, Norwegian University of Science and Technology, 7491 Trondheim, Norway}
\email{sondre.kvamme@ntnu.no}

\author{Adam-Christiaan van Roosmalen}
\address{Adam-Christiaan van Roosmalen, Hasselt University, Campus Diepenbeek, Department WNI, 3590 Diepenbeek, Belgium}
\email{adamchristiaan.vanroosmalen@uhasselt.be}



\newtheorem{theorem}{Theorem}[section]
\newtheorem{proposition}[theorem]{Proposition}
\newtheorem{lemma}[theorem]{Lemma}
\newtheorem{corollary}[theorem]{Corollary}

\theoremstyle{definition}
\newtheorem{definition}[theorem]{Definition}
\newtheorem{remark}[theorem]{Remark}
\newtheorem{example}[theorem]{Example}
\newtheorem{construction}[theorem]{Construction}

\DeclareMathOperator{\inflation}{\rightarrowtail}
\DeclareMathOperator{\deflation}{\twoheadrightarrow}

\DeclareMathOperator{\cogen}{cogen}
\DeclareMathOperator{\gen}{gen}
\DeclareMathOperator{\gldim}{gl.dim}

\DeclareMathOperator{\im}{im}
\DeclareMathOperator{\coim}{coim}
\DeclareMathOperator{\coker}{coker}

\DeclareMathOperator{\Ob}{Ob}

\DeclareMathOperator{\Hom}{Hom}
\DeclareMathOperator{\End}{End}
\DeclareMathOperator{\Ext}{Ext}
\DeclareMathOperator{\Proj}{Proj}
\DeclareMathOperator{\Inj}{Inj}

\DeclareMathOperator{\Ex}{\boldsymbol{\mathsf{Ex}}}
\DeclareMathOperator{\AEx}{\boldsymbol{\mathsf{AEx}}}

\DeclareMathOperator{\IEx}{\boldsymbol{\mathsf{IEx}}}

\DeclareMathOperator{\Mor}{Mor}
\DeclareMathOperator{\Fun}{Fun}

\DeclareMathOperator{\Mod}{Mod}
\DeclareMathOperator{\smod}{mod}
\DeclareMathOperator{\smodad}{mod_{\mathsf{adm}}}
\DeclareMathOperator{\modad}{mod_{\mathsf{adm}}}
\DeclareMathOperator{\coh}{coh}
\DeclareMathOperator{\eff}{eff}
\DeclareMathOperator{\Weff}{Eff}
\DeclareMathOperator{\domdim}{dom.dim}
\DeclareMathOperator{\add}{add}
\DeclareMathOperator{\rep}{rep}

\DeclareMathOperator{\Abelian}{\boldsymbol{\mathsf{Ab}}}
\DeclareMathOperator{\AAb}{\boldsymbol{\mathsf{AAb}}}

\DeclareMathOperator{\Ab}{\mathsf{Ab}}

\DeclareMathOperator{\Acb}{\textbf{Ac}^{\mathsf{b}}}
\DeclareMathOperator{\Kb}{\mathsf{K}^{\mathsf{b}}}
\DeclareMathOperator{\Db}{\mathsf{D}^{\mathsf{b}}}
\DeclareMathOperator{\DAb}{\mathsf{D}^{\mathsf{b}}_{\mathcal{A}}}

\renewcommand{\Upsilon}{\mathbb{Y}}
\renewcommand{\mod}{\operatorname{mod}}

\newcommand{\ic}[1]{{#1}_{\text{ic}}}

\makeatletter
\newcommand{\myitem}[1]{%
\item[#1]\protected@edef\@currentlabel{#1}%
}
\makeatother

\subjclass[2020]{18E05, 18E35; 16G50}

\begin{document}

\keywords{Exact category, Auslander correspondence, effaceable functor, resolving subcategory}

\begin{abstract}
The Auslander correspondence is a fundamental result in Auslander-Reiten theory. In this paper we introduce the category $\smodad(\EE)$ of admissibly finitely presented functors and use it to give a version of Auslander correspondence for any exact category $\EE$. An important ingredient in the proof is the localization theory of exact categories. We also investigate how properties of $\EE$ are reflected in $\smodad(\EE)$, for example being (weakly) idempotent complete or having enough projectives or injectives. Furthermore, we describe $\smodad(\EE)$ as a subcategory of $\smod(\EE)$ when $\EE$ is a resolving subcategory of an abelian category. This includes the category of Gorenstein projective modules and the category of maximal Cohen-Macaulay modules as special cases. Finally, we use $\smodad(\EE)$ to give a bijection between exact structures on an idempotent complete additive category $\CC$ and certain resolving subcategories of $\smod(\CC)$.
\end{abstract}

\maketitle

\tableofcontents
\section{Introduction}

To study representations of finite-dimensional algebras, Auslander advocated a functorial point of view, which he developed in \cite{Auslander65,Auslander78,AuslanderBridger69,AuslanderReiten74,AuslanderReiten78}. This means that to study the category $\smod(\Lambda)$ of finite-dimensional modules over a finite-dimensional algebra $\Lambda$, one should study the category $\smod (\smod (\Lambda))$ of finitely presented functors $F\colon \smod(\Lambda)^{\operatorname{op}}\to \operatorname{Ab}$ to abelian groups, and translate the results back to $\smod (\Lambda)$.  This can be done using \emph{Auslander's formula} (see \cite{Auslander65}, the term was coined in \cite{Lenzing98}):
\[\frac{\smod(\smod(\Lambda))}{\eff(\mod(\Lambda))} \simeq \mod(\Lambda)\text{,}\]
where $\eff(\mod(\Lambda))$ is the full subcategory of effaceable functors, that is, objects of $\mod(\mod(\Lambda))$ which only admit zero morphisms to representable functors. Auslander's formula still holds when one replaces the category $\mod(\Lambda)$ by an arbitrary (essentially small) abelian category $\AA$. 

When $\Lambda$ is representation-finite, the category $\smod(\smod(\Lambda))$ is equivalent to $\smod(\Gamma)$ for some finite-dimensional algebra $\Gamma$ (here, $\Gamma$ is the endomorphism ring of an additive generator of $\smod(\Lambda)$).  The class of algebras $\Gamma$ that can occur in this way is equal to the class of Auslander algebras, i.e. the algebras satisfying 
\[
\domdim \Gamma \geq 2\geq \gldim \Gamma
\]
where $\domdim \Gamma$ is the dominant dimension (see \Cref{definition:DominantDimension}) and $\gldim \Gamma$ is the global dimension of $\Gamma$.  In fact, the association $\Lambda \mapsto \Gamma$ gives a bijection between Morita equivalences classes  of representation-finite algebras and Morita equivalence classes of Auslander algebras.  This bijection is called the \emph{Auslander correspondence}. 

In recent years, several variations of Auslander's formula \cite{AsadollahiAsadollahiHafeziRasool20,Fiorot20,Krause15,Ogawa19k,Ogawa19} and the Auslander correspondence \cite{AuslanderSolberg93a,Auslander71,Beligiannis00,Enomoto18,Hanihara20,Iyama07,IyamaSolberg18} have been discussed.  

In this paper, we generalize Auslander's formula and the Auslander correspondence to the setting of exact categories, in the sense of Quillen. An exact category is an additive category together with a chosen class of short exact sequences, called conflations, which satisfy certain axioms, see  \Cref{Definition:AxiomsR0-R2AndL0-L2}. Exact categories occur in many places in mathematics.  In fact, due to the Gabriel-Quillen embedding, we know that exact categories are precisely the extension-closed subcategories of abelian categories.  However, there is an abundance of exact categories without any obvious abelian categories around to resort to. Examples from functional analysis include the categories of Banach spaces, barrelled spaces, Schwartz spaces, Montel spaces, and Fr\'{e}chet spaces (see \cite{PerezCarrerasBonet87,RoelckeDierolf81}).  The category of locally free coherent sheaves on a scheme is exact.  The category of filtered modules over a filtered ring is an exact category \cite{SchapiraSchneiders16}. For quasi-hereditary algebras \cite{ClineParshallScott88,Scott87}, the modules filtered by standard modules is an exact subcategory which plays an important role. Quasi-hereditary algebras occur for example in Lie theory when considering blocks of the BGG category $\OO$, see \cite{Humphreys08}. We refer to \cite{Buhler10} for a comprehensive treatment of exact categories.

In addition, many categories of interest in the representation theory of artin algebras are exact, for example the Gorenstein projective modules \cite{AuslanderReiten91,Beligiannis11,Chen08,ChenShenZhou18,Ringel13}, or the modules of finite projective dimension \cite{AuslanderReiten91a,HappelUnger96,MarcosMerklenPlatzeck00,PlatzeckReiten01}. Furthermore, Cohen-Macaulay representations \cite{Auslander78,Auslander86,BuchweitzGreuelSchreyer87,Carlson83,IyamaWemyss14,Knorrer87,LeuschkeWiegand12,ReitenVandenBergh89,Yoshino90}, their graded counterparts \cite{AuslanderReiten87,AuslanderReiten89}, and their connection to McKay quivers \cite{ArtinVerdier85,Auslander86a} have also been actively studied from a representation theoretic point of view. 


When replacing $\mod(\Lambda)$ by an exact category $\EE$ in Auslander's formula, two obstructions become clear.  The first obstruction is that $\mod(\EE)$ is independent of the exact structure.  To remedy this, we introduce a different candidate to study instead of $\smod(\EE)$, which \emph{takes into account the exact structure of} $\EE$.

\begin{definition}[\Cref{Definition:smodadAndeff}]
Let $\EE$ be an exact category. We define $\smodad(\EE)$ to be the full subcategory of $\smod(\EE)$ consisting of those functors $F$ that admit a projective representation
		\[\Hom_{\EE}(-,X)\xrightarrow{\Hom_{\EE}(-,f)}\Hom_{\EE}(-,Y)\to F\to 0\]
		where $f\colon X\to Y$ is an admissible morphism in $\EE$, that is, $f = m \circ p$ where $m$ is an inflation and $p$ is a deflation.  A functor $F \in \modad(\EE)$ is called an \emph{admissibly presented} functor.
\end{definition}

The second obstruction is that $\modad(\EE)$ need not be abelian; consequently, the quotient construction that occurs in Auslander's formula cannot be the quotient described in \cite{GabrielZisman67,Gabriel62}.  In this paper, we use the quotient theory for exact categories described in \cite{Cardenas98,HenrardVanRoosmalen19b,HenrardVanRoosmalen19a}, which allows to take a quotient of exact categories by a so-called percolating subcategory (we refer to section \ref{Subsection:PercolatingSubcategories} for the precise definitions).  Given a percolating subcategory $\AA$ of an exact category $\EE'$, we want to construct an exact functor $Q\colon \EE' \to \EE' / \AA$ satisfying the usual 2-universal property of a quotient; in particular, $Q(\AA) = 0$.  Here, exact means that $Q$ maps conflation in $\EE'$ to conflations in $\EE' / \AA$.  To describe the quotient, consider a conflation $X \stackrel{f}{\inflation} Y \stackrel{g}{\deflation}Z$ in $\EE'$.  If $X \in \AA$, then $Q(X) = 0$.  As $Q(X) \stackrel{Q(f)}{\inflation} Q(Y) \stackrel{Q(g)}{\deflation}Q(Z)$ is a conflation, and hence a kernel-cokernel pair, we find that $Q(g)$ is invertible in $\EE' / \AA$.  Likewise, if $Z \in \AA$, then $Q(f)$ is invertible.  Let $\Sigma$ be the set of morphisms in $\EE'$ which can be written as a composition of a deflation with kernel in $\AA$ and an inflation with cokernel in $\AA$. It is shown in \cite{HenrardVanRoosmalen19a} that $\EE'[\Sigma^{-1}]$ is an exact category and that the localization $\EE' \to \EE'[\Sigma^{-1}]$ satisfies the 2-universal property of the quotient functor $Q\colon \EE' \to \EE' / \AA$.  

Having introduced the category $\modad(\EE)$, we can define the subcategory $\eff(\EE)$ of effaceable functors as the subcategory of functors in $\modad(\EE)$ which only admit zero morphisms to representable functors. We can now show Auslander's formula.

\begin{theorem}[\Cref{Proposition:EffAreTorsion} and \Cref{Theorem:AuslandersFormulaForExactCategories}]
The following holds:
\begin{enumerate}
\item $\eff(\EE)$ is a percolating subcategory of $\smodad(\EE)$.  Hence, the quotient $\smodad(\EE)/\eff(\EE)$ exists (in the category of exact categories).
\item $\smodad(\EE)/\eff(\EE)$ is equivalent to $\EE$ as an exact category.
\end{enumerate}
\end{theorem}

Next, we characterize the categories of the form $\smodad(\EE)$, analogous to the characterization of Auslander algebras. For this, we introduce the notion of an Auslander exact category. In the following, we write (for a subcategory $\PP \subseteq \EE$)
\begin{align*}
& {}^{\perp}\PP\coloneqq\{E\in \EE\mid \Hom_{\EE}(E,\PP)=0\} \quad \text{and} \quad \cogen(\PP)\coloneqq\{E\in \EE\mid \exists\text{ inflation }E\inflation P \text{ with }P\in \PP\}. 
\end{align*}

\begin{definition}[\Cref{AuslanderExactCategories}]
Let $\EE$ be an exact category with enough projectives $\PP$. We say that $\EE$ is an \emph{Auslander exact category} if it satisfies the following:
\begin{enumerate}
		\item[(i)] $({}^{\perp}\PP,\cogen(\PP))$ is a torsion pair in $\EE$.
		\item[(ii)] If $E'\in {}^{\perp}\PP$, then any morphism $f\colon E\to E'$ is admissible with image in ${}^{\perp}\PP$.
		\item[(iii)] $\Ext^1_{\EE}({}^{\perp}\PP,\PP)=0$.
		\item[(iv)] $\gldim(\EE)\leq 2$.
	\end{enumerate}
\end{definition}

\begin{theorem}[\Cref{FirstAuslanderCorrespondence}] The following hold:
\begin{enumerate}
\item If $\EE$ is an exact category, then $\smodad(\EE)$ is an Auslander exact category.
\item If $\EE'$ is an Auslander exact category, then there exists an exact equivalence $\EE'\cong \smodad(\EE)$ where $\EE$ is an exact category. Furthermore, $\EE$ is uniquely determined up to exact equivalence.
\end{enumerate}
\end{theorem}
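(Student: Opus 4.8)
The two statements are the two directions of a correspondence, and I would prove them so that they visibly invert one another. Two preliminary facts about $\smodad(\EE)$ will be used throughout and should be established first: (a) $\smodad(\EE)$ carries a canonical exact structure, whose conflations are the sequences of functors that arise, in a suitable sense, from conflations of $\EE$; and (b) the Yoneda functor restricts to an equivalence $\EE\xrightarrow{\ \sim\ }\Proj\smodad(\EE)$ onto the full subcategory of projectives, which are exactly the representables $\Hom_\EE(-,X)$, with $\smodad(\EE)$ having enough of them. It is also convenient to record the ``cokernel functor'' $c\colon\smodad(\EE)\to\EE$ which sends $F=\coker\Hom_\EE(-,f)$, for an admissible morphism $f$ with admissible factorization $f=(X\deflation I\inflation Y)$, to $c(F):=\coker(I\inflation Y)$; by Yoneda there is a natural isomorphism $\Hom_{\smodad(\EE)}(F,\Hom_\EE(-,Z))\cong\Hom_\EE(c(F),Z)$, so $c$ is well defined, agrees with the equivalence in (b) on projectives, and satisfies $c(F)=0$ if and only if $F$ is effaceable.

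\textbf{Part (1).} I would verify the four axioms directly, with $\PP$ the class of representables. The isomorphism above gives $F\in{}^{\perp}\PP$ iff $c(F)=0$ iff $F$ is effaceable, so ${}^{\perp}\PP=\eff(\EE)$. For (i): for $F=\coker\Hom_\EE(-,f)$ with $f=(X\deflation I\inflation Y)$, the sequence $\coker\Hom_\EE(-,(X\deflation I))\rightarrowtail F\twoheadrightarrow\coker\Hom_\EE(-,(I\inflation Y))$ is a conflation whose first term is effaceable and whose last term embeds via an inflation into $\Hom_\EE(-,\coker(I\inflation Y))$, hence lies in $\cogen(\PP)$; combined with $\Hom_{\smodad(\EE)}(\eff(\EE),\cogen(\PP))=0$ (a subfunctor of a representable is torsion-free since $\eff(\EE)={}^{\perp}\PP$) this exhibits the torsion pair, and shows $\cogen(\PP)$ is exactly the torsion-free class. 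For (ii): a morphism with effaceable target, pulled through projective presentations, is admissible with effaceable admissible image, using that $\eff(\EE)$ is closed under admissible subobjects and quotients. For (iii): if $g$ is a deflation $X\deflation Y$ with kernel $K\inflation X$, then $\ker\Hom_\EE(-,g)=\Hom_\EE(-,K)$, so $\coker\Hom_\EE(-,g)$ has projective resolution $0\to\Hom_\EE(-,K)\to\Hom_\EE(-,X)\to\Hom_\EE(-,Y)\to\coker\Hom_\EE(-,g)\to0$; applying $\Hom_{\smodad(\EE)}(-,\Hom_\EE(-,Z))$ and using left-exactness of $\Hom_\EE(-,Z)$ on the conflation $K\inflation X\deflation Y$ yields $\Ext^1_{\smodad(\EE)}(\eff(\EE),\PP)=0$. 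For (iv): the analogous length-two resolution exists for every $F=\coker\Hom_\EE(-,f)$ (take $K=\ker(X\deflation I)$), so $\gldim\smodad(\EE)\le2$. One must also check these functor resolutions are genuine conflation-resolutions for the exact structure (a) — this is where its construction, and the localization input, enter.

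\textbf{Part (2).} Let $\BB$ be an Auslander exact category with enough projectives $\PP$. Axioms (i)--(iii) say that ${}^{\perp}\PP$ is a torsion class whose objects are admissibly embedded and which is $\Ext$-orthogonal to $\PP$; this is exactly what the localization theory of exact categories requires in order to form the exact quotient $\EE:=\BB/\!\!/{}^{\perp}\PP$, together with an exact, essentially surjective quotient functor $Q\colon\BB\to\EE$ that is fully faithful on $\PP$. (Concretely, $\EE$ is the additive category $\PP$ with the exact structure whose conflations $P_1\to P_2\to P_3$ are those admitting a resolution $0\to P_1\to P_2\to P_3\to D\to0$ in $\BB$ with $D\in{}^{\perp}\PP$.) I would then prove $\BB\simeq\smodad(\EE)$: $Q$ carries $\PP$ onto $\Proj\smodad(\EE)$, using (iii) and the torsion pair to see that $\Hom$-sets and projectivity are preserved; and the functor $\Phi\colon\BB\to\smod(\EE)$ defined by $\Phi(F):=\coker\big(\Hom_\EE(-,QP_1)\to\Hom_\EE(-,QP_0)\big)$ for a projective presentation $P_1\to P_0\to F\to0$ in $\BB$ is well defined, and — using (iv), so that the kernel of such a presentation is again governed by a projective and the morphism $QP_1\to QP_0$ is admissible in $\EE$ (its failure to be monic lying in ${}^{\perp}\PP$) — lands in $\smodad(\EE)$ and is an exact equivalence, where essential surjectivity and fullness use that admissible morphisms of $\EE$ lift to $\BB$ along $Q$. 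For uniqueness: if also $\BB\simeq\smodad(\EE')$, then Part (1) identifies ${}^{\perp}\PP$ with $\eff(\EE')$, and the cokernel functor induces an exact equivalence $\smodad(\EE')/\!\!/\eff(\EE')\simeq\EE'$; hence $\EE'\simeq\BB/\!\!/{}^{\perp}\PP\simeq\EE$.

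\textbf{Main obstacle.} The genuine difficulty is the exact-structure bookkeeping: pinning down the canonical exact structure on $\smodad(\EE)$ so that the explicit functor resolutions in Part (1) are honest conflation-resolutions, and establishing, through the localization theory of exact categories, that $\BB/\!\!/{}^{\perp}\PP$ exists, is exact, restricts correctly to projectives, reflects and creates admissibility as needed, and makes $\Phi$ exact. The axioms (i)--(iv) are essentially reverse-engineered so that this machinery applies and so that the round trip $\smodad(\BB/\!\!/{}^{\perp}\PP)\simeq\BB$ goes through.
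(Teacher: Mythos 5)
Your proposal is correct and follows essentially the same route as the paper: part (1) is verified exactly as in Propositions \ref{Proposition:EffAreTorsion} and \ref{Proposition:TorsionTheoryProperties} (the torsion pair $(\eff(\EE),\cogen(\QQ))$ coming from the deflation--inflation factorization, the identification $\eff(\EE)={}^{\perp}\QQ$ via the cokernel functor $L$, and the length-two Yoneda resolutions), and part (2) recovers $\EE$ as $\Proj(\EE')$ with the exact structure induced from the percolating localization $\EE'/{}^{\perp}\PP$, with uniqueness via Auslander's formula $\smodad(\EE)/\eff(\EE)\simeq\EE$. The only cosmetic difference is that you build the equivalence $\EE'\simeq\smodad(\EE)$ by hand from projective presentations, whereas the paper obtains it in the opposite direction from the universal property of $\smodad(-)$ applied to the left exact inclusion $\Proj(\EE')\to\EE'$.
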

In fact, we show that the association $\EE\mapsto \smodad(\EE)$ is an equivalence from the $2$-category of exact categories and left exact functors to the $2$-category of Auslander exact categories and exact functors preserving projective objects. It can also be interpreted as a left adjoint to an inclusion of $2$-categories, see \Cref{Corollary:Left2Adjoint}. 

Note that if $\QQ$ is the subcategory of projective objects in $\smodad(\EE)$, then we have the following identifications:
\begin{enumerate}
\item $F\in \QQ$ if and only if $F$ is isomorphic to a representable functor.
\item $F\in {}^{\perp}\QQ$ if and only if $F$ is an effaceable functor.  Effaceable functors can also be characterized as those functors $F$ having a projective representation
		\[\Hom_{\EE}(-,X)\xrightarrow{\Hom_{\EE}(-,f)}\Hom_{\EE}(-,Y)\to F\to 0\]
		where $f\colon X\to Y$ is a deflation.  Such functors have already been studied in the context of exact categories by several authors under the name \emph{defects}, see \cite{Buan01,Enomoto18,FangGorsky20}.
		\item $F\in \cogen(\QQ)$ if and only if there exist a projective representation
		\[\Hom_{\EE}(-,X)\xrightarrow{\Hom_{\EE}(-,f)}\Hom_{\EE}(-,Y)\to F\to 0\]
		where $f\colon X\to Y$ is an inflation.
\end{enumerate}
It follows that the subcategory of effaceable functors is the torsion part of a torsion pair in $\smodad(\EE)$.

If $\EE$ has enough injectives, then we obtain a characterization of $\smodad(\EE)$ which is similar to module categories of Auslander algebras.

\begin{theorem}[\Cref{Theorem:AuslanderSecond}]
Let $\EE'$ be an exact category with enough projectives $\PP'$.  Then there exists an exact equivalence $\EE'\cong \smodad(\EE)$ where $\EE$ is an exact category with enough injectives if and only if the following hold:
\begin{enumerate}
		\item $\domdim (\EE')\geq 2\geq \gldim (\EE')$.
		\item Any morphism $X\to E$ with $E\in{}^{\perp}\PP'$ is admissible.
		\item For any $E\in \EE'$ there exists an admissible left $\PP'$-approximation $E\to P$.
\end{enumerate}	
Furthermore, in this case $\EE'$ has enough injectives.	
\end{theorem}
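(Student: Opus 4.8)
The strategy is to reduce this second Auslander-type correspondence to the first one (Theorem \ref{FirstAuslanderCorrespondence}), treating the three displayed conditions as a repackaging of what it means for the source exact category $\EE$ in the equivalence $\EE'\cong \smodad(\EE)$ to have enough injectives. So I would first establish the "only if" direction: assume $\EE'\cong\smodad(\EE)$ with $\EE$ having enough injectives, and verify (1)--(3). Condition (1) should come from a direct computation with the torsion pair $({}^{\perp}\PP,\cogen(\PP))$ and the description of projectives $\QQ$ in $\smodad(\EE)$ recalled in the excerpt: $\gldim\leq 2$ is part of being Auslander exact, and $\domdim\geq 2$ should translate into the statement that the first two terms of a minimal injective coresolution of each projective representable $\Hom_\EE(-,X)$ are again projective, which I expect to correspond exactly to embedding $X$ into an injective of $\EE$ and taking the associated four-term exact sequence of functors. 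Conditions (2) and (3) are statements purely about the internal structure of $\smodad(\EE)$: (2) should follow from part (ii) of the definition of Auslander exact category together with the identification of $\cogen(\QQ)$; and (3) should be the translation, via Yoneda, of the existence of an inflation $X\inflation I$ with $I$ injective in $\EE$, since a left $\PP'$-approximation of $\Hom_\EE(-,X)$ in $\smodad(\EE)$ ought to be $\Hom_\EE(-,I)$.

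For the "if" direction, the plan is: given $\EE'$ with enough projectives $\PP'$ satisfying (1)--(3), first invoke (1) to conclude (with the characterization implicit in the paper's earlier development, e.g. the Auslander exact conditions) that $\EE'$ is an Auslander exact category — here conditions (2) and (3) are used to supply whatever (i)--(iii) of the definition of Auslander exact category are not already implied by $\domdim\geq 2\geq\gldim$. Then apply Theorem \ref{FirstAuslanderCorrespondence}(2) to obtain an exact category $\EE$, unique up to exact equivalence, with $\EE'\cong\smodad(\EE)$. It then remains to show $\EE$ has enough injectives. For this I would use the equivalence $\EE\cong\smodad(\EE)/\eff(\EE)$ from Theorem \ref{Theorem:AuslandersFormulaForExactCategories} together with the localization functor $\smodad(\EE)\to\EE$; an object of $\EE$ is the image of a representable $\Hom_\EE(-,X)\in\QQ$, and I would produce an injective hull by pushing forward the admissible left $\PP'$-approximation from (3) through the localization, checking that it becomes an inflation into an injective object of $\EE$. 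The key point is that conditions (2) and (3) are precisely what guarantee that these approximations are admissible and behave well under localization.

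The main obstacle I anticipate is the careful bookkeeping around the localization functor $Q\colon\smodad(\EE)\to\smodad(\EE)/\eff(\EE)\cong\EE$: I need to know that $Q$ sends admissible morphisms to admissible morphisms, sends the projectives $\QQ=\{\Hom_\EE(-,X)\}$ to all of $\EE$, and — the crucial and most delicate step — that $Q$ sends an object of $\smodad(\EE)$ with "the right injective-coresolution shape" (coming from condition (1), i.e. $\domdim\geq 2$) to an injective object of $\EE$, and conversely that every injective of $\EE$ arises this way. This is where the dominant dimension hypothesis really does its work: it ensures that for each $X$ there is a map $\Hom_\EE(-,X)\to I^0$ with $I^0$ projective whose cokernel is effaceable (hence killed by $Q$) in a way that exhibits $Q(I^0)$ as an injective envelope of $Q(\Hom_\EE(-,X))=X$. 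Verifying that this construction is functorial enough, and that the resulting object is genuinely injective in $\EE$ rather than merely in some subcategory, is the step I would expect to require the most care, likely leaning on the percolating/localization machinery cited from \cite{HenrardVanRoosmalen19a}. The final clause ("$\EE'$ has enough injectives") should then fall out for free, since $\smodad(\EE)$ having enough injectives is one of the categorical properties the paper transfers along $\EE\mapsto\smodad(\EE)$, or can be read off directly once the injectives of $\smodad(\EE)$ are identified with the objects of the form coming from injectives of $\EE$.
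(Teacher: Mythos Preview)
Your overall strategy --- reduce to the first Auslander correspondence and then characterize when the underlying $\EE$ has enough injectives --- is correct and matches the paper. However, your plan for showing that $\EE$ has enough injectives in the ``if'' direction is confused. You propose to push the left $\PP'$-approximation from condition (3) through the localization, but such an approximation lands in an arbitrary projective $\Upsilon(Y)$, and there is no reason $Y$ should be injective in $\EE$. You then shift to the dominant-dimension hypothesis and claim it produces a map $\Hom_{\EE}(-,X)\to I^0$ ``with $I^0$ projective whose cokernel is effaceable''; this is wrong on both counts --- $\domdim\geq 1$ gives an inflation into a projective-\emph{injective} object, and its cokernel lies in $\cogen(\QQ)$, not in $\eff(\EE)$ (indeed, if the cokernel were effaceable, $Q$ would turn the map into an isomorphism rather than an inflation into something larger). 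The paper's route is to first isolate the lemma that $I$ is injective in $\EE$ if and only if $\Upsilon(I)$ is injective in $\smodad(\EE)$; once this is in hand, $\domdim\geq 1$ immediately yields for each $X$ an inflation $\Upsilon(X)\inflation \Upsilon(I)$ with $I$ injective, and applying the exact localization $Q$ gives the inflation $X\inflation I$ in $\EE$. Condition (3) plays no role in this step --- it is used instead, together with $\domdim\geq 1$, to produce the torsion pair $({}^{\perp}\PP',\cogen(\PP'))$ needed to verify that $\EE'$ is Auslander exact.

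You also underestimate the final clause. Showing that $\EE'$ itself has enough injectives does not fall out for free from identifying the injectives of $\smodad(\EE)$; one still has to produce, for an arbitrary object $F\in\smodad(\EE)$ (not just the representables), an inflation into an injective. The paper handles this directly from $\domdim(\EE')\geq 2\geq \gldim(\EE')$ via a cosyzygy argument: every projective lies in $\cogen_3(\II)$ (the third term of the dominant-dimension coresolution is forced injective by $\gldim\leq 2$), and then closure properties of the subcategories $\cogen_n(\II)$ under conflations propagate this down to show that every object lies in $\cogen(\II)$.
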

We also analyse the properties of $\smodad(\EE)$ when $\EE$ has enough projectives, see Section \ref{subsection:EnoughProjectives}.  In this case, we find that Auslander's formula can be extended to a recollement connecting $\EE$ and $\modad(\EE)$, see \Cref{Proposition:TTFTripleYieldsRecollement}. 

One of our main examples of an exact category $\EE$ is the category of Cohen-Macaulay modules, and it turns out that $\smodad(\EE)$ has a simple description in this case. Let $R$ be a commutative Cohen-Macaulay local ring and let $\Lambda$ be a Cohen-Macaulay-finite $R$-order, i.e. a noetherian $R$-algebra which is maximal Cohen-Macaulay as an $R$ -module. Let $M$ be an additive generator of the subcategory $\operatorname{CM}(\Lambda)$ of maximal Cohen-Macaulay modules containing $\Lambda$ as a summand. Let $\Gamma=\End_{\Lambda}(M)$ and let $e$ be the idempotent corresponding to the direct summand $\Lambda$. Then we have 
\[
\smodad(\Gamma)=\{M\in \smod(\Gamma)\mid Me\in \operatorname{CM}(\Lambda)\}.
\]
Similar descriptions hold for any resolving subcategory of a module category, see \Cref{Corollary:SubcategoriesAbelian}.  Here, a \emph{resolving subcategory} $\EE$ of an exact category $\AA$ is a subcategory which is generating (in the sense that for every object $A \in \AA$, there is a deflation $E \deflation A$ with $E \in \EE$), closed under extensions, direct summands, and kernels of deflations. 

The study of exact structures on a fixed idempotent complete additive category $\CC$ has recently attracted a lot of interest \cite{BaillargeonBrustleGorskyHassoun20,BrustleHassounTattar20,BrustleHassounLangfordRoy20,Enomoto18,FangGorsky20,Rump11,Rump15,Rump20}. In particular, they form a lattice \cite{BrustleHassounLangfordRoy20,Rump11} and  the association $\EE\mapsto \eff(\EE)$ gives a bijection to certain Serre subcategories of $\smod(\EE)$ \cite{Buan01,Enomoto18,FangGorsky20}. We obtain a similar characterization by replacing $\eff(\EE)$ with $\smodad(\EE)$ and Serre subcategories by resolving subcategories. In the following, $\PP^2(\CC)$ denotes the subcategory of $\smod(\CC)$ consisting of functors $F$ admitting a projective resolution
\[
0\to \Hom_{\EE}(-,X)\to \Hom_{\EE}(-,Y)\to \Hom_{\EE}(-,Z)\to F\to 0.
\]
Also, $\underline{\smod}(\CC)$ denote the stable category of $\smod(\CC)$ modulo projectives, and $\operatorname{Tr}\colon \underline{\smod}(\CC)\to \underline{\smod}(\CC^{\operatorname{op}})^{\operatorname{op}}$ denotes the Auslander-Bridger transpose \cite{AuslanderBridger69}. For a subcategory $\XX$ of $\smod(\CC)$ we let $\operatorname{Tr}(\XX)$ denote the subcategory of $\smod(\CC^{\operatorname{op}})$ consisting of all functors $F$ which are isomorphic in $\underline{\smod}(\CC^{\operatorname{op}})$ to $\operatorname{Tr}(F)$ for some $F\in \XX$. For the notion of grade, see \Cref{Definition:grade}.

\begin{theorem}[\Cref{Theorem:ExactStructuresResolving}]\label{Theorem:IntroductionExactStructuresResolving}
Let $\CC$ be an idempotent complete additive category. The association $\EE\mapsto \smodad(\EE)$ gives a bijection between the following:
\begin{enumerate}
\item Exact structures $\SS$ on $\CC$, where $\EE=(\CC,\SS)$ is the corresponding exact category.
\item Subcategories $\XX$ of $\smod(\CC)$ satisfying the following:
\begin{enumerate}
\item $\XX$ is a resolving subcategory of $\PP^2(\CC)$ and $\operatorname{Tr(\XX)}$ is a resolving subcategory of $\PP^2(\CC^{\operatorname{op}})$, and
\item $\XX$ and $\operatorname{Tr}(\XX)$ have no objects of grade $1$.
\end{enumerate}
\end{enumerate}
\end{theorem}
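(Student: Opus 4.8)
The plan is to recognize that the map in question is essentially $\EE\mapsto\smodad(\EE)$, and to control it using \Cref{FirstAuslanderCorrespondence} (the $2$-equivalence between exact categories and Auslander exact categories), \Cref{Theorem:AuslandersFormulaForExactCategories} (Auslander's formula $\smodad(\EE)/\eff(\EE)\simeq\EE$), and the two facts recalled in the excerpt: $\smodad(\EE)$ is the smallest resolving subcategory of $\smod(\CC)$ containing $\eff(\EE)$, and $\eff(\EE)$ is recovered from $\smodad(\EE)$ intrinsically as its torsion part $\{F\mid\Hom_{\smod(\CC)}(F,\Hom_\CC(-,X))=0\ \forall X\}$ (\Cref{Proposition:EffAreTorsion}). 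Thus identifying the image of $\EE\mapsto\smodad(\EE)$ amounts to characterizing which full extension-closed subcategories $\XX\subseteq\smod(\CC)$ are Auslander exact with $\CC$ (the representables) as category of projectives, and the goal is to show this happens exactly when (2a)--(2b) hold.

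\textbf{The image lands in the target, and injectivity.} Let $\EE=(\CC,\SS)$ and $\XX=\smodad(\EE)$. An admissible presentation of $F\in\XX$ through $f=\iota\pi$ with $\pi\colon X\deflation I$, $\iota\colon I\inflation Y$ gives $\ker(\Hom_\CC(-,f))\cong\Hom_\CC(-,\ker\pi)$, hence a length-$2$ projective resolution, so $\XX\subseteq\PP^2(\CC)$; the first syzygy of $F$ is the image of $\Hom_\CC(-,\ker\pi\inflation X)$, which is again admissibly presented, so $\XX$ is syzygy-closed, while closure under extensions and summands and the presence of all projectives follow from the structural results on $\smodad(\EE)$. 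Since admissibility is self-dual, the Auslander--Bridger transpose identifies $\operatorname{Tr}(\XX)$ with $\smodad(\EE^{\operatorname{op}})\subseteq\smod(\CC^{\operatorname{op}})$, giving (2a) for $\operatorname{Tr}(\XX)$ by the same argument over $\CC^{\operatorname{op}}$. For (2b): if $0\ne F\in\XX$ has $\operatorname{grade}(F)\ge 1$ then the map in any projective presentation is an epimorphism in $\CC$, which together with admissibility forces the inflation part to be invertible, so $F$ is effaceable; applying $\Hom_\CC(-,Z)$ to the defining conflation and using left exactness gives $\Ext^1_{\smod(\CC)}(F,\Hom_\CC(-,Z))=0$ for all $Z$, hence $\operatorname{grade}(F)\ge 2$, and dually for $\operatorname{Tr}(\XX)$. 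For injectivity, if $\smodad(\EE)=\smodad(\EE')$ as subcategories of $\smod(\CC)$ then $\eff(\EE)=\eff(\EE')$ by intrinsicness, so $\EE\simeq\EE'$ via an equivalence that is the identity on representables, hence on $\CC$, so $\SS=\SS'$.

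\textbf{Surjectivity.} Given $\XX$ satisfying (2a)--(2b), view it as an exact category via its resolving structure and verify it is Auslander exact with $\PP=$ representables. Enough projectives, and that the projectives are exactly the representables, follows from syzygy-closure and idempotent completeness of $\CC$; because an $\XX$-projective resolution by representables is also a resolution in $\smod(\CC)$, one has $\Ext^\bullet_\XX=\Ext^\bullet_{\smod(\CC)}$, so $\gldim(\XX)\le 2$ from $\XX\subseteq\PP^2(\CC)$; and since ${}^\perp\PP$ is the grade-$\ge 1$ part, which equals the grade-$\ge 2$ part by (2b), we get $\Ext^1_\XX({}^\perp\PP,\PP)=0$ at once. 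The torsion-pair axiom $({}^\perp\PP,\cogen(\PP))$ and the admissibility axiom use the hypothesis on $\operatorname{Tr}(\XX)$ through Auslander--Bridger duality: for $F\in\XX$ the condition $\operatorname{Tr}(F)\in\PP^2(\CC^{\operatorname{op}})$ forces $F^{\vee}$, hence $F^{\vee\vee}$, to be representable, so the Auslander--Bridger sequence $0\to tF\to F\to F^{\vee\vee}\to E\to 0$ yields a short exact sequence $0\to tF\to F\to\bar F\to 0$ with $tF$ of grade $\ge 1$ and $\bar F$ a subobject of a representable; one then checks $tF,\bar F\in\XX$ and that the map is a deflation, and treats maps into ${}^\perp\PP$ similarly. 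Finally \Cref{FirstAuslanderCorrespondence} produces an exact equivalence $\XX\simeq\smodad(\EE)$ which, being projective- and resolution-preserving, is the identity on $\smod(\CC)$; and its base $\EE\simeq\XX/{}^\perp\PP$ has underlying additive category $\CC$, so $\EE=(\CC,\SS)$ and $\smodad((\CC,\SS))=\XX$.

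\textbf{Main obstacle.} The delicate point is the torsion-pair and admissibility axioms for $\XX$ in the surjectivity step: the torsion part of $\XX$ is under control (it becomes $\eff(\EE)$), but handling the torsion-free objects --- showing $tF$, $\bar F$ and $E$ genuinely lie in $\XX$, that $\bar F$ embeds into a representable as an inflation, and that morphisms into the torsion part factor admissibly --- is exactly where the hypothesis on $\operatorname{Tr}(\XX)$ and the absence of grade-$1$ objects are indispensable, and where the Auslander--Bridger machinery must be pushed well past its formal statement.
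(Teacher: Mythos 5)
Your high-level strategy is genuinely different from the paper's. You propose to verify directly that a subcategory $\XX$ satisfying (2a)--(2b) is an Auslander exact category with projectives the representables, and then invoke \Cref{FirstAuslanderCorrespondence} to produce the exact structure on $\CC$. The paper instead never verifies the axioms of \Cref{AuslanderExactCategories} for $\XX$ a priori: it first manufactures the exact structure on $\CC$ itself by declaring $f$ an inflation (resp.\ deflation) when $\Hom_\CC(-,f)$ is an inflation in $\XX$ (resp.\ $\Hom_\CC(f,-)$ is an inflation in $\operatorname{Tr}(\XX)$), checks Quillen's axioms for this class (\Cref{Proposition:ResolvingGivesExactStructure}, \Cref{Proposition:ResolvingGivesExactStructure:2}), proves that the resulting admissible morphisms are exactly the $\XX$-admissible ones so that $\smodad(\mathbb{C})=\XX$ (\Cref{Proposition:AdmissibleMorphisms}), and only then obtains the Auslander exact axioms for $\XX$ for free from \Cref{Corollary:ImageModAdmissible}. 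Your forward direction and injectivity are essentially the paper's (grade~$>0$ forces effaceable forces grade~$>1$ via \Cref{Proposition:TorsionTheoryProperties}; $\eff$ is intrinsic to $\smodad$), and your use of the Auslander--Bridger sequence to produce the torsion decomposition mirrors \Cref{Lemma:WhenXandTr(X)AreResolving} and \Cref{Lemma:EquivalentConditions}.

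However, there is a genuine gap precisely at the point you flag as ``delicate'' and then dismiss with ``treats maps into ${}^\perp\PP$ similarly.'' Axiom (ii) of \Cref{AuslanderExactCategories} requires that \emph{every morphism} $\eta\colon F\to G$ in $\XX$ with $G\in{}^\perp\PP$ be admissible with image in ${}^\perp\PP$. This is a statement about arbitrary morphisms, not about single objects, so the Auslander--Bridger sequence does not address it. The only proof of this kind of statement in the paper (\Cref{Proposition:EffAreTorsion}, via \Cref{Lemma:TheFamousDiagramChase}) factors $\eta$ by forming a pullback in the base category along a \emph{deflation} representing $G$ -- but in your surjectivity step no exact structure on $\CC$ exists yet, so there are no deflations and no guaranteed pullbacks to work with, and it is not clear why $\im(\eta)$ and $\coker(\eta)$ (computed in $\Mod(\CC)$) should lie in $\XX$, nor why $\im(\eta)$ should again have positive grade (subfunctors of grade-$\ge 1$ functors need not have grade $\ge 1$). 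A similar unaddressed point occurs in the torsion-pair axiom: you need $\Hom(\ker(\operatorname{ev}_F),\Hom_\CC(-,Z))=0$, which does not follow formally from (2a)--(2b) without further argument. This circularity -- needing admissibility data on $\CC$ to verify the very axioms that are supposed to produce that data -- is exactly why the paper reverses the order and builds the exact structure on $\CC$ first. As written, your surjectivity argument does not close.
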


It was shown in \cite{ButlerHorrocks61,DraxlerReitenSmaloSolberg99} that exact structures on an abelian category $\AA$ correspond to closed subfunctors of $\Ext^1_{\AA}(-,-)$. Such structures are one of the main objects of study in relative homological algebra \cite{EilenbergMoore65,EnochsJenda00}, see also \cite{AuslanderSolberg93a,AuslanderSolberg93d,AuslanderSolberg93c,AuslanderSolberg93b} for the case of Artin algebras. We obtain the following corollary for abelian categories. 

\begin{corollary}[\Cref{Corollary:ExactStructuresResolving}]
Let $\AA$ be an abelian category. Then there exists a bijection between the following:
\begin{enumerate}
\item Exact structures on $\AA$.
\item Resolving subcategories $\XX$ of $\smod (\AA)$ for which $\operatorname{Tr}(\XX)$ is a resolving subcategory of $\smod (\AA^{\operatorname{op}})$. 
\end{enumerate}
\end{corollary}

Explicit examples will be given in \Cref{example:MainTheorem1,example:MainTheorem2}.

\subsection*{Structure of the paper}

In Section \ref{section:Localizations} we recall the quotients of exact categories that occur in the formulation of Auslander's formula. In particular, we show that localization by a two-sided admissible percolating subcategory reflects admissible morphisms, see \Cref{Theorem:QReflectsAdmissibles}. We end by investigating torsion theory on exact categories where the torsion part is a two-sided admissible percolating subcategory.

In Section \ref{section:Formula} we study the category $\smodad(\EE)$. In particular, we show in \Cref{Theorem:Universal property} that it satisfies a universal property, in \Cref{Theorem:AuslandersFormulaForExactCategories} that Auslander's formula holds, and in \Cref{Corollary:Left2Adjoint} that the association $\EE\mapsto \smodad(\EE)$ can be made into a left adjoint $2$-functor. We finish the section by showing that $\smodad(\EE)$ has the properties of an Auslander exact category.

In Section \ref{section:Correspondence} we introduce Auslander exact categories, and we show that $\smodad(-)$ gives an equivalence onto the $2$-category of Auslander exact categories, see \Cref{FirstAuslanderCorrespondence}. We then investigate how properties of $\EE$ are reflected in $\smodad(\EE)$, for example having enough injectives, see \Cref{Theorem:AuslanderSecond}, and having enough projectives, see \Cref{Proposition:EnoughProjectivesInEEMakesTTTorsionFree}.  We end by characterizing $\smodad(\EE)$ for Gorenstein projective modules, see \Cref{Corollary:GorensteinProjective}, and for maximal Cohen-Macaulay modules, see \Cref{Corollary:CohenMacaulay}.

In Section \ref{section:ViaResolvingSubcategories} we investigate different exact structures on an idempotent complete additive category by studying $\smodad(\EE)$. In particular, we show that $\smodad(\EE)$ is the smallest resolving subcategory of $\PP^2(\EE)$ containing $\eff(\EE)$, see \Cref{Proposition:SmallestResolvingSubcategory}. In the last part we show \Cref{Theorem:IntroductionExactStructuresResolving}, which is the main result of this section.

\subsection*{Conventions}
All categories $\CC$ are assumed to be essentially small. We also assume that they are additive, i.e. that $\Hom_{\CC}(-,-)$ is enriched over abelian groups and that $\CC$ admit finite direct sums. All subcategories are assumed to be full and closed under isomorphisms.

An endomorphism $e\colon X \to X$ is called an \emph{idempotent} if $e^2=e$.  A morphism $r\colon X \to Y$ is called a \emph{retraction} if there is a morphism $s\colon Y \to X$ such that $r \circ s = 1_Y$.  A category for which every idempotent has a kernel is called \emph{idempotent complete}; a category for which every retraction has a kernel is called \emph{weakly idempotent complete}.  Both idempotent completeness and weakly idempotent completeness are self-dual concepts.

\subsection*{Acknowledgements}
The authors thank Luisa Fiorot, Mikhail Gorsky, and Chrysostomos Psaroudakis for useful discussions, as well as the anonymous referee for many and detailed comments improving the manuscript.  The 2nd author would like to thank the Hausdorff Institute for Mathematics in Bonn, since parts of the paper were written during his stay at the junior trimester program ``New Trends in Representation Theory''.  The third authors is currently a postdoctoral researcher at FWO (12.M33.16N).
\section{Localizations/Quotients of exact categories}\label{section:Localizations}

In this section, we recall the notion of an exact category as introduced by Quillen in \cite{Quillen73}, as well as the notion of a \emph{(two-sided admissibly) percolating subcategory} as in \cite{HenrardVanRoosmalen19a} (these types of subcategories are called \emph{subcategories that localize} in \cite{Cardenas98}). Given an exact category $\EE$ and a percolating subcategory $\AA\subseteq \EE$, the quotient $\EE/\AA$ can be realized as a localization $\Sigma_{\AA}^{-1}\EE$. 

%
%
%
%
%

\subsection{Exact categories}

An exact category in the sense of Quillen is an additive category equipped with extra structure, namely a class of chosen kernel-cokernel pairs, satisfying some additional conditions.

\begin{definition}
	\begin{enumerate}
		\item A \emph{conflation category} is an additive category $\CC$ together with a chosen class of kernel-cokernel pairs, called \emph{conflations}, closed under isomorphisms. The kernel part of a conflation is called an \emph{inflation} and the cokernel part of a conflation is called a \emph{deflation}. We depict inflations by the symbol $\inflation$ and deflations by $\deflation$.
		\item An additive functor $F\colon \CC\to \DD$ of conflation categories is called \emph{exact} or \emph{conflation-exact} if conflations are mapped to conflations.
		\item A map $f\colon X\to Y$ in a conflation category $\CC$ is called \emph{admissible} if $f$ admits a factorization $X\deflation I\inflation Y$. Admissible morphisms are depicted by $\xymatrix{X\ar[r]|{\circ}^f & Y}$.
		\item A cochain $\dots\to X^{n-1}\xrightarrow{d^{n-1}}X^n\xrightarrow{d^n}X^{n+1}\to\dots $ in a conflation category $\CC$ is called \emph{acyclic} or \emph{exact} if each $d^i$ is admissible and $\ker(d^{i+1})=\im(d^i)$.
	\end{enumerate}
\end{definition}

\begin{remark}
	Note that if $f\colon X\to Y$ is admissible, the factorization $X\deflation I\inflation Y$ is unique up to isomorphism. Moroever, $f$ admits a kernel and cokernel and $\im(f) \coloneqq \ker(\coker(f))\cong I\cong \coker(\ker(f))\eqqcolon\coim(f)$.
\end{remark}

\begin{definition}\label{Definition:AxiomsR0-R2AndL0-L2}
	A conflation category $\EE$ is called an \emph{exact category} if $\EE$ satisfies the following two dual lists of axioms:
	\begin{enumerate}[label=\textbf{R\arabic*},start=0]
		\item\label{R0} For any $X\in \EE$, $X\deflation 0$ is a deflation.
		\item\label{R1} Deflations are closed under composition.
		\item\label{R2} Pullbacks along deflations exist, moreover, deflations are stable under pullbacks.
	\end{enumerate}
	The dual list is given by:
	\begin{enumerate}[label=\textbf{L\arabic*},start=0]
		\item\label{L0} For any $X\in \EE$, $0\inflation X$ is an inflation.
		\item\label{L1} Inflations are closed under composition.
		\item\label{L2} Pushouts along inflations exist, moreover, inflations are stable under pushouts.
	\end{enumerate}
\end{definition}

Given an exact category $\EE$, we let $\Kb(\EE)$ denote the homotopy category of bounded complexes in $\EE$, and we let $\Acb(\EE)$ denote the subcategory of $\Kb(\EE)$ consisting of the acyclic complexes. Note that $\Acb(\EE)$ is not closed under isomorphism in $\Kb(\EE)$ unless $\EE$ is weakly idempotent complete, see \cite[Proposition 10.14]{Buhler10}. However, $\Acb(\EE)$ is still a triangulated subcategory of $\Kb(\EE)$, see \cite[Corollary 10.5]{Buhler10}. The Verdier localization $\Kb(\EE)/\Acb(\EE)$ yields the triangulated category $\Db(\EE)$, called the bounded derived category of $\EE$. The natural embedding $\EE\hookrightarrow \Db(\EE)$, mapping objects to stalk complexes in degree zero, is fully faithful. Moreover, conflations correspond to triangles under this embedding.  

\subsection{Localization of categories}

We recall briefly how to localize a (small) category at a set of morphisms.  We refer to \cite{GabrielZisman67, KashiwaraSchapira06} for more information.

\begin{definition}\label{Definition:LocalizationWithRespectToMorphisms}
Let $\CC$ be any category and let $\Sigma \subseteq \Mor \CC$ be any class of morphisms of $\CC$.  The \emph{localization of $\CC$ with respect to $\Sigma$} is a functor $Q\colon \CC \to \CC[\Sigma^{-1}]$ satisfying the following conditions:
\begin{enumerate}
	\item For each $s \in \Sigma$, the morphism $Q(s)$ is invertible in $\CC[\Sigma^{-1}]$.
	\item For each category $\DD$ and functor $F\colon \CC\to \DD$ such that $F(s)$ is an isomorphism for each $s\in \Sigma$, there exists a functor $F'\colon \CC[\Sigma^{-1}]\to \DD$ and a natural isomorphism $F\cong F'\circ Q$.
	\item For each category $\DD$, the functor $-\circ Q\colon \Fun(\CC[\Sigma^{-1}], \DD) \to \Fun(\CC, \DD)$ is fully faithful.
\end{enumerate}
\end{definition}

If $\CC$ is a small category, then localizations with respect to every class of morphisms of $\CC$ exist.

\begin{remark}
	Put differently, the localization $Q\colon \CC \to \CC[\Sigma^{-1}]$ is a functor such that, for each $s \in \Sigma$, the map $Q(s)$ is invertible, and is $2$-universal with respect to this property. Note that \cite{GabrielZisman67, KashiwaraSchapira06} define the localization of $\CC$ with respect to $\Sigma$ via a $1$-universal property.
\end{remark}

In this paper, we often consider localizations with respect to so-called right multiplicative systems.

\begin{definition}\label{definition:RMS}
Let $\CC$ be a category and let $\Sigma$ be a class of morphisms of $\CC$.  We say that $\Sigma$ is a \emph{right multiplicative system} if it has the following properties:
\begin{enumerate}[label=\textbf{RMS\arabic*},start=1]
	\item\label{RMS1} For every object $A$ of $\CC$ the identity $1_A$ is contained in $\Sigma$. Composition of composable arrows in $\Sigma$ is again in $\Sigma$.
	\item\label{RMS2} Every solid diagram
	\[\xymatrix{
		X \ar@{.>}[r]^{g} \ar@{.>}[d]_{t}^{\rotatebox{90}{$\sim$}}& Y\ar[d]_{s}^{\rotatebox{90}{$\sim$}}\\
		Z\ar[r]_{f} & W
	}\] with $s\in \Sigma$ can be completed to a commutative square with $t\in \Sigma$. 
	\item\label{RMS3} For every pair of morphisms $f,g\colon X\rightarrow Y$ and $s\in S$ with source $Y$ such that $s\circ f= s\circ g$ there exists a $t\in S$ with target $X$ such that $f\circ t =g\circ t$.
\end{enumerate}
\end{definition}

For localizations with respect to a right multiplicative system, we have the following description of the localization.

\begin{construction}\label{construction:Localization}
	Let $\CC$ be a category and $\Sigma$ a right multiplicative system in $\CC$.  We define a category $\Sigma^{-1}\CC$ as follows:
	\begin{enumerate}
		\item We set $\Ob(\Sigma^{-1}\CC)=\Ob(\CC)$.
		\item\label{construction:Localization:2} Let $f_1\colon X_1\rightarrow Y, s_1\colon X_1\rightarrow X, f_2\colon X_2\rightarrow Y, s_2\colon X_2\rightarrow X$ be morphisms in $\CC$ with $s_1,s_2\in \Sigma$. We call the pairs $(f_1,s_1), (f_2,s_2) \in (\Mor \CC) \times \Sigma$ equivalent (denoted by $(f_1,s_1) \sim (f_2,s_2)$) if there exists a third pair $(f_3\colon X_3\rightarrow Y,s_3\colon X_3\rightarrow X) \in (\Mor \CC) \times \Sigma$ and morphisms $u\colon X_3\rightarrow X_1, v\colon X_3\rightarrow X_2$ such that 
		\[\xymatrix@!{
			& X_1\ar[ld]_{s_1}^{}\ar[rd]^{f_1} & \\
			X &X_3\ar[d]^{v}\ar[u]_{u}\ar[l]_{s_3}^{}\ar[r]_{f_3} & Y\\
			& X_2 \ar[ul]^{s_2}_{}\ar[ur]_{f_2}&
		}\] is a commutative diagram.
		\item $\Hom_{\Sigma^{-1}\CC}(X,Y)=\left\{(f,s)\mid f\in \Hom_{\CC}(X',Y), s\in \Hom_{\CC}(X',X) \mbox{ with } s\in \Sigma \right\} / \sim$
		\item\label{construction:Localization:D} The composition of $(f\colon X'\rightarrow Y, s\colon X'\rightarrow X)$ and $(g\colon Y'\rightarrow Z, t\colon Y'\rightarrow Y)$ is given by $(g\circ h\colon X''\rightarrow Z,s\circ u\colon X''\rightarrow X)$ where $h$ and $u$ are chosen to fit in a commutative diagram 
		\[\xymatrix{
		X''\ar[r]^{h}\ar[d]_{u}^{\rotatebox{90}{$\sim$}} & Y'\ar[d]_{t}^{\rotatebox{90}{$\sim$}}\\
		X'\ar[r]^{f} & Y
		}\]which exists by \ref{RMS2}.
	\end{enumerate}
\end{construction}

\subsection{Localizations via percolating subcategories}\label{Subsection:PercolatingSubcategories}

Throughout this section, let $\EE$ be an exact category.

\begin{definition}\label{Definition:TwoSidedAdmissiblyPercolating}
	A full nonempty subcategory $\AA\subseteq \EE$ is called an \emph{deflation-percolating subcategory} if the following axioms are satisfied:
	\begin{enumerate}[align=left]
		\myitem{\textbf{A1}}\label{A1} the category $\AA$ is a \emph{Serre subcategory} of $\EE$, i.e.~given a conflation $X\inflation Y\deflation Z$ in $\EE$, we have that $Y\in \AA\Leftrightarrow X,Z\in \AA$.
		\myitem{\textbf{A2}}\label{A2} All morphisms $f\colon X\to A$ with $A\in \AA$ are admissible with image in $\AA$, i.e.~factors as \[X\deflation \im(f)\inflation A\] and $\im(f)\in \AA$.
	\end{enumerate}
	Dually, $\AA$ is called an \emph{inflation-percolating subcategory} if $\AA^{\mathsf{op}}$ is a deflation-percolating subcategory in $\EE^{\mathsf{op}}$. Finally, $\AA$ is a \emph{percolating subcategory} if it is both a deflation- and inflation-percolating subcategory.
\end{definition}

\begin{remark}\makeatletter
\hyper@anchor{\@currentHref}%
\makeatother\label{remark:Percolating}
\begin{enumerate}
	  \item The terminology used in \Cref{Definition:TwoSidedAdmissiblyPercolating} differs from the one used in \cite{HenrardVanRoosmalen19a}: a deflation- or inflation-percolating subcategory from \Cref{Definition:TwoSidedAdmissiblyPercolating} was called an admissibly deflation- or inflation-percolating subcategory; what we call a percolating subcategory was called a two-sided admissibly percolating subcategory in \cite{HenrardVanRoosmalen19a} and a subcategory that localizes in \cite{Cardenas98}.
		\item\label{remark:Percolating:2} Since an deflation-percolating subcategory $\AA\subseteq \EE$ is closed under extensions, it inherits an exact structure from $\EE$. Moreover, it follows from axiom \ref{A2} that $\AA$ is an abelian category.  The short exact sequences in $\AA$ are precisely those sequences which are conflations in $\EE$. 
	\end{enumerate}
\end{remark}

The following definition is based on \cite[Definition~1.12]{Schlichting04} and \cite[Definition~4.0.36]{Cardenas98}.

\begin{definition}\label{Definition:WeakIsomorphism}
	Let $\AA\subseteq \EE$ be a deflation-percolating subcategory. A morphism $f\colon X\to Y$ in $\EE$ is called a \emph{weak $\AA$-isomorphism} or simply a \emph{weak isomorphism} if $\AA$ is understood, if $f$ is admissible and $\ker(f), \coker(f)\in \AA$.  Morphisms endowed with $\sim$ are weak isomorphisms.  The set of weak isomorphisms is denoted by $\Sigma_{\AA}$.
\end{definition}

\begin{proposition}\label{Proposition:PropertiesOfWeakIsomorphisms}
	Let $\AA\subseteq \EE$ be a deflation-percolating subcategory.
	\begin{enumerate}
		\item\label{Proposition:PropertiesOfWeakIsomorphismsA} The set $\Sigma_{\AA}$ is a right multiplicative system.
		\item\label{Proposition:PropertiesOfWeakIsomorphismsB} The set $\Sigma_{\AA}$ is saturated, i.e.~$Q(f)$ is an isomorphism in $\Sigma_{\AA}^{-1}\EE$ if and only if $f\in\Sigma_{\AA}$ .
		\item\label{Proposition:PropertiesOfWeakIsomorphismsC} If $Q(M)\cong 0$ for an object $M\in \EE$, then $M\in \AA$.  If $Q(f)=0$ for a morphism $f$ in $\EE$, then $f$ factors through $\AA$.
		\item\label{Proposition:PropertiesOfWeakIsomorphismsD} Pullbacks along weak isomorphisms exist and weak isomorphisms are stable under pullbacks.
		\item\label{Proposition:PropertiesOfWeakIsomorphismsE} Admissible morphisms are stable under pullbacks along weak isomorphisms.
	\end{enumerate}
\end{proposition}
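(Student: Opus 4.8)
The plan is to verify the five items by playing the exact-category axioms against the defining properties \ref{A1} and \ref{A2} of an admissibly deflation-percolating subcategory; this is the viewpoint of \cite{Cardenas98} and \cite{HenrardVanRoosmalen19a}, which I will follow for the routine parts. Two observations drive everything. First, since $\AA$ is nonempty and Serre, $0\in\AA$, so every isomorphism---in particular every identity---is a weak isomorphism, and \ref{A1} makes $\AA$ closed under subobjects, quotients and extensions. Second, a weak isomorphism $s$ factors canonically as $X\deflation\im(s)\inflation Y$ where the deflation has kernel $\ker(s)\in\AA$ and the inflation has cokernel $\coker(s)\in\AA$; conversely, an inflation with cokernel in $\AA$ is the kernel of a deflation onto an object of $\AA$, and \ref{A2} says that \emph{every} morphism with target in $\AA$ is admissible with image in $\AA$. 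Consequently every pullback appearing below is assembled from two cases handled automatically by the exact structure---pullbacks along deflations (\ref{R2}), and pullbacks of inflations along deflations---together with the case of an inflation with cokernel in $\AA$, which \ref{A2} converts into a kernel.

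I would dispatch (1) and (3) first. For (1): identities are weak isomorphisms by the first observation; closure under composition follows by computing the image and cokernel of a composite from the kernel--cokernel sequences of the two admissible factorizations and invoking \ref{A1}, while \ref{A2} guarantees the composite is admissible; the Ore condition is immediate from part (4), which produces, for $s\in\Sigma_{\AA}$ and any $f$ with the same target, a pullback square whose opposite leg again lies in $\Sigma_{\AA}$; and for cancellation, if $sf=sg$ with $s\in\Sigma_{\AA}$ then $h:=f-g$ factors through the inflation $\ker(s)\inflation Y$ as $h=\kappa\bar h$, and since $\ker(s)\in\AA$, \ref{A2} makes $\bar h$ admissible, so the kernel $t\colon K\inflation X$ of the deflation $X\deflation\im(\bar h)$ has $\ker(t)=0$ and $\coker(t)=\im(\bar h)\in\AA$, hence $t\in\Sigma_{\AA}$, and $\bar h t=0$ gives $ft=gt$. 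For (3): $Q(M)\cong 0$ gives $Q(\mathrm{id}_M)=Q(0_M)$, so the calculus of right fractions yields $s\colon M'\to M$ in $\Sigma_{\AA}$ with $s=0$; as the zero morphism $M'\to M$ has cokernel $M$, we get $M=\coker(s)\in\AA$.

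For (4), I would write $s=m e$ as above and put $C:=\coker(m)\in\AA$. Given $f\colon Z\to Y$, the composite $Z\xrightarrow{f}Y\deflation C$ has target in $\AA$, hence is admissible by \ref{A2}; its kernel $P\inflation Z$ is an inflation with cokernel in $\AA$, and one checks $P\cong\im(s)\times_Y Z$. Pulling the deflation $e$ back along $P\to\im(s)$ exists by \ref{R2} and produces $W\cong X\times_Y Z$ together with a deflation $W\deflation P$ of kernel $\cong\ker(e)\in\AA$. Then $W\to Z$, being the composite $W\deflation P\inflation Z$, is admissible with kernel and cokernel in $\AA$, i.e.\ in $\Sigma_{\AA}$, which is (4). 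For (5), factor the admissible morphism as $g=m_g e_g\colon Z\deflation\im(g)\inflation Y$. By (4) the pullback $V:=\im(g)\times_Y X$ of the inflation $m_g$ along the weak isomorphism $s$ exists, and tracing through $s=m e$ exhibits $V\to X$ as the pullback of an inflation along the deflation $e$, hence as an inflation. Pulling the deflation $e_g$ back along $V\to\im(g)$ (again \ref{R2}) gives $W\cong X\times_Y Z$ with a deflation $W\deflation V$, so the pullback $W\to X$ of $g$ equals $W\deflation V\inflation X$ and is admissible. Finally, for the remaining implication of (2): if $Q(f)$ is invertible then, since $\Sigma_{\AA}$ is a right multiplicative system, there are morphisms $g,h$ with $gf,fh\in\Sigma_{\AA}$; feeding the admissible factorizations of $gf$ and $fh$ back through \ref{A1} and \ref{A2} shows $f$ is admissible with $\ker(f)$ a subobject of $\ker(gf)\in\AA$ and $\coker(f)$ a quotient of $\coker(fh)\in\AA$, so $f\in\Sigma_{\AA}$.

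The main obstacle is the bookkeeping behind parts (4) and (5): one must check that pulling an inflation-with-cokernel-in-$\AA$ back along an arbitrary morphism again yields such an inflation, and---for (5)---that pulling the inflation part of an admissible morphism back along a weak isomorphism genuinely produces an inflation. Both hinge on \ref{A2}, and the second needs care because pullbacks of inflations need not exist in a general exact category and a factor of an inflation need not be an inflation unless the category is weakly idempotent complete; these verifications, together with the saturatedness in (2), are carried out in \cite[Section~4]{Cardenas98} and \cite{HenrardVanRoosmalen19a}.
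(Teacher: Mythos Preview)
Your proposal is correct and follows the same route as the paper, which simply cites \cite{HenrardVanRoosmalen19a} for parts (1)--(4) and invokes \cite[Corollary~6.10]{HenrardVanRoosmalen19a} together with stability of deflations under pullback for part (5); you have essentially unpacked what those references do, correctly isolating axiom \ref{A2} as the engine that manufactures the missing pullbacks and admissible factorizations. The one place your sketch is thinner than it looks is (2): deducing that $f$ itself is admissible from $gf,fh\in\Sigma_{\AA}$ is a genuine $2$-out-of-$3$ argument for weak isomorphisms (not just a kernel/cokernel comparison), and this is precisely where the references do real work---but you flag this and defer appropriately.
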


\begin{proof}
We refer the reader to \cite{HenrardVanRoosmalen19a} for the first four statements. The last statement follows from \cite[Corollary 6.10]{HenrardVanRoosmalen19a} and the fact that deflations are preserved under pullbacks.
\end{proof}

The following theorem yields that the localization $\Sigma_{\AA}^{-1}\EE$ satisfies the universal property of the quotient $\EE/\AA$.

\begin{theorem}\label{Theorem:TwoSidedLocalizationTheorem}
	Let $\AA\subseteq \EE$ be a percolating subcategory and write $Q\colon \EE\to \Sigma_{\AA}^{-1}\EE$ for the localization functor.
	\begin{enumerate}
		\item\label{Item:Theorem:TwoSidedLocalizationTheorem1} $Q$ sends conflations in $\EE$ to kernel-cokernel pairs in $\Sigma_{\AA}^{-1}\EE$. Furthermore, closing this class of kernel-cokernel pairs under isomorphisms makes $\Sigma_{\AA}^{-1}\EE$ into an exact category.
		\item\label{Item:Theorem:TwoSidedLocalizationTheorem2} The functor $Q$ satisfies the $2$-universal property of a quotient $\EE/\AA$ in the category of exact categories, i.e.~for any exact category $\FF$ and exact functor $F\colon \EE\to \FF$ such that $F(\AA)\cong 0$, there exists an exact functor $F'\colon \Sigma_{\AA}^{-1}\EE\to \FF$ such that $F'\circ Q=F$.
		\item\label{Item:Theorem:TwoSidedLocalizationTheorem3} The localization sequence $\AA\to \EE\xrightarrow{Q} \Sigma_{\AA}^{-1}\EE$ induces the Verdier localization sequence 
		\[\DAb(\EE)\to \Db(\EE)\to \Db(\EE/\AA)\]
	where $\DAb(\EE)$ is the thick full triangulated subcategory of $\Db(\EE)$ generated by $\AA$ under the canonical embedding $\EE\hookrightarrow\Db(\EE)$.
	\end{enumerate}
\end{theorem}

\begin{proof}
The first two statements are \cite[Theorem~8.1]{HenrardVanRoosmalen19a} (see also \cite[Theorem 4.0.57]{Cardenas98}).  The final statement is \cite[Theorem~1.4]{HenrardVanRoosmalen19b}.
\end{proof}

Explicitly, a sequence $X\xrightarrow{f} Y\xrightarrow{g} Z$ in $\Sigma_{\AA}^{-1}\EE$ is a conflation if and only if there exists a commutative diagram
\[\xymatrix{
			Q(X')\ar[r]^{Q(f')}\ar[d]^{\cong} & Q(Y')\ar[r]^{Q(g')}\ar[d]^{\cong} & Q(Z')\ar[d]^{\cong}\\
			X\ar[r]^{f} & Y\ar[r]^{g}& Z
		}\]
		where the vertical maps are isomorphisms in $\Sigma_{\AA}^{-1}\EE$, and where $X'\xrightarrow{f'} Y'\xrightarrow{g'} Z'$ is a conflation in $\EE$. 

\subsection{\texorpdfstring{Admissible morphisms in $\EE/\AA$}{Admissible morphisms in the quotient}}\label{Section:QReflectsAdmissibles}

In this section we show that the localization functor $Q\colon \EE\to \EE/\AA$ reflects admissible morphisms when $\AA$ is a percolating subcategory (see \Cref{Theorem:QReflectsAdmissibles} below).  We start with the following result.

\begin{proposition}\label{Proposition:weakisocompositionadmissible}
Let $\EE$ be an exact category, and let $\AA$ be a deflation-percolating subcategory of $\EE$.  Let $f\colon X\to Y$  and $g\colon Y\to Z$ be morphisms in $\EE$. The following holds:
\begin{enumerate}
\item\label{Proposition:weakisocompositionadmissible:1} If $f$ is an inflation with cokernel in $\AA$ and $g$ is a deflation, then $g\circ f=f'\circ g'$ where $g'$ is a deflation and $f'$ is an inflation with cokernel in $\AA$.
\item\label{Proposition:weakisocompositionadmissible:2} If $f$ is a weak isomorphism and $g$ is admissible, then $g\circ f$ is admissible.
\item\label{Proposition:weakisocompositionadmissible:3} If $g$ is a weak isomorphism and $g\circ f$ is admissible, then $f$ is admissible.
\end{enumerate} 
\end{proposition}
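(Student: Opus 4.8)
The plan is to prove the three statements in order, using pullback and pushout constructions together with the exact-category axioms and the properties of weak isomorphisms recorded in \Cref{Proposition:PropertiesOfWeakIsomorphisms}.

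For \eqref{Proposition:weakisocompositionadmissible:1}: I would form the pullback of the deflation $g\colon Y\deflation Z$ along... no — rather, I would use that $f\colon X\inflation Y$ is an inflation and push out the composition. Concretely, since $g$ is a deflation, consider the kernel $K\inflation Y\deflation Z$. Form the pullback square of $g$ along itself? Instead the cleanest route: take the pushout of $f\colon X\inflation Y$ along $g\colon Y\to Z$ is not available since $g$ need not be an inflation. So I would instead argue as follows. Let $K=\ker(g)$, so $K\inflation Y\deflation Z$ is a conflation. Consider the pullback $P$ of $K\inflation Y$ and $X\inflation Y$ (a pullback of an inflation along any map exists and... actually the pullback of two inflations exists and is again an inflation into each, by the exact category axioms). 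Then $P = \ker(g\circ f)$ and one checks $g\circ f$ is admissible with $\coim(g\circ f) = X/P$. The key point is to show $\coker(f') \in \AA$ where $f'\colon \im(g\circ f)\inflation Z$; this cokernel is a subquotient of $\coker(f)\in\AA$, hence in $\AA$ by the Serre property \ref{A1}. This is the step I expect to be the main obstacle — getting the diagram chase right so that the relevant cokernel is visibly a subquotient of $\coker(f)$, and invoking \ref{A1} and \ref{A2} correctly.

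For \eqref{Proposition:weakisocompositionadmissible:2}: write the weak isomorphism $f$ as a composite (or work with its factorization) — since $f$ is admissible with kernel and cokernel in $\AA$, factor $f = (X\deflation \im f \inflation Y)$. The deflation part: precomposing an admissible map with a deflation keeps it admissible (if $g$ is admissible, $g = (Y\deflation I\inflation Z)$, and the composite of two deflations is a deflation by \ref{R1}). So it remains to handle the inflation part $\im f\inflation Y$ with cokernel in $\AA$; that is exactly the situation of part \eqref{Proposition:weakisocompositionadmissible:1} once I reduce $g$ to its deflation part $Y\deflation I$, then postcompose with the inflation $I\inflation Z$. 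Composing: an inflation after an admissible map is admissible (inflations compose with the inflation part, \ref{L1}). So \eqref{Proposition:weakisocompositionadmissible:2} follows by decomposing both $f$ and $g$ into (defl)(infl) pieces and applying \eqref{Proposition:weakisocompositionadmissible:1} to the middle (infl-with-coker-in-$\AA$)(defl) crossing.

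For \eqref{Proposition:weakisocompositionadmissible:3}: this is the reflection statement. Here $g$ is a weak isomorphism, so in particular admissible, with kernel and cokernel in $\AA$; and $g\circ f$ is admissible. I would dualize/adapt the argument: first reduce to the case where $g$ is either an inflation with cokernel in $\AA$ or a deflation with kernel in $\AA$ (factor $g$ and treat the two pieces, noting that $f$ admissible is what we want to conclude and both reductions are reversible). If $g\colon Y\inflation Z$ is an inflation with cokernel in $\AA$: given $g\circ f = (X\deflation I\inflation Z)$ admissible, I want $f$ admissible. Form the pullback of $I\inflation Z$ along $g\colon Y\inflation Z$; call it $I'\inflation Y$, an inflation (pullback of an inflation). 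Then $X\to Y$ factors through $I'$, the map $X\to I'$ is a deflation (it is a pullback-compatible factor of the deflation $X\deflation I$, using that $I'\to I$ fits in a conflation with cokernel in $\AA$ — here one needs that $\AA$ is a Serre subcategory to control $\coker(I'\to I)$, which is a subobject of $\coker(g)\in\AA$). Hence $f = (X\deflation I'\inflation Y)$ is admissible. The dual case where $g$ is a deflation with kernel in $\AA$ is handled by the dual construction (pushout along $g$), or by passing to $\EE^{\mathsf{op}}$. Throughout, the recurring technical tool is: a pullback/pushout of an inflation whose cokernel lies in $\AA$ again has cokernel in $\AA$ (the cokernel is preserved under pullback of an inflation along any map — Noether isomorphism in the exact setting), together with the Serre property \ref{A1} to pass between sub- and quotient objects lying in $\AA$.

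The main obstacle, as noted, is the bookkeeping in part \eqref{Proposition:weakisocompositionadmissible:1}: identifying $\ker(g\circ f)$ and $\coim(g\circ f)$ via an iterated pullback, and checking that the cokernel of the resulting inflation into $Z$ is a subquotient of $\coker(f)$ so that \ref{A1} applies; once \eqref{Proposition:weakisocompositionadmissible:1} is in place, \eqref{Proposition:weakisocompositionadmissible:2} and \eqref{Proposition:weakisocompositionadmissible:3} are obtained by systematically decomposing the admissible/weak-iso maps into their deflation and inflation parts and applying \eqref{Proposition:weakisocompositionadmissible:1} (and its dual).
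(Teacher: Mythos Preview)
Your approach to part~\eqref{Proposition:weakisocompositionadmissible:2} is exactly the paper's: factor both $f$ and $g$ as deflation--inflation, use part~\eqref{Proposition:weakisocompositionadmissible:1} to swap the middle inflation-with-cokernel-in-$\AA$ past the deflation, and reassemble. For part~\eqref{Proposition:weakisocompositionadmissible:1} the paper simply cites an external reference, so there is nothing to compare against; your sketch is in the right direction, but note that ``the pullback of two inflations exists by the exact category axioms'' is false in general --- the pullback you want exists here because $f$ (having cokernel in $\AA$) is a weak isomorphism, and \Cref{Proposition:PropertiesOfWeakIsomorphisms}.\eqref{Proposition:PropertiesOfWeakIsomorphismsD} guarantees pullbacks along weak isomorphisms.

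The real divergence is in part~\eqref{Proposition:weakisocompositionadmissible:3}, and there your plan has a genuine gap. You propose to factor $g$ into an inflation-with-cokernel-in-$\AA$ and a deflation-with-kernel-in-$\AA$, and for the latter case you say ``handled by the dual construction, or by passing to $\EE^{\mathsf{op}}$''. But the hypothesis is only that $\AA$ is admissibly \emph{deflation}-percolating, not two-sided; none of the dual properties (in particular the dual of \Cref{Proposition:PropertiesOfWeakIsomorphisms}) are available, so the duality shortcut fails. Secondly, in your inflation case you assert that the induced map $X\to I'$ is a \emph{deflation}; this is stronger than what the argument actually yields and is not obvious without weak idempotent completeness. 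What one does get (after producing a section via the universal property of the pullback, as you implicitly do) is that $X\to I'$ is a composite of an inflation with cokernel in $\AA$ and a deflation --- hence \emph{admissible} by part~\eqref{Proposition:weakisocompositionadmissible:1} --- and that already suffices to conclude $f$ is admissible.

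The paper avoids both difficulties with a single, symmetric move: form the pullback $X'$ of the weak isomorphism $g$ along the admissible map $g\circ f$ (this exists by \Cref{Proposition:PropertiesOfWeakIsomorphisms}.\eqref{Proposition:PropertiesOfWeakIsomorphismsD}). The projection $g'\colon X'\to X$ is again a weak isomorphism, and since $g\circ f$ factors through $g$ via $f$, the universal property produces a section $i\colon X\to X'$ of $g'$ with $h\circ i=f$, where $h\colon X'\to Y$ is admissible by \Cref{Proposition:PropertiesOfWeakIsomorphisms}.\eqref{Proposition:PropertiesOfWeakIsomorphismsE}. A weak isomorphism with a right inverse is a deflation, so $i$ is a split inflation with cokernel $\ker(g')\in\AA$, hence a weak isomorphism; now $f=h\circ i$ is admissible by part~\eqref{Proposition:weakisocompositionadmissible:2}. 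This handles the full weak isomorphism $g$ at once, without splitting into cases and without appealing to any dual hypothesis.
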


\begin{proof}
\begin{enumerate}
\item This follows from \cite[Corollary~6.10]{HenrardVanRoosmalen19a}.

 \item Write $f=f_2\circ f_1$ and $g=g_2\circ g_1$  where $f_1,g_1$ are deflations and $f_2,g_2$ are inflations. By  \eqref{Proposition:weakisocompositionadmissible:1} we have that $g_1\circ f_2$ can be written as a composite $f_2'\circ g_1'$ where $g_1'$ is a deflation and $f_2'$ is an inflation. In particular, $g_1\circ f_2$ is admissible. Hence the composite $g\circ f=g_2\circ (g_1\circ f_2)\circ f_1$ is admissible, which proves the claim.

\item Taking the pullback of $g$ along $g\circ f$, we get a commutative diagram
\[\xymatrix{
		  X'\ar[r]^{h}\ar[d]^{g'} & Y\ar[d]^{g}\\
		  X\ar[r]^{g\circ f} & Z.
	}\]
Since $g\circ f\colon X\to Z$ factors through $g\colon Y\to Z$ via $f$, there exists a map $i\colon X\to X'$ satisfying $g'\circ i = 1_X$ and $h\circ i=f$. Since weak isomorphisms are preserved by pullbacks, $g'$ must be a weak isomorphism, and hence a deflation since it has a right inverse. Therefore, $i$ is an inflation with cokernel in $\AA$. Also, since admissible morphisms are preserved under pullbacks by weak isomorphisms, $h$ must be admissible. Since $f=h\circ i$, we get that $f$ is admissible by the first part of the proof.\qedhere
\end{enumerate}
\end{proof}

Next we describe the inflations and deflations in $\EE/\AA$. To do this we introduce the following classes of maps 
\begin{align*}
& \SS_{\operatorname{def}}=\{f\in \operatorname{Mor}\CC\mid f=f_1\circ f_2 \mbox{ where }f_2 \mbox { is a deflation and }f_1\in \Sigma\}, \\
& \SS_{\operatorname{inf}}=\{f\in \operatorname{Mor}\CC\mid f=f_1\circ f_2 \mbox{ where }f_1 \mbox { is an inflation and }f_2\in \Sigma\}.
\end{align*}

\begin{lemma}\label{lemma:S_def}
Let $\EE$ be an exact category, and let $\AA$ be a percolating subcategory of $\EE$. The following hold:
\begin{enumerate}
\item\label{lemma:S_def:1} $\SS_{\operatorname{def}}$ is closed under pullbacks.
\item\label{lemma:S_def:2} $\SS_{\operatorname{def}}$ is closed under  pushouts along weak isomorphisms (which exist by the dual of \Cref{Proposition:PropertiesOfWeakIsomorphisms}.\eqref{Proposition:PropertiesOfWeakIsomorphismsD}).
\item\label{lemma:S_def:3} $\SS_{\operatorname{def}}$ is closed under compositions.
\end{enumerate}
\end{lemma}

\begin{proof}
\begin{enumerate}
\item Since pullbacks preserve weak isomorphisms by \Cref{Proposition:PropertiesOfWeakIsomorphisms}.\eqref{Proposition:PropertiesOfWeakIsomorphismsD} and deflations by definition, it follows that $\SS_{\operatorname{def}}$ is closed under pullbacks. 
\item Since pushouts preserve epimorphisms, and pushouts along weak isomorphisms preserve admissible morphisms by the dual of \Cref{Proposition:PropertiesOfWeakIsomorphisms}.\eqref{Proposition:PropertiesOfWeakIsomorphismsE}, it follows that pushouts along weak isomorphisms preserve deflations. Since pushouts of weak isomorphisms are weak isomorphisms by the dual of \Cref{Proposition:PropertiesOfWeakIsomorphisms}.\eqref{Proposition:PropertiesOfWeakIsomorphismsD}, it therefore follows that $\SS_{\operatorname{def}}$ is closed under pushouts along weak isomorphisms.  
\item Let $f=f_1\circ f_2$ and $g=g_1\circ g_2$ be two composable morphisms where $f_2$ and $g_2$ are deflations and $f_1$ and $g_1$ are weak isomorphisms. By \Cref{Proposition:weakisocompositionadmissible}.\eqref{Proposition:weakisocompositionadmissible:1} we have that $g_2\circ f_1=g_1'\circ f_2'$ where $f_2'$ is a deflation and $g_1'$ is a weak isomorphisms. Hence
\[
g\circ f=g_1\circ g_2\circ f_1\circ f_2=g_1\circ g_1'\circ f_2'\circ f_2
\]
Since deflations and weak isomorphisms are closed under compositions, it follows that $f_2'\circ f_2$ is a deflation and $g_1\circ g_1'$ is a weak isomorphism. This shows that $\SS_{\operatorname{def}}$ is closed under compositions. \qedhere
\end{enumerate}
\end{proof}

\begin{lemma}
\label{lemma:WhenDeflationInQuotient}
Let $\AA$ be a percolating subcategory of an exact category $\EE$. The following hold:
\begin{enumerate}
\item\label{lemma:WhenDeflationInQuotient:1} A morphism in $\EE / \AA$ is a deflation if and only if it can be represented by a roof $(f,s)$ where $f\in \SS_{\operatorname{def}}$.
\item\label{lemma:WhenDeflationInQuotient:2} A morphism in $\EE / \AA$ is an inflation if and only if it can be represented by a roof $(f,s)$ where $f\in \SS_{\operatorname{inf}}$.
\end{enumerate}
\end{lemma}

\begin{proof}
\begin{enumerate}
\item Consider the following class of morphisms in $\EE/\AA$
\[
\UU=\{g\in  \operatorname{Mor}(\EE/\AA)\mid g\mbox{ can be represented by a roof } (f,s) \mbox{ where }f\in \SS_{\operatorname{def}}.\}
\]  
Clearly, $\UU$ contains all the isomorphisms in $\EE/\AA$. Furthermore, $\UU$ is closed under compositions. Indeed, let $(f,s)$ and $(g,t)$ be two composable morphisms in $\EE/\AA$ where $f,g\in \SS_{\operatorname{def}}$. Following the notation in \Cref{construction:Localization}.\eqref{construction:Localization:D}, by taking the pullback of $f$ along $t$, we get a roof $(g\circ h, s\circ u)$ representing the composition $(g,t)\circ (f,s)$ where the commutative diagram in \Cref{construction:Localization}.\eqref{construction:Localization:D} is the pullback square. Since $h$ is a pullback of $f$ and  $\SS_{\operatorname{def}}$ is closed under pullbacks by \Cref{lemma:S_def}.\eqref{lemma:S_def:1}, it follows that $h\in\SS_{\operatorname{def}}$. Since $\SS_{\operatorname{def}}$ is closed under compositions by \Cref{lemma:S_def}.\eqref{lemma:S_def:3}, it follows that $g\circ h\in \SS_{\operatorname{def}}$. This shows that $\UU$ is closed under compositions.

Now assume $g\colon Y\to Z$ is a deflation in $\EE/\AA$. By the description of the conflations in $\EE/\AA$, we get that $g$ is equal to a composite $Y\xrightarrow{\cong}Q(Y')\xrightarrow{Q(g')}Q(Z')\xrightarrow{\cong}Z$ in $\EE/\AA$ for some deflation $g'\colon Y'\deflation Z'$ in $\EE$. Since $g$ is a composition of morphisms in $\UU$, it follows that $g$ is in $\UU$. This proves the claim.
\item This is proved similarly to \eqref{lemma:WhenDeflationInQuotient:1}, using that $\SS_{\operatorname{inf}}$ is closed under compositions and under pullbacks along weak isomorphisms by the dual of \Cref{lemma:S_def}.\eqref{lemma:S_def:3} and the dual of \Cref{lemma:S_def}.\eqref{lemma:S_def:2}, respectively. \qedhere
\end{enumerate}
\end{proof}

\begin{theorem}\label{Theorem:QReflectsAdmissibles}
Let $\EE$ be an exact category and let $\AA$ be a percolating subcategory of $\EE$.  A morphism $k\colon X\to Z$ is admissible in $\EE$ if and only if $Q(k)$ is admissible in $\EE/\AA$.  In other words, the localization functor $Q$ reflects admissible morphisms.
\end{theorem}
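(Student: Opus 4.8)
The plan is to prove both implications of \Cref{Theorem:QReflectsAdmissibles}. The forward direction is immediate: $Q$ is an exact functor, so it sends inflations to inflations and deflations to deflations, hence sends any admissible factorization $X\deflation I\inflation Y$ to an admissible factorization $Q(X)\deflation Q(I)\inflation Q(Y)$. The substance is in the converse: assuming $Q(f)$ is admissible in $\EE/\AA$, we must produce an admissible factorization of $f$ in $\EE$.

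For the converse, I would first unwind what admissibility of $Q(f)$ means using the explicit description of conflations in $\Sigma_{\AA}^{-1}\EE$ recorded after \Cref{Theorem:TwoSidedLocalizationTheorem}. Since $Q(f)$ factors as a deflation followed by an inflation in $\EE/\AA$, and conflations in the quotient are (up to isomorphism on source/target) images of conflations in $\EE$, there should exist a roof-type diagram: morphisms in $\EE$ together with weak isomorphisms $s, t \in \Sigma_{\AA}$ so that, after applying $Q$, the composite $Q(t)^{-1}\circ Q(\text{admissible morphism in }\EE)\circ Q(s)$ equals $Q(f)$ (the admissible morphism in $\EE$ being obtained by composing a genuine deflation and a genuine inflation coming from conflations in $\EE$). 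Equivalently, there is a morphism $g\colon X'\to Y'$ in $\EE$ that is admissible (or becomes admissible after one more weak-isomorphism adjustment), and weak isomorphisms relating $X', Y'$ to $X, Y$, such that the two parallel composites $X' \to Y$ involving $f$ and $g$ agree in the localization.

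The key technical tool is \Cref{Proposition:weakisocompositionadmissible}, parts \eqref{Proposition:weakisocompositionadmissible:2} and \eqref{Proposition:weakisocompositionadmissible:3}: composing an admissible morphism with a weak isomorphism on either side preserves admissibility, and a weak isomorphism can be cancelled from the left of an admissible composite. Combined with \Cref{Proposition:PropertiesOfWeakIsomorphismsD} and \eqref{Proposition:PropertiesOfWeakIsomorphismsE} (pullbacks along weak isomorphisms exist and preserve admissibility), this lets me manipulate roofs freely. The plan is: from the roof presentation of $Q(f)$, extract an honest equation in $\EE$ of the form $g \circ s' = s'' \circ f$ (or $f \circ s = t \circ g$) with $s', s'', s, t$ weak isomorphisms and $g$ admissible — here I may need to replace the original roof by a pulled-back one so that the relevant square actually commutes in $\EE$, not just in the localization, which is where saturation (\ref{Proposition:PropertiesOfWeakIsomorphismsB}) and the calculus of the right multiplicative system $\Sigma_{\AA}$ enter. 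Once such an equation holds on the nose in $\EE$, parts \eqref{Proposition:weakisocompositionadmissible:2}–\eqref{Proposition:weakisocompositionadmissible:3} of \Cref{Proposition:weakisocompositionadmissible} yield that $f$ itself is admissible.

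The main obstacle I anticipate is the passage from ``commutes in $\Sigma_{\AA}^{-1}\EE$'' to ``commutes in $\EE$ after composing with a suitable weak isomorphism.'' Two parallel morphisms that agree in the localization need only agree after precomposition (resp.\ postcomposition) with some element of $\Sigma_{\AA}$, by the definition of a right multiplicative system; one has to carefully chase through the roof calculus for $\Sigma_{\AA}^{-1}\EE$ to pin down exactly which weak isomorphisms appear and on which side, and to ensure the admissibility of $g$ survives all the pullbacks needed to reconcile the diagrams. Handling the asymmetry between the deflation part (where $\Sigma_{\AA}$ behaves well under pullback) and the inflation part (which requires the dual statement, available since $\AA$ is two-sided percolating) is the bookkeeping that makes the argument slightly delicate, but no new ideas beyond \Cref{Proposition:weakisocompositionadmissible} and \Cref{Proposition:PropertiesOfWeakIsomorphisms} should be required.
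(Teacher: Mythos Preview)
Your proposal is correct and follows essentially the same approach as the paper: unwind admissibility of $Q(f)$ into a zig-zag of genuine conflation data in $\EE$ connected by weak isomorphisms, use pushouts/pullbacks along weak isomorphisms (\Cref{Proposition:PropertiesOfWeakIsomorphisms} and its dual) together with \Cref{Proposition:weakisocompositionadmissible}\eqref{Proposition:weakisocompositionadmissible:2} to produce a single admissible morphism in $\EE$ that agrees with $f$ in the localization, invoke the multiplicative-system property to upgrade this to an equality in $\EE$ after post-composing with a weak isomorphism, and then cancel via \Cref{Proposition:weakisocompositionadmissible}\eqref{Proposition:weakisocompositionadmissible:3}. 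You have also correctly anticipated the one genuinely delicate step, namely the passage from commutativity in $\Sigma_{\AA}^{-1}\EE$ to commutativity in $\EE$.
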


\begin{proof}
As $Q$ preserves inflations and deflations, it is easy to see that, if $k$ is admissible, then so is $Q(k)$. For the other direction, assume that $Q(k)$ is admissible in $\EE / \AA $.  By \Cref{lemma:WhenDeflationInQuotient} we can write $Q(k)$ as a composition $(g,t) \circ (f,s)$ of roofs in $\EE / \AA $, where $g\in \SS_{\operatorname{inf}}$ and $f\in \SS_{\operatorname{def}}$. Following the notation in \Cref{construction:Localization}.\eqref{construction:Localization:D}, by taking the pullback of $f$ along $t$, we get a roof $(g\circ h, s\circ u)$ which also represents the composition $(g,t)\circ (f,s)$, where the commutative diagram in \Cref{construction:Localization}.\eqref{construction:Localization:D} is the pullback square. Since $h$ is a pullback of $f$ and $\SS_{\operatorname{def}}$ is closed under pullbacks by \Cref{lemma:S_def}.\eqref{lemma:S_def:1}, it follows that $h\in\SS_{\operatorname{def}}$. 

Next, we claim that the composite $g\circ h$ is an admissible morphism in $\EE$. To this end, let $g=g_1\circ g_2$ where $g_1$ is an inflation and $g_2$ is a weak isomorphism, and let $h=h_1\circ h_2$ where $h_1$ is a weak isomorphism and $h_2$ is a deflation. Since weak isomorphisms are closed under compositions, we get that $g_2\circ h_1$ is a weak isomorphism. Hence, by \Cref{Proposition:weakisocompositionadmissible}.\eqref{Proposition:weakisocompositionadmissible:2} it follows that $g_1\circ g_2\circ h_1$ is an admissible morphism. Since $h_2$ is a deflation, we get that $(g_1\circ g_2\circ h_1)\circ h_2=g\circ h$ is an admissible morphism. 
 
Finally, note that the roofs $(k,1)$ and $(g\circ h, s\circ u)$ both represent the same morphism $Q(k)$ in $\EE/\AA$. Therefore, they must be equivalent as in \Cref{construction:Localization}.\eqref{construction:Localization:2}. Hence,
there exists a weak isomorphism $v$ such that the equality 
\[
g\circ h\circ v=k\circ s\circ u\circ v
\]
holds in $\EE$. Since $g\circ h$ is admissible, it follows from  \Cref{Proposition:weakisocompositionadmissible}.\eqref{Proposition:weakisocompositionadmissible:2} that $g\circ h\circ v$ is admissible. Hence $k\circ s\circ u \circ v$ is an admissible morphism. Since weak isomorphisms are closed under compositions, $s\circ u\circ v$ is a weak isomorphism. Therefore, by the dual of \Cref{Proposition:weakisocompositionadmissible}.\eqref{Proposition:weakisocompositionadmissible:3} we have that $k$ is an admissible morphism. This proves the claim. 
\end{proof}

\subsection{Torsion theory with percolating torsion class}\label{Section:TorsionTheory}

In this section, we show that for any torsion pair $(\TT,\FF)$ in an exact category $\EE$ such that $\TT$ satisfies axiom \ref{A2}, the torsion class $\TT$ is a percolating subcategory of $\EE$.

We start by recalling the notion of a torsion theory in an exact category.

\begin{definition}
	Let $\EE$ be an exact category. A \emph{torsion theory} or \emph{torsion pair} is a pair $(\TT,\FF)$ of full subcategories, closed under isomorphisms, such that the following conditions hold:
	\begin{enumerate}
		\item $\Hom_{\EE}(T,F)=0$ for all $T\in \TT$ and $F\in \FF$.
		\item For any object $M\in \EE$, there exists a conflation $T\inflation M\deflation F$ with $T\in \TT$ and $F\in \FF$.
	\end{enumerate}
\end{definition}

For a subcategory $\XX$ of $\EE$ we let ${}^{\perp}\XX$ and $\XX^{\perp}$ denote the subcategories consisting of all objects $E\in \EE$ satisfying $\Hom_{\EE}(E,X)=0$ or $\Hom_{\EE}(X,E)=0$ for all $X\in \XX$, respectively. The following proposition is standard.

\begin{proposition}\label{Proposition:BasicPropertiesTorsionPair}
	Let $(\TT,\FF)$ be a torsion pair in an exact category $\EE$. 
	\begin{enumerate}
	\item $\TT={}^{\perp}\FF$ and $\FF=\TT^{\perp}$.
		\item The subcategory $\TT\subseteq \EE$ is closed under extensions and epimorphic quotients. Dually, the subcategory $\FF\subseteq \EE$ is closed under extensions and subobjects. In particular, $\TT$ and $\FF$ inherit exact structures from $\EE$.
		\item For each $M\in \EE$, there is a unique (up to isomorphism) conflation $\tt M\inflation M\deflation \ff M$ with $\tt M\in \TT$ and $\ff M\in \FF$.
		\item The inclusion functor $\TT\to \EE$ has right adjoint $\tt$ such that $\tt(M)=\tt M$ as above. Dually, the inclusion functor $\FF\to\EE$ has a left adjoint $\ff$ such that $\ff(M)=\ff M$.
	\end{enumerate}
\end{proposition}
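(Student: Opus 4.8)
The plan is to prove part (1) first and then to deduce (2)--(4) from it, using only the left-exactness of $\Hom$ on conflations together with the uniqueness of kernels and cokernels of admissible morphisms. For (1), the inclusions $\TT\subseteq{}^{\perp}\FF$ and $\FF\subseteq\TT^{\perp}$ are just the orthogonality axiom of a torsion pair, so the content lies in the reverse inclusions. Given $E\in{}^{\perp}\FF$, I would pick a conflation $T\inflation E\xrightarrow{p}F$ with $T\in\TT$ and $F\in\FF$; then $p\in\Hom_{\EE}(E,F)=0$, so the deflation $p$ is the zero morphism, and since it is the cokernel of the inflation $T\inflation E$ its kernel is simultaneously $T\inflation E$ and $\mathrm{id}_E$. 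By uniqueness of kernels, $T\inflation E$ is an isomorphism and $E\cong T\in\TT$, so ${}^{\perp}\FF\subseteq\TT$; the inclusion $\TT^{\perp}\subseteq\FF$ is dual.

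For (2), given a conflation $X\inflation Y\deflation Z$ with $X,Z\in\TT$, applying $\Hom_{\EE}(-,F)$ for $F\in\FF$ yields a left-exact sequence $0\to\Hom_{\EE}(Z,F)\to\Hom_{\EE}(Y,F)\to\Hom_{\EE}(X,F)$ whose outer terms vanish, whence $\Hom_{\EE}(Y,F)=0$ and $Y\in{}^{\perp}\FF=\TT$ by (1). For closure under epimorphic quotients, if $e\colon T\to Q$ is epimorphic with $T\in\TT$, then any $g\colon Q\to F$ with $F\in\FF$ satisfies $g\circ e=0$, hence $g=0$, so $Q\in{}^{\perp}\FF=\TT$; the statements for $\FF$ are dual. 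The remaining claim, that $\TT$ and $\FF$ inherit exact structures, is the standard fact that a full additive subcategory closed under extensions is exact with the induced conflations (those of $\EE$ having all three terms in the subcategory); here I would simply cite \cite{Buhler10}.

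For (3), existence is the second torsion-pair axiom, and for uniqueness I would compare two conflations $T\xrightarrow{i}M\xrightarrow{p}F$ and $T'\xrightarrow{i'}M\xrightarrow{p'}F'$ of the required shape: since $\Hom_{\EE}(T,F')=0=\Hom_{\EE}(T',F)$, both $p'\circ i$ and $p\circ i'$ vanish, so $i$ and $i'$ factor through $\ker(p')=i'$ and $\ker(p)=i$ respectively, giving maps $\alpha\colon T\to T'$ and $\beta\colon T'\to T$ that are mutually inverse because $i,i'$ are monomorphisms; passing to cokernels then identifies $F$ with $F'$ compatibly. Finally, for (4) I would set $\tt(M):=\tt M$ with the inflation $i_M\colon\tt M\to M$ from the conflation of (3): for $T\in\TT$ every $\phi\colon T\to M$ has $p\circ\phi\in\Hom_{\EE}(T,\ff M)=0$ and hence factors uniquely through $\ker(p)=i_M$, which exhibits $i_M$ as the counit of an adjunction and thus $\tt$ as right adjoint to the inclusion $\TT\hookrightarrow\EE$ (functoriality of $\tt$ on morphisms being then automatic); the statement for $\ff$, with unit the deflation $M\to\ff M$, is dual.

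I do not expect a genuine obstacle: the proposition is folklore, and the only points requiring attention are the ordering (part (1) must be established before (2), so that the orthogonality characterization is available), the decision to extract closure under quotients and subobjects from exactness of the $\Hom$-functor rather than from a hands-on manipulation of conflations, and the fact that the inheritance of the exact structure is quoted rather than reproved. The single slightly non-formal step is the deduction in (1) that $p=0$ forces the inflation $T\inflation E$ to be an isomorphism, which rests on uniqueness of kernels.
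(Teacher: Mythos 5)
Your proof is correct. Note that the paper does not prove this proposition at all -- it is stated as ``standard'' with a pointer to \cite{PeschkeVanderLinden16} -- so there is no in-paper argument to compare against; what you have written is the standard folklore argument, and every step checks out. In particular the one delicate point, deducing in (1) that the inflation $T\inflation E$ is an isomorphism once the deflation $p$ vanishes, is handled correctly via uniqueness of kernels (the kernel of the zero map $E\to F$ is $1_E$), and your ordering -- establishing the orthogonality characterization $\TT={}^{\perp}\FF$ first so that closure under extensions and epimorphic quotients in (2) follows from left-exactness of $\Hom_{\EE}(-,F)$ -- is exactly the right way to organize the deductions.
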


\begin{proof}
This statement combines \cite[Lemma~8.14, Proposition~8.15, and Corollary~8.16]{HenrardVanRoosmalen19a} and is based on \cite{BournBorceux06}.
\end{proof}

The following definition is adapted from \cite{PeschkeVanderLinden16}.

\begin{definition}
	An additive functor $F\colon \EE\to \FF$ between exact categories is called \emph{left exact} if a conflation $X\stackrel{f}{\inflation}Y\stackrel{g}{\deflation}Z$ is mapped to an exact sequence $\xymatrix{F(X)\ar@{>->}[r]^{F(f)} & F(Y)\ar[r]|{\circ}^{F(g)} & F(Z)\ar@{->>}[r] & \coker(F(g))}$. The notion of a \emph{right exact functor} is defined dually.
\end{definition}

\begin{remark}
If $F\colon \EE\to \FF$ is left exact and $h\colon Y\to Z$ is an admissible morphism in $\EE$ with kernel $f\colon X\inflation Y$, then $F(h)\colon F(Y)\to F(Z) $ is an admissible morphism in $\FF$ with kernel $F(f)\colon F(X)\inflation F(Y)$. Indeed, choosing a factorization $Y\stackrel{g'}{\deflation}\im(f)\stackrel{g''}{\inflation}Z$ of $g$ and applying $F$ to the conflation
$X\stackrel{f}{\inflation}Y\stackrel{g'}{\deflation}\im(f)$
we get an admissible morphism $F(g')\colon F(Y)\to F(\im(f))$ with kernel $F(f)\colon F(X)\inflation F(Y)$. The claim now follows since $F(g)=F(g'')\circ F(g')$ and $F(g'')$ is an inflation.
\end{remark}

\begin{remark}
Note that a left exact (resp right exact) functor between exact categories does not necessarily preserve all kernels (resp cokernels), but only kernels of admissible morphisms.
\end{remark}

\begin{proposition}\label{Proposition:Torsion+A2IsTwoSidedPercolating}
	Let $\EE$ be an exact category with a torsion pair $(\TT,\FF)$ such that $\TT\subseteq \EE$ satisfies axiom \ref{A2}.
	\begin{enumerate}	
		\item The subcategory $\TT\subseteq \EE$ is percolating.
		\item\label{Item:tIsSequentiallyLeftExactAndfPreservesInflationsAndDeflations} The functor $\tt$ is left exact and the functor $\ff$ preserves inflations and deflations. 
		\item\label{Item:fReflectsAdmissibles} The functor $\ff$ reflects admissible morphisms.
	\end{enumerate}
\end{proposition}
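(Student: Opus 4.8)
The plan is to prove the three assertions in order, since each builds on the previous one. For (1), I would verify that $\TT$ is an admissibly deflation-percolating subcategory and, by a dual argument, an admissibly inflation-percolating subcategory. Axiom \ref{A1} (Serre subcategory) is essentially automatic: $\TT$ is closed under extensions (it is a torsion class), and given a conflation $X \inflation Y \deflation Z$ with $Y \in \TT$, we need $X, Z \in \TT$. Since $\TT$ is closed under epimorphic quotients, $Z \in \TT$; and $X$ sits inside $Y \in \TT$, so applying the torsion--torsionfree decomposition $\tt X \inflation X \deflation \ff X$ and composing $\ff X$ with the inclusion-type data forces $\ff X \in \TT \cap \FF = 0$ after a short diagram chase, hence $X \in \TT$. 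Axiom \ref{A2} holds for $\TT$ by hypothesis. For the dual axiom $\mathbf{A2^{op}}$ (every morphism $F \to X$ with $F \in \FF$... wait, rather every morphism into an object of $\TT$ from... ) — actually the inflation-percolating axioms concern $\TT^{\mathsf{op}} \subseteq \EE^{\mathsf{op}}$; unwinding, I need: every morphism $A \to X$ with $A \in \TT$ is admissible with \emph{coimage} in $\TT$, which again follows from $\TT$ being closed under quotients together with axiom \ref{A2} applied appropriately. So part (1) is a torsion-theoretic bookkeeping argument.

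For (2), that $\tt$ is left exact: given a conflation $X \stackrel{f}{\inflation} Y \stackrel{g}{\deflation} Z$, I would apply $\tt$ and use that $\tt$ is a right adjoint (hence left exact in the sense of preserving kernels of admissible morphisms, once we know such kernels are computed correctly). Concretely, take the torsion decompositions of $X$, $Y$, $Z$; by naturality we get a $3 \times 3$-style diagram, and I want the top row $\tt X \to \tt Y \to \tt Z$ to be $\xymatrix{\tt X \ar@{>->}[r] & \tt Y \ar[r]|{\circ} & \tt Z}$ with $\tt X = \ker(\tt g)$. The inflation $\tt X \inflation \tt Y$ comes from $\tt X \inflation X$ factoring through $\tt Y$ (since $\tt X \in \TT$ and $\tt Y$ is the torsion subobject of $Y$); admissibility of $\tt g$ and the kernel identification follow from part (1) together with \Cref{Theorem:QReflectsAdmissibles} — or more directly from a pullback/pushout argument inside $\EE$. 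For $\ff$: since $\TT$ is now two-sided admissibly percolating, $\ff$ is (up to the equivalence $\FF \simeq \EE/\TT$) essentially the localization functor $Q$, which sends conflations to conflations by \Cref{Theorem:TwoSidedLocalizationTheorem}(1); in particular $\ff$ preserves inflations and deflations. I would make the identification $\ff \cong Q$ precise via the universal property and the fact that $\FF \hookrightarrow \EE \xrightarrow{Q} \EE/\TT$ is an equivalence (the composite kills $\TT$, is fully faithful because $\Hom(\FF,\FF)$ is unchanged by inverting weak isomorphisms, and is essentially surjective).

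For (3), that $\ff$ reflects admissible morphisms: this is now almost immediate from \Cref{Theorem:QReflectsAdmissibles} once we have the identification $\ff \cong Q$ (as functors $\EE \to \EE/\TT \simeq \FF$) from the proof of (2). Given $h\colon Y \to Z$ in $\EE$ with $\ff(h)$ admissible in $\FF$, transport along the equivalence to conclude $Q(h)$ is admissible in $\EE/\TT$, then apply \Cref{Theorem:QReflectsAdmissibles} to get that $h$ is admissible in $\EE$.

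I expect the main obstacle to be part (2) — specifically, nailing down that $\tt$ is left exact without circularity. The subtlety is that left exactness for functors between exact categories only asks for preservation of kernels of \emph{admissible} morphisms, so one must first check that $\tt g$ is admissible before even speaking of its kernel; this is where part (1) and the reflection theorem do real work, and where I would be most careful to avoid assuming what I am trying to prove. The identification $\ff \cong Q$ also requires a clean argument that the localization functor restricted to $\FF$ is an equivalence, which is standard but worth stating explicitly.
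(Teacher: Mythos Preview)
Your outline for part (1) is roughly right, though the argument you sketch for showing $X \in \TT$ when $X \inflation Y$ and $Y \in \TT$ is circuitous. The paper simply applies axiom \ref{A2} directly to the inflation $X \inflation Y$ (since $Y \in \TT$): this forces the image, namely $X$ itself, to lie in $\TT$. No torsion--torsionfree decomposition of $X$ is needed.

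The real problem is in part (2), and not where you anticipated. Your argument for $\tt$ being left exact is essentially the paper's: $\tt(a)$ and $\tt(b)$ are admissible by \ref{A2} (their targets are in $\TT$), $\tt(a)$ is monic hence an inflation, and $\tt$ is a right adjoint so preserves kernels. But your argument for $\ff$ has a genuine gap. You claim that since $Q\colon \EE \to \EE/\TT$ is exact and $\ff \cong Q$ via the equivalence $\FF \simeq \EE/\TT$, it follows that $\ff$ preserves inflations and deflations. This conflates two different exact structures: the one $\FF$ inherits from $\EE$, and the one $\EE/\TT$ acquires from the localization. The latter has strictly more conflations in general, so ``$Q(a)$ is an inflation in $\EE/\TT$'' does \emph{not} transport back to ``$\ff(a)$ is an inflation in $\EE$'' along the equivalence. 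Indeed, the remark immediately following the proposition in the paper points out that $\ff$ is \emph{not} conflation-exact --- precisely the phenomenon your ``in particular'' glosses over. The paper instead builds two $3\times 3$ diagrams and applies the Nine Lemma to produce explicit conflations $\ff X \inflation \ff Y \deflation L$ and $\coker(\tt(b)) \inflation L \deflation \ff Z$, then composes deflations. Your route can be salvaged by invoking \Cref{Theorem:QReflectsAdmissibles} to get that $\ff(a)$ and $\ff(b)$ are admissible in $\EE$, and then arguing separately that $\ker(\ff(a)) \in \TT \cap \FF = 0$ (easy) and $\coker(\ff(b)) = 0$ (requires noting that the deflation $Y \deflation Z \deflation \ff Z \deflation \coker(\ff(b))$ coincides with a map factoring through $\im(\ff(b)) \to \coker(\ff(b)) = 0$) --- but none of this is in your sketch.

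For part (3), your approach via \Cref{Theorem:QReflectsAdmissibles} does work and is a legitimate alternative: if $\ff(h)$ is admissible in $\EE$ then $Q(\ff(h))$ is admissible in $\EE/\TT$, and since $Q(h) \cong Q(\ff(h))$ (the units $X \to \ff X$ being weak isomorphisms), \Cref{Theorem:QReflectsAdmissibles} gives $h$ admissible. The paper takes a more self-contained route using \Cref{Proposition:weakisocompositionadmissible}\eqref{Proposition:weakisocompositionadmissible:3} directly on the composite $X \xrightarrow{f} Y \deflation \ff Y$, avoiding the localization machinery entirely.
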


\begin{proof}
	\begin{enumerate}
		\item We first show axiom $\ref{A2}^{\mathsf{op}}$. Let $f\colon T\to X$ be a map with $X\in \TT$. As $(\TT,\FF)$ is a torsion pair, there is a conflation $\tt X\inflation X\deflation \ff X$. Note that $f$ factors through $\tt X\inflation X$ as $\Hom(\TT,\FF)=0$. By axiom \ref{A2}, the map $T\to \tt  X$ is admissible with image in $\TT$. It follows that the composition $T\to \tt X\inflation X$ is admissible with image in $\TT$. This shows axiom $\ref{A2}^{\mathsf{op}}$.
		
		We now show axiom \ref{A1}. Let $X\inflation Y\deflation Z$ be a conflation in $\EE$. Assume first that $Y\in \TT$. By axiom \ref{A2}, the map $X\inflation Y$ is admissible with image in $\TT$ and thus $X\in \TT$. Similarly, by axiom $\ref{A2}^{\mathsf{op}}$ yields that $Y\deflation Z$ is admissible with image in $\TT$ and thus $Z\in \TT$. Conversely, if $X,Z\in \TT$, then $Y\in \TT$ as $\TT\subseteq \EE$ is extension-closed.
		 
		\item Let $X\stackrel{a}{\inflation} Y \stackrel{b}{\deflation}Z$  be a conflation in $\EE$. By axiom \ref{A2} the maps $\tt(a)$ and $\tt(b)$ are admissible. As $\tt(a)$ is monic, it is an inflation. Since $\tt$ is a right adjoint, $\tt$ commutes with kernels, and thus the kernel of $\tt(b)$ in $\TT$ is given by $\tt(a)$. Since $\TT$ is abelian (as it satisfies axiom \ref{A2}, see \Cref{remark:Percolating}.\eqref{remark:Percolating:2}) and the inclusion $\TT\to \EE$ is exact, it follows that $\tt(a)$ is also the kernel of $\tt(b)$ in $\EE$. This shows that $\tt$ is left exact. In particular, it follows that $\im(\tt(b))\cong\coker(\tt(a))$. By axiom \ref{L1}, the composition $\im(\tt(b))\inflation \tt Z\inflation Z$ is an inflation and we write $L$ for its cokernel. Consider the following two commutative diagrams:
		\[\xymatrix{
			\tt X\ar@{>->}[r]\ar@{>->}[d] & \tt Y\ar@{->>}[r]\ar@{>->}[d] & \im(\tt (b))\ar@{>->}[d] && \im(\tt(b))\ar@{>->}[r]\ar@{=}[d] & \tt Z\ar@{->>}[r]\ar@{>->}[d] & \coker(\tt(b))\ar[d]\\
			X\ar@{>->}[r]\ar@{->>}[d] & Y\ar@{->>}[r]\ar@{->>}[d] & Z\ar@{->>}[d] && \im(\tt(b))\ar@{->>}[d]\ar@{>->}[r] & Z\ar@{->>}[r]\ar@{->>}[d] & L\ar[d]\\
			\ff X\ar[r] & \ff Y\ar[r] & L && 0\ar@{>->}[r] & \ff Z\ar@{=}[r] & \ff Z
		}\] Applying the Nine Lemma to both diagrams, we obtain conflations \[
		\ff X\inflation \ff Y\deflation L \quad \text{and} \quad  \coker(\tt (b))\inflation L\deflation \ff Z.
		\]
		 By axiom \ref{R1}, the composition $\ff Y\deflation L\deflation fZ$ is a deflation. The result follows.

		\item Let $f\colon X\to Y$ be a morphism in $\EE$. Consider the following commutative diagram:
		\[\xymatrix{
			\tt X\ar@{>->}[r]\ar[d]^{\tt(f)} & X\ar@{->>}[r]^{\sim}\ar[d]^{f} & \ff X\ar[d]^{\ff(f)}\\
			\tt Y\ar@{>->}[r] & Y\ar@{->>}[r]^{\sim} & \ff Y
		}\] If $f$ is admissible, then $\ff(f)$ is admissible as $\ff$ preserves inflations and deflations. Conversely, if $\ff(f)$ is admissible, then the composition $X\deflation \ff X \xrightarrow{\ff(f)}\ff Y$ is admissible. Therefore the composition $X\xrightarrow{f}Y\deflation \ff Y$ is admissible, and hence $f$ is admissible by \Cref{Proposition:weakisocompositionadmissible}.\eqref{Proposition:weakisocompositionadmissible:3}.\qedhere
	\end{enumerate}
\end{proof}

\begin{remark}
	Despite the functor $\ff$ preserving inflations and deflations, it need not be conflation-exact. Indeed, consider the standard torsion theory on the category abelian groups. The short exact sequence $\mathbb{Z} \stackrel{\cdot n}{\inflation}\mathbb{Z}\deflation \mathbb{Z}/n\mathbb{Z}$ is mapped to $\mathbb{Z} \stackrel{\cdot n}{\rightarrow}\mathbb{Z}\to 0$.
\end{remark}
\section{Auslander's formula for exact categories}\label{section:Formula}

Given an exact category $\EE$, we consider the category $\smodad(\EE)$ of admissibly presented functors and the subcategory $\eff(\EE)\subseteq\smodad(\EE)$ of effaceable functors (see \Cref{Definition:smodadAndeff} below). We show that Auslander's formula extends to exact categories, i.e.~the categories $\EE$ and $\smodad(\EE)/\eff(\EE)$ are equivalent as exact categories. Moreover, $\eff(\EE)\subseteq\smodad(\EE)$ is a percolating subcategory that arises as the torsion part of a torsion theory on $\smodad(\EE)$. 

\subsection{Basic definitions and results}
Write $\Upsilon\colon \EE\to \Mod(\EE)\colon A\mapsto \Hom_{\EE}(-,A)$ for the Yoneda embedding.
We start with the following definition. 

\begin{definition}\label{Definition:smodadAndeff}
	Let $\EE$ be a conflation category. 
	\begin{enumerate}
		\item Let $\smod(\EE)$ be the full subcategory of $\Mod(\EE)$ consisting of those contravariant functors $F\colon \EE\to \Ab$ that admit a projective presentation
		\[\Upsilon(X)\xrightarrow{\Upsilon(f)}\Upsilon(Y)\to F\to 0.\]
		\item Let $\smodad(\EE)$ be the full subcategory of $\Mod(\EE)$ consisting of those functors $F$ that admit a projective presentation
		\[\Upsilon(X)\xrightarrow{\Upsilon(f)}\Upsilon(Y)\to F\to 0\]
		where $f\colon X\to Y$ is an admissible morphism in $\EE$. We refer to $f$ as a \emph{presenting} morphism of $F$.
		\item Let $\eff(\EE)$ be the full subcategory of $\Mod(\EE)$ consisting of those functors $F$ that admit a projective presentation
		\[\Upsilon(X)\xrightarrow{\Upsilon(f)}\Upsilon(Y)\to F\to 0\]
		where $f\colon X\deflation Y$ is a deflation in $\EE$.  Objects of $\eff(\EE)$ are called \emph{effaceable}.
		\item Let $\FF$ be the full subcategory of $\Mod(\EE)$ consisting of those functors $F$ that admit a projective presentation
		\[\Upsilon(X)\xrightarrow{\Upsilon(f)}\Upsilon(Y)\to F\to 0\]
		where $f\colon X\inflation Y$ is an inflation in $\EE$.
	\end{enumerate}
\end{definition}

\begin{remark}
It will be verified in \Cref{Proposition:WeaklyEffaceableAndFinitelyPresentedIsEffaceable} that objects of $\eff(\EE)$ are weakly effaceable in the sense of \Cref{definition:WeaklyEffaceable} (following the original concept introduced in \cite{Grothendieck57}).  Furthermore, it is easy to verify that $\FF$ is the full subcategory of $\modad(\EE)$ consisting of objects of projective dimension at most one.
\end{remark}

\begin{lemma}\label{Lemma:YonedaEmbeddingIsLeftExact}
	Every $F\in \smodad(\EE)$ fits into an exact sequence
	\[0\to \Upsilon(\ker(f))\to \Upsilon(X)\xrightarrow{\Upsilon(f)}\Upsilon(Y)\to F\to 0.\]
	in $\Mod(\EE)$.
\end{lemma}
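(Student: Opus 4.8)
The plan is to exploit the fact that $\Upsilon$ is fully faithful and left exact in the appropriate sense: since $f\colon X\to Y$ is admissible, it factors as $X\stackrel{g}{\deflation}\im(f)\stackrel{h}{\inflation}Y$, and the first part $X\stackrel{g}{\deflation}\im(f)$ has a kernel $\ker(f)\inflation X$ which fits into a conflation $\ker(f)\inflation X\deflation \im(f)$ in $\EE$. The Yoneda embedding is left exact, so applying $\Upsilon$ to this conflation yields an exact sequence $0\to \Upsilon(\ker(f))\to \Upsilon(X)\xrightarrow{\Upsilon(g)}\Upsilon(\im(f))$ in $\Mod(\EE)$; in fact since $g$ is a deflation this is part of a short exact sequence of functors, but we only need left exactness here.

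Next I would paste these together. We have a projective representation $\Upsilon(X)\xrightarrow{\Upsilon(f)}\Upsilon(Y)\to F\to 0$, so $F=\coker(\Upsilon(f))$, and since $\Upsilon$ is faithful and $f=h\circ g$, we get $\Upsilon(f)=\Upsilon(h)\circ\Upsilon(g)$. The key observation is that $\Upsilon(h)$ is a monomorphism in $\Mod(\EE)$: because $h$ is an inflation (hence admissible), left exactness of $\Upsilon$ gives that $\Upsilon(h)\colon \Upsilon(\im(f))\to\Upsilon(Y)$ has zero kernel, as its kernel would be $\Upsilon(\ker(h))=\Upsilon(0)=0$. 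Since $\Upsilon(h)$ is monic, $\ker(\Upsilon(f))=\ker(\Upsilon(g))$, and therefore $\im(\Upsilon(g))=\coim(\Upsilon(g))=\coim(\Upsilon(f))$, identifying $\Upsilon(\im(f))$ with the image of $\Upsilon(f)$ inside $\Upsilon(Y)$. Consequently the exact sequence $0\to\Upsilon(\ker(f))\to\Upsilon(X)\xrightarrow{\Upsilon(g)}\Upsilon(\im(f))$ combines with the monomorphism $\Upsilon(\im(f))\hookrightarrow\Upsilon(Y)$ to give exactness of $0\to \Upsilon(\ker(f))\to \Upsilon(X)\xrightarrow{\Upsilon(f)}\Upsilon(Y)$, and exactness at $\Upsilon(Y)$ and the final $0$ is just the given cokernel presentation.

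One subtlety I should address is precisely what ``left exact'' means for $\Upsilon$ and whether the relevant remark in the excerpt applies. The excerpt's remark on left exact functors establishes exactly this: if $h\colon Y\to Z$ is admissible in $\EE$ with kernel $f\colon X\inflation Y$, then $F(h)$ is admissible with kernel $F(f)$ — applied here with $F=\Upsilon$ (which is left exact since it sends conflations $X\inflation Y\deflation Z$ to exact sequences $0\to\Upsilon(X)\to\Upsilon(Y)\to\Upsilon(Z)$, the classical left exactness of the Yoneda embedding). The main obstacle, though a mild one, is the bookkeeping of identifying $\Upsilon(\im(f))$ with the honest image subobject of $\Upsilon(f)$ in the abelian category $\Mod(\EE)$; once the monomorphism $\Upsilon(h)$ is in hand this is formal, and the rest is diagram-chasing that I would not spell out in full.
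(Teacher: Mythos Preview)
Your proposal is correct and rests on the same underlying fact as the paper's proof—the left exactness of the Yoneda embedding. The paper's own argument is a one-liner: it simply notes that $\Upsilon\colon\EE\to\Mod(\EE)$ is a left exact covariant functor (equivalently, preserves all limits, in particular kernels) and that the admissible morphism $f$ admits a kernel, so $\Upsilon(\ker(f))\to\Upsilon(X)$ is automatically the kernel of $\Upsilon(f)$ in $\Mod(\EE)$. Your detour through the deflation--inflation factorization $f=h\circ g$ and the separate verification that $\Upsilon(h)$ is monic is valid but unnecessary—once you grant (as you do in your final paragraph) that $\Upsilon$ preserves kernels of arbitrary morphisms that have them, the result is immediate without decomposing $f$.
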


\begin{proof}
	The Yoneda embedding $\Upsilon\colon\EE\to \Mod(\EE)$ is a left exact covariant functor. The result follows as $f$ admits a kernel.
\end{proof}

The following proposition will be used multiple times throughout the text.

\begin{proposition}\label{Lemma:TheFamousDiagramChase}
	Let $\CC$ be an additive category and write $\Upsilon\colon \CC\to \Mod(\CC)$ for the Yoneda embedding. Consider a commutative diagram
	\begin{equation}\label{DefiningDiagram}
			\xymatrix{
				A\ar[r]^{\beta}\ar[d]^f & C\ar[d]^{g}\\
				B\ar[r]^{\alpha} & D
			}
	\end{equation}
	in $\CC$ such that $g$ admits a kernel $k\colon \ker(g)\to C$ and such that the cospan $B\stackrel{\alpha}{\rightarrow}D\stackrel{g}{\leftarrow}C$ admits a pullback $E$. Write $F=\coker(\Upsilon(f))$, $G=\coker(\Upsilon(g))$ and $\eta\colon F\to G$ for the induced map. Consider the commutative diagram
	\begin{gather}
		\begin{aligned}
			\xymatrix{
				&&\ker(g')\ar[d]^{k'}\ar@{=}[r] & \ker(g)\ar[d]^{k} &&\\
				\ker(g)\oplus A\ar[r]^-{\begin{psmallmatrix}0&1\end{psmallmatrix}}\ar[d]^{\begin{psmallmatrix}k'&\beta''\end{psmallmatrix}} & A\ar[r]^{\beta''}\ar[d]^f & E\ar[r]^{\beta'}\ar[d]^{g'} & C\ar[r]^-{\begin{psmallmatrix}1\\0\end{psmallmatrix}}\ar[d]^{g} & C\oplus B\ar[d]^{\begin{psmallmatrix}g&\alpha\end{psmallmatrix}}\\
				E\ar[r]^{g'} & B\ar@{=}[r] & B\ar[r]^{\alpha} & D\ar@{=}[r] & D
	}
		\end{aligned}\label{Thefamousdiagram}
	\end{gather}
	where $ECBD$ is a pullback square and $\beta=\beta'\beta''$. Applying $\Upsilon$ and taking the cokernel of the vertical maps induces the epi-mono factorization 
	\[\xymatrix{
		\ker(\eta)\ar@{>->}[r] & F\ar@{->>}[r] & \im(\eta)\ar@{>->}[r] & G\ar@{->>}[r] & \coker(\eta)
	}\] of $\eta$ in $\Mod(\CC)$.
\end{proposition}

\begin{proof}
	We follow the proof of \cite[Proposition~2.1.(b)]{Auslander65} closely. Let $P^{\bullet}$ be the complex $0\to 0\to \Upsilon(A)\xrightarrow{\Upsilon(f)} \Upsilon(B)$ and let $Q^{\bullet}$ be the complex $0\to \Upsilon(\ker(g))\stackrel{\Upsilon(k)}{\inflation} \Upsilon(C)\xrightarrow{\Upsilon(g)}\Upsilon(D)$. Clearly $H_0(P^{\bullet})=F$ and $H_0(Q^{\bullet})=G$. By \Cref{Lemma:YonedaEmbeddingIsLeftExact}, $H_1(Q^{\bullet})=0$. The map $\eta\colon F\to G$ lifts to the map $\eta^{\bullet}\colon P^{\bullet}\to Q^{\bullet}$ dictated by diagram \eqref{DefiningDiagram}. Let $M^{\bullet}=\operatorname{cone}(\eta^{\bullet})$ be the mapping cone. We obtain the long exact sequence
	\[\cdots \to H_1(P^{\bullet})\to H_1(Q^{\bullet})\to H_1(M^{\bullet})\to H_0(P^{\bullet})\to H_0(Q^{\bullet})\to H_0(M^{\bullet})\to 0.\] It follows that $H_0(M^{\bullet})\cong \coker(\eta)$ and that $H_1(M^{\bullet})\cong \ker(\eta)$. By definition, $H_0(M^{\bullet})$ is isomorphic to the cokernel of $\Upsilon\begin{psmallmatrix}g&\alpha\end{psmallmatrix}\colon \Upsilon(C\oplus B)\to \Upsilon(D)$. Hence $\coker(\eta)$ has the desired resolution. 
	
	As the square $ECBD$ is a pullback square, $\begin{psmallmatrix}\beta'\\-g'\end{psmallmatrix}\colon E\to C\oplus B$ is the kernel of $\begin{psmallmatrix}g&\alpha\end{psmallmatrix}$ and by \Cref{Lemma:YonedaEmbeddingIsLeftExact}, $\Upsilon\begin{psmallmatrix}\beta'\\-g'\end{psmallmatrix}$ is the kernel of $\Upsilon\begin{psmallmatrix}g&\alpha\end{psmallmatrix}$. It follows that $H_1(M)$ is given by the cokernel of the map $\Upsilon\begin{psmallmatrix}k'&\beta''\end{psmallmatrix} \colon \Upsilon(\ker(g)\oplus A)\to \Upsilon(E)$. This yields the desired resolution of $\ker(\eta)$.
	
	It remains to obtain the resolution of $\im(\eta)$. By definition, $\im(\eta)=\coker(\ker(\eta))$. Copying the technique of the first part of this proof, one sees that $\coker(\ker(\eta))$ is given by the zeroth homology of the complex
		\[\Upsilon(\ker(g)\oplus A)\xrightarrow{\Upsilon\begin{psmallmatrix}-k'&-\beta''\\ 0&1_A\end{psmallmatrix}} \Upsilon(E\oplus A)\xrightarrow{\Upsilon\begin{psmallmatrix}g'&f'\end{psmallmatrix}} \Upsilon(B)\to 0.\] Up to homotopy equivalence, the latter complex is given by the left exact complex 
	\[\Upsilon(\ker(g))\xrightarrow{\Upsilon(k')} \Upsilon(E)\xrightarrow{\Upsilon(g')} \Upsilon(B)\to 0.\] The result follows.
\end{proof}

\subsection{\texorpdfstring{The quotient $\smodad(\EE)/\eff(\EE)$}{The quotient}}

Throughout this section, $\EE$ denotes an exact category.

\begin{proposition}\label{Proposition:AdmissiblyPresentedLiesExtensionClosed}
	The category $\smodad(\EE)$ is an extension-closed subcategory of $\Mod(\EE)$. In particular, $\smodad(\EE)$ inherits an exact structure from $\Mod(\EE)$.
\end{proposition}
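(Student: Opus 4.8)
The plan is to verify that $\smodad(\EE)$ is closed under extensions inside $\Mod(\EE)$; the ``in particular'' statement then follows immediately from the standard fact that an extension-closed subcategory of an abelian (or exact) category inherits an exact structure. So suppose we have a short exact sequence $0 \to F' \to F \to F'' \to 0$ in $\Mod(\EE)$ with $F', F'' \in \smodad(\EE)$, and we must produce an admissible representing morphism for $F$. Fix admissible morphisms $f'\colon X' \to Y'$ and $f''\colon X'' \to Y''$ with projective representations $\Upsilon(X') \xrightarrow{\Upsilon(f')} \Upsilon(Y') \to F' \to 0$ and $\Upsilon(X'') \xrightarrow{\Upsilon(f'')} \Upsilon(Y'') \to F'' \to 0$.

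The key step is the familiar horseshoe-style construction: lift the projective cover $\Upsilon(Y'') \to F''$ through the epimorphism $F \to F''$ (possible since $\Upsilon(Y'')$ is projective), and combine with $\Upsilon(Y') \to F' \hookrightarrow F$ to obtain an epimorphism $\Upsilon(Y') \oplus \Upsilon(Y'') \to F$ fitting into a commutative diagram with the original short exact sequence, so that in fact $\Upsilon(Y') \oplus \Upsilon(Y'') \cong \Upsilon(Y' \oplus Y'') \to F$ is a projective representation whose kernel maps onto $\Upsilon(X')$-image and $\Upsilon(X'')$-image appropriately. Concretely, one gets a projective presentation $\Upsilon(X' \oplus X'') \xrightarrow{\Upsilon(g)} \Upsilon(Y' \oplus Y'') \to F \to 0$ where $g$ is of the form $\begin{psmallmatrix} f' & h \\ 0 & f'' \end{psmallmatrix}$ for some $h\colon X'' \to Y'$ (the ``off-diagonal'' map recording the extension). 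The remaining—and main—point is to check that this morphism $g$ is admissible in $\EE$. Here I would use that $f'$ and $f''$ are admissible, factor each through its image, and analyze the block-triangular map; using axioms \ref{L1}, \ref{R1} and the pullback/pushout stability of inflations and deflations, one shows that a block-upper-triangular morphism with admissible diagonal entries is itself admissible (this is essentially the statement that admissible morphisms are closed under extensions of morphisms, which can be proved by a direct diagram chase factoring $g$ as a deflation followed by an inflation).

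The main obstacle is precisely this last verification that $g = \begin{psmallmatrix} f' & h \\ 0 & f'' \end{psmallmatrix}$ is admissible: one must be careful that the naive ``take images componentwise'' argument actually assembles into a genuine factorization $X' \oplus X'' \deflation I \inflation Y' \oplus Y''$ in the exact category $\EE$, rather than merely in some ambient abelian category. I expect this to follow by first reducing (via a change of basis / pushout along $\Upsilon(Y') \to F'$) to understanding the kernel of $\Upsilon(Y' \oplus Y'') \to F$, then invoking \Cref{Lemma:YonedaEmbeddingIsLeftExact} to identify that kernel with $\Upsilon(\ker g)$, and finally using that admissibility of $g$ is equivalent to $\ker g \inflation X' \oplus X''$ being an inflation with the induced map to the image being a deflation—conditions one can check blockwise using that $f', f''$ are already admissible together with the obscure axiom / weak idempotent completeness consequences recalled in Section 2. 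Alternatively, and perhaps more cleanly, one can avoid picking bases altogether: realize the extension $0 \to F' \to F \to F'' \to 0$ as the cokernel comparison map in a diagram as in \Cref{Lemma:TheFamousDiagramChase} and read off admissibility of the representing morphism of $F$ from that of $F'$ and $F''$ directly.
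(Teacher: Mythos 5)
Your overall strategy coincides with the paper's: apply the Horseshoe Lemma to get a projective presentation of the middle term whose representing morphism is a block-triangular matrix $g=\begin{psmallmatrix} f' & h \\ 0 & f'' \end{psmallmatrix}$ with the representing morphisms of the outer terms on the diagonal, and then prove that $g$ is admissible. The gap is in that last step, which you correctly identify as the main obstacle but then dispose of by appealing to the claim that a block-triangular morphism with admissible diagonal entries is admissible (equivalently, that admissible morphisms are closed under extensions of morphisms). This claim is \emph{false} in a general exact category: take $f'\colon 0\inflation Y'$ (an inflation), $f''\colon X''\deflation 0$ (a deflation) and an arbitrary $h\colon X''\to Y'$; then $\begin{psmallmatrix} f' & h \\ 0 & f'' \end{psmallmatrix}$ is just $h$, which need not be admissible (e.g.\ multiplication by $2$ on $\bZ$ in finitely generated free abelian groups with the split exact structure). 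So admissibility of $g$ cannot follow from admissibility of $f'$ and $f''$ alone; one must use the additional constraint supplied by the Horseshoe Lemma, namely that the presentations extend to resolutions, which forces $h\circ k_{f''}=-f'\circ l'$ for the kernel inclusion $k_{f''}$ of $f''$ and some $l'\colon \ker(f'')\to X'$ --- i.e.\ $h$ restricted to $\ker(f'')$ factors through $f'$.

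The paper's proof extracts exactly this extra information. It keeps the full resolutions (including the $\Upsilon(\ker)$ terms from \Cref{Lemma:YonedaEmbeddingIsLeftExact}), factors the representing morphism of the \emph{quotient} as a deflation followed by an inflation, assembles the data into a morphism between two acyclic complexes in $\EE$, and invokes \cite[Lemma~1.1]{Neeman90} (the cone of a morphism of acyclic complexes is acyclic). The block-triangular matrix with the deflation part on the diagonal is then a differential of this acyclic cone, hence admissible, and composing with the inflation $\begin{psmallmatrix}1&0\\0&f''_2\end{psmallmatrix}$ finishes. Your two fallback suggestions do not close the gap as stated: weak idempotent completeness and the obscure axiom are not available here (the proposition is asserted for arbitrary exact categories), and \Cref{Lemma:TheFamousDiagramChase} analyses a single morphism between presented functors rather than an extension. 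To repair your argument you need either Neeman's cone lemma or a direct diagram chase that explicitly uses the relation $h\circ k_{f''}=-f'\circ l'$.
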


\begin{proof}
	Let $0\to F \to G \to H\to 0$ be a short exact sequence in the abelian category $\Mod(\EE)$ and assume that $F,H\in \smodad(\EE)$. Let $\Upsilon (\ker(f))\xrightarrow{\Upsilon(k_f)}\Upsilon (A)\xrightarrow{\Upsilon(f)}\Upsilon (B)\to F$ and $\Upsilon(\ker(h))\xrightarrow{\Upsilon(k_h)}\Upsilon (C)\xrightarrow{\Upsilon(h)}\Upsilon (D)\to H$ be projective resolutions of $F$ and $H$ respectively with $f,h$ admissible morphisms. By the Horseshoe Lemma and the fact that the Yoneda embedding is faithful, one finds a projective resolution $\Upsilon (\ker(h)\oplus \ker(f))\xrightarrow{\Upsilon\begin{psmallmatrix}k_h &0\\ l' & k_f\end{psmallmatrix}}\Upsilon(C\oplus A)\xrightarrow{\Upsilon\begin{psmallmatrix}h &0\\l&f\end{psmallmatrix}}\Upsilon(D\oplus B)\to G$ for some morphisms $l\colon C\to B$ and $l'\colon \ker(h)\to A$. Let $C\xrightarrow{h'}\im(h)\xrightarrow{h''} D$ and be a deflation-inflation factorization of $h$. We obtain the following morphism of acyclic complexes  in $\EE$:
	\[\xymatrix{
		\cdots \ar[r] & 0\ar[r]\ar[d]&0\ar@{>->}[r]\ar[d]&\ker(h)\ar@{>->}[r]^{-k_h}\ar[d]^{l'} & C\ar@{->>}[r]^{-h'}\ar[d]^l & \im(h)\ar[r]\ar@{.>}[d]^{l''} & 0\ar[r]\ar[d] & \cdots \\
		\cdots \ar[r] &0\ar@{>->}[r]&\ker(f)\ar@{>->}[r]^{k_f}&A\ar[r]^f & B\ar@{->>}[r] & \coker(f) \ar@{->>}[r]& 0\ar[r]&\cdots
	}\] By \cite[Lemma~1.1]{Neeman90}, the cone is an acyclic complex as well. It follows that the map $\begin{psmallmatrix}h'&0\\l&f\end{psmallmatrix}\colon C\oplus A\to \im(h)\oplus B$ is admissible. Also, the map  $\begin{psmallmatrix}h''&0\\0&1_B\end{psmallmatrix}\colon \im(h)\oplus B\to D\oplus B$ is an inflation as the direct sum of inflations are inflations. It follows that $\begin{psmallmatrix}h &0\\l&f\end{psmallmatrix}=\begin{psmallmatrix}h''&0\\0&1_B\end{psmallmatrix}\begin{psmallmatrix}h'&0\\l&f\end{psmallmatrix}$ is admissible. This shows that $G\in \smodad(\EE)$.
\end{proof}

\begin{proposition}\label{Proposition:EffAreTorsion}
The pair $(\eff(\EE),\FF)$ is a torsion pair in $\smodad(\EE)$ such that $\eff(\EE)\subseteq \smodad(\EE)$ satisfies axiom \ref{A2}. In particular, $\eff(\EE)\subseteq \smodad(\EE)$ is a percolating subcategory. 
\end{proposition}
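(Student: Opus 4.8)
The plan is to verify the two defining axioms of a torsion pair for $(\eff(\EE),\FF)$ in $\smodad(\EE)$, then check that $\eff(\EE)$ satisfies axiom \ref{A2}, and finally invoke \Cref{Proposition:Torsion+A2IsTwoSidedPercolating} to conclude that $\eff(\EE)$ is two-sided admissibly percolating. First I would establish the $\Hom$-vanishing. Given $F\in\eff(\EE)$ with a presentation $\Upsilon(X)\xrightarrow{\Upsilon(g)}\Upsilon(Y)\to F\to 0$ where $g\colon X\deflation Y$ is a deflation, and given $G\in\FF$, I want $\Hom_{\Mod(\EE)}(F,G)=0$. Applying $\Hom(-,G)$ to the presentation and using the Yoneda lemma, a map $F\to G$ corresponds to an element $y\in G(Y)$ that vanishes under $G(g)\colon G(Y)\to G(X)$; since $G\in\FF$ is presented by an inflation and hence (via \Cref{Lemma:YonedaEmbeddingIsLeftExact} applied to the inflation's presentation) is a subfunctor-type object, I would argue $G(g)$ is injective whenever $g$ is a deflation — this is the crux of step one and amounts to the observation that an effaceable functor pairs trivially with a functor presented by an inflation. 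Concretely, writing $G$ with presentation $\Upsilon(U)\xrightarrow{\Upsilon(u)}\Upsilon(V)\to G\to 0$, $u$ an inflation, one computes $G$ on objects as a cokernel and checks that precomposition with a deflation $g$ kills only the zero element.

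Next I would produce the torsion conflation: for any $H\in\smodad(\EE)$ with representing admissible morphism $f\colon X\to Y$, factor $f$ as $X\xrightarrow{p}\im(f)\xrightarrow{i}Y$ with $p$ a deflation and $i$ an inflation. The idea is that $\coker(\Upsilon(p))$ should be the torsion part (it lies in $\eff(\EE)$, being presented by the deflation $p$) and $\coker(\Upsilon(i))$ should be the torsion-free part (presented by the inflation $i$, hence in $\FF$), and these assemble into a short exact sequence in $\Mod(\EE)$. This is exactly the situation of \Cref{Lemma:TheFamousDiagramChase}: applying $\Upsilon$ to the factorization diagram and taking cokernels of the vertical maps yields the epi-mono factorization $\ker(\eta)\rightarrowtail H\twoheadrightarrow\im(\eta)\rightarrowtail G'\twoheadrightarrow\coker(\eta)$ of the natural map, and one reads off a conflation $T\inflation H\deflation F$ with $T=\coker(\Upsilon(p))\in\eff(\EE)$ and $F\in\FF$. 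I would need to check this sequence is a conflation in the inherited exact structure on $\smodad(\EE)$, which follows since all three terms lie in $\smodad(\EE)$ (by \Cref{Proposition:AdmissiblyPresentedLiesExtensionClosed} it is extension-closed, and the sequence is short exact in $\Mod(\EE)$).

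For axiom \ref{A2}, I must show every morphism $\varphi\colon H\to F$ with $F\in\eff(\EE)$ is admissible in $\smodad(\EE)$ with image in $\eff(\EE)$. Here the strategy is to lift $\varphi$ to a morphism of projective presentations: choose presentations of $H$ (by an admissible morphism) and of $F$ (by a deflation), lift $\varphi$ to a map of the presenting morphisms in $\EE$, and then use the diagram chase of \Cref{Lemma:TheFamousDiagramChase} together with the acyclic-complex/cone argument as in the proof of \Cref{Proposition:AdmissiblyPresentedLiesExtensionClosed} to see that $\im(\varphi)$ is presented by a deflation, hence lies in $\eff(\EE)$, and that the mono and epi parts of $\varphi$ stay in $\smodad(\EE)$. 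The main obstacle I anticipate is precisely this last point: controlling the image and coimage of $\varphi$ inside $\smodad(\EE)$ rather than merely in $\Mod(\EE)$ — one must ensure the epi-mono factorization of $\varphi$ in $\Mod(\EE)$ actually has middle term in $\smodad(\EE)$ and realizes a conflation in the exact structure, which is where the representability of $\Upsilon$-cokernels and Neeman's cone lemma for acyclic complexes do the real work. Once \ref{A2} (and hence \ref{A2}$^{\mathsf{op}}$ is not needed — only \ref{A2} is hypothesized) holds together with the torsion pair, \Cref{Proposition:Torsion+A2IsTwoSidedPercolating}(1) immediately gives that $\eff(\EE)\subseteq\smodad(\EE)$ is two-sided admissibly percolating.
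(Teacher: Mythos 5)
Your overall strategy coincides with the paper's: the torsion conflation comes from the deflation--inflation factorization of a representing admissible morphism, the $\Hom$-vanishing is the same computation in different clothing (your element-wise Yoneda argument that $G(g)$ is injective when $g$ is a deflation and $G$ is presented by an inflation is exactly the paper's lifting argument — both use that the kernel of the deflation dies under the inflation, so the lift factors), axiom \ref{A2} is checked via \Cref{Lemma:TheFamousDiagramChase}, and the conclusion is \Cref{Proposition:Torsion+A2IsTwoSidedPercolating}. You also correctly note that only \ref{A2} for the torsion part needs to be verified, and you have the direction of \ref{A2} right (morphisms \emph{into} $\eff(\EE)$).

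The one step your sketch does not actually close is the one you yourself flag as the main obstacle: membership of $\ker(\varphi)$ in $\smodad(\EE)$. From \Cref{Lemma:TheFamousDiagramChase} the kernel is presented by $\begin{psmallmatrix}k'&\beta''\end{psmallmatrix}\colon \ker(g)\oplus A\to E$, and one must prove this morphism of $\EE$ is admissible. Neeman's cone lemma, which is the tool you invoke, is what proves extension-closure in \Cref{Proposition:AdmissiblyPresentedLiesExtensionClosed}; it is not what does the work here. The paper instead factors the presenting admissible morphism of the source as $A\deflation B'\inflation B$, pulls back $B'\inflation B$ along the deflation $g'\colon E\deflation B$ to obtain an inflation $E'\inflation E$ (pullbacks of inflations along deflations are inflations, \cite[Proposition 2.15]{Buhler10}), and uses \cite[Proposition 2.12]{Buhler10} to see that the induced map $\ker(g)\oplus A\to E'$ is a deflation; the composite $\ker(g)\oplus A\deflation E'\inflation E$ is then the required deflation--inflation factorization of $\begin{psmallmatrix}k'&\beta''\end{psmallmatrix}$. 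Some argument of this kind is needed; without it the admissibility of the presenting morphism of $\ker(\varphi)$ is unproved. The remaining parts of your \ref{A2} step (the image and cokernel of $\varphi$ land in $\eff(\EE)$ because their presenting morphisms $g'$ and $\begin{psmallmatrix}g&\alpha\end{psmallmatrix}$ are deflations) are fine and match the paper.
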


\begin{proof}
We first show that $(\eff(\EE),\FF)$ yields a torsion pair in $\smodad(\EE)$. To that end, let $G\in \smodad(\EE)$ and choose a projective presentation $\Upsilon(X)\xrightarrow{\Upsilon(f)}\Upsilon(Y)\to G\to 0$ where $f$ is admissible. Let $X\stackrel{f'}{\deflation}X'\stackrel{f''}{\inflation}Y$ be the deflation-inflation factorization of $f$. The commutative diagram
	\[\xymatrix{
		X\ar@{=}[r]\ar@{->>}[d]^{f'} & X\ar@{->>}[r]^{f'}\ar[d]^{f} & X'\ar@{>->}[d]^{f''}\\
		X'\ar@{>->}[r]^{f''} & Y\ar@{=}[r] & Y\\
	}\] in $\EE$ induces a sequence $F\stackrel{\phi}{\to} G \stackrel{\psi}{\to} H$ in $\smodad(\EE)$ where $F=\coker(\Upsilon(f'))\in \eff(\EE)$ and $H=\coker(\Upsilon(f''))\in \FF$. A straightforward diagram chase shows that $F\stackrel{\phi}{\to} G \stackrel{\psi}{\to} H$ is a short exact sequence.
	
	We now show that $\Hom_{\smodad(\EE)}(\eff(\EE),\FF)=0$. Let $\eta\colon F\to H$ be a morphism in $\smodad(\EE)$ with $F\in \eff(\EE)$ and $H\in \FF$. Choose projective presentations $\Upsilon(X)\xrightarrow{\Upsilon(f)}\Upsilon(Y)\to F\to 0$ and $\Upsilon(X')\xrightarrow{\Upsilon(h)}\Upsilon(Y')\to H \to 0$ where $f$ is a deflation and $h$ is an inflation in $\EE$. Lifting the morphism $\eta$ to a map between the projective presentations and using that the Yoneda embedding is fully faithful, we find morphisms $\alpha\colon X\to X'$ and $\beta\colon Y\to Y'$ such that $h\circ \alpha=\beta \circ f$ and making the diagram
	\[\xymatrix{
		\Upsilon(X)\ar[r]^{\Upsilon(f)}\ar[d]^{\Upsilon(\alpha)} & \Upsilon(Y)\ar[r]^{}\ar[d]^{\Upsilon(\beta)}  & F\ar[d]^{\eta} \ar[r]&  0\\
		\Upsilon(X')\ar[r]^{\Upsilon(h)} & \Upsilon(Y')\ar[r]^{}  & F' \ar[r] & 0
	}\]
commutative. Moreover, using \Cref{Lemma:YonedaEmbeddingIsLeftExact}, we find that $\alpha\circ k_f=0$ (where $k_f\colon \ker(f)\inflation X$ is the kernel of $f$). Since $h\circ \alpha \circ k_f= \beta \circ f\circ k_f=0$ and $h$ is an inflation, it follows that $\alpha \circ k_f=0$. Hence $\alpha$ factors through $f$ via a morphism $\gamma\colon Y\to X'$. Since $h\circ \gamma\circ f=h\circ \alpha =\beta \circ f$ and $f$ is a deflation, we get that $h\circ \gamma = \beta$. Hence $\Upsilon(\beta)$ factors through $\Upsilon(h)$, so $\eta$ must be $0$.

	To prove that $\eff(\EE)\subseteq \smodad(\EE)$ satisfies axiom \ref{A2}, assume we are given a morphism $\eta\colon F\to G$ with $F\in \smodad(\EE)$ and $G\in \eff(\EE)$. Choose projective presentations $\Upsilon(A)\xrightarrow{\Upsilon(f)}\Upsilon(B)\to F\to 0$ and $\Upsilon(C)\xrightarrow{\Upsilon(g)}\Upsilon(D)\to G\to 0$ where $f$ is admissible and $g$ is a deflation. Lifting $\eta$ to these projective presentation gives maps $\alpha\colon B\to D$ and $\beta\colon A\to C$  making the square 
	\[\xymatrix{
		A\ar[r]^{\beta}\ar[d]^f & C\ar[d]^{g}\\
		B\ar[r]^{\alpha} & D
	}\]
	commutative. By taking the pullback of $\alpha$ along $g$ we get the diagram \eqref{Thefamousdiagram} in \Cref{Lemma:TheFamousDiagramChase}, which induces the factorization 
	\[\xymatrix{
		\ker(\eta)\ar@{>->}[r] & F\ar@{->>}[r] & \im(\eta)\ar@{>->}[r] & G\ar@{->>}[r] & \coker(\eta)
	}\] of $\eta$ in $\Mod(\EE)$. Since $g'$ and $\begin{psmallmatrix}g& \alpha\end{psmallmatrix}$ are deflations, it follows that $\im(\eta)$ and $\coker(\eta)$ are in $\eff(\EE)$. Now let $A\stackrel{f'}{\deflation}B'\stackrel{f''}{\inflation}B$ be a deflation-inflation factorization of $f$. Taking the pullback $E'$ of $f''$ along $g'$, we get a commutative diagram 
\[\xymatrix{
			\ker(f)\ar[d]\ar@{>->}[r] & A\ar[d]\ar@{->>}[r]^{f'} & B'\ar@{=}[d]\\
			\ker(g)\ar@{=}[d]\ar@{>->}[r] & E'\ar@{>->}[d] \ar@{->>}[r]& B'\ar@{>->}[d]^{f''}\\
			\ker(g)\ar@{>->}[r]^{k'} & E\ar@{->>}[r]^{g'} & B
		}\]
where the rows are exact and where the composite $A\to E'\inflation E$ is equal to $\beta''$. Since the pullback of an inflation along a deflation is an inflation \cite[Proposition 2.15]{Buhler10}, the map $E'\inflation E$ is an inflation. Also, by \cite[Proposition 2.12]{Buhler10} the top left square is pushout square and the induced map $\ker(g)\oplus A\deflation E'$ is a deflation. Since $\begin{psmallmatrix}k'& \beta''\end{psmallmatrix}\colon \ker(g)\oplus A\to E$ is equal to the composite $\ker(g)\oplus A\deflation E'\inflation E$, it must be admissible. Hence $\ker (\eta)\in \smodad(\EE)$, which proves axiom \ref{A2}. Finally, by \Cref{Proposition:Torsion+A2IsTwoSidedPercolating}, $\eff(\EE)\subseteq \smodad(\EE)$ is a percolating subcategory.
\end{proof}

\begin{remark}
It was already shown in \cite[Proposition 4.7]{Fiorot20} that $\eff \EE$ lies two-sided filtering in $\smod \EE$ and in \cite[Lemma 9]{Schlichting06} that $\eff \EE$ is an abelian subcategory of $\smod \EE$ (for idempotent complete categories $\EE$).
\end{remark}

\begin{corollary}
	The quotient $\smodad(\EE)/\eff(\EE)$ is an exact category.
\end{corollary}

\begin{proof}
This follows directly from \Cref{Theorem:TwoSidedLocalizationTheorem}.
\end{proof}

\subsection{Auslander's formula for exact categories}\label{Section:AuslandersFormulaForExactCategories}

In this section $\EE$ denotes an exact category.

\begin{theorem}[Universal property]\label{Theorem:Universal property}
	\begin{enumerate}
		\item The Yoneda embedding $\Upsilon\colon \EE\to \smodad(\EE)$ is left exact.
		\item If $\CC$ is an exact category and $\Phi\colon \EE\to \CC$ is a left exact functor, then there exists a functor $\overline{\Phi}\colon \smodad(\EE)\to \CC$, unique up to isomorphism, which is exact and satisfies $\overline{\Phi}\circ \Upsilon=\Phi$.
	\end{enumerate} 
\end{theorem}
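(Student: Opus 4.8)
The plan is to prove the two parts separately. Part (1), that $\Upsilon\colon\EE\to\smodad(\EE)$ is left exact, amounts to checking that a conflation $X\stackrel{f}{\inflation}Y\stackrel{g}{\deflation}Z$ is sent to an exact sequence $\Upsilon(X)\inflation\Upsilon(Y)\xrightarrow{\circ}\Upsilon(Z)\twoheadrightarrow\coker(\Upsilon(g))$ in $\smodad(\EE)$. Since $\smodad(\EE)$ is an extension-closed (hence exact) subcategory of $\Mod(\EE)$ by \Cref{Proposition:AdmissiblyPresentedLiesExtensionClosed}, and the inclusion into $\Mod(\EE)$ is exact, it suffices to work inside $\Mod(\EE)$: the Yoneda embedding into $\Mod(\EE)$ is left exact, so $\Upsilon(X)\to\Upsilon(Y)$ is the kernel of $\Upsilon(g)$ there. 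One then observes $\coker(\Upsilon(g))\in\eff(\EE)\subseteq\smodad(\EE)$ (as $g$ is a deflation), that $\Upsilon(Y)/\Upsilon(X)\cong\im(\Upsilon(g))\in\smodad(\EE)$ (it has the admissible presentation coming from $g$), and that the relevant mono/epi pieces are admissible morphisms in $\smodad(\EE)$; this is exactly the content of the factorization produced in \Cref{Proposition:EffAreTorsion}. So $\Upsilon(g)$ is admissible in $\smodad(\EE)$ with kernel $\Upsilon(f)$, giving left exactness.

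For Part (2), given a left exact $\Phi\colon\EE\to\FF$, I would define $\overline{\Phi}$ on objects by choosing, for each $F\in\smodad(\EE)$, an admissible presentation $\Upsilon(X)\xrightarrow{\Upsilon(f)}\Upsilon(Y)\to F\to 0$ with $f$ admissible, and setting $\overline{\Phi}(F):=\coker(\Phi(f))$; this cokernel exists because $\Phi$ left exact sends admissible $f$ to an admissible morphism in $\FF$ (the \emph{Remark} following the definition of left exact functors), and admissible morphisms have cokernels. On morphisms, a map $\eta\colon F\to F'$ lifts (since $\Upsilon(Y)$ is projective and $\Upsilon(Y')\to F'$ is epi) to a chain map between presentations; applying $\Phi$ and passing to cokernels gives $\overline{\Phi}(\eta)$. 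The bulk of the work is well-definedness and functoriality: independence of the choice of lift, and independence of the choice of presentation. The standard move is that any two admissible presentations of $F$ are dominated by a third (take the "direct sum with a split presentation" / Schanuel-type argument, using that $\Upsilon$ is fully faithful and reflects admissibility is not needed here — only that $\Phi$ of an admissible is admissible), and a chain homotopy between two lifts $\Upsilon(\alpha)-\Upsilon(\alpha')=\Upsilon(f')\circ\Upsilon(s)$ transfers under $\Phi$ to a homotopy, killing the difference on cokernels. The identity $\overline{\Phi}\circ\Upsilon=\Phi$ is immediate from the presentation $0\to\Upsilon(A)\xrightarrow{\mathrm{id}}\Upsilon(A)\to 0$ of $\Upsilon(A)$: no, rather from $\Upsilon(0)\to\Upsilon(A)\to\Upsilon(A)\to 0$, whose $\Phi$-cokernel is $\Phi(A)$.

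It then remains to check $\overline{\Phi}$ is exact. Given a conflation $F\inflation G\deflation H$ in $\smodad(\EE)$, by the Horseshoe Lemma (as in the proof of \Cref{Proposition:AdmissiblyPresentedLiesExtensionClosed}) one can choose compatible admissible presentations fitting into a short exact sequence of complexes $P_\bullet^F\to P_\bullet^G\to P_\bullet^H$ of projectives in $\smodad(\EE)$, i.e.\ a morphism of acyclic complexes in $\EE$ whose "rows" are $\ker(f_?)\inflation X_?\xrightarrow{f_?} Y_?\deflation\coker(f_?)$ with the $X$- and $Y$-rows split. Applying $\Phi$ termwise and using that $\Phi$ preserves split conflations and sends the relevant acyclic complexes to acyclic complexes in $\FF$ (again via the \emph{Remark} on left exact functors, which controls $\Phi$ on admissible morphisms and their kernels), one gets a short exact sequence of complexes in $\FF$; taking cokernels and applying the Nine Lemma / long exact sequence yields a conflation $\overline{\Phi}(F)\inflation\overline{\Phi}(G)\deflation\overline{\Phi}(H)$ in $\FF$. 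The main obstacle I anticipate is the bookkeeping in exactness: ensuring that the chosen presentations can be made simultaneously admissible and fitting into an exact sequence of complexes, and that left exactness of $\Phi$ (which only guarantees good behaviour on admissible morphisms, \emph{not} preservation of all cokernels) is enough to push the short exact sequence of complexes through $\Phi$. Uniqueness of $\overline{\Phi}$ up to isomorphism follows formally: any exact $\Psi$ with $\Psi\Upsilon=\Phi$ must send the presentation $\Upsilon(X)\to\Upsilon(Y)\to F\to 0$ — which is a right exact sequence with $\Upsilon(f)$ admissible in $\smodad(\EE)$ by Part (1) — to $\Phi(X)\to\Phi(Y)\to\Psi(F)\to 0$ right exact, identifying $\Psi(F)\cong\coker(\Phi(f))=\overline{\Phi}(F)$ naturally.
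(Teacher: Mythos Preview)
Your proposal is correct and follows essentially the same route as the paper's proof: define $\overline{\Phi}(F)$ as $\coker(\Phi(f))$ for a chosen admissible presentation, check well-definedness by comparing lifts, and establish exactness via the Horseshoe construction of \Cref{Proposition:AdmissiblyPresentedLiesExtensionClosed} followed by the $3\times 3$ Lemma (your ``Nine Lemma''). The paper is somewhat terser on the bookkeeping you flag as the main obstacle --- it simply observes that after applying $\Phi$ to the Horseshoe diagram the rows remain split and the columns remain left exact (since $\Phi$ is additive and left exact), and then invokes \cite[Corollary 3.6]{Buhler10} twice: once to show that the row of images is a conflation, and once more to get that the row of cokernels is.
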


\begin{proof}
	\begin{enumerate}
		\item This follows from \Cref{Lemma:YonedaEmbeddingIsLeftExact}.
		\item Uniqueness is clear since any exact functor $\smodad(\EE)\to \CC$ is uniquely determined (up to isomorphism) by its restriction to the representable functors.  We prove existence.  For each $F\in\smodad(\EE)$, choose a projective presentation  $\Upsilon(A)\xrightarrow{\Upsilon(f)}\Upsilon(B)\to F\to 0$ with $f$ admissible (where if $F=\Upsilon(E)$ we simply choose $0\to \Upsilon(E)\xrightarrow{1}\Upsilon(E)\to 0$). Since $\Phi$ is left exact, it preserves admissible morphisms, and hence $\Phi(f)$ is admissible in $\CC$. We define $\overline{\Phi}(F)=\coker (\Phi(f))$. Let $\eta\colon F\to F'$ be a morphism in $\smodad(\EE)$ between objects with fixed projective presentations $\Upsilon(A)\xrightarrow{\Upsilon(f)}\Upsilon(B)\to F\to 0$ and $\Upsilon(A')\xrightarrow{\Upsilon(f')}\Upsilon(B')\to F'\to 0$. Choose maps $\alpha\colon A\to A'$ and $\beta\colon B\to B'$ in $\EE$ making the diagram
		\[\xymatrix{
		\Upsilon(A)\ar[r]^{\Upsilon(f)}\ar[d]^{\Upsilon(\alpha)} & \Upsilon(B)\ar[r]^{}\ar[d]^{\Upsilon(\beta)}  & F\ar[d]^{\eta} \ar[r]&  0\\
		\Upsilon(A')\ar[r]^{\Upsilon(f')} & \Upsilon(B')\ar[r]^{}  & F' \ar[r] & 0
	}\]
commutative. We define $\overline{\Phi}(\eta)\colon \overline{\Phi}(F)\to \overline{\Phi}(F')$ to be the unique morphism in $\CC$ making the diagram \[\xymatrix{
		\Phi(A)\ar[r]^{\Phi(f)}\ar[d]^{\Phi(\alpha)} & \Phi(B)\ar[r]^{}\ar[d]^{\Phi(\beta)}  & \overline{\Phi}(F)\ar[d]^{\overline{\Phi}(\eta)} \ar[r]&  0\\
		\Phi(A')\ar[r]^{\Phi(f')} & \Phi(B')\ar[r]^{}  & \overline{\Phi}(F') \ar[r] & 0
	}\]
	commutative. It is straightforward to check that $\overline{\Phi}(\eta)$ is independent of choice of $\alpha$ and $\beta$, and that $\overline{\Phi}\colon \smodad(\EE)\to \CC$ is a well-defined functor. It only remains to show that $\overline{\Phi}$ is exact. Let $F\inflation G\deflation H$ be a conflation in $\smodad(\EE)$. Choose projective presentations $\Upsilon(A)\xrightarrow{\Upsilon(f)}\Upsilon(B)\to F\to 0$ and $\Upsilon(C)\xrightarrow{\Upsilon(h)}\Upsilon(D)\to H\to 0$ of $F$ and $H$, and use the Horseshoe Lemma as in \Cref{Proposition:AdmissiblyPresentedLiesExtensionClosed} to obtain the following commutative diagram in $\EE$:
		\[\xymatrix{
			\ker(f)\ar@{>->}[d]\ar@{>->}[r] & \ker(f)\oplus \ker(h)\ar@{>->}[d]\ar@{->>}[r] & \ker(h)\ar@{>->}[d]\\
			A\ar[d]^f\ar@{>->}[r] & A\oplus C\ar[d] \ar@{->>}[r]& C\ar[d]^h\\
			B\ar@{>->}[r] & B\oplus D\ar@{->>}[r] & D
		}\] where the rows are are split kernel-cokernels pairs and the columns are left exact sequences. Since $\Phi$ is left exact and preserves split kernel-cokernel pairs, by applying the $3\times 3$ lemma twice \cite[Corollary 3.6]{Buhler10} we get that $\overline{\Phi}(F)\inflation \overline{\Phi}(G)\deflation \overline{\Phi}(H)$ is a conflation. This shows that $\overline{\Phi}$ is exact. \qedhere 
	\end{enumerate}
\end{proof}

\begin{corollary}\label{corollary:YonedaLeftAdjoint}
		 The Yoneda embedding $\Upsilon\colon \EE\to \smodad(\EE)$ has an exact left adjoint $L\colon \smodad(\EE)\to\EE$ satisfying $L\circ \Upsilon=1_{\EE}$ and $\ker(L)=\eff(\EE)$.
\end{corollary}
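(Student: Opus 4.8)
The plan is to deduce everything from the universal property in \Cref{Theorem:Universal property}. First I would observe that the identity functor $1_\EE\colon\EE\to\EE$ is trivially left exact (it sends conflations to conflations), so applying the universal property to $\Phi = 1_\EE$ produces an exact functor $L := \overline{1_\EE}\colon \smodad(\EE)\to\EE$ satisfying $L\circ\Upsilon = 1_\EE$. Concretely, $L$ sends a functor $F$ with admissible presentation $\Upsilon(A)\xrightarrow{\Upsilon(f)}\Upsilon(B)\to F\to 0$ to $\coker(f)$, which exists in $\EE$ because $f$ is admissible.

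Next I would verify the adjunction $L\dashv\Upsilon$. Given $F\in\smodad(\EE)$ with a chosen admissible presentation as above and an object $E\in\EE$, a morphism $F\to\Upsilon(E)$ in $\smodad(\EE) = \smod(\EE)$ corresponds, via the projectivity of $\Upsilon(B)$ and the Yoneda lemma, to a morphism $b\colon B\to E$ with $b\circ f = 0$ (after using that $\Upsilon$ is fully faithful). Since $f$ is admissible with deflation part $f'\colon B\to\coker(f) = L(F)$, and $\coker(f)$ is the cokernel of $f$, the condition $b\circ f = 0$ means $b$ factors uniquely through $f'$ as $b = \tilde b\circ f'$ with $\tilde b\colon L(F)\to E$. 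One checks this assignment $(F\to\Upsilon(E))\mapsto (L(F)\to E)$ is natural in both variables and bijective, giving the adjunction; naturality in $F$ is where one uses that $L(\eta)$ was defined via lifted presentations, exactly matching the construction of $\overline{\Phi}(\eta)$ in the proof of \Cref{Theorem:Universal property}. The counit $L\Upsilon = 1_\EE \Rightarrow 1_\EE$ being the identity is immediate from $L\circ\Upsilon = 1_\EE$ on the nose.

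Finally I would identify $\ker(L)$, i.e.\ the full subcategory of $F\in\smodad(\EE)$ with $L(F)\cong 0$. If $F\in\eff(\EE)$, it admits a presentation with $f$ a deflation, so $\coker(f) = 0$ and $L(F) = 0$. Conversely, if $L(F) = \coker(f) = 0$ for the chosen admissible presentation with deflation-inflation factorization $B\xrightarrow{f'}\im(f)\xrightarrow{f''}E$ (writing $f\colon B\to E$), then $\coker(f'') = \coker(f) = 0$; since $f''$ is an inflation with zero cokernel it is an isomorphism, hence $f = f''\circ f'$ is (isomorphic to) a deflation, so $F\in\eff(\EE)$. Thus $\ker(L) = \eff(\EE)$.

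I expect the main obstacle to be checking naturality and well-definedness of the adjunction bijection independently of the chosen presentations — but this is precisely the kind of diagram-chase already carried out in the proof of \Cref{Theorem:Universal property} (independence of $\overline\Phi(\eta)$ from the lift), so it should go through routinely; alternatively, one can cite the universal property directly by noting that any left exact functor, in particular $1_\EE$, factors through $\Upsilon$, and that $\smodad(\EE)$ being generated under cokernels by the image of $\Upsilon$ forces the extension to be a left adjoint.
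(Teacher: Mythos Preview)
Your proposal is correct and follows essentially the same approach as the paper: both apply the universal property of \Cref{Theorem:Universal property} to $1_\EE$, identify $L(F)\cong\coker(f)$, verify the adjunction via the universal property of the cokernel (the paper phrases this as factoring through the unit $\alpha\colon F\to\Upsilon(\coker f)$, you phrase it as the bijection $\{b\colon B\to E\mid b\circ f=0\}\cong\Hom(\coker f,E)$), and deduce $\ker(L)=\eff(\EE)$ from the fact that an admissible morphism with trivial cokernel is a deflation. One minor point: your variable names drift (first $f\colon A\to B$, later $f\colon B\to E$), and calling the cokernel map $B\to\coker(f)$ the ``deflation part'' of $f$ is a misnomer---it is a deflation, but not the one appearing in the deflation--inflation factorization of $f$ itself.
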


\begin{proof}
	Applying \Cref{Theorem:Universal property} to the identity functor on $\EE$ gives an exact functor $L\colon \smodad(\EE)\to \EE$ satisfying $L\circ \Upsilon=1_{\EE}$. Explicitly, given $F\in \smodad(\EE)$ with projective presentation $\Upsilon (A)\xrightarrow{\Upsilon(f)}\Upsilon (B)\to F$ where $f$ admissible, we have $L(F)\cong \coker (f)$. Note that $\Upsilon (B)\to\Upsilon(\coker(f))$ factors as $\Upsilon (Y)\to F \xrightarrow{\alpha} \Upsilon(\coker(f))$ for a unique map $\alpha$. One readily verifies that any map $F\to \Upsilon(Z)$ must factor through $\alpha$ in a unique way, and hence we have natural isomorphisms \[
	\Hom_{\smodad(\EE)}(F,\Upsilon (Z))\cong \Hom_{\EE}(\coker (f),Z)\cong \Hom_{\EE}(L(F),Z)
	\]
which shows that $L$ is left adjoint to $\Upsilon$. Finally, note that $L(F)\cong 0$ if and only if $\coker(f)\cong 0$. As $f$ is admissible, $\coker(f)\cong 0$ implies that $f$ is a deflation. This shows that $\ker(L)=\eff(\EE)$.
\end{proof}

We now prove Auslander's formula for exact categories.

\begin{theorem}[Auslander's formula for exact categories]\label{Theorem:AuslandersFormulaForExactCategories}
	The functor $L\colon \smodad(\EE)\to \EE$ induces an equivalence
	\[
	\smodad(\EE)/\eff(\EE)\cong \EE
	\]
	of exact categories. 
\end{theorem}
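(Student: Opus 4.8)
The plan is to invoke the universal property of the quotient $\smodad(\EE)/\eff(\EE)$ established in \Cref{Theorem:TwoSidedLocalizationTheorem} together with the universal property of $\smodad(\EE)$ itself (\Cref{Theorem:Universal property}), and to produce functors in both directions that are quasi-inverse to each other. First I would note that by \Cref{corollary:YonedaLeftAdjoint} the functor $L\colon \smodad(\EE)\to\EE$ is exact with $\ker(L)=\eff(\EE)$, so $L(\eff(\EE))\cong 0$; hence by the $2$-universal property of the quotient (\Cref{Theorem:TwoSidedLocalizationTheorem}(2)) there is a unique exact functor $\overline{L}\colon \smodad(\EE)/\eff(\EE)\to\EE$ with $\overline{L}\circ Q = L$, where $Q\colon\smodad(\EE)\to\smodad(\EE)/\eff(\EE)$ is the localization functor. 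This is the candidate equivalence.

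For the inverse, I would consider the composite $Q\circ\Upsilon\colon \EE\to\smodad(\EE)/\eff(\EE)$. The Yoneda embedding $\Upsilon$ is left exact by \Cref{Theorem:Universal property}(1), and $Q$ is exact, hence in particular left exact (an exact functor sends conflations to conflations, which are a fortiori of the required shape), so $Q\circ\Upsilon$ is a left exact functor $\EE\to\smodad(\EE)/\eff(\EE)$. I would then verify directly that $Q\circ\Upsilon$ is an equivalence by checking it is fully faithful and essentially surjective. Essential surjectivity is immediate: every object of $\smodad(\EE)/\eff(\EE)$ is (isomorphic to) $Q(F)$ for some $F\in\smodad(\EE)$, and choosing an admissible projective presentation $\Upsilon(X)\to\Upsilon(Y)\to F\to 0$ with deflation--inflation factorization $X\deflation X'\inflation Y$, the torsion sequence of \Cref{Proposition:EffAreTorsion} realizes $F$ as an extension with torsion part in $\eff(\EE)$ and torsionfree part $\Upsilon(X')\to F\to H$ with $H\in\FF$; pushing to the quotient kills the $\eff(\EE)$ part, so $Q(F)\cong Q(H)$, and one checks $Q(H)\cong Q(\Upsilon(X'))$ using that the map $\Upsilon(X)\to\Upsilon(X')$ has effaceable cokernel. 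For full faithfulness I would compute $\Hom$ in the localization as a colimit over roofs of weak isomorphisms: a morphism $Q(\Upsilon(A))\to Q(\Upsilon(B))$ is represented by a roof $\Upsilon(A)\xleftarrow{\sim}G\to\Upsilon(B)$ with $G\in\smodad(\EE)$ and the left leg a weak isomorphism, i.e.\ admissible with kernel and cokernel in $\eff(\EE)$; applying $L$ and using $L\circ\Upsilon=1_\EE$, $L(\eff(\EE))=0$ and exactness of $L$, one extracts a genuine morphism $A\to B$ in $\EE$, and conversely every morphism $A\to B$ arises this way — giving the bijection.

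Alternatively, and perhaps more cleanly, I would argue that $\overline{L}$ and $Q\circ\Upsilon$ are quasi-inverse. On one side, $\overline{L}\circ(Q\circ\Upsilon) = (\overline{L}\circ Q)\circ\Upsilon = L\circ\Upsilon = 1_\EE$ by \Cref{corollary:YonedaLeftAdjoint}, so this composite is the identity on the nose. For the other composite $(Q\circ\Upsilon)\circ\overline{L}\colon \smodad(\EE)/\eff(\EE)\to\smodad(\EE)/\eff(\EE)$, I would precompose with $Q$: since $Q$ is (essentially) surjective and full it suffices to produce a natural isomorphism $Q\circ\Upsilon\circ L \cong Q$ of functors $\smodad(\EE)\to\smodad(\EE)/\eff(\EE)$. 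For $F\in\smodad(\EE)$ the adjunction counit $\Upsilon L(F)\to F$ (or rather, the relevant comparison map coming from \Cref{corollary:YonedaLeftAdjoint}) has kernel and cokernel in $\eff(\EE)$ — this is exactly the content of the torsion decomposition together with $\ker(L)=\eff(\EE)$ — hence becomes an isomorphism after applying $Q$, and naturality is routine. By the universal property of $Q$, this natural isomorphism descends to $Q\circ\Upsilon\circ\overline{L}\cong \mathrm{id}$, completing the proof that $\overline{L}$ is an exact equivalence.

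The main obstacle I anticipate is the verification that the comparison map $\Upsilon L(F)\to F$ has both kernel \emph{and} cokernel in $\eff(\EE)$, equivalently is a weak $\eff(\EE)$-isomorphism: one must carefully unwind the torsion pair $(\eff(\EE),\FF)$ from \Cref{Proposition:EffAreTorsion} and the explicit description $L(F)\cong\coker(f)$ for an admissible representing morphism $f\colon X\to Y$, checking that the natural map relating $\Upsilon(\coker f)$ to $F$ fits into the conflation $F'\inflation F\deflation H$ with $F'\in\eff(\EE)$ in a way compatible with $\Upsilon$ applied to $X\deflation X'\inflation Y$, and that the residual discrepancy between $\Upsilon(X')$-type presentations and $\Upsilon(\coker f)=\Upsilon(Y/\im f)$ is again effaceable. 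This is where the admissibility of $f$ is used essentially, via the deflation--inflation factorization, and where one must be careful that $\smodad(\EE)$ — not $\smod(\EE)$ — is the right ambient category for the argument to close.
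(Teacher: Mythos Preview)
Your second approach is correct and is essentially an unwinding, by hand, of what the paper accomplishes in one line. The paper's proof simply observes that $\ker(L)=\eff(\EE)$ so that $L$ descends to $\overline{L}$, and then invokes \cite[Proposition~1.3]{GabrielZisman67}: since $L$ admits a fully faithful right adjoint $\Upsilon$ (this is \Cref{corollary:YonedaLeftAdjoint}), the induced functor from the localization at the morphisms $L$ inverts to $\EE$ is automatically an equivalence. Your construction of $Q\circ\Upsilon$ as quasi-inverse, together with the check that the adjunction comparison map becomes invertible in the quotient, is exactly the content of that Gabriel--Zisman proposition, so nothing is gained by spelling it out except self-containedness.

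Two small corrections. First, the relevant comparison map is the \emph{unit} $\eta_F\colon F\to \Upsilon L(F)$, not the counit: $L$ is left adjoint to $\Upsilon$, so the counit $L\Upsilon\to 1_\EE$ is the identity (this is $L\circ\Upsilon=1_\EE$), while the unit goes the other way. Concretely, $\eta_F$ is the map $\alpha\colon F\to\Upsilon(\coker f)$ appearing in the proof of \Cref{corollary:YonedaLeftAdjoint}; it factors as $F\deflation H\inflation \Upsilon(\coker f)$ via the torsion sequence and \Cref{Proposition:TorsionTheoryProperties}\eqref{Proposition:TorsionTheoryPropertiesB}, so its kernel and cokernel lie in $\eff(\EE)$ as you need. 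Second, your essential surjectivity sketch in the first approach is muddled: the representable that $Q(F)$ is isomorphic to is $Q(\Upsilon(\coker f))=Q(\Upsilon L(F))$, not $Q(\Upsilon(X'))$. The object $X'=\im(f)$ plays no direct role there; it is $L(F)=\coker(f)$ that matters.
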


\begin{proof}
	As $\ker(L)=\eff(\EE)$, \Cref{Theorem:TwoSidedLocalizationTheorem}.\eqref{Item:Theorem:TwoSidedLocalizationTheorem2} yields an exact functor $L'\colon \smodad(\EE)/\eff(\EE)\to \EE$ such that $L'\circ Q=L$ where $Q\colon \smodad(\EE)\to \smodad(\EE)/\eff(\EE)$ is the localization functor. By \cite[Proposition~1.3]{GabrielZisman67}, $L'$ is an equivalence.
\end{proof}

\begin{remark}\label{Remark:WeSeeLAsALocalizationFunctor}
	The functors $L$ and $Q$ satisfy the same universal property (see \Cref{Definition:LocalizationWithRespectToMorphisms}) of a localization functor as $L=L'\circ Q$ and $L'$ is an equivalence of exact categories.
\end{remark}
	
\begin{corollary}\label{Corollary:VerdierLocalizationAlternativeDefinitionViaAuslander}
\begin{enumerate}
\item The Yoneda embedding $\EE\to \smodad(\EE)$ lifts to a triangle equivalence $\Kb(\EE)\to\Db(\smodad(\EE))$.
		\item there is a natural commutative diagram
			\[\xymatrix{
				\Db_{\eff(\EE)}(\smodad(\EE))\ar[r]\ar[d]^{\simeq} & \Db(\smodad(\EE))\ar[r]\ar[d]^{\simeq} & \Db(\smodad(\EE)/\eff(\EE))\ar[d]^{\simeq}\\
				\left\langle\Acb(\EE)\right\rangle_{\text{thick}}\ar[r] & \Kb(\EE)\ar[r] & \Db(\EE)
			}\] whose rows are Verdier localization sequences.
	\end{enumerate}
\end{corollary}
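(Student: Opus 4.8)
The plan is to prove (1) first, and then to obtain (2) by splicing (1) together with Auslander's formula (\Cref{Theorem:AuslandersFormulaForExactCategories}) and the induced Verdier localization sequence of \Cref{Theorem:TwoSidedLocalizationTheorem}. For (1), note that although $\Upsilon$ is only left exact (\Cref{Theorem:Universal property}), it is additive, so it induces a triangle functor $\Kb(\Upsilon)\colon\Kb(\EE)\to\Kb(\smodad(\EE))$ (the triangulated structure on bounded homotopy categories uses only the additive structure), and composing with the localization $\Kb(\smodad(\EE))\to\Db(\smodad(\EE))$ gives the functor $\Phi$ to be shown to be an equivalence. Two facts make this work. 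First, by \Cref{Lemma:YonedaEmbeddingIsLeftExact} every object of $\smodad(\EE)$ has a projective resolution of length at most $2$ by representable functors, so $\gldim\smodad(\EE)\le 2$. Second, the projective objects of $\smodad(\EE)$ are exactly the representable functors: representables are clearly projective, and conversely, if $F\in\smodad(\EE)$ is projective then, using a projective presentation with admissible underlying morphism (whose kernel is representable, so that the kernel of $\Upsilon(B)\twoheadrightarrow F$ again lies in $\smodad(\EE)$), $F$ is a direct summand of some $\Upsilon(B)$; since $\Upsilon$ is fully faithful the counit of the adjunction $L\dashv\Upsilon$ from \Cref{corollary:YonedaLeftAdjoint} is invertible, hence so is the unit $\eta_{\Upsilon(B)}\colon\Upsilon(B)\to\Upsilon L\Upsilon(B)$, and by naturality of $\eta$ with respect to the idempotent cutting out $F$ (together with additivity of $\Upsilon$ and $L$) $\eta_{\Upsilon(B)}$ decomposes as a direct sum having $\eta_F$ as a summand, so $\eta_F$ is invertible and $F\cong\Upsilon L(F)$ is representable. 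Granting these, $\Phi$ factors as $\Kb(\EE)\xrightarrow{\sim}\Kb(\Proj\smodad(\EE))\xrightarrow{\sim}\Db(\smodad(\EE))$, where the second functor is an equivalence by the standard fact that $\Kb(\Proj)\to\Db$ is a triangle equivalence for an exact category with enough projectives and finite global dimension (full faithfulness because a bounded complex of projectives has the same morphism spaces in $\Kb$ and $\Db$; essential surjectivity because the bound on projective dimension lets one resolve any bounded complex by a bounded complex of projectives up to quasi-isomorphism).

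For (2), the top row is a Verdier localization sequence by \Cref{Proposition:EffAreTorsion} (which says $\eff(\EE)$ is a two-sided admissibly percolating subcategory of $\smodad(\EE)$) together with \Cref{Theorem:TwoSidedLocalizationTheorem}(3) applied to the pair $\eff(\EE)\subseteq\smodad(\EE)$, so that $\Db_{\eff(\EE)}(\smodad(\EE))$ is the thick subcategory generated by $\eff(\EE)$, i.e.\ the kernel of the quotient functor. The bottom row is a Verdier localization sequence essentially by definition, since $\Db(\EE)=\Kb(\EE)/\Acb(\EE)$ and the kernel of a Verdier quotient functor is the thick closure of what one kills, namely $\langle\Acb(\EE)\rangle_{\text{thick}}$. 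The middle vertical equivalence is (1). The right vertical equivalence is $\Db(-)$ applied to the exact equivalence $\smodad(\EE)/\eff(\EE)\cong\EE$ of \Cref{Theorem:AuslandersFormulaForExactCategories}.

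It remains to check that the two squares commute and to produce the left vertical equivalence. For the right square: by \Cref{corollary:YonedaLeftAdjoint} the exact functor $L$ factors as $\smodad(\EE)\to\smodad(\EE)/\eff(\EE)\xrightarrow{\sim}\EE$, so the composite of the top-right horizontal with the right vertical equivalence is $\Db(L)$; since $\Db(L)$ is induced from $\Kb(L)$ and the middle equivalence is induced from $\Kb(\Upsilon)$, precomposing gives the functor induced from $\Kb(L)\circ\Kb(\Upsilon)=\Kb(L\circ\Upsilon)=\Kb(1_\EE)$, i.e.\ the canonical localization $\Kb(\EE)\to\Db(\EE)$; hence the right square commutes up to a canonical natural isomorphism. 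Finally, in each row the left-hand term is fully faithfully embedded as the kernel of the right-pointing localization functor, so the commuting right square together with the middle and right vertical equivalences forces the middle equivalence to restrict to an equivalence $\Db_{\eff(\EE)}(\smodad(\EE))\xrightarrow{\sim}\langle\Acb(\EE)\rangle_{\text{thick}}$ making the left square commute.

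The step I expect to be the main obstacle is (1), and inside it the identification of the projective objects of $\smodad(\EE)$ with the representable functors: because $\EE$ (and hence $\smodad(\EE)$) need not be idempotent complete, one cannot simply invoke $\Proj\smodad(\EE)=\add\Upsilon(\EE)=\Upsilon(\EE)$, and the adjunction argument above is precisely what rules out spurious non-representable projective summands of representables. Once (1) and Auslander's formula are in hand, everything in (2) is a formal manipulation of Verdier localization sequences.
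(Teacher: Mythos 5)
Your argument is correct and is exactly the route the paper intends: part (1) is the standard equivalence $\Kb(\Proj)\simeq\Db$ for an exact category with enough projectives and finite global dimension (the paper's entire proof of (1) is the remark that every object of $\smodad(\EE)$ has finite projective dimension by \Cref{Lemma:YonedaEmbeddingIsLeftExact}), and part (2) is the same formal splicing of the two Verdier localization sequences via \Cref{Theorem:TwoSidedLocalizationTheorem} and \Cref{Theorem:AuslandersFormulaForExactCategories}. The only point where you genuinely deviate is in proving $\Proj(\smodad(\EE))=\Upsilon(\EE)$ via the unit of $L\dashv\Upsilon$; the paper establishes this only later (\Cref{Proposition:TorsionTheoryProperties}) by a torsion-pair argument, so your self-contained version avoids a forward reference and is a welcome addition.
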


\begin{proof}
	\begin{enumerate}
		\item	This follows immediately since any object in $\smodad(\EE)$ has finite projective dimension by \Cref{Lemma:YonedaEmbeddingIsLeftExact}.
		\item The upper Verdier localization sequence is induced by the localization sequence \[
		\eff(\EE)\to\smodad(\EE)\to \smodad(\EE)/\eff(\EE).\] The lower sequence is simply the definition of $\Db(\EE)$ for an exact category. The functor $L'$ induces the equivalence $\Db(\smodad(\EE)/\eff(\EE))\xrightarrow{\simeq} \Db(\EE)$ and the Yoneda embedding induces the equivalence $\Kb(\EE)\rightarrow \Db(\smodad(\EE))$. The result follows.\qedhere
	\end{enumerate}
\end{proof}

In the last part of this section we show that $\EE\mapsto \smodad(\EE)$ can be considered as a $2$-functor.

\begin{definition}
\begin{enumerate}
\item  Let $\Ex$ denote the $2$-category with objects exact categories, with $1$-morphisms exact functors, and with $2$-morphisms natural transformations.
\item Let $\Ex_L$ denote the $2$-category with objects exact categories, with $1$-morphisms left exact functors, and with $2$-morphisms natural transformations.
\end{enumerate}
\end{definition}

\begin{corollary}\label{Corollary:Left2Adjoint}
The association $\EE\mapsto \smodad(\EE)$ induces a $2$-functor
\[
\smodad(-)\colon \Ex_L\to \Ex
\]
which is left adjoint to the inclusion $\Ex\subseteq \Ex_L$, i.e. for which the restriction 
\[
\Hom_{\Ex}(\smodad(\EE),\EE')\to \Hom_{\Ex_L}(\EE,\EE')
\]
via $\Upsilon\colon \EE\to \smodad(\EE)$  is an equivalence of categories.

\end{corollary}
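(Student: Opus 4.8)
The plan is to upgrade the ordinary adjunction from \Cref{corollary:YonedaLeftAdjoint} together with the universal property of \Cref{Theorem:Universal property} to a statement about $2$-categories. First I would check that $\smodad(-)$ is actually a $2$-functor: on objects it sends $\EE$ to $\smodad(\EE)$; on a $1$-morphism, i.e.\ a left exact functor $\Phi\colon \EE\to \EE'$, I compose with the Yoneda embedding $\Upsilon'\colon \EE'\to \smodad(\EE')$ to get a left exact functor $\Upsilon'\circ\Phi\colon \EE\to \smodad(\EE')$, and then apply \Cref{Theorem:Universal property} to obtain an exact functor $\overline{\Upsilon'\circ\Phi}\colon \smodad(\EE)\to \smodad(\EE')$; this is $\smodad(\Phi)$. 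Functoriality up to canonical isomorphism (pseudofunctoriality is all one can expect, since $\overline{(-)}$ is only unique up to isomorphism) follows from the uniqueness clause in \Cref{Theorem:Universal property}: both $\smodad(\Psi)\circ\smodad(\Phi)$ and $\smodad(\Psi\circ\Phi)$ are exact functors $\smodad(\EE)\to \smodad(\EE'')$ whose restriction along $\Upsilon$ is $\Upsilon''\circ\Psi\circ\Phi$, hence they agree up to isomorphism; similarly $\smodad(1_\EE)\cong 1_{\smodad(\EE)}$. On $2$-morphisms, a natural transformation $\tau\colon \Phi\Rightarrow \Phi'$ induces a natural transformation $\Upsilon'\tau\colon \Upsilon'\Phi\Rightarrow\Upsilon'\Phi'$, and using that exact functors out of $\smodad(\EE)$ are determined by their restriction to representables one checks this lifts uniquely to $\smodad(\tau)\colon \smodad(\Phi)\Rightarrow\smodad(\Phi')$; this assignment respects vertical and horizontal composition.

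Next I would establish the adjunction. The unit of the claimed $2$-adjunction is the Yoneda embedding $\Upsilon\colon \EE\to \smodad(\EE)$, which is left exact by \Cref{Theorem:Universal property}(1), hence a $1$-morphism in $\Ex_L$. The restriction functor
\[
\Hom_{\Ex}(\smodad(\EE),\EE')\longrightarrow \Hom_{\Ex_L}(\EE,\EE'),\qquad G\longmapsto G\circ\Upsilon,
\]
is well-defined because the composite of an exact functor $G\colon \smodad(\EE)\to\EE'$ with the left exact functor $\Upsilon$ is left exact (composition of a left exact functor followed by an exact functor is left exact). On objects, this functor is essentially surjective and full-and-faithful in the appropriate sense precisely by \Cref{Theorem:Universal property}: given a left exact $\Phi\colon \EE\to\EE'$, the functor $\overline{\Phi}$ satisfies $\overline{\Phi}\circ\Upsilon=\Phi$, giving essential surjectivity, and the uniqueness up to isomorphism of $\overline{\Phi}$ gives that the restriction functor is essentially injective. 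For fullness and faithfulness on $2$-cells: given $G_1,G_2\colon \smodad(\EE)\to\EE'$ exact, a natural transformation $G_1\Upsilon\Rightarrow G_2\Upsilon$ extends uniquely to a natural transformation $G_1\Rightarrow G_2$, again because both $G_i$ are determined on all of $\smodad(\EE)$ by their behaviour on representables together with exactness (every $F\in\smodad(\EE)$ has a presentation $\Upsilon(X)\to\Upsilon(Y)\to F\to 0$ with the first map admissible, and $G_i$ preserves the cokernel). This is the content of "an equivalence of categories" in the statement.

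The main obstacle, and the step requiring the most care, is the last one: showing that a $2$-cell $G_1\Upsilon\Rightarrow G_2\Upsilon$ extends, uniquely and compatibly, to $G_1\Rightarrow G_2$. The point is that $\smodad(\EE)$ is not generated under cokernels by representables using \emph{arbitrary} cokernels, only cokernels of \emph{admissible} morphisms; so one must verify that an exact functor respects exactly those cokernels that appear in projective presentations of objects of $\smodad(\EE)$, and that naturality on representables propagates along such presentations. Concretely, for $F$ with presentation $\Upsilon(X)\xrightarrow{\Upsilon(f)}\Upsilon(Y)\to F\to 0$ with $f$ admissible, one factors $f=f''f'$ with $f'$ a deflation and $f''$ an inflation; then $\Upsilon(f')$ is an epimorphism in $\smodad(\EE)$ and $\Upsilon(Y)\twoheadrightarrow F$ factors through it, so $F$ is a cokernel of the admissible-hence-"$\smodad(\EE)$-exact" data, and an exact functor preserves this. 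One must also check the extended transformation is independent of the chosen presentation (two presentations are related by a homotopy, which one lifts) and that it is natural in $F$; these are the routine-but-delicate diagram chases, entirely parallel to the well-definedness arguments already carried out in the proof of \Cref{Theorem:Universal property} and \Cref{corollary:YonedaLeftAdjoint}, so I would phrase the proof as a reduction to those. Everything else — the pseudofunctoriality coherences and the compatibility of the equivalence with composition — follows formally from the uniqueness-up-to-isomorphism in \Cref{Theorem:Universal property}.
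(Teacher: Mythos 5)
Your proposal is correct and follows essentially the same route as the paper: define $\smodad(\Phi)$ as the exact extension $\overline{\Upsilon'\circ\Phi}$ furnished by \Cref{Theorem:Universal property}, handle $2$-morphisms and the equivalence of $\Hom$-categories via the unique extension of natural transformations from representables, and deduce the adjunction from the existence/uniqueness clauses of the universal property. Your extra care about pseudofunctoriality (coherence only up to canonical isomorphism, since $\overline{(-)}$ is unique only up to isomorphism) and about which cokernels exact functors are required to preserve is a reasonable sharpening of details the paper leaves implicit, but it does not change the argument.
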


\begin{proof}
Using  \Cref{Theorem:Universal property}, for each left exact functor $\Phi\colon \EE\to \EE'$ we choose an exact functor \[
\smodad(\Phi)\colon \smodad(\EE)\to \smodad(\EE')
\]
 making the square  
\[\xymatrix@C=3em{
		\EE\ar[r]^{\Phi}\ar[d]^{\Upsilon} & \EE'\ar[d]^{\Upsilon}\\
		\smodad(\EE)\ar[r]^{\smodad(\Phi)} & \smodad(\EE')
	}\]
	commutative. Since each natural transformation $\Phi\to \Phi'$ can be extended uniquely to a natural transformation $\smodad(\Phi)\to \smodad(\Phi')$, we get a $2$-functor $\smodad(-)\colon \Ex_L\to \Ex$. Finally, the restriction $\Hom_{\Ex}(\smodad(\EE),\EE')\to \Hom_{\Ex_L}(\EE,\EE')$ is an equivalence by \Cref{Theorem:Universal property} and the fact that any natural transformation $\Psi\circ \Upsilon\to \Psi'\circ \Upsilon$ where $\Psi,\Psi'\in \Hom_{\Ex}(\smodad(\EE),\FF)$ can be extended uniquely to a natural transformation $\Psi\to \Psi'$.
\end{proof}

\subsection{\texorpdfstring{The torsion pair $(\eff(\EE),\cogen(\QQ))$ in $\smodad(\EE)$}{Torsion theory in admissibly presented functors}}

Throughout this section, fix an exact category $\EE$ and set $\QQ=\Proj(\smodad(\EE))$. In this section, we show that $(\eff(\EE),\FF)$ as in \Cref{Proposition:EffAreTorsion} satisfies several useful properties. In the next section, we show that these properties can be used to characterize the image of the $2$-functor $\smodad(-)\colon \Ex_L\to \Ex$, which leads to a generalization of Auslander correspondence, cf.~\cite{Auslander71}.

\begin{definition}
	Let $\mathcal{X}\subseteq \EE$ be a subcategory and $n\geq 0$ an integer.
	\begin{enumerate}
		\item We write $\gen_k(\mathcal{X})$ for the full subcategory of $\EE$ consisting of those objects $E$ such that there is an exact sequence $X_k\to \cdots \to X_2\to  X_1\to E\to 0$ with $X_i\in \mathcal{X}$ for $1\leq i\leq k$. If $k=1$ we write $\gen_1(\mathcal{X})=\gen(\mathcal{X})$.
		\item We write $\cogen_k(\mathcal{X})$ for the full subcategory of $\EE$ consisting of those objects $E$ such that there is an exact sequence $0\to E\to X_1\to X_{2}\to \cdots \to X_k$ with $X_i\in \mathcal{X}$ for $1\leq i\leq k$. If $k=1$ we write $\cogen_1(\mathcal{X})=\cogen(\mathcal{X})$.
	\end{enumerate}
\end{definition}

In the following ${}^{\perp}\QQ$ and $\cogen(\QQ)$ are defined inside the exact category $\smodad(\EE)$. 

\begin{proposition}\label{Proposition:TorsionTheoryProperties}
	Consider the torsion pair $(\eff(\EE),\FF)$ in $\smodad(\EE)$ as in \Cref{Proposition:EffAreTorsion}. The following hold:
	\begin{enumerate}
		\item\label{Proposition:TorsionTheoryPropertiesA} $\eff(\EE)={}^{\perp}\QQ$.
		\item\label{Proposition:TorsionTheoryPropertiesB} $\FF=\cogen(\QQ)$.
		\item\label{Proposition:TorsionTheoryPropertiesC} $\Ext^1_{\smodad(\EE)}(\eff(\EE),\QQ)=0$.
		\item\label{Proposition:TorsionTheoryPropertiesD} The Yoneda functor $\Upsilon\colon \EE\to \smodad(\EE)$ gives an equivalence $\EE\simeq \QQ$.
	\end{enumerate}
\end{proposition}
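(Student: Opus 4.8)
The plan is to prove the four statements roughly in the order given, since each earlier one feeds into the next. Throughout, I work inside the exact category $\smodad(\EE)$, and I use the projective presentation $\Upsilon(X)\xrightarrow{\Upsilon(f)}\Upsilon(Y)\to F\to 0$ of an object $F$, with $f$ admissible, as the basic computational device; recall from \Cref{Lemma:YonedaEmbeddingIsLeftExact} that such an $F$ actually has a length-three projective resolution, so $\smodad(\EE)$ consists of functors of projective dimension at most two. The representable functors are exactly the projectives $\QQ$ of $\smod(\EE)$ by Yoneda, and I will need that they remain projective in $\smodad(\EE)$ and that this is an exact-structure statement, which is essentially \eqref{Proposition:TorsionTheoryPropertiesD}.

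First I would prove \eqref{Proposition:TorsionTheoryPropertiesD}. The Yoneda functor $\Upsilon$ is fully faithful and lands in $\QQ$; conversely every representable is of this form, so $\Upsilon\colon\EE\to\QQ$ is an equivalence of additive categories. That it is \emph{exact} (conflations to conflations) follows because conflations in $\EE$ are sent by $\Upsilon$ to short exact sequences in $\Mod(\EE)$ with all terms representable hence in $\smodad(\EE)$, and such a sequence is a conflation in the inherited exact structure on $\smodad(\EE)$ (\Cref{Proposition:AdmissiblyPresentedLiesExtensionClosed}); reflection of conflations is automatic for a fully faithful exact functor whose essential image is closed under the relevant extensions. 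For \eqref{Proposition:TorsionTheoryPropertiesA}, one inclusion is definitional: if $F\in\eff(\EE)$ with presentation coming from a deflation $f\colon X\deflation Y$, then for any representable $\Upsilon(Z)$ a map $F\to\Upsilon(Z)$ pulls back to $\Upsilon(Y)\to\Upsilon(Z)$ killing $\Upsilon(f)$, i.e.\ a morphism $Y\to Z$ with $g\circ f=0$; since $f$ is an epimorphism (being a deflation, Yoneda takes it to an epi of functors because... actually a deflation need not be an epi of functors, so here I must instead use that $\coker f=0$ in $\EE$ and $L(F)\cong\coker f$) — more cleanly, $\Hom_{\smodad(\EE)}(F,\Upsilon(Z))\cong\Hom_{\EE}(L(F),Z)$ by \Cref{corollary:YonedaLeftAdjoint}, and $L(F)\cong\coker(f)=0$ since $f$ is a deflation, so $\eff(\EE)\subseteq{}^{\perp}\QQ$. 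For the reverse, if $F\in{}^{\perp}\QQ$ then $\Hom_{\EE}(L(F),Z)=0$ for all $Z$, so $L(F)=0$, which by the description $L(F)\cong\coker(f)$ and admissibility of $f$ forces $f$ to be a deflation, i.e.\ $F\in\eff(\EE)$. So $\eff(\EE)={}^{\perp}\QQ$, and this reproves $\ker(L)=\eff(\EE)$ in the language needed later.

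Next, \eqref{Proposition:TorsionTheoryPropertiesB}. The torsion-free class $\FF$ was defined as those $F$ with a presentation $\Upsilon(X)\xrightarrow{\Upsilon(f)}\Upsilon(Y)\to F\to 0$ where $f\colon X\inflation Y$ is an inflation. If $f$ is an inflation, then from \Cref{Lemma:YonedaEmbeddingIsLeftExact} we get $0\to\Upsilon(X)\to\Upsilon(Y)\to F\to 0$ is a conflation (the kernel of an inflation being zero), i.e.\ $F$ is the cokernel of the inflation $\Upsilon(f)$ between representables; this exhibits $F$ together with the inflation $F\rightarrowtail$ (something)? — more to the point, I want $F\in\cogen(\QQ)$, i.e.\ an inflation $F\inflation Q$ with $Q$ representable, which is \emph{not} immediate and is the subtle point. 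The trick: given $F$ torsion-free, the torsion pair from \Cref{Proposition:EffAreTorsion} gives a conflation $\tt F\inflation F\deflation \ff F$ with $\tt F\in\eff(\EE)$; since the construction there splits off the deflation part, for $F\in\FF$ one has $\tt F=0$ and the presentation is genuinely by an inflation. Then apply the unit of the adjunction $L\dashv\Upsilon$: the unit $F\to\Upsilon L(F)=\Upsilon(\coker f)$, and because $f$ is an inflation, $\coker(f)$ receives $Y$ via a deflation and I claim the unit map $F\to\Upsilon(\coker f)$ is an inflation in $\smodad(\EE)$ — verify this by the diagram chase of \Cref{Lemma:TheFamousDiagramChase} applied to the factorization square of $f$, exactly as in the proof of \Cref{Proposition:EffAreTorsion} but now reading off that $\ker(\text{unit})=0$. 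Hence $F\in\cogen(\QQ)$. Conversely if $F\inflation Q$ with $Q$ representable, pull back a projective cover $\Upsilon(Y)\to F$ along... — better: take a presentation of $F$ and combine with the inflation into $Q$; since $\cogen(\QQ)\subseteq\FF$ will follow because $\Hom(\eff(\EE),\cogen(\QQ))\subseteq\Hom(\eff(\EE),\QQ)$ which vanishes by \eqref{Proposition:TorsionTheoryPropertiesA}, and $\FF=\eff(\EE)^{\perp}$ by the torsion-pair property, so $\cogen(\QQ)\subseteq\FF$ is formal. Thus $\FF=\cogen(\QQ)$.

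Finally, \eqref{Proposition:TorsionTheoryPropertiesC}: $\Ext^1_{\smodad(\EE)}(\eff(\EE),\QQ)=0$. Take $F\in\eff(\EE)$ with presentation from a deflation $X\xrightarrow{f}Y$, so $0\to\Upsilon(\ker f)\to\Upsilon(X)\to\Upsilon(Y)\to F\to 0$ is exact, giving a conflation $\Upsilon(\ker f)\inflation\Upsilon(X)\deflation\Omega F$ and a conflation $\Omega F\inflation\Upsilon(Y)\deflation F$, where $\Omega F=\im(\Upsilon f)$. Note $\Omega F\in\FF=\cogen(\QQ)$ since it's a subobject of a representable (indeed it sits in $\Upsilon(Y)$), actually $\Omega F$ is the image of a map between representables with a deflation first factor, hence... let me instead argue directly: an extension $0\to Q\to E\to F\to 0$ with $Q\in\QQ$; pull back along $\Upsilon(Y)\deflation F$ to get $0\to Q\to E'\to\Upsilon(Y)\to 0$, which splits since $\Upsilon(Y)$ is projective in $\smodad(\EE)$ (by \eqref{Proposition:TorsionTheoryPropertiesD}), so $E'\cong Q\oplus\Upsilon(Y)$; the original extension is then the pushout of $0\to\Omega F\to\Upsilon(Y)\to F\to 0$ along the composite $\Omega F\to E'\cong Q\oplus\Upsilon(Y)\to Q$, i.e.\ along a map $\Omega F\to Q$. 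But $\Omega F\in\cogen(\QQ)\subseteq\FF$ and $Q\in\QQ$; I claim $\Hom_{\smodad(\EE)}(\cogen(\QQ)\cap\text{(image of deflation)}, \QQ)$ — no, that's false in general. The correct observation: the map $\Omega F\to Q$ must kill $\Upsilon(\ker f)\deflation$? — rather, $\Omega F$ is a quotient of $\Upsilon(X)$ via a deflation (namely $\Upsilon(X)\deflation\Omega F$), and a map from $\Omega F$ to $Q$ composed with this gives $\Upsilon(X)\to Q$, i.e.\ a morphism $X\to Z$ in $\EE$ (with $Q=\Upsilon Z$) vanishing on $\ker f$, hence factoring through $f\colon X\deflation Y$ since $f$ is a deflation and $\Upsilon$ is left exact — so it factors through $\Upsilon(Y)$, meaning the map $\Omega F\to Q$ extends to $\Upsilon(Y)\to Q$, so the pushout splits. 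Therefore the extension is trivial and $\Ext^1_{\smodad(\EE)}(F,Q)=0$. The main obstacle in the whole proof is this last $\Ext^1$-vanishing together with the $\cogen(\QQ)$-description in \eqref{Proposition:TorsionTheoryPropertiesB}: both hinge on correctly extracting, from an admissible presentation, the factorization of auxiliary maps through a deflation or inflation using left-exactness of $\Upsilon$ and the adjunction $L\dashv\Upsilon$, and organizing the diagram chase of \Cref{Lemma:TheFamousDiagramChase} so that the relevant kernels or cokernels are visibly zero.
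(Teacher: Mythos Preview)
Your main gap is in part \eqref{Proposition:TorsionTheoryPropertiesD}. You assert that ``the Yoneda functor $\Upsilon$ is fully faithful and lands in $\QQ$; conversely every representable is of this form, so $\Upsilon\colon\EE\to\QQ$ is an equivalence'', but this only shows that $\Upsilon$ is an equivalence onto the \emph{representable} functors --- it says nothing about why an arbitrary projective $P\in\QQ=\Proj(\smodad(\EE))$ must be representable. That is precisely the nontrivial content of \eqref{Proposition:TorsionTheoryPropertiesD}. The paper's argument goes through the torsion pair: any projective $P$ is a summand of a representable, hence lies in $\FF$, so it admits a conflation $\Upsilon(A)\inflation\Upsilon(B)\deflation P$ coming from an inflation $f\colon A\inflation B$ in $\EE$; projectivity of $P$ forces this to split, whence $f$ is a split inflation in $\EE$ (by full faithfulness of $\Upsilon$) and $P\cong\Upsilon(\coker f)$. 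Without this step you have not established \eqref{Proposition:TorsionTheoryPropertiesD}, and the gap propagates: in your treatment of \eqref{Proposition:TorsionTheoryPropertiesA} via the adjunction $L\dashv\Upsilon$ you only compute ${}^\perp(\text{representables})$, and in \eqref{Proposition:TorsionTheoryPropertiesC} you silently write $Q=\Upsilon(Z)$. Both are easily patched once you know projectives are summands of representables (and use additivity of $\Hom$ and $\Ext$), but as written the argument is circular.

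On the parts that do work, two comparisons with the paper are worth noting. Your proof of \eqref{Proposition:TorsionTheoryPropertiesA} via the isomorphism $\Hom_{\smodad(\EE)}(F,\Upsilon(Z))\cong\Hom_\EE(L(F),Z)$ from \Cref{corollary:YonedaLeftAdjoint} is slicker than the paper's direct argument and is perfectly valid (modulo the representable-vs-projective issue above). By contrast, your argument for \eqref{Proposition:TorsionTheoryPropertiesC} is considerably more laboured than necessary: the paper simply applies $\Hom_{\smodad(\EE)}(-,\Upsilon(Z))$ to the length-three projective resolution $0\to\Upsilon(\ker e)\to\Upsilon(A)\to\Upsilon(B)\to E\to 0$ of $E\in\eff(\EE)$ and reads off $\Ext^1(E,\Upsilon(Z))=0$ from exactness at $\Hom_\EE(A,Z)$ --- which is exactly the ``maps $A\to Z$ killing $\ker e$ factor through the deflation $e$'' step you eventually reach, but without the detour through pullbacks, pushouts, and $\Omega F$. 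Your argument for $\FF\subseteq\cogen(\QQ)$ in \eqref{Proposition:TorsionTheoryPropertiesB} via the unit $F\to\Upsilon L(F)$ is essentially the same as the paper's diagram-chase, just phrased through the adjunction; the converse inclusion $\cogen(\QQ)\subseteq\FF$ you derive from $\eff(\EE)\subseteq{}^\perp\QQ$ is the same as the paper's.
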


\begin{proof}
	\begin{enumerate}
		\item Clearly $\QQ\subseteq \FF$, hence the inclusion $\eff(\EE)\subseteq {}^{\perp}\QQ$ follows from $\Hom_{\smodad(\EE)}(\eff(\EE),\FF)=0$. Conversely, let $G\in {}^{\perp}\QQ$. Choose a projective presentation $\Upsilon(A)\xrightarrow{\Upsilon(f)}\Upsilon(B)\to G\to 0$ with $f$ admissible in $\EE$. Since the composite $\Upsilon(A)\xrightarrow{\Upsilon(f)}\Upsilon(B)\to \Upsilon(\coker (f))$ is $0$, the map $\Upsilon(B)\to \Upsilon(\coker(f))$ factors through $G$ and must therefore be $0$ since $G\in {}^{\perp}\QQ$. Using that $\Upsilon$ is faithful, it follows that the map $B\to \coker(f)$ is $0$, so $f$ must be a deflation. This shows that $G\in \eff(\EE)$. 
	
	\item We first show $\FF\subseteq \cogen(\QQ)$. Choose $F\in \FF$ and let $\Upsilon(X)\xrightarrow{\Upsilon(f)}\Upsilon(Y)\to F\to 0$ be a projective presentation with $f$ an inflation. Let $g\colon Y\deflation Z$ denote the cokernel of $f$. By considering the two rightmost squares in diagram \eqref{Thefamousdiagram} in \Cref{Lemma:TheFamousDiagramChase}, we get that the commutative diagram 
	\[\xymatrix{
		X\ar[r]\ar@{>->}[d]^f & 0\ar[r]\ar[d] & Y\ar@{->>}[d]^g\\
		Y\ar@{->>}[r]^{g} & Z\ar@{=}[r] & Z
	}\] yields a short exact sequence $F\inflation \Upsilon(Z)\deflation H$ where $H=\coker(\Upsilon(g))\in \eff(\EE)$. Thus $F\in\cogen(\QQ)$.
	
	Conversely, let $G\in \cogen(\QQ)$, then there is an inflation $G\inflation \Upsilon(Z)$ by definition. As $(\eff(\EE),\FF)$ is a torsion pair, $G$ fits into a conflation $F\inflation G\deflation H$ with $F\in \eff(\EE)$ and $H\in \FF$. As $\Upsilon(Z)\in \FF$, the composition $F\inflation G\inflation \Upsilon(Z)$ is zero. It follows that $F\cong 0$ and thus $G\cong H\in \FF$. This shows the equality $\FF=\cogen(\QQ)$.
	
	\item Let $E\in \eff(\EE)$. Then there exists a conflation $\ker(e)\stackrel{k}{\inflation} A\stackrel{e}{\deflation} B$ in $\EE$ which gives a projective resolution  
	\[
	0\to \Upsilon(\ker(e))\xrightarrow{\Upsilon(k)}\Upsilon(A)\xrightarrow{\Upsilon(e)}\Upsilon(B)\to E\to 0
	\]
	 of $E$. Let $\Upsilon(Z)\in \QQ$ be arbitrary. Applying the functor $\Hom(-, \Upsilon(Z))$ to the projective resolution and using that the $\Upsilon$ is fully faithful yields the left exact sequence
	\[
	0\to \Hom(B,Z)\xrightarrow{-\circ e}\Hom(A,Z)\xrightarrow{-\circ k} \Hom(\ker(e),Z)\]
of abelian groups. Since the sequence is exact at $\Hom(A,Z)$, it follows that  $\Ext^1(E, \Upsilon(Z))=0$.
	
	\item Let $P\in \QQ$. Note that $P$ is torsion-free and thus there exists a conflation $A\stackrel{f}{\inflation} B \deflation C$ such that $\Upsilon(A)\xrightarrow{\Upsilon(f)}\Upsilon(B)\to P$ is a short exact sequence. Since $P$ is projective, the latter sequence is split and thus $f$ is a split inflation as $\Upsilon$ is fully faithful. It follows that $P\cong \Upsilon(C)$, which proves the claim.\qedhere
	\end{enumerate}
\end{proof}

\begin{corollary}\label{Corollary:(Weak)IdempotentCompleteness}
	Let $\EE$ be an exact category. The following hold:
	\begin{enumerate}
		\item The category $\EE$ is weakly idempotent complete if and only if $\smodad(\EE)$ is weakly idempotent complete.
		\item The category $\EE$ is idempotent complete if and only if $\smodad(\EE)$ is idempotent complete.
	\end{enumerate}
\end{corollary}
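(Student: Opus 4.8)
The plan is to transport (weak) idempotent completeness back and forth between $\EE$ and $\smodad(\EE)$ using the adjunction $L\dashv \Upsilon$ from \Cref{corollary:YonedaLeftAdjoint} together with the equivalence $\EE\cong\QQ=\Proj(\smodad(\EE))$ from \Cref{Proposition:TorsionTheoryProperties}.\eqref{Proposition:TorsionTheoryPropertiesD}. The easy direction is ``$\smodad(\EE)$ (weakly) idempotent complete $\Rightarrow$ $\EE$ (weakly) idempotent complete'': since $\EE\cong\QQ$ is a full subcategory of $\smodad(\EE)$ closed under direct summands (a summand of a projective is projective), any idempotent of an object of $\EE$, viewed inside $\smodad(\EE)$, splits there, and the splitting object is again projective, hence lies in $\QQ\cong\EE$; the same argument works for weak idempotent completeness, using that a splitting of a retraction in $\smodad(\EE)$ with projective ambient object again has projective factors. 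More concretely, one invokes the standard fact that the retract-closure of $\QQ$ inside $\smodad(\EE)$ is again $\QQ$.

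For the converse, assume $\EE$ is (weakly) idempotent complete and let $F\in\smodad(\EE)$ carry an idempotent $e\colon F\to F$ (resp.\ a retraction). By \Cref{Lemma:YonedaEmbeddingIsLeftExact} choose a projective presentation $\Upsilon(A)\xrightarrow{\Upsilon(f)}\Upsilon(B)\to F\to 0$ with $f$ admissible, lift $e$ to an idempotent $2\times 2$ ``matrix'' on $\Upsilon(A\oplus B)$ as in the proof of \Cref{Proposition:AdmissiblyPresentedLiesExtensionClosed} (Horseshoe Lemma), and use full faithfulness of $\Upsilon$ together with idempotent completeness of $\EE$ to split the relevant idempotents on $A$ and $B$. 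This produces a direct-sum decomposition $A\cong A_0\oplus A_1$, $B\cong B_0\oplus B_1$ compatible with $f$, hence a decomposition $F\cong F_0\oplus F_1$ in $\Mod(\EE)$ realizing $e$; one then checks $F_0, F_1\in\smodad(\EE)$ because the restricted maps $A_i\to B_i$ are admissible (admissibility is detected on direct summands). A cleaner packaging: the functor $\smodad(\EE)\hookrightarrow\Mod(\EE)$ lands in an abelian category which is idempotent complete, so the summand $F_0$ exists in $\Mod(\EE)$ automatically; the only real point is that $F_0\in\smodad(\EE)$, i.e.\ that $F_0$ has an \emph{admissible} projective presentation. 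This in turn follows by taking the idempotent-split presentation above and noting that a biproduct decomposition of an admissible morphism $f=f_0\oplus f_1$ forces each $f_i$ to be admissible (factor $f$ as $A\deflation\im(f)\inflation B$; the decomposition of $f$ induces compatible decompositions of $\im(f)$, using \ref{R2}/\ref{L2} and the five lemma, whence each $f_i$ is admissible).

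Alternatively, and perhaps most economically, one can run the argument through $L$: if $F\cong F_0\oplus F_1$ in $\Mod(\EE)$ with idempotent $e$, apply the exact left adjoint $L$ and use $L\circ\Upsilon\cong 1_\EE$; but this only controls the ``$\QQ$-part'' of $F$ and loses the effaceable part, so I do not expect it to suffice on its own — it is better used to handle the torsion-free quotient $F/\tt F\in\FF=\cogen(\QQ)$ while handling $\tt F\in\eff(\EE)$ separately via its two-term deflation presentation. In either packaging the weak version is the same argument with ``idempotent'' replaced by ``retraction'' and ``direct summand'' by ``admissible (co)kernel of a section'', using that in a weakly idempotent complete exact category a morphism with a one-sided inverse is an inflation or deflation (\cite[Proposition 7.6]{Buhler10}).

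The main obstacle is the converse direction, and specifically the bookkeeping showing that the summand $F_0$ produced inside $\Mod(\EE)$ actually lies in $\smodad(\EE)$ — i.e.\ that one can choose the lifted idempotent on the projective presentation so that the underlying morphism $A_0\to B_0$ remains \emph{admissible}. Everything else (existence of liftings, exactness of $L$, the identification $\EE\cong\QQ$) is already available from the results above; the admissibility claim for the restricted morphism is the one genuinely new verification, and it is where the exact structure axioms \ref{R2}, \ref{L2} (stability of (de)inflations under pullback/pushout) do the work.
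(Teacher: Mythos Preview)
Your proposal has a genuine gap in the converse (hard) direction. The step ``lift $e$ to an idempotent $2\times 2$ matrix on $\Upsilon(A\oplus B)$'' is not justified: while any endomorphism of $F$ lifts to a chain map on the presentation $\Upsilon(A)\to\Upsilon(B)$ by projectivity of the terms, the lift of an idempotent need not itself be idempotent---it is only idempotent up to homotopy. The Horseshoe Lemma you invoke produces lifts of morphisms between cokernels, not idempotent lifts of idempotents. Without honest idempotents on $A$ and $B$ you have no decomposition $f=f_0\oplus f_1$, so the subsequent claim that ``a biproduct decomposition of an admissible morphism forces each summand to be admissible'' (which is fine once such a decomposition is in hand) never gets off the ground. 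Your ``cleaner packaging'' via the ambient abelian $\Mod(\EE)$ runs into the same wall: the summand $F_0$ certainly exists in $\Mod(\EE)$, but to place it in $\smodad(\EE)$ you need an admissible presentation, and producing one again requires the very splitting of the presentation you have not established. The torsion-theoretic variant you sketch does not obviously repair this either, since splitting $\tt F$ and $\ff F$ separately does not automatically split the extension $\tt F\inflation F\deflation \ff F$ compatibly.

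The paper's proof takes a completely different and much shorter route, using the localization functor $Q\colon\smodad(\EE)\to\EE$ together with \Cref{Theorem:QReflectsAdmissibles}. The point is that in an exact category, (weak) idempotent completeness is equivalent to every idempotent (respectively, every morphism admitting a one-sided inverse) being \emph{admissible}. So given an idempotent $e$ in $\smodad(\EE)$, the image $Q(e)$ is an idempotent in $\EE$, hence admissible by hypothesis; then $e$ itself is admissible because $Q$ reflects admissible morphisms, and therefore $e$ splits. The weak case is identical. This is exactly the payoff of the work done in \Cref{Section:QReflectsAdmissibles}, and it sidesteps the idempotent-lifting problem entirely.
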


\begin{proof}
The if direction follows as $\EE$ is equivalent to $\Proj(\smodad(\EE))$. Conversely, assume that $\EE$ is (weakly) idempotent complete, and let $f$ be an idempotent endomorphism (respectively a map which admits a right inverse) in $\smodad (\EE)$. Then $Q(f)$ is idempotent (respectively admits a right inverse), and hence $Q(f)$ is admissible by assumption. Therefore, by \Cref{Theorem:QReflectsAdmissibles} we have that $f$ is admissible in $\smodad (\EE)$. Hence $\smodad (\EE)$ is (weakly) idempotent complete, which proves the claim.
\end{proof}

\subsection{Admissibly presented functors and the category of sheaves}\label{Subsection:Interpretation}

Throughout this section, $\EE$ denotes an exact category. The category $\smodad(\EE)$ is defined as a subcategory of $\smod(\EE)$, depending on the exact structure.  In this section, we show how the category $\smodad(\EE)$ occurs naturally by comparing the localization sequence $\eff(\EE)\to \smodad(\EE)\to \smodad(\EE)/\eff(\EE)$ from \Cref{Theorem:AuslandersFormulaForExactCategories} to similar localization sequences appearing in the literature (see \Cref{{Proposition:InverseImage}}). In order to do so, we first recall the notion of a weakly effaceable functor as introduced by Grothendieck in \cite{Grothendieck57} (following the terminology of \cite{DmitryLowen15}).  

\begin{definition}\label{definition:WeaklyEffaceable}
	A functor $F\in \Mod(\EE)$ is called \emph{weakly effaceable} if for every $C\in \EE$ and every $x\in F(C)$, there exists a deflation $\lambda\colon C'\deflation C$ such that $F(\lambda)(x)=0$. The full subcategory of $\Mod(\EE)$ of weakly effaceable functors is denoted by $\Weff(\EE)$.
\end{definition}

The following proposition states that effaceable functors are weakly effaceable functors which are finitely presented (see also \cite[Lemma 2.13]{Enomoto2020A} or \cite[Lemma 9]{Schlichting06}).

\begin{proposition}\label{Proposition:WeaklyEffaceableAndFinitelyPresentedIsEffaceable}
	Let $F\in \Mod(\EE)$.  Then $F\in \eff(\EE)$ if and only if $F\in \Weff(\EE)\cap \smod(\EE)$.
\end{proposition}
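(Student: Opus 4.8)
The plan is to prove both implications directly from the definitions, using the characterisation of $\eff(\EE)$ via deflation-presented functors.

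For the forward direction, suppose $F\in\eff(\EE)$, so there is a deflation $g\colon X\deflation Y$ with $\Upsilon(X)\xrightarrow{\Upsilon(g)}\Upsilon(Y)\to F\to 0$ exact in $\Mod(\EE)$. Then $F\in\smod(\EE)$ is immediate. To check $F\in\Weff(\EE)$, take $C\in\EE$ and $x\in F(C)$. Since $\Upsilon(Y)\to F$ is an epimorphism in $\Mod(\EE)$ (which is computed pointwise for finitely presented functors — or just because $\Upsilon(Y)(C)\to F(C)$ is surjective by exactness of the presentation evaluated at $C$), lift $x$ to some $y\in\Upsilon(Y)(C)=\Hom_\EE(C,Y)$. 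Now form the pullback of $g\colon X\deflation Y$ along $y\colon C\to Y$; since deflations are stable under pullback (axiom \ref{R2}), we obtain a deflation $\lambda\colon C'\deflation C$ together with $C'\to X$ lifting $y\circ\lambda$. Then $F(\lambda)(x)$ is the image of $y\circ\lambda=g\circ(C'\to X)$ under $\Upsilon(Y)(C')\to F(C')$, which is zero because $y\circ\lambda$ factors through $g=\Upsilon(g)$ and $\Upsilon(g)$ composed with $\Upsilon(Y)\to F$ vanishes. Hence $F$ is weakly effaceable.

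For the converse, suppose $F\in\Weff(\EE)\cap\smod(\EE)$. Choose any projective presentation $\Upsilon(X)\xrightarrow{\Upsilon(f)}\Upsilon(Y)\to F\to 0$; I want to show $F$ admits one in which the representing morphism is a deflation. The canonical generator of $F$ to exploit is the element of $F(Y)$ that is the image of $\mathrm{id}_Y\in\Upsilon(Y)(Y)$ under $\Upsilon(Y)\to F$. Call it $\xi\in F(Y)$. By weak effaceability applied to $C=Y$ and $x=\xi$, there is a deflation $\lambda\colon Y'\deflation Y$ with $F(\lambda)(\xi)=0$. I claim $\Upsilon(Y')\xrightarrow{\Upsilon(\lambda)}\Upsilon(Y)\to F\to 0$ is exact, which would finish the proof since $\lambda$ is a deflation. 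Exactness at $\Upsilon(Y)$ on the right (surjectivity onto $F$) is clear since $\Upsilon(Y)\to F$ is already onto. For exactness in the middle, one shows that $\Upsilon(\lambda)\colon\Upsilon(Y')\to\Upsilon(Y)$ together with the original $\Upsilon(f)\colon\Upsilon(X)\to\Upsilon(Y)$ have the same image subfunctor of $\Upsilon(Y)$: the map $\Upsilon(Y)\to F$ kills $\Upsilon(\lambda)$ because $F(\lambda)(\xi)=0$ and $\xi$ generates, so $\mathrm{im}(\Upsilon(\lambda))\subseteq\ker(\Upsilon(Y)\to F)=\mathrm{im}(\Upsilon(f))$; conversely, evaluating at any $C$, an element of $\mathrm{im}(\Upsilon(f))(C)$ is a morphism $C\to Y$ mapping to $0$ in $F(C)$, and since $F(C)=\mathrm{coker}$ is computed via $\Upsilon(\lambda)$ once we know the cokernel presentation agrees — here one uses that $\mathrm{coker}(\Upsilon(\lambda))$ also equals $F$ by a Yoneda/pullback argument: given $c\colon C\to Y$ with image zero in $F$, weak effaceability (or rather the fact that $c^*\xi=F(c)(\xi)=$ the class of $c$, which is zero) lets us pull back $\lambda$ along $c$ to a deflation $C''\deflation C$ through which $c$ lifts to $Y'$, and since $\Upsilon$ applied to a deflation is an epimorphism in $\Mod(\EE)$, the class of $c$ in $\mathrm{coker}(\Upsilon(\lambda))(C)$ is zero. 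Thus $\mathrm{coker}(\Upsilon(\lambda))\cong F$ and $F\in\eff(\EE)$.

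The main obstacle is the converse direction: making precise that the canonical deflation $\lambda\colon Y'\deflation Y$ produced by weak effaceability at the generator $\xi$ actually gives a presentation of $F$, i.e.\ that $\mathrm{coker}(\Upsilon(\lambda))\cong F$. This requires carefully combining the Yoneda lemma (to identify $F(C)$-elements with equivalence classes of morphisms into $Y$), the pointwise exactness of finitely presented functor presentations, and the pullback-stability of deflations to show that \emph{every} element of $\ker(\Upsilon(Y)\to F)$ is hit by $\Upsilon(\lambda)$, not just that the composite vanishes. An alternative, possibly cleaner, route is to first reduce to the case where $F$ has a \emph{fixed} admissible presentation $\Upsilon(X)\xrightarrow{\Upsilon(f)}\Upsilon(Y)\to F\to 0$ (using $F\in\smod(\EE)$ and a weak-effaceability argument to upgrade $f$ to be admissible — though a priori we only know $F\in\smod(\EE)$, not $F\in\smodad(\EE)$, so this reduction itself needs justification), and then use weak effaceability to replace the deflation-part of the image factorisation; but the direct argument via the generator $\xi$ seems most transparent.
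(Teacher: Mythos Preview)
Your forward direction is correct and matches the paper's argument exactly: lift the element, pull back the deflation, done.

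The converse direction has a genuine gap. The assertion ``$\Upsilon$ applied to a deflation is an epimorphism in $\Mod(\EE)$'' is false: epimorphisms in $\Mod(\EE)$ are pointwise surjections, and $\Hom_\EE(D,Y')\to\Hom_\EE(D,Y)$ is surjective only when every map $D\to Y$ lifts along $\lambda$, which fails for non-split deflations. Consequently your claim that $\coker(\Upsilon(\lambda))\cong F$ is not valid for an arbitrary choice of $\lambda$. Concretely, take $\EE=\Ab$, $f\colon\mathbb{Z}/4\twoheadrightarrow\mathbb{Z}/2$, so $F=\coker(\Upsilon(f))\in\eff(\EE)$. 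Weak effaceability lets you choose $\lambda\colon\mathbb{Z}\twoheadrightarrow\mathbb{Z}/2$ with $\lambda=f\circ u$ for $u\colon\mathbb{Z}\to\mathbb{Z}/4$. But then $c:=f\colon\mathbb{Z}/4\to\mathbb{Z}/2$ lies in $\ker(\Upsilon(Y)\to F)(\mathbb{Z}/4)$ yet does \emph{not} factor through $\lambda$, since $\Hom(\mathbb{Z}/4,\mathbb{Z})=0$. So $\coker(\Upsilon(\lambda))(\mathbb{Z}/4)\neq 0=F(\mathbb{Z}/4)$. What your pullback argument actually shows is only that the kernel of the surjection $\coker(\Upsilon(\lambda))\to F$ is weakly effaceable, not that it vanishes.

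The paper's remedy is not to replace $f$ by $\lambda$ but to \emph{adjoin} it: observe that $\lambda=f\circ u$ implies the map $\begin{psmallmatrix}0&f\end{psmallmatrix}\colon Y'\oplus X\to Y$ differs from $\begin{psmallmatrix}\lambda&f\end{psmallmatrix}$ by an automorphism of the source, and the latter is a deflation in any exact category because one component already is (this is \cite[Proposition~3.4]{HenrardvanRoosmalen20Obscure}). Since $\coker(\Upsilon\begin{psmallmatrix}0&f\end{psmallmatrix})=\coker(\Upsilon(f))=F$, this exhibits $F\in\eff(\EE)$. The key idea you are missing is that you must keep the original $f$ around to pin down the correct cokernel; the deflation $\lambda$ alone only controls $F$ up to a weakly effaceable discrepancy.
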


\begin{proof}
	Assume that $F\in \eff(\EE)$ and let $f\colon A\deflation B$ be a deflation in $\EE$ such that $F\cong \coker(\Upsilon(f))$. For any $C\in \EE$ and $x'\in F(C)$, there exists a morphism $x\colon C\to B$ representing $x'$. By axiom \ref{R2}, we obtain the following pullback square:
	\[\xymatrix{
		C'\ar[r]^{y}\ar@{->>}[d]^{\lambda} & A\ar@{->>}[d]^{f}\\
		C\ar[r]^{x} & B
	}\] Commutativity of the square yields $F(\lambda)(x)=0$. This shows that $F\in \Weff(\EE)\cap \smod(\EE)$.
	
	Conversely, let $F\in \Weff(\EE)\cap\smod(\EE)$. As $F$ is finitely presented, there is a map $f\colon A\to B$ such that $F\cong \coker(\Upsilon(f))$. The identity $1_B$ yields an element $1\in F(B)$. As $F\in \Weff(\EE)$, there exists a deflation $\lambda\colon C\deflation B$ such that $F(\lambda)(1)=0$. Hence there exists a map $u\colon C\to A$ such that $fu=\lambda 1_B=\lambda$. By \cite[Proposition~3.4]{HenrardvanRoosmalen20Obscure}, the map $\begin{psmallmatrix}0&f\end{psmallmatrix}\colon C\oplus A\deflation B$ is a deflation in $\EE$. Clearly, $\coker(\Upsilon(\begin{psmallmatrix}0&f\end{psmallmatrix}))\cong F$ and thus $F\in \eff(\EE)$.
\end{proof}

\begin{remark}
	It is shown in \cite[Lemma~2.13]{Enomoto2020A} that $\eff(\EE)\subseteq \coh(\EE)$ where $\coh(\EE)$ are the \emph{coherent functors}, i.e.~finitely presented functors such that every finitely generated subfunctor is also finitely presented. Hence, $\eff(\EE)=\Weff(\EE)\cap \coh(\EE)$. In particular, $\eff(\EE)\subseteq \coh(\EE)$ is a Serre subcategory and hence $\eff(\EE)$ is abelian (see also \cite[Lemma 9]{Schlichting06}).
\end{remark}

The following definition is given in \cite[III.2]{Gabriel62}.

\begin{definition}\label{Definition:LocalizingSerreSubcategory}
	Let $\AA$ be an abelian category. A full subcategory $\SS\subseteq \AA$ is called a \emph{localizing Serre subcategory} of $\AA$ if $\SS\subseteq \AA$ is a Serre subcategory such that the localization functor $Q\colon\AA\to \AA/\SS$ has a right adjoint $R\colon \AA/\SS\to \AA$. 
\end{definition}

\begin{proposition}\label{Proposition:BasicPropertiesLocalizingSubcategories}
	The category $\Weff(\EE)$ is a localizing Serre subcategory of $\Mod(\EE)$. Furthermore, the following properties hold.
	\begin{enumerate}
		\item\label{Item:Proposition:BasicPropertiesLocalizingSubcategories1} We obtain a localization sequence 
	\[\Weff(\EE)\to \Mod(\EE)\xrightarrow{Q_w} \operatorname{Lex}(\EE)\]
	where $\operatorname{Lex}(\EE)$ is the category of left exact functors. 
		\item\label{Item:Proposition:BasicPropertiesLocalizingSubcategories2} The right adjoint $R\colon \operatorname{Lex}(\EE)\to \Mod(\EE)$ is the canonical inclusion.
		\item\label{Item:Proposition:BasicPropertiesLocalizingSubcategories3} Denote by $\mu\colon 1 \Rightarrow R \circ Q_w$ the unit of the adjunction $Q_w\dashv R$. For each $F\in \Mod(\EE)$, we have that $\ker(\mu_F),\coker(\mu_F)\in \Weff(\EE)$. Additionally, if $F\in \operatorname{Lex}(\EE)$ then $\mu_F\colon F \to R\circ Q_w(F)$ is an isomorphism.
	\end{enumerate}
\end{proposition}

\begin{proof}
	It is shown in \cite[Appendix A.2]{Keller90} that $\Weff(\EE)$ is a Serre subcategory of $\Mod(\EE)$ closed under direct limits that gives rise to the localisation sequence \eqref{Item:Proposition:BasicPropertiesLocalizingSubcategories1}. By \cite[III.4~Proposition~8]{Gabriel62}, $\Weff(\EE)$ is a localizing Serre subcategory of $\Mod(\EE)$. Property \eqref{Item:Proposition:BasicPropertiesLocalizingSubcategories2} follows from \cite[III.5.b]{Gabriel62}. Finally, \eqref{Item:Proposition:BasicPropertiesLocalizingSubcategories3} follows from \cite[III.3~Lemme~2, Proposition~3 and Corollaire]{Gabriel62}.
\end{proof}

\begin{corollary}
	The category $\eff(\EE)$ is a deflation-percolating subcategory of $\smod(\EE)$.
\end{corollary}

\begin{proof}
	By \Cref{Proposition:BasicPropertiesLocalizingSubcategories}, $\Weff(\EE)$ is a Serre subcategory of $\Mod(\EE)$. By \Cref{Proposition:WeaklyEffaceableAndFinitelyPresentedIsEffaceable}, $\eff(\EE)=\Weff(\EE)\cap \smod(\EE)$ and thus $\eff(\EE)\subseteq \smod(\EE)$ is a Serre subcategory. That $\eff(\EE)\subseteq \smod(\EE)$ satisfies axiom \ref{A2} follows from \Cref{Lemma:TheFamousDiagramChase}.
\end{proof}

\begin{remark}\label{Remark:RecoverRump}
	By \Cref{Proposition:PropertiesOfWeakIsomorphisms}, $\Sigma_{\eff(\EE)}\subseteq \text{Mor}(\smod(\EE))$ is a right multiplicative system and thus we obtain the localization functor $Q\colon \smod(\EE)\to \smod(\EE)[\Sigma^{-1}_{\eff(\EE)}]$. In general, it is not clear whether $\eff(\EE)\subseteq\smod(\EE)$ is an inflation-percolating subcategory and thus it is not clear whether $\mod(\EE)$ induces an exact structure on $\smod(\EE)[\Sigma^{-1}_{\eff(\EE)}]$.  Nonetheless, by \cite[Theorem~1.2]{HenrardVanRoosmalen19a}, the localization functor $Q\colon \smod(\EE)\to \smod(\EE)[\Sigma^{-1}_{\eff(\EE)}]$ endows $\smod(\EE)[\Sigma^{-1}_{\eff(\EE)}]$ with a deflation-exact structure (that is, $\smod(\EE)[\Sigma^{-1}_{\eff(\EE)}]$ satisfies axioms \ref{R0},\ref{R1}, and \ref{R2}). Furthermore, $\smod(\EE)[\Sigma^{-1}_{\eff(\EE)}]$ satisfies the universal property of a quotient of deflation-exact categories. Therefore, we still write $\smod(\EE)[\Sigma^{-1}_{\eff(\EE)}]\simeq \smod(\EE)/\eff(\EE)$.
\end{remark}

We now consider the following commutative diagram for an exact category $\EE$:
\[\xymatrix{
	\eff(\EE)\ar[r]\ar@{=}[d] & \smodad(\EE)\ar[r]\ar[d] & \smodad(\EE)/\eff(\EE)\ar[d]\simeq \EE \\
	\eff(\EE)\ar[r]\ar[d] & \smod(\EE)\ar[r]\ar[d]^{\iota} & \smod(\EE)/\eff(\EE)\ar[d]\\
	\Weff(\EE)\ar[r] & \Mod(\EE)\ar[r]_-{Q_w} & \text{Lex}(\EE) 
}\]
Here, the bottom row is obtained by \Cref{Proposition:BasicPropertiesLocalizingSubcategories}. 
The middle row is obtained by \Cref{Remark:RecoverRump} and the top row is Auslander's formula as in \Cref{Theorem:AuslandersFormulaForExactCategories}. 

\begin{remark}
\begin{enumerate}
	\item Following \cite[Sections~2.4 and 2.5]{DmitryLowen15}, the exact structure on $\EE$ induces a Grothendieck topology (there called the additive single morphism topology) on $\EE$.  Using this topology, one can define a category $\text{Sh}^{\text{add}}(\EE)$ of (additive) sheaves on $\EE$ and show that $\text{Sh}^{\text{add}}(\EE)\simeq\text{Lex}(\EE)$ (see \cite[Section~2.1]{DmitryLowen15}).
	\item In \cite{Rump20}, Rump considers a localization theory of left abelian categories. By \cite[Proposition~5]{Rump04}, left abelian categories with enough projectives are precisely categories of the form $\smod(\AA)$ for an additive category $\AA$. Any left abelian category has an exact structure given by all kernel-cokernel pairs \cite[Proposition~2]{Rump20}. Rump defines a category $Q_l(\EE)$ which is obtained as a localization of the left abelian category $\smod(\EE)$. Moveover, $Q_l(\EE)$ inherits an exact structure from $\smod(\EE)$ (see \cite[Theorem~1]{Rump20}).
	
	There is a natural functor $\theta\colon\smod(\EE)/\eff(\EE)\to Q_l(\EE)$, which is a localization functor by \cite[Lemma~5.2]{HenrardVanRoosmalen19b}. By \cite[Example~3]{Rump20}, $\eff(\EE)$ is a percolating subcategory if $\EE$ is left quasi-abelian and then $\theta$ is an equivalence. Hence, the second row reduces to Rump's localization sequence in some interesting cases. Furthermore, \cite[Corollary~2]{Rump20} shows that the embedding $\EE\to \textbf{Q}_l(\EE)$ is an equivalence if and only if $\EE$ is abelian.  In this case, the top and middle line of this diagram are equivalent.  
	
	\item When $\EE$ has weak kernels (so that $\smod(\EE)$ is abelian), the middle line is considered in \cite{Ogawa19}. In \cite[Theorem~2.9]{Ogawa19}, Ogawa shows that if the localization functor $\smod(\EE)\to \smod(\EE)/\eff(\EE)$ has a right adjoint, then there is an equivalence $\smod(\EE)/\eff(\EE)\simeq \operatorname{lex}(\EE)$, where $\operatorname{lex}(\EE)$ is the category finitely presented left exact functors on $\EE$.  The right adjoint $\operatorname{lex}(\EE)\to \smod(\EE)$ is then the canonical inclusion.
	\item When $\EE$ has kernels, the quotient $\smod(\EE)/\eff(\EE)$ is equivalent to $\mathcal{LH}(\EE)$ where $\mathcal{LH}(\EE)$ is the left heart of the category $\EE$ (see \cite[Theorem~3.17]{HenrardKvammevanRoosmalenWegner21}). Here, the left heart $\mathcal{LH}(\EE)$ generalizes the construction of the left heart of a quasi-abelian category as in \cite{Schneiders99}.
	\item Despite the above remarks, it is unclear whether $\smod(\EE)/\eff(\EE)$ is an exact category in general.
	\end{enumerate}
\end{remark}

We conclude this section by recovering $\smodad(\EE)$ directly from the bottom two localization sequences. We need the following lemma.

\begin{lemma}\label{Lemma:ResolutionsOfEffaceables}
	Let $E\in \eff(\EE)$ and let $\Upsilon(\ker(g))\inflation \Upsilon(X)\xrightarrow{\Upsilon(g)}\Upsilon(Y)\deflation E$ be a resolution of $E$.
	\begin{enumerate}
		\item There exists an $X'\in \EE$ such that $\begin{psmallmatrix}0&g\end{psmallmatrix}\colon X'\oplus X\deflation Y$ is a deflation. 
		\item If $\EE$ is weakly idempotent complete, $g$ is a deflation.
	\end{enumerate}
\end{lemma}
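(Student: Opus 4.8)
The plan is to derive both statements from the weak effaceability of $E$, using the same mechanism as in the proof of \Cref{Proposition:WeaklyEffaceableAndFinitelyPresentedIsEffaceable}: a deflation onto $Y$ which factors through a morphism $f$ forces $\begin{psmallmatrix}0&f\end{psmallmatrix}$ to be a deflation, by \cite[Proposition~3.4]{HenrardvanRoosmalen20Obscure}.

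For part (1), I would write $\epsilon\colon\Upsilon(Y)\to E$ for the epimorphism in the given resolution and set $\bar{1}:=\epsilon_Y(1_Y)\in E(Y)$. Since $E\in\eff(\EE)$, it is weakly effaceable by \Cref{Proposition:WeaklyEffaceableAndFinitelyPresentedIsEffaceable}, so there is a deflation $\lambda\colon X'\deflation Y$ with $E(\lambda)(\bar{1})=0$. By naturality of $\epsilon$ and the Yoneda lemma one has $E(\lambda)(\bar{1})=\epsilon_{X'}(\lambda)$, so $\lambda\in\Hom_{\EE}(X',Y)=\Upsilon(Y)(X')$ lies in the kernel of $\epsilon_{X'}$. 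As the resolution is exact at $\Upsilon(Y)$, i.e.\ $\ker(\epsilon)=\im(\Upsilon(g))$, evaluating at $X'$ gives $\lambda=g\circ u$ for some $u\colon X'\to X$. Thus $\lambda$ is a deflation onto $Y$ factoring through $g$, and \cite[Proposition~3.4]{HenrardvanRoosmalen20Obscure} yields that $\begin{psmallmatrix}0&g\end{psmallmatrix}\colon X'\oplus X\deflation Y$ is a deflation. Note that this step does not use weak idempotent completeness.

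For part (2), I would observe that $\begin{psmallmatrix}0&g\end{psmallmatrix}\colon X'\oplus X\to Y$ equals the composite $g\circ\pi$, where $\pi\colon X'\oplus X\deflation X$ is the split projection. Hence $g\circ\pi$ is a deflation, and since $\EE$ is weakly idempotent complete, \cite[Proposition~7.6]{Buhler10} gives that $g$ itself is a deflation.

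The only delicate point is the translation, via naturality of $\epsilon$ and the Yoneda lemma, of the vanishing $E(\lambda)(\bar{1})=0$ into the factorization $\lambda=g\circ u$; everything else is formal, so I do not anticipate a genuine obstacle. The content is essentially a packaging of \Cref{Proposition:WeaklyEffaceableAndFinitelyPresentedIsEffaceable} together with the elementary facts about deflations in (weakly idempotent complete) exact categories.
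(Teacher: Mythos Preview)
Your proof is correct but takes a different route from the paper. The paper argues via the Comparison Theorem: since $E$ admits another projective resolution built from a genuine deflation $f$, the two resolutions are homotopy equivalent, and pushing this homotopy equivalence down to $\EE$ through the faithful Yoneda embedding shows (via \cite[Proposition~10.14]{Buhler10}) that $g$ becomes a deflation in the weak idempotent completion $\widehat{\EE}$; both parts of the lemma are then read off from this, with \cite[Proposition~3.4]{HenrardvanRoosmalen20Obscure} mediating between ``deflation in $\widehat{\EE}$'' and ``$\begin{psmallmatrix}0&g\end{psmallmatrix}$ is a deflation in $\EE$''. You instead replay the weak-effaceability mechanism of \Cref{Proposition:WeaklyEffaceableAndFinitelyPresentedIsEffaceable} directly: produce a deflation $\lambda$ factoring through $g$, apply \cite[Proposition~3.4]{HenrardvanRoosmalen20Obscure} to get part (1), and then deduce part (2) from part (1) using \cite[Proposition~7.6]{Buhler10}. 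Your approach is more elementary---it avoids the passage through homotopy equivalences of complexes and the weak idempotent completion entirely---and it makes explicit that part (1) needs no hypothesis on $\EE$. The paper's approach, by contrast, exposes the conceptual reason: any two projective resolutions of $E$ are homotopy equivalent, so their underlying complexes in $\EE$ are simultaneously acyclic in $\widehat{\EE}$.
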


\begin{proof}
	As $E\in \eff(\EE)$, there is a resolution $\Upsilon(\ker(f))\to \Upsilon(A)\xrightarrow{\Upsilon(f)}\Upsilon(B) \deflation E$ with $f$ a deflation. By the Comparison Theorem (see for example \cite[Theorem~12.4]{Buhler10}), there is a homotopy equivalence between the two resolutions. As the Yoneda embedding is faithful, we obtain a homotopy equivalence between $\ker(g)\to X\xrightarrow{g} Y\to 0$ and $\ker(f) \to A\xrightarrow{f} B\to 0$. By \cite[Proposition~10.14]{Buhler10}, $g$ is a deflation in the weak idempotent completion $\widehat{\EE}$ of $\EE$. This shows the first part, the second now follows from \cite[Proposition~3.4]{HenrardvanRoosmalen20Obscure}.	
\end{proof}

\begin{proposition}\label{Proposition:InverseImage}
	The category $\smodad(\EE)$ is obtained as the inverse image $(Q_w\circ \iota)^{-1}(Q_w\circ\Upsilon(\EE))$.
\end{proposition}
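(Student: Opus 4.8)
The plan is to identify $\smodad(\EE)$ with the preimage under $Q_w \circ \iota$ of the essential image of $\Upsilon\colon \EE \to \Mod(\EE)$, using the identification $\mathrm{Sh}^{\mathrm{add}}(\EE) \simeq \mathrm{Lex}(\EE)$ and the fact — established in \Cref{Theorem:AuslandersFormulaForExactCategories} via \Cref{corollary:YonedaLeftAdjoint} — that the composite $\smodad(\EE) \hookrightarrow \smod(\EE) \xrightarrow{Q} \smod(\EE)/\eff(\EE) \simeq \EE$ recovers the left adjoint $L$ of the Yoneda embedding. The two inclusions will be proved separately.

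\emph{First inclusion: $\smodad(\EE) \subseteq (Q_w\circ\iota)^{-1}(\Upsilon(\EE))$.} Take $F \in \smodad(\EE)$ with admissible presentation $\Upsilon(X)\xrightarrow{\Upsilon(f)}\Upsilon(Y)\to F\to 0$ and factor $f = f''\circ f'$ with $f'\colon X \deflation \im(f)$ and $f''\colon \im(f)\inflation Y$. As in the proof of \Cref{Proposition:EffAreTorsion}, this yields a conflation $F'\inflation F\deflation F''$ in $\smodad(\EE)$ with $F' = \coker(\Upsilon(f'))\in\eff(\EE)$ and $F'' = \coker(\Upsilon(f''))$ presented by an inflation. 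Applying the exact functor $Q_w\circ\iota$ (which is a localization at $\Weff(\EE)$, hence kills $\eff(\EE)\subseteq\Weff(\EE)$) gives $Q_w(\iota(F)) \cong Q_w(\iota(F''))$. It therefore suffices to show $Q_w(\iota(F'')) \in \Upsilon(\EE)$; but $F''$ is presented by an inflation $f''\colon \im(f)\inflation Y$ with cokernel $g\colon Y\deflation Z$ in $\EE$, so from the exact sequence $\Upsilon(\im(f))\to\Upsilon(Y)\to\Upsilon(Z)$ one checks (this is the diagram in the proof of \Cref{Proposition:TorsionTheoryProperties}(ii)) that $F''\inflation\Upsilon(Z)$ with effaceable cokernel, whence $Q_w(\iota(F''))\cong Q_w(\iota(\Upsilon(Z)))\in\Upsilon(\EE)$ since $Q_w\circ\iota\circ\Upsilon$ is the sheafification of representables, which lands in $\mathrm{Lex}(\EE)$ at the representable itself. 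Thus $Q_w(\iota(F))\in\Upsilon(\EE)$.

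\emph{Second inclusion: $(Q_w\circ\iota)^{-1}(\Upsilon(\EE)) \subseteq \smodad(\EE)$.} Let $F\in\smod(\EE)$ with $Q_w(\iota(F))\cong\Upsilon(E)$ for some $E\in\EE$. Since the bottom row is a Serre localization sequence with kernel $\Weff(\EE)$ and $F$ is finitely presented, the unit $F\to (Q_w\circ\iota)_*(Q_w\circ\iota)(F)$ has kernel and cokernel in $\Weff(\EE)\cap\smod(\EE) = \eff(\EE)$ (using \Cref{Proposition:WeaklyEffaceableAndFinitelyPresentedIsEffaceable}); more precisely, there is $G\in\smod(\EE)$ and a weak isomorphism $F\to G$ in $\smod(\EE)$ with $Q_w(\iota(G))\cong Q_w(\iota(F))$, where $G$ can be taken so that the canonical map $\Upsilon(E)\to G$ (adjoint to $Q_w(\iota(F))\cong\Upsilon(E)$) is an isomorphism in $\smod(\EE)/\eff(\EE)$. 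Chasing through, $F$ becomes isomorphic to $\Upsilon(E)$ in $\smod(\EE)/\eff(\EE)$; writing this isomorphism as a roof over $\Sigma_{\eff(\EE)}$ and pulling back along \Cref{Proposition:PropertiesOfWeakIsomorphisms}, one produces a weak isomorphism $\Upsilon(E)\xrightarrow{\sim} H$ and a weak isomorphism $F\xrightarrow{\sim}H$ in $\smod(\EE)$. The latter means there is a conflation-type presentation relating $F$ to $\Upsilon(E)$ by finitely presented effaceable functors on both ends, and then running the Horseshoe argument of \Cref{Proposition:AdmissiblyPresentedLiesExtensionClosed} — together with the observation that both $\Upsilon(E)$ (presented by $0\to\Upsilon(E)$, an inflation) and any effaceable functor (presented by a deflation) lie in $\smodad(\EE)$, which is extension-closed — shows $F\in\smodad(\EE)$.

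\emph{Main obstacle.} The delicate point is the second inclusion: one must turn the abstract statement ``$Q_w(\iota(F))$ is representable'' into a \emph{concrete} admissible presentation of $F$ itself. The bottom sequence only tells us that $F$ differs from something whose sheafification is $\Upsilon(E)$ by weakly effaceable (hence, being finitely presented, effaceable) data; the work is in bookkeeping the kernel and cokernel of the localization unit, realizing them inside $\smod(\EE)$ as genuine effaceable functors, and then assembling an admissible presentation of $F$ from the resulting two-step filtration $F'\inflation F\deflation F''$ (with $F'$ effaceable and $F''$ a kernel-cokernel manipulation of $\Upsilon(E)$). Extension-closedness of $\smodad(\EE)$ (\Cref{Proposition:AdmissiblyPresentedLiesExtensionClosed}) does the final gluing, but getting the filtration to have the right pieces — and checking that the presenting morphism assembled via the Horseshoe Lemma is genuinely admissible, not merely admissible after localization — is where the care is needed; here \Cref{Theorem:QReflectsAdmissibles} (that $Q$ reflects admissible morphisms) is the tool that closes the gap.
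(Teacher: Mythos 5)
Your first inclusion is fine (the paper leaves it implicit: it follows from the commutative diagram preceding the proposition, since $Q_w\circ\iota$ restricted to $\smodad(\EE)$ factors through $\smodad(\EE)/\eff(\EE)\simeq\EE$). The genuine gap is in the second inclusion, and it sits exactly where you flag ``the main obstacle'' --- your proposal names the difficulty but does not overcome it. You assert that the unit $u_F\colon F\to R(Q_w(\iota(F)))\cong\Upsilon(E)$ has kernel and cokernel in $\Weff(\EE)\cap\smod(\EE)=\eff(\EE)$, citing \Cref{Proposition:WeaklyEffaceableAndFinitelyPresentedIsEffaceable}. That proposition only converts ``weakly effaceable \emph{and finitely presented}'' into ``effaceable''; it does not tell you that $\ker(u_F)$ is finitely presented, and there is no general reason it should be, since $\smod(\EE)$ is not closed under kernels or subobjects unless $\EE$ has weak kernels --- which an arbitrary exact category does not. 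Establishing $\ker(u_F)\in\smod(\EE)$ is precisely the content of the paper's proof: one first writes $\coker(u_F)\cong\coker(\Upsilon(g))$ for an explicit $g\colon B\to C$ (so it is finitely presented, hence effaceable), uses \Cref{Lemma:ResolutionsOfEffaceables} to replace $g$ by a genuine deflation $\begin{psmallmatrix}0&g\end{psmallmatrix}\colon B'\oplus B\deflation C$ with kernel $K$, and then exhibits $\ker(u_F)$ as $\coker(\Upsilon(h))$ for an explicit $h\colon B'\oplus A\to K$ via \Cref{Lemma:TheFamousDiagramChase}. Only after this does the identity $\Weff(\EE)\cap\smod(\EE)=\eff(\EE)$ apply, and a second application of \Cref{Lemma:ResolutionsOfEffaceables} turns $h$ into a deflation up to a projective summand, which produces an admissible presentation of $F$ directly.

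A secondary point: you name \Cref{Theorem:QReflectsAdmissibles} as the tool that closes the gap, but that theorem concerns reflection of admissibility along the localization of an exact category at a two-sided admissibly percolating subcategory; it presupposes that the morphism already lives in $\smodad(\EE)$ and says nothing about finite presentability of kernels in $\smod(\EE)$. Your concluding step (weak isomorphisms $F\stackrel{\sim}{\rightarrow}H\stackrel{\sim}{\leftarrow}\Upsilon(E)$ plus extension-closedness of $\smodad(\EE)$) would indeed finish the argument once the finite presentability of $\ker(u_F)$ and the admissibility of $u_F$ in $\smod(\EE)$ are secured, but as written those are assumed rather than proved.
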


\begin{proof}
Let $F\in (Q_w\circ \iota)^{-1}(Q_w\circ\Upsilon(\EE))$. As $F\in \smod(\EE)$, there is a map $f\colon A\to B$ such that $F\cong \coker(\Upsilon(f))$. As $Q_w\circ \iota(F)\in Q_w\circ\Upsilon(\EE)$, there is a $C\in \EE$ such that $Q_w\circ \iota(F)\cong Q_w\circ\Upsilon(C)$. The unit of the adjunction $Q_w\dashv R$ yields a map $\mu_{F}\colon \iota(F)\to RQ_w\iota(F)$. Note that $RQ_w\iota(F)\cong RQ_w\Upsilon(C)$ and $RQ_w\Upsilon(C)\cong \Upsilon(C)$ by \Cref{Proposition:BasicPropertiesLocalizingSubcategories}.\eqref{Item:Proposition:BasicPropertiesLocalizingSubcategories3} as $\Upsilon(C)\in \operatorname{Lex}(\EE)$. Hence we obtain the following commutative diagram
	\[\xymatrix{
	& \Upsilon(A)\ar[d]^{\Upsilon(f)} &&\\
	& \Upsilon(B)\ar@{.>}[rd]^{\Upsilon(g)}\ar@{->>}[d] &&\\
		\ker(\mu_F)\ar@{>->}[r] & F\ar[r]^-{\sim} & \Upsilon(C)\ar@{->>}[r] & \coker(\mu_F)\\
	}\] in $\Mod(\EE)$. Here, $\ker(\mu_F),\coker(\mu_F)\in \Weff(\EE)$ by \Cref{Proposition:BasicPropertiesLocalizingSubcategories}.\eqref{Item:Proposition:BasicPropertiesLocalizingSubcategories3}. As the Yoneda functor is fully faithful, there is a map $g\colon B\to C$ such that the above diagram commutes. Note that $\coker(\mu_F)\cong \coker(\Upsilon(g))$ and thus $\coker(\mu_F)\in \eff(\EE)$ as it is finitely presented (see \Cref{Proposition:WeaklyEffaceableAndFinitelyPresentedIsEffaceable}). By \Cref{Lemma:ResolutionsOfEffaceables}, there is an object $B'\in \EE$ such that $\begin{psmallmatrix}0&g\end{psmallmatrix}\colon B'\oplus B\to C$ is a deflation. Write $K$ for the kernel of $\begin{psmallmatrix}0&g\end{psmallmatrix}$. Consider the commutative diagram:
	\[\xymatrix{
		B'\oplus A\ar@{=}[r]\ar[d]^h & B'\oplus A\ar[r]\ar[d]^{\begin{psmallmatrix}1_B' &0\\0&f\end{psmallmatrix}} & K\ar[r]\ar[d] & 0\ar[d]\\
		K\ar@{>->}[r] & B'\oplus B\ar@{=}[r] & B'\oplus B\ar@{->>}[r]^-{\begin{psmallmatrix}0&g\end{psmallmatrix}} & C\\
	}\] Using \Cref{Lemma:TheFamousDiagramChase}, the above diagram yields that $\ker(\mu_F)\cong \coker(\Upsilon(h))$ and thus $\ker(\mu_F)\in \eff(\EE)$. Using \Cref{Lemma:ResolutionsOfEffaceables} once more, one readily verifies that $F\in \smodad(\EE)$ as required. This shows $(Q_w\circ \iota)^{-1}(Q_W\circ\Upsilon(\EE))\subseteq \smodad(\EE)$.
	
	Conversely, let $F\in \smodad(\EE)$ and let $f\colon X\to Y$ be an admissible morphism in $\EE$ such that $F\cong \coker(\Upsilon(f))$. Commutativity of the diagram yields that $Q_w\circ \iota(F)=Q_w\circ\Upsilon\circ L(F)$. By the proof of \Cref{corollary:YonedaLeftAdjoint}, we find $L(F)\cong \coker(f)$. The result follows.	
\end{proof}
\section{Auslander correspondence for exact categories}\label{section:Correspondence}

In this section we show that $\smodad(-)\colon \Ex_L\to \Ex$ gives an equivalence onto a subcategory $\AEx$ of $\Ex$, which we describe explicitly (see below). In particular, we obtain an extension of Auslander correspondence to exact categories. 

\subsection{\texorpdfstring{The $2$-category $\AEx$}{The 2-category AEx}}

For an exact category $\EE$ with enough projective objects, we let $\gldim(\EE)$ denote the global dimension of $\EE$.

\begin{definition}\label{AuslanderExactCategories}
Let $\EE$ be an exact category with enough projectives $\PP=\Proj(\EE)$. We say that $\EE$ is an \emph{Auslander exact category} if it satisfies the following conditions:
	\begin{enumerate}
		\item[(i)] $({}^{\perp}\PP,\cogen(\PP))$ is a torsion pair in $\EE$.
		\item[(ii)] ${}^{\perp}\PP$ satisfies axiom \ref{A2}, i.e. all morphisms $f\colon E\to E'$ with $E'\in {}^{\perp}\PP$ are admissible with image in ${}^{\perp}\PP$.
		\item[(iii)]\label{Item:AuslanderExactCategories} $\Ext^1_{\EE}({}^{\perp}\PP,\PP)=0$.
		\item[(iv)] $\gldim(\EE)\leq 2$.
	\end{enumerate}
\end{definition}

\begin{definition}\label{Definition:TheTwo2Categories}
Let $\AEx$ denote the $2$-category with objects Auslander exact categories, with $1$-morphisms exact functors preserving projective objects, and with $2$-morphisms natural transformations. 
\end{definition}

\begin{corollary}\label{Corollary:ImageModAdmissible}
$\smodad(-)\colon \Ex_L\to \Ex$ factors through $\AEx$. 
\end{corollary}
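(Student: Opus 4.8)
The plan is to verify that for any exact category $\EE$, the Auslander exact category $\smodad(\EE)$ satisfies the defining conditions (i)--(iv) of \Cref{AuslanderExactCategories}, and moreover that this assignment is compatible with $1$-morphisms and $2$-morphisms in the appropriate way. Most of the work on objects has already been done: set $\QQ=\Proj(\smodad(\EE))$. By \Cref{Proposition:TorsionTheoryProperties}, we have that $(\eff(\EE),\FF)=({}^{\perp}\QQ,\cogen(\QQ))$ is a torsion pair in $\smodad(\EE)$, which gives condition (i); by \Cref{Proposition:EffAreTorsion}, $\eff(\EE)={}^{\perp}\QQ$ satisfies axiom \ref{A2}, which is condition (ii); and \Cref{Proposition:TorsionTheoryProperties}.\eqref{Proposition:TorsionTheoryPropertiesC} gives $\Ext^1_{\smodad(\EE)}({}^{\perp}\QQ,\QQ)=0$, which is condition (iii). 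For condition (iv), \Cref{Lemma:YonedaEmbeddingIsLeftExact} exhibits every $F\in\smodad(\EE)$ with a projective resolution of length at most $2$, namely $0\to\Upsilon(\ker f)\to\Upsilon(X)\to\Upsilon(Y)\to F\to 0$, so $\gldim(\smodad(\EE))\leq 2$; one should also note $\smodad(\EE)$ has enough projectives, which is immediate from the definition since every object has a projective presentation by representable functors. Thus $\smodad(\EE)$ is an Auslander exact category, so the $2$-functor $\smodad(-)\colon\Ex_L\to\Ex$ lands in the full sub-$2$-category spanned by Auslander exact categories on objects.

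The remaining point is that the $1$-morphisms and $2$-morphisms also land in $\AEx$, i.e.\ that for a left exact functor $\Phi\colon\EE\to\EE'$ the induced exact functor $\smodad(\Phi)\colon\smodad(\EE)\to\smodad(\EE')$ preserves projective objects. Recall from \Cref{Corollary:Left2Adjoint} that $\smodad(\Phi)$ is characterized (up to isomorphism) by the commuting square $\smodad(\Phi)\circ\Upsilon_{\EE}\cong\Upsilon_{\EE'}\circ\Phi$. By \Cref{Proposition:TorsionTheoryProperties}.\eqref{Proposition:TorsionTheoryPropertiesD}, the projectives of $\smodad(\EE)$ are exactly (up to isomorphism) the representable functors $\Upsilon_{\EE}(E)$, and likewise for $\EE'$. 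Hence for $E\in\EE$ we get $\smodad(\Phi)(\Upsilon_{\EE}(E))\cong\Upsilon_{\EE'}(\Phi(E))$, which is representable, hence projective in $\smodad(\EE')$. Since every projective object of $\smodad(\EE)$ is a summand of (in fact isomorphic to) a representable functor and exact functors preserve direct summands, $\smodad(\Phi)$ preserves projectives. Therefore $\smodad(\Phi)$ is a $1$-morphism in $\AEx$; the $2$-morphisms are unrestricted natural transformations in both $\Ex$ and $\AEx$, so there is nothing further to check there.

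Assembling these observations, $\smodad(-)$ restricts to a $2$-functor $\Ex_L\to\AEx$, and composing with the inclusion $\AEx\subseteq\Ex$ recovers the original $2$-functor of \Cref{Corollary:Left2Adjoint}; this is precisely the assertion that $\smodad(-)\colon\Ex_L\to\Ex$ factors through $\AEx$. I do not anticipate a genuine obstacle here, since each ingredient is supplied by an earlier result; the only mild care needed is the bookkeeping that ``preserves projectives'' is exactly the extra condition imposed on $1$-morphisms in \Cref{Definition:TheTwo2Categories}, and that the identification of projectives with representables (\Cref{Proposition:TorsionTheoryProperties}.\eqref{Proposition:TorsionTheoryPropertiesD}) is what makes the square from \Cref{Corollary:Left2Adjoint} force this property.
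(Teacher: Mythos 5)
Your proposal is correct and follows essentially the same route as the paper: the paper's proof likewise obtains condition (iv) from \Cref{Lemma:YonedaEmbeddingIsLeftExact} and conditions (i)--(iii) from \Cref{Proposition:EffAreTorsion} together with \Cref{Proposition:TorsionTheoryProperties}. Your additional verification that $\smodad(\Phi)$ preserves projectives (via \Cref{Proposition:TorsionTheoryProperties}.\eqref{Proposition:TorsionTheoryPropertiesD} and the commuting square from \Cref{Corollary:Left2Adjoint}) is a point the paper leaves implicit, and it is handled correctly.
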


\begin{proof}
By \Cref{Lemma:YonedaEmbeddingIsLeftExact} we know that $\gldim(\smodad(\EE))\leq 2$. The claim now follows from \Cref{Proposition:EffAreTorsion} and \Cref{Proposition:TorsionTheoryProperties}.
\end{proof}

\subsection{\texorpdfstring{The equivalence $\Ex_L\simeq\AEx$}{The equivalence ExL = AEx}}

In this section we show that $\smodad(-)\colon \Ex_L\to \AEx$ is an equivalence of $2$-categories.

\begin{proposition}\label{Lemma:TorsionFreeHasWeakInflationToProjective}
Let $\EE\in \AEx$ and let $\PP=\Proj(\EE)$. The following hold:
\begin{enumerate}
\item ${}^{\perp}\PP\subseteq \EE$ is a percolating subcategory of $\EE$.
\item\label{Item:TorsionFreeHasWeakInflationToProjective1} Any weak ${}^{\perp}\PP$-isomorphism $P\stackrel{\sim}{\rightarrow}M$ with $P\in \PP$ is a split inflation.
		\item\label{Item:TorsionFreeHasWeakInflationToProjective3} For every $M\in \cogen(\PP)$, there exists a conflation $M\stackrel{\sim}{\inflation}P\deflation T$ with $P\in \PP$ and $T\in {}^{\perp}\PP$.
\end{enumerate} 
\end{proposition}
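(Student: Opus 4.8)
The plan is to dispatch parts (1) and (2) quickly and put essentially all the work into part (3).

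\emph{Parts (1) and (2).} For (1), hypothesis (i) says $({}^{\perp}\PP,\cogen(\PP))$ is a torsion pair and hypothesis (ii) says its torsion class ${}^{\perp}\PP$ satisfies axiom \ref{A2}, so \Cref{Proposition:Torsion+A2IsTwoSidedPercolating} applies verbatim and gives that ${}^{\perp}\PP\subseteq\EE$ is two-sided admissibly percolating. For (2), let $f\colon P\stackrel{\sim}{\to}M$ be a weak ${}^{\perp}\PP$-isomorphism with $P\in\PP$. Since $f$ is admissible, its kernel $\ker(f)\inflation P$ is an inflation; but $\ker(f)\in{}^{\perp}\PP$ and $P\in\PP$, so by definition of ${}^{\perp}\PP$ the map $\ker(f)\to P$ is zero, and being monic this forces $\ker(f)=0$. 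Thus $f$ is an inflation with cokernel $T:=\coker(f)\in{}^{\perp}\PP$, and the conflation $P\stackrel{f}{\inflation}M\deflation T$ splits because $\Ext^1_{\EE}(T,P)=0$ by hypothesis (iii). Hence $f$ is a split inflation.

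\emph{Part (3), reduction.} First I would record an auxiliary fact: every $X\in\cogen(\PP)$ has $\pd X\leq 1$. Indeed, an inflation $X\inflation P_0\deflation T$ with $P_0\in\PP$ exhibits $X$ as a first syzygy of $T$; since $\gldim\EE\leq 2$ we have $\pd T\leq 2$, so any first syzygy of $T$ has projective dimension $\leq 1$, and all first syzygies of $T$ agree up to a projective direct summand by Schanuel's lemma, whence $\pd X\leq 1$. Now take $M\in\cogen(\PP)$ and fix a conflation $M\inflation P_0\deflation T_0$ with $P_0\in\PP$. Decomposing $T_0$ along the torsion pair yields a conflation $\tt T_0\inflation T_0\deflation\ff T_0$ with $\tt T_0\in{}^{\perp}\PP$ and $\ff T_0\in\cogen(\PP)$. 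Pulling $P_0\deflation T_0$ back along $\tt T_0\inflation T_0$ — and using that pullbacks of deflations are deflations, pullbacks of inflations along deflations are inflations, and such squares are bicartesian — produces conflations
\[
M\inflation\widetilde{M}\deflation\tt T_0,\qquad \widetilde{M}\inflation P_0\deflation\ff T_0,
\]
where $\widetilde{M}:=P_0\times_{T_0}\tt T_0$.

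\emph{Part (3), conclusion.} By the auxiliary fact, $\pd(\ff T_0)\leq 1$, so there is a conflation $P_1'\inflation P_1\deflation\ff T_0$ with $P_1,P_1'\in\PP$. Forming the pullback $E:=P_0\times_{\ff T_0}P_1$ of the two deflations onto $\ff T_0$ gives conflations $P_1'\inflation E\deflation P_0$ and $\widetilde{M}\inflation E\deflation P_1$; both split, since $P_0$ and $P_1$ are projective, so $\widetilde{M}\oplus P_1\cong E\cong P_1'\oplus P_0$. As the right-hand side is projective and a direct summand of a projective is projective, $\widetilde{M}\in\PP$. Therefore $M\inflation\widetilde{M}\deflation\tt T_0$ is the desired conflation $M\stackrel{\sim}{\inflation}P\deflation T$ with $P=\widetilde{M}\in\PP$ and $T=\tt T_0\in{}^{\perp}\PP$; the inflation $M\inflation\widetilde{M}$ is automatically a weak ${}^{\perp}\PP$-isomorphism because its kernel is $0$ and its cokernel $\tt T_0$ lies in ${}^{\perp}\PP$.

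\emph{Main obstacle.} The only nontrivial point is part (3): upgrading the purely existential embedding $M\inflation P_0$ to one with a \emph{projective} target and a cokernel that is \emph{torsion}. The decisive observation is that the torsion-free part $\ff T_0$ of the cokernel has projective dimension $\leq 1$ (this is exactly where $\gldim\EE\leq 2$ enters, via the syzygy/Schanuel argument), which forces the pullback $E$ to be projective and, after cancelling the projective summand $P_1$, forces the intermediate subobject $\widetilde{M}$ — the preimage of $\tt T_0$ inside $P_0$ — to be projective as well. Once this is in place, the conflation $M\inflation\widetilde{M}\deflation\tt T_0$ is exactly what is wanted.
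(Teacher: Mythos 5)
Your proof is correct and takes essentially the same route as the paper: parts (1) and (2) are argued identically, and for part (3) you form the same pullback of $P_0\deflation T_0$ along $\tt T_0\inflation T_0$ and use $\gldim(\EE)\leq 2$ to see that the torsion-free part of the cokernel has projective dimension $\leq 1$, forcing the intermediate object to be projective. The only difference is that you spell out the Schanuel-type cancellation that the paper leaves implicit in the phrase ``Thus $P$ must be projective.''
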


\begin{proof}
\begin{enumerate}
\item This follows from \Cref{Proposition:Torsion+A2IsTwoSidedPercolating}.
\item  If $f\colon P\stackrel{\sim}{\rightarrow}M$ be a weak ${}^{\perp}\PP$-isomorphism, then $f$ is admissible and $\ker(f),\coker(f)\in {}^{\perp}\PP$. Since $\Hom({}^{\perp}\PP,P)=0$, it follows that $\ker(f)=0$, and hence $f$ is an inflation. Since $\Ext^1_{\EE}(\coker (f),P)=0$, we get that $f$ must be a split inflation.

		\item Let $M\in \cogen(\PP)$. By definition there is a conflation $M\inflation Q \deflation N$ with $Q\in \PP$. Choose a conflation $\tt N\inflation N \deflation \ff N$ with $\tt N\in {}^{\perp}\PP$ and $\ff N\in \cogen(\PP)$. Taking the pullback of $Q\deflation N$ along $\tt N\inflation N$, we obtain the following commutative diagram:
		\[\xymatrix{
			& \ff N\ar@{=}[r] & \ff N\\
			M\ar@{>->}[r] & Q\ar@{->>}[r]\ar@{->>}[u] & N\ar@{->>}[u]\\
			M\ar@{>->}[r]\ar@{=}[u] & P\ar@{->>}[r]\ar@{>->}[u] & \tt N\ar@{>->}[u]
		}\] Since $\ff N\in \cogen(\PP)$ and $\gldim (\EE)\leq 2$, it follows that $\ff N$ have projective dimension $\leq 1$. Thus $P$ must be projective. The lower row is therefore the desired conflation. \qedhere
\end{enumerate}
\end{proof}

\begin{proposition}\label{Proposition:KillingTorsionGivesProjectives}
	Let $\EE\in \AEx$ and let $\PP=\Proj(\EE)$. The composition $\PP\stackrel{\iota}{\hookrightarrow} \EE\xrightarrow{Q} \EE/{}^{\perp}\PP$ is an equivalence of categories.
\end{proposition}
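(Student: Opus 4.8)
The plan is to show that the functor $\Phi := Q \circ \iota \colon \PP \to \EE/{}^{\perp}\PP$ is fully faithful and essentially surjective. Throughout write $\AA = {}^{\perp}\PP$, which by \Cref{Lemma:TorsionFreeHasWeakInflationToProjective}(1) is a two-sided admissibly percolating subcategory, so that $\EE/\AA = \Sigma_{\AA}^{-1}\EE$ with localization functor $Q$, and recall that $\Sigma_{\AA}$ is a right multiplicative system (and, by the two-sided hypothesis, also a left multiplicative system), so morphisms in $\EE/\AA$ can be computed by roofs on either side.

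\emph{Essential surjectivity.} Let $M \in \EE$. Since $(\AA, \cogen(\PP))$ is a torsion pair, there is a conflation $\tt M \inflation M \deflation \ff M$ with $\tt M \in \AA$ and $\ff M \in \cogen(\PP)$; the inflation $\tt M \inflation M$ is a weak isomorphism (its kernel is $0$ and its cokernel $\ff M$... no — rather the deflation $M \deflation \ff M$ has kernel $\tt M \in \AA$, so it is a weak isomorphism), hence $Q(M) \cong Q(\ff M)$. Now apply \Cref{Lemma:TorsionFreeHasWeakInflationToProjective}(3) to $\ff M \in \cogen(\PP)$: there is a conflation $\ff M \stackrel{\sim}{\inflation} P \deflation T$ with $P \in \PP$ and $T \in \AA$, and this inflation is a weak isomorphism, so $Q(\ff M) \cong Q(P) = \Phi(P)$. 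Composing, $Q(M) \cong \Phi(P)$.

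\emph{Full faithfulness.} Fix $P, P' \in \PP$. A morphism $\Phi(P) \to \Phi(P')$ in $\EE/\AA$ is represented by a roof $P \stackrel{\sim}{\leftarrow} N \xrightarrow{g} P'$ with $N \stackrel{s}{\to} P$ a weak $\AA$-isomorphism. By \Cref{Lemma:TorsionFreeHasWeakInflationToProjective}(2) — applied with the roles set up so that the weak isomorphism lands in a projective — we want $s$ to be a split inflation; but the statement there is about weak isomorphisms \emph{out of} a projective. To handle a weak isomorphism $N \stackrel{s}{\to} P$ \emph{into} a projective, note $\ker(s) \in \AA$ and $\Hom(\AA, P) = 0$ forces... this goes the wrong way. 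Instead I will present the roof on the other side, using that $\Sigma_{\AA}$ is also left multiplicative: a morphism $\Phi(P)\to\Phi(P')$ is represented by $P \xrightarrow{g} N \stackrel{\sim}{\leftarrow} P'$, i.e. a weak isomorphism $t \colon P' \stackrel{\sim}{\to} N$ out of the projective $P'$, which by \Cref{Lemma:TorsionFreeHasWeakInflationToProjective}(2) is a split inflation, say with retraction $r \colon N \to P'$. Then $r \circ g \colon P \to P'$ is a morphism in $\EE$ with $Q(r \circ g) = Q(r)\circ Q(g) = Q(t)^{-1}\circ Q(g)$ equal to the given morphism, proving fullness. For faithfulness, suppose $a \colon P \to P'$ in $\EE$ has $Q(a) = 0$. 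Then $a$ factors in $\EE$ (using the calculus of right fractions / the characterization of the kernel of $Q$) through an object of $\AA$, i.e. $a = v \circ u$ with $u \colon P \to A$, $A \in \AA$; but $\Hom(\PP, \AA)$... again this requires $\Hom(P, A) = 0$, which is \emph{not} given. The correct argument: $Q(a)=0$ means there is a weak isomorphism $w \colon P' \stackrel{\sim}{\to} Y$ with $w \circ a = 0$ in $\EE$ (left multiplicative system); by \Cref{Lemma:TorsionFreeHasWeakInflationToProjective}(2), $w$ is a split inflation, in particular monic, so $a = 0$. This gives faithfulness.

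\emph{Remark on the main obstacle.} The delicate point, and the one to get right, is consistently using \Cref{Lemma:TorsionFreeHasWeakInflationToProjective}(2): it controls weak isomorphisms \emph{emanating from} projectives, so one must arrange every roof and every factorization so that the weak isomorphism has a projective as its \emph{source}. This is possible precisely because $\AA \subseteq \EE$ is \emph{two-sided} admissibly percolating, so $\Sigma_{\AA}$ is simultaneously a left and right multiplicative system and we may choose the roof representing a morphism $\Phi(P)\to\Phi(P')$ in the form $P \to N \stackrel{\sim}{\leftarrow} P'$. Once this is set up, fullness, faithfulness, and essential surjectivity all follow from (2) and (3) of \Cref{Lemma:TorsionFreeHasWeakInflationToProjective} together with the torsion pair axiom, with no further computation.
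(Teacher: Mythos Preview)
Your proposal is correct and, despite the exploratory false starts (first trying the right roof, first trying to factor through $\AA$), arrives at exactly the same argument as the paper: essential surjectivity via the torsion conflation and \Cref{Lemma:TorsionFreeHasWeakInflationToProjective}(3), and full faithfulness by writing every morphism as a left roof $P \to N \stackrel{\sim}{\leftarrow} P'$ and using \Cref{Lemma:TorsionFreeHasWeakInflationToProjective}(2) to split the weak isomorphism out of the projective. Your closing remark correctly identifies the one genuine subtlety, namely that one must orient all roofs so the weak isomorphism has projective \emph{source}, which is possible precisely because $\Sigma_{\AA}$ is a left multiplicative system.
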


\begin{proof}
	We first show that the composition $Q\circ \iota$ is essentially surjective. Let $M\in \Ob(\EE/{}^{\perp}\PP)=\Ob(\EE)$. Choose a conflation $\tt M\inflation M\stackrel{\sim}{\deflation} \ff M$ with $\tt M\in {}^{\perp}\PP$ and $\ff M\in \cogen(\PP)$. By \Cref{Lemma:TorsionFreeHasWeakInflationToProjective} . \eqref{Item:TorsionFreeHasWeakInflationToProjective3} there exists a conflation $\ff M\stackrel{\sim}{\inflation}P\deflation T$ with $P\in \PP$ and $T\in {}^{\perp}\PP$. The composition $M \stackrel{\sim}{\deflation} \ff M \stackrel{\sim}{\inflation}P$ yield that $M\cong \ff M\cong P$ in $\Sigma_{{}^{\perp}\PP}^{-1}\EE$. Hence $Q\circ \iota$ is essentially surjective.
	
	We now show that $Q\circ \iota$ is faithful. Let $f\colon P_1\to P_2$ be a map in $\PP$ such that $Q(f)=0$. As ${}^{\perp}\PP$ is a percolating subcategory of $\EE$, the set $\Sigma_{{}^{\perp}\PP}$ is a left multiplicative system. It follows that there exists a weak isomorphism $P_2\stackrel{\sim}{\rightarrow}Y$ such that the composition $P_1\stackrel{f}{\rightarrow}P_2\stackrel{\sim}{\rightarrow}Y$ is zero. By \Cref{Lemma:TorsionFreeHasWeakInflationToProjective}.\eqref{Item:TorsionFreeHasWeakInflationToProjective1}, $P_2\stackrel{\sim}{\rightarrow}Y$ is monic. It follows that $f$ is zero.
	
	Lastly, we show that $Q\circ \iota$ is full. As $\Sigma_{{}^{\perp}\PP}$ is a left multiplicative system, we can represent a morphism $f\in \Hom_{\EE/{}^{\perp}\PP}(P_1,P_2)$ as a roof $\xymatrix{P_1\ar[r] & X &\ar[l]_{\sim} P_2}$. Then $P_2\stackrel{\sim}{\to}X$ has a left inverse $h\colon X\stackrel{\sim}{\to} P_2$ by \Cref{Lemma:TorsionFreeHasWeakInflationToProjective}.\eqref{Item:TorsionFreeHasWeakInflationToProjective1}.  The composite $P_1\to X\xrightarrow{h}P_2$ yields the desired morphism $g\colon P_1\to P_2$ such that $Q\circ \iota(g)=f$.
\end{proof}

\begin{remark}\label{remark:WhichExactStructureOnProjectives}
In light of \Cref{Proposition:KillingTorsionGivesProjectives}, there are two natural exact structures on $\PP$.  Firstly, as subcategory of $\EE$, the category $\PP$ inherits the split exact structure.  Secondly, via the quotient $Q\colon \EE \to \EE/{}^{\perp}\PP$, the category $\EE/{}^{\perp}\PP$ is equipped with the structure of an exact category.  We can use the equivalence $\PP\stackrel{\iota}{\hookrightarrow} \EE\xrightarrow{Q} \EE/{}^{\perp}\PP$ of \Cref{Proposition:KillingTorsionGivesProjectives} to transfer this structure to $\PP$.

This last structure can be understood as follows.  Recall from \Cref{Theorem:QReflectsAdmissibles} that a morphism $f$ in $\EE$ is admissible if and only if $Q(f)$ is admissible.  We find that a morphism $g$ in $\PP$ is admissible if and only if $\iota(g)$ is admissible.  This is sufficient to recover the conflation structure on $\PP$.  Indeed, the inflations are the monic admissible morphisms and the deflations are the epic admissible morphisms.
\end{remark}

%

\begin{proposition}\label{Proposition:ProjectivesLeftExact}
Let $\EE\in \AEx$ and let $\PP=\Proj(\EE)$ be endowed with the exact structure coming from the equivalence in \Cref{Proposition:KillingTorsionGivesProjectives}. The following hold:
\begin{enumerate}
\item The functor $\PP\to \EE$ is left exact.
\item The induced exact functor $\smodad(\PP)\to \EE$ is an equivalence.
\end{enumerate}
\end{proposition}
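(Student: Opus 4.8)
The plan is to recognize $\PP = \Proj(\EE)$, equipped with the exact structure transported from the equivalence $\PP \simeq \EE/{}^{\perp}\PP$ of \Cref{Proposition:KillingTorsionGivesProjectives}, as an exact category, and then to feed it into \Cref{Theorem:Universal property} applied to the inclusion $\iota\colon \PP \hookrightarrow \EE$. For part (1), I first need to check that $\iota$ is left exact, i.e.\ that a conflation $P_1 \inflation P_2 \deflation P_3$ in $\PP$ (which, under the equivalence, is the same as a conflation in $\EE/{}^{\perp}\PP$) is sent to a left exact sequence $P_1 \inflation P_2 \to P_3 \to \coker$ in $\EE$. By \Cref{Theorem:TwoSidedLocalizationTheorem}, such a conflation in $\EE/{}^{\perp}\PP$ comes (up to isomorphism in the localization) from a conflation $X' \inflation Y' \deflation Z'$ in $\EE$; the point is to promote this to an actual left exact sequence \emph{among projectives}. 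Here \Cref{Lemma:TorsionFreeHasWeakInflationToProjective} does the work: any weak ${}^{\perp}\PP$-isomorphism out of a projective is a split inflation, so one can replace $X', Y', Z'$ by their projective parts and track that the inflation $P_1 \to P_2$ remains an inflation in $\EE$ and that its cokernel in $\EE$ is computed correctly. I expect this to reduce, after pullback/pushout manipulations à la \Cref{Theorem:QReflectsAdmissibles}, to the observation that $Q$ reflects admissibility (\Cref{Theorem:QReflectsAdmissibles}) together with $\Hom({}^{\perp}\PP, \PP) = 0$ and $\Ext^1_{\EE}({}^{\perp}\PP,\PP)=0$ from the Auslander axioms.

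Once $\iota\colon \PP \to \EE$ is known to be left exact, \Cref{Theorem:Universal property} yields an exact functor $\overline{\iota}\colon \smodad(\PP) \to \EE$ with $\overline{\iota}\circ \Upsilon_\PP = \iota$, which is part (2)'s candidate equivalence. To see $\overline{\iota}$ is an equivalence I would argue in two steps. First, essential surjectivity and fullness/faithfulness on the subcategory of projectives: $\overline{\iota}$ restricted to $\Proj(\smodad(\PP)) \simeq \PP$ is just $\iota$, which is fully faithful, and every object of $\EE$ lies in the image because $\EE$ has enough projectives — more precisely, I would use that $\overline{\iota}$ is exact, sends the projective generators correctly, and then lift a deflation $P \deflation M$ (for $M \in \EE$, $P \in \PP$) with admissible kernel to a presentation in $\smodad(\PP)$ whose image under $\overline{\iota}$ is $M$. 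Second, to see $\overline{\iota}$ is fully faithful on all of $\smodad(\PP)$: any $F \in \smodad(\PP)$ has a projective presentation $\Upsilon(P_1) \to \Upsilon(P_2) \to F \to 0$ with $P_1 \to P_2$ admissible in $\PP$; applying $\overline{\iota}$ gives $P_1 \to P_2 \to \overline{\iota}(F) \to 0$ with $P_1 \to P_2$ admissible in $\EE$, and one computes $\Hom$ and checks bijectivity by a five-lemma-style diagram chase using that $\Upsilon_\PP$ is fully faithful and that $\PP = \Proj(\EE)$.

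An alternative, and probably cleaner, route for part (2) is to invoke Auslander's formula (\Cref{Theorem:AuslandersFormulaForExactCategories}): $\smodad(\PP)/\eff(\PP) \cong \PP \simeq \EE/{}^{\perp}\PP$, and under $\overline{\iota}$ the subcategory $\eff(\PP)$ maps into ${}^{\perp}\PP \subseteq \EE$; combined with the identification $\eff(\PP) = {}^{\perp}\Proj(\smodad(\PP))$ from \Cref{Proposition:TorsionTheoryProperties} and a comparison of the two torsion pairs, one deduces that $\overline{\iota}$ identifies the localization sequences $\eff(\PP) \to \smodad(\PP) \to \PP$ and ${}^{\perp}\PP \to \EE \to \EE/{}^{\perp}\PP$. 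Since $\overline{\iota}$ induces an equivalence on the localizing subcategories (both are "the effaceable/torsion part") and on the quotients (both are $\PP$), it is itself an equivalence — this is where the Auslander axioms (iii) and (iv), guaranteeing $\gldim \leq 2$ and the $\Ext$-vanishing, are used to match the homological behaviour on both sides.

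The main obstacle I anticipate is part (1): verifying left exactness of $\iota\colon \PP \to \EE$ is not formal, because the exact structure on $\PP$ is defined \emph{indirectly} via the localization $\EE/{}^{\perp}\PP$, so every conflation in $\PP$ must be unwound through \Cref{Theorem:TwoSidedLocalizationTheorem} and then re-expressed inside $\EE$ using \Cref{Lemma:TorsionFreeHasWeakInflationToProjective}. Getting the cokernel computations to line up — in particular showing that the cokernel of $P_1 \inflation P_2$ taken in $\EE$ agrees with the object representing $P_3$ in the localization, rather than just agreeing up to a weak isomorphism — will require the precise form of axioms (i)–(iii) in \Cref{AuslanderExactCategories}, and is the step where I would be most careful.
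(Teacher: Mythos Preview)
Your proposal is on the right track and would work, but the paper's approach to both parts is considerably more direct.

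For part (1), the paper avoids unwinding the conflation via \Cref{Theorem:TwoSidedLocalizationTheorem} entirely. Given $P_1\xrightarrow{f}P_2\xrightarrow{g}P_3$ in $\PP$ with $Q(P_1)\to Q(P_2)\to Q(P_3)$ a conflation in $\EE/{}^{\perp}\PP$, it first invokes \Cref{Theorem:QReflectsAdmissibles} to conclude that $f$ and $g$ are admissible in $\EE$. Then, rather than computing cokernels, it checks that $f$ is the kernel of $g$ in $\EE$ by testing the universal property only on maps out of projective objects $P$ (this suffices because $\EE$ has enough projectives): one needs the sequence
\[
\Hom_\EE(P,P_1)\to\Hom_\EE(P,P_2)\to\Hom_\EE(P,P_3)
\]
to be left exact. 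By the equivalence of \Cref{Proposition:KillingTorsionGivesProjectives} this sequence is isomorphic to the sequence of $\Hom$-groups in $\EE/{}^{\perp}\PP$, which is left exact since $Q(P_1)\to Q(P_2)\to Q(P_3)$ is a conflation there. This neatly sidesteps exactly the obstacle you anticipated about matching the cokernel in $\EE$ with the object in the localization: the kernel property is verified via the $\Hom$-equivalence, and no roof manipulation or use of \Cref{Lemma:TorsionFreeHasWeakInflationToProjective} is needed.

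For part (2), the paper's argument is a single line: both $\smodad(\PP)$ and $\EE$ are exact categories with enough projectives, and the exact functor $\overline{\iota}$ restricts to an equivalence on projective objects, hence is itself an equivalence. Your first approach (presentations plus a five-lemma comparison) is precisely how one proves this standard fact, so it is the same argument unpacked; the localization-sequence comparison you propose as an alternative is more machinery than the situation requires.
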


\begin{proof}
\begin{enumerate}
\item Let $P_1\xrightarrow{f}P_2\xrightarrow{g}P_3$ be a sequence in $\PP$, and assume that $Q(P_1)\xrightarrow{Q(f)}Q(P_2)\xrightarrow{Q(g)}Q(P_3)$ is a conflation in $\EE/{}^{\perp}\PP$.  Since $Q(f)$ and $Q(g)$ are admissible in $\EE/{}^{\perp}\PP$, it follows that $f$ and $g$ are admissible in $\EE$ by \Cref{Theorem:QReflectsAdmissibles}. Hence, it only remains to show that $f$ is a kernel of $g$. Since $\EE$ has enough projectives, it suffices to check that any morphism $h\colon P\to P_2$ with $P\in \PP$ satisfying $g\circ h=0$ must factor uniquely through $f$. In other words, it suffices to show that the sequence
\[
\Hom_{\EE}(P,P_1)\xrightarrow{f\circ -}\Hom_{\EE}(P,P_2)\xrightarrow{g\circ -}\Hom_{\EE}(P,P_3)
\]
is left exact. By \Cref{Proposition:KillingTorsionGivesProjectives} we have isomorphisms $\Hom_{\EE}(P,P_i)\cong \Hom_{\EE/{}^{\perp}\PP}(Q(P),Q(P_i))$ for $1\leq i\leq 3$. Hence the sequence above is isomorphic to the sequence
\[
\Hom_{\EE/{}^{\perp}\PP}(Q(P),Q(P_1))\xrightarrow{Q(f)\circ -}\Hom_{\EE/{}^{\perp}\PP}(Q(P),Q(P_2))\xrightarrow{Q(g)\circ -}\Hom_{\EE/{}^{\perp}\PP}(Q(P),Q(P_3))
\]
which is left exact since $Q(P_1)\xrightarrow{Q(f)}Q(P_2)\xrightarrow{Q(g)}Q(P_3)$ is a conflation. This proves the claim.

\item By \Cref{Theorem:Universal property} there exists an exact functor $\overline{\iota}\colon \smodad(\PP)\to \EE$ extending the inclusion $\iota\colon \PP\to \EE$; the functor $\overline{\iota}$ is given by mapping an object $M \cong \coker (\Upsilon(f))$ (where $f$ is an admissible morphism in $\PP$) to $\coker(\iota(f))$. Since any object $E\in \EE$ is the cokernel of an admissible morphism in $\PP$, the functor $\overline{\iota}\colon \smodad(\PP)\to \EE$ is dense. Since  $\smodad(\PP)$ is an exact category with enough projectives, and since the exact functor $\smodad(\PP)\to \EE$ restricts to an fully faithful functor on the projective objects, it must be fully faithful itself. This shows that $\overline{\iota}$ is an equivalence. \qedhere
\end{enumerate}
\end{proof}

\begin{theorem}[First version of Auslander correspondence]\label{FirstAuslanderCorrespondence}
	 $\smodad(-)\colon \Ex_L\to \AEx$ is an equivalence of $2$-categories.
\end{theorem}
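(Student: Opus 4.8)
The plan is to assemble the $2$-equivalence $\smodad(-)\colon \Ex_L \to \AEx$ from the pieces already developed in the excerpt. First I would recall that \Cref{Corollary:ImageModAdmissible} shows $\smodad(-)$ does land in $\AEx$, and that \Cref{Corollary:Left2Adjoint} equips $\smodad(-)$ with its $2$-functor structure and exhibits it as left $2$-adjoint to the inclusion $\Ex \subseteq \Ex_L$ (via the Yoneda embedding as unit). So the real content is to produce a quasi-inverse $2$-functor $\AEx \to \Ex_L$ and check the unit and counit are $2$-isomorphisms.

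The quasi-inverse is the assignment $\EE' \mapsto \Proj(\EE')$, where the category of projectives is given the exact structure transported from $\EE'/{}^{\perp}\PP$ via the equivalence of \Cref{Proposition:KillingTorsionGivesProjectives}. By \Cref{Proposition:ProjectivesLeftExact}(1) the inclusion $\Proj(\EE') \hookrightarrow \EE'$ is then left exact, so $\Proj(\EE')$ is a well-defined object of $\Ex_L$; to make this a $2$-functor one checks that a $1$-morphism in $\AEx$ (an exact functor preserving projectives) restricts to a left exact functor between the projective subcategories — this restriction is left exact precisely because the exact structure on $\Proj$ is inherited from $\EE'/{}^{\perp}\PP$ and the functor is compatible with the torsion pairs. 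Then the counit is exactly the equivalence $\smodad(\Proj(\EE')) \xrightarrow{\ \simeq\ } \EE'$ furnished by \Cref{Proposition:ProjectivesLeftExact}(2), and the unit at $\EE$ is the identification $\Proj(\smodad(\EE)) \cong \EE$ furnished by \Cref{Proposition:TorsionTheoryProperties}(4) (the Yoneda embedding identifies $\EE$ with the projectives of $\smodad(\EE)$, and one checks the transported exact structure on $\Proj(\smodad(\EE))$ agrees with the original one on $\EE$ — this uses Auslander's formula \Cref{Theorem:AuslandersFormulaForExactCategories}, since $\smodad(\EE)/\eff(\EE) \cong \EE$ and $\eff(\EE) = {}^{\perp}\QQ$ by \Cref{Proposition:TorsionTheoryProperties}(1)).

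With both unit and counit in hand, it remains to verify they are natural in the appropriate $2$-categorical sense and that they satisfy the triangle identities — but since each is built from equivalences of categories that are defined by universal properties (the universal property of $\smodad$ in \Cref{Theorem:Universal property}, and essential uniqueness of the quotient), the coherence is automatic: any two extensions along the Yoneda embedding agree up to unique isomorphism. I would phrase this by saying that $\smodad(-)$ is fully faithful on $2$-cells and essentially surjective on objects, invoking \Cref{Corollary:Left2Adjoint} for the hom-category equivalence $\Hom_{\Ex}(\smodad(\EE),\EE') \simeq \Hom_{\Ex_L}(\EE,\EE')$ and \Cref{Proposition:ProjectivesLeftExact} for essential surjectivity, which together already force a $2$-equivalence.

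The main obstacle I anticipate is the bookkeeping around the exact structure on $\Proj(\EE')$: one must confirm both that it is genuinely well-defined (independent of coherent choices) and, for the unit, that transporting the structure from $\smodad(\EE)/\eff(\EE)$ back to $\EE$ recovers the original conflations rather than some coarser or finer structure. This is where \Cref{Theorem:QReflectsAdmissibles} (the localization functor reflects admissible morphisms) is essential, since it lets one detect conflations in $\EE$ from conflations in the quotient; combined with \Cref{Proposition:KillingTorsionGivesProjectives} identifying $\Proj$ with $\EE'/{}^{\perp}\PP$, it pins down the exact structure uniquely. Everything else — functoriality on $1$- and $2$-cells, the triangle identities — is formal once this identification of exact structures is secured.
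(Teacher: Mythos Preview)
Your proposal is correct and follows essentially the same approach as the paper: construct the quasi-inverse $\AEx\to\Ex_L$ by sending $\EE'$ to $\Proj(\EE')$ with the exact structure transported from $\EE'/{}^{\perp}\PP$ via \Cref{Proposition:KillingTorsionGivesProjectives}, and then verify the two composites are equivalent to the identity using \Cref{Theorem:AuslandersFormulaForExactCategories} and \Cref{Proposition:ProjectivesLeftExact}. The paper's proof is terser than yours and does not spell out the unit/counit verification or the point about matching exact structures, but the content is the same; your identification of \Cref{Theorem:QReflectsAdmissibles} as the mechanism ensuring the transported exact structure on $\Proj(\smodad(\EE))$ recovers the original one on $\EE$ is accurate (this is implicitly invoked in the paper through \Cref{Proposition:ProjectivesLeftExact}(1)).

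One small caution: your closing remark about deducing full faithfulness directly from \Cref{Corollary:Left2Adjoint} is not quite immediate, since that corollary gives $\Hom_{\Ex}(\smodad(\EE),\EE')\simeq\Hom_{\Ex_L}(\EE,\EE')$ rather than $\Hom_{\AEx}(\smodad(\EE),\smodad(\EE'))\simeq\Hom_{\Ex_L}(\EE,\EE')$; bridging the gap requires knowing that exact functors $\smodad(\EE)\to\smodad(\EE')$ preserving projectives correspond to left exact functors $\EE\to\EE'$, which is again the explicit quasi-inverse construction. Since you already build that quasi-inverse, this alternative phrasing is redundant rather than wrong.
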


\begin{proof}
We define a $2$-functor $\AEx\to \Ex_L$ as follows: An object $\EE\in \AEx$ is sent to $\Proj(\EE)$ endowed with the exact structure coming from the equivalence in \Cref{Proposition:KillingTorsionGivesProjectives}. A $1$-morphism $\Psi\colon \EE\to \EE'$ is sent to the restriction $\Psi|_{\Proj(\EE)}\colon \Proj(\EE)\to \Proj(\EE')$ (which can be seen to be left exact by using that the inclusion $\Proj(\EE)\to \EE$ is left exact). Finally, a natural transformation $\Psi\to \Psi'$ is sent to the restriction $\Psi|_{\Proj(\EE)}\to \Psi'|_{\Proj(\EE)}$. It follows from \Cref{Theorem:AuslandersFormulaForExactCategories} and  \Cref{Proposition:ProjectivesLeftExact} that this gives a quasi-inverse to $\smodad (-)$, which proves the claim.
\end{proof}

\begin{remark}\label{AbelianCase}
Note that $\EE$ is an abelian category if and only if $\smodad(\EE)$ is an abelian category. Indeed, if $\EE$ is abelian, then $\smodad(\EE)=\operatorname{mod}(\EE)$ is abelian since $\EE$ has weak kernels \cite{Freyd66}. Conversely, if $\smodad(\EE)$ is abelian, then $\eff(\EE)$ is a Serre subcategory of $\smodad(\EE)$, and hence the localization $\EE\cong \smodad(\EE)/\eff(\EE)$ is an abelian category. It follows from this that $\smodad(-)\colon \Ex_L\to \AEx$ restricts to an equivalence of $2$-categories
\[
\operatorname{mod}(-)\colon \Abelian_L \to \AAb
\]
where $\Abelian_L$ and $\AAb$ are the $2$-subcategories of $\Ex_L$ and $\AEx$ consisting of the abelian categories. Note that axiom \ref{A2} in the definition of Auslander exact categories reduces to requiring ${}^{\perp}\PP$ to be closed under subobjects in this case. 
\end{remark}

We use the equivalence from \Cref{FirstAuslanderCorrespondence} to deduce some further properties of Auslander exact categories.

\begin{corollary}
	Let $\EE\in \AEx$ and $\PP=\Proj(\EE)$. The following hold:
	\begin{enumerate}
	\item The subcategory $\cogen(\PP)$ consists precisely of the objects of projective dimension at most one.
	\item the inclusion functor $\PP\to \EE$ has a left adjoint.
	\end{enumerate} 
\end{corollary}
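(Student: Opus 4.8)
The strategy is to use the equivalence $\EE\cong\smodad(\PP)$ from \Cref{Proposition:ProjectivesLeftExact}, which reduces both statements to facts that are already visible on the functor side. Write $\QQ=\Proj(\smodad(\PP))$, so that under this equivalence $\PP$ corresponds to $\QQ$ and $\eff(\PP)$ corresponds to ${}^{\perp}\PP$; moreover, by \Cref{Proposition:TorsionTheoryProperties}, $\cogen(\PP)$ corresponds to $\cogen(\QQ)=\FF$, the torsionfree class of the torsion pair $(\eff(\PP),\FF)$ in $\smodad(\PP)$.

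\emph{Item (1).} First I would show that every $F\in\FF$ has projective dimension at most one. By definition $F$ has a projective presentation $\Upsilon(X)\xrightarrow{\Upsilon(f)}\Upsilon(Y)\to F\to 0$ with $f\colon X\inflation Y$ an inflation; since $f$ is an inflation it is in particular a monomorphism, so by \Cref{Lemma:YonedaEmbeddingIsLeftExact} the sequence $0\to\Upsilon(X)\xrightarrow{\Upsilon(f)}\Upsilon(Y)\to F\to 0$ is exact, exhibiting $\pd F\leq 1$. Conversely, suppose $\pd F\leq 1$, i.e.\ $F$ admits a projective resolution $0\to\Upsilon(X)\xrightarrow{\Upsilon(f)}\Upsilon(Y)\to F\to 0$. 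Using the torsion pair, write the canonical conflation $\tt F\inflation F\deflation \ff F$ with $\tt F\in\eff(\PP)$ and $\ff F\in\FF$; I claim $\tt F=0$. Indeed $\tt F$ is a quotient of $F$ (being the image of $F$ in... ) — more carefully, from $\pd F\leq1$ one gets $\pd(\tt F)\le 1$ as well because $\ff F\in\FF$ has $\pd\le 1$ by the first part, so the conflation $\tt F\inflation F\deflation\ff F$ forces $\pd(\tt F)\le 1$; but any $E\in\eff(\PP)$ of projective dimension $\le 1$ is zero, since its minimal projective resolution $0\to\Upsilon(\ker e)\to\Upsilon(A)\xrightarrow{\Upsilon(e)}\Upsilon(B)\to E\to 0$ coming from a deflation $e\colon A\deflation B$ would force $\Upsilon(\ker e)\to\Upsilon(A)$ to be a split monomorphism, hence $\Upsilon(B)\to E$ split, hence $E$ projective and effaceable, hence $E=0$. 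Thus $F\cong\ff F\in\FF=\cogen(\QQ)$, which under the equivalence says $F\in\cogen(\PP)$.

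\emph{Item (2).} The left adjoint to $\PP\hookrightarrow\EE$ is obtained by transporting the torsionfree functor. Concretely, under $\EE\cong\smodad(\PP)$ the inclusion $\PP\to\EE$ becomes $\QQ\hookrightarrow\smodad(\PP)$, and I claim this has a left adjoint. Given $F\in\smodad(\PP)$ with presentation $\Upsilon(X)\xrightarrow{\Upsilon(f)}\Upsilon(Y)\to F\to 0$, the functor $L$ of \Cref{corollary:YonedaLeftAdjoint} applied to $\PP$ in place of $\EE$ gives $L(F)\cong\coker(f)\in\PP$ together with a natural map $F\to\Upsilon_\PP(L(F))$, and the proof of \Cref{corollary:YonedaLeftAdjoint} shows $\Hom_{\smodad(\PP)}(F,\Upsilon_\PP(P))\cong\Hom_{\PP}(L(F),P)$ naturally; here one uses that $f$ is admissible in $\PP$, which holds because $\PP\to\EE$ is left exact by \Cref{Proposition:ProjectivesLeftExact}(1) and $\EE$ is honestly exact, so every map of $\EE$ that lands between projectives and is a morphism of $\smodad(\PP)$-presentation is admissible. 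Transporting back along the equivalence yields the left adjoint to $\PP\hookrightarrow\EE$.

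\textbf{Main obstacle.} The subtle point is the implication $\pd F\leq 1\Rightarrow F\in\cogen(\PP)$: one must be careful that having a \emph{short} projective resolution does not a priori give a presentation by an \emph{inflation}, and the argument has to go through the torsion pair and the observation that a nonzero effaceable functor never has projective dimension $\le 1$ (equivalently, a deflation that is also a monomorphism up to the relevant completion is an isomorphism). Everything else is a routine transport of structure along the equivalence of \Cref{Proposition:ProjectivesLeftExact} together with \Cref{corollary:YonedaLeftAdjoint}.
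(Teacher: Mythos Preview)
Your approach is the same as the paper's: transport the question along the exact equivalence $\EE\simeq\smodad(\PP)$ from \Cref{Proposition:ProjectivesLeftExact} and then invoke \Cref{Proposition:TorsionTheoryProperties}\eqref{Proposition:TorsionTheoryPropertiesB} for item~(1) and \Cref{corollary:YonedaLeftAdjoint} for item~(2). The paper's proof is a bare citation; you have usefully tried to spell out why $\cogen(\QQ)$ coincides with the objects of projective dimension at most one, which the paper leaves implicit.

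There is, however, one genuine gap in your argument for item~(1). From $\pd E\le 1$ with $E\in\eff(\PP)$ you correctly deduce that the first syzygy $\Omega E=\ker(\Upsilon(B)\to E)$ is projective, equivalently that $\Upsilon(\ker e)\to\Upsilon(A)$ is a split inflation. But the next step, ``hence $\Upsilon(B)\to E$ splits'', does not follow from this alone: knowing that $\Omega E$ is projective says nothing about the conflation $\Omega E\inflation\Upsilon(B)\deflation E$ unless one knows $\Ext^1(E,\Omega E)=0$. Trying to argue via the splitting $A\cong\ker e\oplus A'$ runs into trouble because you would need $\PP$ to be weakly idempotent complete to conclude that the induced map $A'\to B$ is a deflation, and this is not assumed. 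The clean fix is to use precisely the axiom you have available: by \Cref{Proposition:TorsionTheoryProperties}\eqref{Proposition:TorsionTheoryPropertiesC} (equivalently, axiom~(iii) of an Auslander exact category) one has $\Ext^1(\eff(\PP),\QQ)=0$; hence any short projective resolution $0\to P_1\to P_0\to E\to 0$ of $E\in\eff(\PP)$ splits, $E$ is projective, and then $E\in{}^{\perp}\QQ\cap\QQ$ forces $\Hom(E,E)=0$, so $E=0$. With this correction your argument goes through.
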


\begin{proof}
This holds since the analogous properties are true for $\smodad(\EE')$ for an exact category $\EE'$ by \Cref{corollary:YonedaLeftAdjoint} and \Cref{Proposition:TorsionTheoryProperties}.\eqref{Proposition:TorsionTheoryPropertiesB}.
\end{proof}

\subsection{Injectives and dominant dimension}

In this section we investigate how the property of an exact category $\EE$ having enough injectives is reflected in $\smodad (\EE)$. In particular, we obtain a more familiar version of Auslander correspondence.

\begin{lemma}\label{injectives}
Let $\EE$ be an exact category. An object $I$ is injective in $\EE$ if and only if $\Upsilon(I)$ is injective in $\smodad \EE$.
\end{lemma}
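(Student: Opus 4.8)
The plan is to treat the two implications separately, in both cases leveraging the exact left adjoint $L\colon \smodad(\EE)\to\EE$ of the Yoneda embedding with $L\circ\Upsilon=1_{\EE}$, established in \Cref{corollary:YonedaLeftAdjoint}.

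For the ``only if'' direction, suppose $I$ is injective in $\EE$. It suffices to show that for every inflation $\alpha\colon F\inflation G$ in $\smodad(\EE)$ the restriction map $\Hom_{\smodad(\EE)}(G,\Upsilon(I))\to\Hom_{\smodad(\EE)}(F,\Upsilon(I))$ is surjective. Since $\alpha$ fits into a conflation $F\inflation G\deflation H$ and $L$ is exact, $L(\alpha)\colon L(F)\inflation L(G)$ is an inflation in $\EE$. The adjunction $L\dashv\Upsilon$ furnishes a natural isomorphism $\Hom_{\smodad(\EE)}(-,\Upsilon(I))\cong\Hom_{\EE}(L(-),I)$, so naturality identifies the restriction map along $\alpha$ with the restriction map $\Hom_{\EE}(L(G),I)\to\Hom_{\EE}(L(F),I)$ along $L(\alpha)$. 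The latter is surjective because $I$ is injective and $L(\alpha)$ is an inflation, whence so is the former; thus $\Upsilon(I)$ is injective in $\smodad(\EE)$.

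For the ``if'' direction, suppose $\Upsilon(I)$ is injective in $\smodad(\EE)$, and let $i\colon X\inflation Y$ be an inflation in $\EE$ with a morphism $\phi\colon X\to I$. Completing $i$ to a conflation and using that $\Upsilon\colon\EE\to\smodad(\EE)$ is left exact (\Cref{Theorem:Universal property}), we see that $\Upsilon(i)\colon\Upsilon(X)\to\Upsilon(Y)$ is an inflation in $\smodad(\EE)$. As $\Upsilon$ is fully faithful, $\phi$ corresponds to a morphism $\Upsilon(\phi)\colon\Upsilon(X)\to\Upsilon(I)$; injectivity of $\Upsilon(I)$ provides an extension along $\Upsilon(i)$, and full faithfulness shows this extension is $\Upsilon(\psi)$ for a (unique) $\psi\colon Y\to I$ with $\psi i=\phi$. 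Hence $I$ is injective in $\EE$.

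No step poses a genuine obstacle; the points to be careful about are merely that representable functors lie in $\smodad(\EE)$ (take the presentation with underlying morphism $0\inflation X$, which is admissible, so that $\Upsilon(I)$ is a legitimate object of $\smodad(\EE)$) and that the isomorphism $\Hom_{\smodad(\EE)}(-,\Upsilon(I))\cong\Hom_{\EE}(L(-),I)$ is natural, so it does transport restriction along $\alpha$ to restriction along $L(\alpha)$.
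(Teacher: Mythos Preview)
Your proof is correct. Both directions work, and your ``if'' direction is essentially the paper's argument (use that $\Upsilon$ is left exact so $\Upsilon(i)$ is an inflation, then extend and transfer back by full faithfulness), though phrased more directly as an extension problem rather than as exactness of a Hom-sequence.

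Your ``only if'' direction, however, takes a genuinely different route from the paper's. The paper argues by picking a projective resolution $0\to\Upsilon(\ker f)\to\Upsilon(X)\to\Upsilon(Y)\to F\to 0$ of an arbitrary $F\in\smodad(\EE)$, applying $\Hom(-,\Upsilon(I))$, and using injectivity of $I$ in $\EE$ to see that the resulting sequence of Hom-groups is exact, whence $\Ext^i(F,\Upsilon(I))=0$ for $i>0$. You instead invoke the adjunction $L\dashv\Upsilon$ with $L$ exact (\Cref{corollary:YonedaLeftAdjoint}) to get $\Hom_{\smodad(\EE)}(-,\Upsilon(I))\cong\Hom_{\EE}(L(-),I)$ naturally, reducing the extension problem along $\alpha$ to the extension problem along the inflation $L(\alpha)$. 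Your approach is the categorical slogan ``right adjoints of exact functors preserve injectives'' made explicit, and it is cleaner: it avoids any resolution computation and makes no use of the fact that objects of $\smodad(\EE)$ have projective dimension at most two. The paper's computation, on the other hand, yields the slightly stronger information that $\Ext^i(F,\Upsilon(I))=0$ for all $i>0$ directly, which is occasionally convenient to have stated, but is not needed for the lemma itself.
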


\begin{proof}
Assume $I$ is injective in $\EE$. Let $F\in \smodad (\EE)$, and choose an admissible morphism $f\colon X\to Y$ such that
\[
0\to \Upsilon(\ker f)\to \Upsilon (X)\xrightarrow{\Upsilon(f)}\Upsilon (Y)\to F\to 0
\]
is exact. Applying $\Hom_{\smodad(\EE)}(-,\Upsilon (I))$ and using that the Yoneda embedding is fully faithful, we get the sequence
\[
0\to \Hom_{\smodad(\EE)}(F,\Upsilon (I))\to \Hom_{\EE}(Y,I)\to \Hom_{\EE}(X,I)\to \Hom_{\EE}(\ker f,I)\to 0.
\]
It is exact since $I$ is injective in $\EE$. Hence $\Ext^i_{\smodad (\EE)}(F,\Upsilon (I))=0$ for all $i>0$ and all $F\in \smodad (\EE)$, so $\Upsilon (I)$ is injective in $\smodad (\EE)$.

Conversely, assume $\Upsilon (I)$ is injective in $\smodad (\EE)$, and let $0\to X\xrightarrow{f}Y\xrightarrow{g}Z\to 0$ be a conflation in $\EE$. Then the sequence
\[
0\to \Upsilon (X)\xrightarrow{\Upsilon(f)}\Upsilon (Y)\xrightarrow{\Upsilon(g)}\Upsilon (Z)\to \coker (\Upsilon(g))\to 0
\]
is exact in $\smodad (\EE)$. Since $\coker (\Upsilon(g))\in \eff (\EE)$, we get that $\Hom_{\smodad (\EE)}(\coker (\Upsilon(g)),\Upsilon (I))=0$. Therefore, applying $\Hom_{\smodad (\EE)}(-,\Upsilon (I))$ to the sequence above and using that 
\[
\Ext^i_{\smodad(\EE)}(\coker (\Upsilon(g)),\Upsilon (I))=0
\]
 for all $i>0$ since $\Upsilon (I)$ is injective, we get an exact sequence
\[
0\to \Hom_{\EE}(Z,I)\xrightarrow{-\circ g}\Hom_{\EE}(Y,I)\xrightarrow{-\circ f}\Hom_{\EE}(X,I)\to 0.
\]
This shows that $I$ is injective in $\EE$.
\end{proof}

\begin{definition}\label{definition:DominantDimension}
	Let $\EE$ be an exact category with enough projectives. The \emph{dominant dimension} of $\EE$, denoted $\domdim (\EE)$, is the largest integer $n$ such that for any projective object $P\in \EE$ there exists an exact sequence
\[
0\to P\to I_1\to \cdots \to I_n \to C\to 0
\]
with $I_k$ being projective and injective for $1\leq k\leq n$. 
\end{definition}

\begin{lemma}\label{Lemma:EquivalentCharacterizationOfDomDim}
Let $\EE$ be an exact category. The following are equivalent:
\begin{enumerate}
\item\label{Item:EquivalentCharacterizationOfDomDim1} $\EE$ has enough injectives.
\item\label{Item:EquivalentCharacterizationOfDomDim2} $\domdim (\smodad (\EE))\geq 2$.
\item\label{Item:EquivalentCharacterizationOfDomDim3} $\domdim (\smodad (\EE))\geq 1$.
\end{enumerate}
\end{lemma}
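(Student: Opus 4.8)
The plan is to prove the cyclic chain of implications $\eqref{Item:EquivalentCharacterizationOfDomDim1}\Rightarrow\eqref{Item:EquivalentCharacterizationOfDomDim2}\Rightarrow\eqref{Item:EquivalentCharacterizationOfDomDim3}\Rightarrow\eqref{Item:EquivalentCharacterizationOfDomDim1}$, using throughout that $\Proj(\smodad(\EE))=\QQ\cong\Upsilon(\EE)$ (\Cref{Proposition:TorsionTheoryProperties}.\eqref{Proposition:TorsionTheoryPropertiesD}) and that by \Cref{injectives} the injective objects of $\smodad(\EE)$ that are also projective are precisely the functors $\Upsilon(I)$ with $I$ injective in $\EE$. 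The implication $\eqref{Item:EquivalentCharacterizationOfDomDim2}\Rightarrow\eqref{Item:EquivalentCharacterizationOfDomDim3}$ is immediate from the definition of dominant dimension.

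For $\eqref{Item:EquivalentCharacterizationOfDomDim1}\Rightarrow\eqref{Item:EquivalentCharacterizationOfDomDim2}$: a projective object of $\smodad(\EE)$ has the form $\Upsilon(E)$ for some $E\in\EE$. If $\EE$ has enough injectives, pick a conflation $E\inflation I_1\deflation E_1$ in $\EE$ with $I_1$ injective, and then a conflation $E_1\inflation I_2\deflation E_2$ with $I_2$ injective. Splicing these and applying the left exact embedding $\Upsilon\colon\EE\to\smodad(\EE)$ (\Cref{Theorem:Universal property}), one obtains in $\smodad(\EE)$ an exact sequence
\[
0\to \Upsilon(E)\to \Upsilon(I_1)\to \Upsilon(I_2)\to C\to 0,
\]
where $C=\coker(\Upsilon(E_1)\to\Upsilon(I_2))$; indeed $\Upsilon$ preserves the two conflations as two-term left exact sequences, and the Yoneda image of the deflation $I_1\deflation E_1$ composed with the Yoneda image of the inflation $E_1\inflation I_2$ gives an admissible map $\Upsilon(I_1)\to\Upsilon(I_2)$ with the required kernel. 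By \Cref{injectives} each $\Upsilon(I_k)$ is injective in $\smodad(\EE)$, and it is projective since $I_k\in\EE$; hence $\domdim(\smodad(\EE))\geq 2$. The one point to check carefully here is that the spliced four-term sequence is exact at $\Upsilon(I_2)$, which follows since $\im(\Upsilon(I_1)\to\Upsilon(I_2))=\Upsilon(E_1)=\ker(\Upsilon(I_2)\to\Upsilon(I_2/E_1))$ by left exactness of $\Upsilon$ applied to $E_1\inflation I_2$.

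For $\eqref{Item:EquivalentCharacterizationOfDomDim3}\Rightarrow\eqref{Item:EquivalentCharacterizationOfDomDim1}$: assume $\domdim(\smodad(\EE))\geq 1$, and let $E\in\EE$ be arbitrary. Then there is an exact sequence $0\to\Upsilon(E)\to J\to C\to 0$ in $\smodad(\EE)$ with $J$ projective and injective. By \Cref{Proposition:TorsionTheoryProperties}.\eqref{Proposition:TorsionTheoryPropertiesD} we may write $J=\Upsilon(I)$, and by \Cref{injectives} the object $I$ is injective in $\EE$. Since $\Upsilon$ reflects conflations and the functor $L\colon\smodad(\EE)\to\EE$ of \Cref{corollary:YonedaLeftAdjoint} is exact with $L\circ\Upsilon=1_\EE$, applying $L$ to the monomorphism $\Upsilon(E)\to\Upsilon(I)$ shows that the underlying map $E\to I$ is an inflation in $\EE$ (alternatively: a monomorphism $\Upsilon(E)\inflation\Upsilon(I)$ in $\smodad(\EE)$ is, up to the torsion decomposition, the image under $\Upsilon$ of an inflation, since $\Upsilon(E)$ and $\Upsilon(I)$ are torsion-free). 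Thus every object of $\EE$ admits an inflation into an injective, i.e. $\EE$ has enough injectives. The main obstacle is precisely this last reflection step — extracting an inflation $E\inflation I$ in $\EE$ from a monomorphism $\Upsilon(E)\to\Upsilon(I)$ in $\smodad(\EE)$ — but this is controlled by the left adjoint $L$ together with \Cref{Theorem:QReflectsAdmissibles} and the fact that $L$ sends the given short exact sequence to a conflation $E\inflation I\deflation L(C)$ in $\EE$.
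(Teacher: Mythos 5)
Your overall strategy coincides with the paper's: the same splicing of two injective coresolution steps for $(1)\Rightarrow(2)$, and for $(3)\Rightarrow(1)$ the application of an exact functor ($L$ in your case, the localization $Q\colon \smodad(\EE)\to\smodad(\EE)/\eff(\EE)\simeq\EE$ in the paper's, which amounts to the same thing) to turn the inflation $\Upsilon(E)\inflation\Upsilon(I)$ into an inflation $E\inflation I$. That direction, and $(2)\Rightarrow(3)$, are fine.

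There is, however, a genuine error in the point you single out as ``the one point to check carefully'' in $(1)\Rightarrow(2)$. You claim $\im\bigl(\Upsilon(I_1)\to\Upsilon(I_2)\bigr)=\Upsilon(E_1)$. This is false in general: the composite factors as $\Upsilon(I_1)\xrightarrow{\Upsilon(g')}\Upsilon(E_1)\xrightarrow{\Upsilon(g'')}\Upsilon(I_2)$ with $g'\colon I_1\deflation E_1$ the deflation, and $\Upsilon(g')$ is \emph{not} an epimorphism of functors unless $g'$ splits --- its cokernel is exactly the effaceable functor attached to $g'$, and the nonvanishing of such functors is the whole point of $\eff(\EE)$. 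So the image of the admissible map $d\colon\Upsilon(I_1)\to\Upsilon(I_2)$ is the image of $\Hom_{\EE}(-,I_1)\to\Hom_{\EE}(-,E_1)$ embedded in $\Hom_{\EE}(-,I_2)$, which is a proper subfunctor of $\Upsilon(E_1)=\ker\bigl(\Upsilon(I_2)\to\Upsilon(I_2/E_1)\bigr)$ in general; consequently your four-term sequence with $C=\coker\bigl(\Upsilon(E_1)\to\Upsilon(I_2)\bigr)$ is not exact at $\Upsilon(I_2)$. The repair is immediate and is what the paper implicitly does: $d$ is admissible in $\smodad(\EE)$ with kernel $\Upsilon(E)$ (by left exactness of $\Upsilon$ applied to the admissible morphism $I_1\to I_2$), so one simply takes $C=\coker(d)$, which exists in $\smodad(\EE)$ precisely because $d$ is admissible there, and then exactness at $\Upsilon(I_2)$ holds by definition of the deflation--inflation factorization of $d$. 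With that one-line correction your argument agrees with the paper's.
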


\begin{proof}
Assume $\EE$ has enough injectives. By \Cref{Proposition:TorsionTheoryProperties}.\eqref{Proposition:TorsionTheoryPropertiesD}, a projective object in $\smodad(\EE)$ is of the form $\Upsilon (X)$. As $\EE$ has enough injectives, there is an exact sequence
\[
0\to X\to I_1\xrightarrow{f} I_2 \to \coker(f)\to 0
\]
with $I_1$ and $I_2$ injective. Applying the Yoneda embedding we get a left exact sequence 
\[
0\to \Upsilon (X)\to \Upsilon (I_1)\to \Upsilon (I_2)
\]
in $\smodad (\EE)$, and by \Cref{injectives} we know that $\Upsilon (I_1)$ and $\Upsilon (I_2)$ are projective and injective in $\smodad(\EE)$. This shows that $\domdim (\smodad(\EE))\geq 2$. Since the implication $\eqref{Item:EquivalentCharacterizationOfDomDim2}\Rightarrow \eqref{Item:EquivalentCharacterizationOfDomDim3}$ is clear, it only remains to show $\eqref{Item:EquivalentCharacterizationOfDomDim3}\Rightarrow \eqref{Item:EquivalentCharacterizationOfDomDim1}$. Let $X$ be an arbitrary object in $\EE$. By assumption there exists an inflation $\Upsilon (X)\to \Upsilon (I)$ where $\Upsilon (I)$ is injective in $\smodad (\EE)$, and thus $I$ is injective in $\EE$ by \Cref{injectives}. Since the localization functor $Q\colon \smodad (\EE)\to \smodad(\EE)/\eff(\EE)\simeq\EE$ is exact, it follows that the induced morphism $X\to I$ is an inflation in $\EE$. This proves the claim.
\end{proof}

Our next goal is to show that $\domdim (\EE)\geq 2$ implies some of the other criteria for categories in $\AEx$.

\begin{lemma}\label{Lemma:DomdimImpliesRigid}
Let $\EE$ be an exact category with enough projectives $\PP=\Proj(\EE)$. Assume $\domdim (\EE)\geq 2$. Then $\Ext^1_{\EE}({}^{\perp}\PP,\PP)=0$.
\end{lemma}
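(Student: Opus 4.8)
The plan is to unwind $\Ext^1_{\EE}({}^{\perp}\PP,\PP)=0$ to the statement that every conflation $P\overset{a}{\rightarrowtail}E\overset{b}{\twoheadrightarrow}T$ with $P\in\PP$ and $T\in{}^{\perp}\PP$ splits, and to produce a retraction of $a$ by a short diagram chase using the two ingredients supplied by $\domdim(\EE)\geq 2$: an injective object lying above $P$, followed by a projective object just above that.

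First I would fix, for the given projective $P$, an exact sequence $0\to P\overset{\iota}{\to}I_1\overset{f}{\to}I_2\to C\to 0$ with $I_1,I_2$ projective-injective, as provided by the definition of dominant dimension; here $\iota$ is an inflation and $\iota=\ker f$. Given a conflation $P\overset{a}{\rightarrowtail}E\overset{b}{\twoheadrightarrow}T$ with $T\in{}^{\perp}\PP$, injectivity of $I_1$ allows me to extend $\iota$ along $a$ to a map $g\colon E\to I_1$ with $g\circ a=\iota$. Then $f\circ g\circ a=f\circ\iota=0$, so $f\circ g$ factors through $b=\coker(a)$, say $f\circ g=h\circ b$ for some $h\colon T\to I_2$. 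Since $I_2$ is projective and $T\in{}^{\perp}\PP$, we have $\Hom_{\EE}(T,I_2)=0$, hence $h=0$ and therefore $f\circ g=0$.

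Now $f\circ g=0$ together with $\iota=\ker f$ yields a (unique) morphism $r\colon E\to P$ with $\iota\circ r=g$; then $\iota\circ r\circ a=g\circ a=\iota$, and since $\iota$ is monic we get $r\circ a=1_P$. Thus $a$ is a split inflation, the conflation splits, and $\Ext^1_{\EE}(T,P)=0$ for all $T\in{}^{\perp}\PP$ and $P\in\PP$.

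The argument is essentially a two-step chase, so I do not expect a serious obstacle; the only points needing care are that $\iota$ is genuinely $\ker f$ and that $b$ is $\coker a$, so that the two factorizations above are legitimate — both are immediate from the conflation structure and from the meaning of exactness of the cochain in the definition of $\domdim$. As an alternative one could apply $\Hom_{\EE}(T,-)$ to the conflations $P\rightarrowtail I_1\twoheadrightarrow\im f$ and $\im f\rightarrowtail I_2\twoheadrightarrow C$: the second gives $\Hom_{\EE}(T,\im f)\hookrightarrow\Hom_{\EE}(T,I_2)=0$, and then the long exact $\Ext$-sequence of the first, using $\Ext^1_{\EE}(T,I_1)=0$, forces $\Ext^1_{\EE}(T,P)=0$.
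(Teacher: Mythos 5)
Your proof is correct and uses exactly the same two ingredients as the paper (injectivity of the first projective-injective object to extend the map out of $P$, and projectivity of the second one together with $T\in{}^{\perp}\PP$ to kill the induced map); your main argument is just the explicit conflation-splitting form of this, and the ``alternative'' you sketch at the end is verbatim the paper's proof via the two long exact $\Ext$-sequences. No gaps.
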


\begin{proof}
Let $X\in {}^{\perp}\PP$ and $P\in \PP$ be arbitrary. Choose a conflation $0\to P\to I_0\to C\to 0$ and $0\to C\to I_1\to C'\to 0$ with $I_0$ and $I_1$ projective and injective in $\EE$. Applying $\Hom_{\EE}(X,-)$ to the second sequence, we get a monomorphism $\Hom_{\EE}(X,C)\to \Hom_{\EE}(X,I_1)$ and since $\Hom_{\EE}(X,I_1)=0$ by assumption on $X$, it follows that  $\Hom_{\EE}(X,C)=0$. Applying $\Hom_{\EE}(X,-)$ to the first sequence we get an exact sequence $\Hom_{\EE}(X,C)\to \Ext^1_{\EE}(X,P)\to \Ext^1_{\EE}(X,I)$. Now $\Hom_{\EE}(X,C)=0$ by the discussion above and $\Ext_{\EE}^1(X,I)=0$ since $I$ is injective. Hence $\Ext^1_{\EE}(X,P)=0$, which proves the claim.
\end{proof}

\begin{lemma}\label{Lemma:DomdimImpliesCloseUnderSubmodules}
Let $\EE$ be an exact category with enough projectives $\PP=\Proj(\EE)$. Assume $\domdim (\EE)\geq 1$. Then ${}^{\perp}\PP$ is closed under admissible subobjects.
\end{lemma}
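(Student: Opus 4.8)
The statement to prove is \Cref{Lemma:DomdimImpliesCloseUnderSubmodules}: if $\EE$ has enough projectives $\PP = \Proj(\EE)$ and $\domdim(\EE) \geq 1$, then ${}^{\perp}\PP$ is closed under admissible subobjects. So I would start from an admissible inflation $Y \inflation X$ with $X \in {}^{\perp}\PP$ and aim to show $Y \in {}^{\perp}\PP$, i.e. $\Hom_{\EE}(Y, P) = 0$ for every projective $P$.

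The key idea is to exploit the definition of dominant dimension. Since $\domdim(\EE) \geq 1$, for any projective $P$ there is an exact sequence $0 \to P \to I \to C \to 0$ with $I$ projective \emph{and injective}. Given a morphism $g \colon Y \to P$, I would like to show it is zero. Compose with $P \inflation I$ to get $Y \to I$. Because $I$ is injective and $Y \inflation X$ is an inflation, this morphism extends to a morphism $h \colon X \to I$. Now $X \in {}^{\perp}\PP$, but $I$ need not be projective as... wait — $I$ \emph{is} projective by the definition of $\domdim$, so $\Hom_{\EE}(X, I) = 0$, hence $h = 0$, hence the composite $Y \to P \inflation I$ is $0$. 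Since $P \inflation I$ is a (mono)inflation, the original map $g \colon Y \to P$ must be $0$. That gives $\Hom_{\EE}(Y, P) = 0$ for all $P \in \PP$, i.e. $Y \in {}^{\perp}\PP$.

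So the proof is essentially a three-line diagram argument: (1) invoke $\domdim(\EE) \geq 1$ to embed $P$ into a projective-injective $I$; (2) use injectivity of $I$ to extend $Y \to I$ along the inflation $Y \inflation X$ to a map $X \to I$, which vanishes since $X \in {}^{\perp}\PP$ and $I$ is projective; (3) conclude $Y \to P$ vanishes because $P \inflation I$ is monic. The one point to be careful about is that ``injective'' in an exact category means $\Ext^1_{\EE}(-, I) = 0$, which is equivalent to saying every morphism from a subobject (via an inflation) extends; since $Y \inflation X$ is a genuine inflation this is exactly the lifting property we need. I expect no real obstacle here — the content is entirely in correctly marrying the definition of dominant dimension (which forces $I$ to be simultaneously projective and injective) with the definition of ${}^{\perp}\PP$.

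\begin{proof}
Let $g\colon Y\inflation X$ be an admissible inflation with $X\in {}^{\perp}\PP$, and let $P\in \PP$ be arbitrary; we must show $\Hom_{\EE}(Y,P)=0$. Since $\domdim(\EE)\geq 1$, there is an exact sequence $0\to P\stackrel{\iota}{\to} I\to C\to 0$ with $I$ projective and injective in $\EE$; in particular $\iota$ is an inflation. Let $f\colon Y\to P$ be any morphism. As $I$ is injective and $g\colon Y\inflation X$ is an inflation, the morphism $\iota\circ f\colon Y\to I$ extends along $g$ to a morphism $h\colon X\to I$ with $h\circ g=\iota\circ f$. But $I$ is projective, so $I\in \PP$, and hence $\Hom_{\EE}(X,I)=0$ because $X\in {}^{\perp}\PP$; thus $h=0$ and therefore $\iota\circ f=h\circ g=0$. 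Since $\iota$ is a monomorphism, it follows that $f=0$. As $f$ and $P$ were arbitrary, $\Hom_{\EE}(Y,P)=0$ for all $P\in \PP$, i.e. $Y\in {}^{\perp}\PP$.
\end{proof}
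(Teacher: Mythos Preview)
Your proof is correct and follows essentially the same approach as the paper: both use that $\domdim(\EE)\geq 1$ embeds any projective into a projective-injective $I$, then exploit injectivity of $I$ to see that $\Hom_{\EE}(X,I)\to \Hom_{\EE}(Y,I)$ is surjective, so $\Hom_{\EE}(Y,I)=0$. The only cosmetic difference is that the paper first reduces to showing $\Hom_{\EE}(Y,I)=0$ for projective-injectives $I$ (since any $P\in\PP$ embeds in such an $I$), whereas you carry an arbitrary $P$ along and use that $P\inflation I$ is monic at the end; the content is identical.
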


\begin{proof}
Let $E\inflation X$ be an inflation with $X\in{}^{\perp}\PP$. We want to show that $E\in {}^{\perp}\PP$. Since $\domdim (\EE)\geq 1$, it suffices to show that $\Hom_{\EE}(E,P)=0$ for all $P$ which are projective and injective. But since $P$ is injective, the induced map $\Hom_{\EE}(X,P)\to \Hom_{\EE}(E,P)$ is an epimorphism. Since $\Hom_{\EE}(X,P)=0$ by assumption, it follows that  $\Hom_{\EE}(E,P)=0$.
\end{proof}

\begin{definition}
Let $\YY$ be a subcategory of an exact category $\EE$. A morphism $f\colon X\to Y$ in $\EE$ with $Y\in \YY$ is called an \emph{(admissible) left-}$\YY$ \emph{approximation} if $f$ is an admissible morphism and any morphism $X\to Y'$ with $Y'\in \YY$ factors through $f$. The subcategory $\YY$ is called \emph{(admissibly) covariantly finite} if for all objects $X$ in $\EE$ there exists an admissible left $\YY$-approximation $X\to Y$.
\end{definition}

\begin{lemma}\label{Lemma:CovariantlyFiniteTorsion}
Let $\EE$ be an exact category with enough projectives $\PP=\Proj(\EE)$. The following hold:
\begin{enumerate}
\item\label{Item:AuslanderGivesCovariantlyFinite} If $\EE$ is an Auslander exact category, then $\PP$ is admissibly covariantly finite in $\EE$.
\item\label{Item:CovariantlyFiniteGivesTorsion} If $\PP$ is admissibly covariantly finite in $\EE$ and $\domdim (\EE)\geq 1$, then $({}^{\perp}\PP,\cogen (\PP))$ is a torsion pair.
\end{enumerate} 
\end{lemma}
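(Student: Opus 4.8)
The plan is to prove the two parts separately, using the Auslander correspondence and the torsion pair already established on $\smodad(\EE')$ for the first part, and a direct construction via approximations for the second.

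For part \eqref{Item:AuslanderGivesCovariantlyFinite}, I would invoke \Cref{FirstAuslanderCorrespondence}: since $\EE$ is Auslander exact, there is an exact equivalence $\EE\cong \smodad(\EE')$ for some exact category $\EE'$, under which $\PP=\Proj(\EE)$ corresponds to $\QQ=\Proj(\smodad(\EE'))$, which by \Cref{Proposition:TorsionTheoryProperties}.\eqref{Proposition:TorsionTheoryPropertiesD} is the image of the Yoneda embedding $\Upsilon\colon \EE'\to \smodad(\EE')$. So it suffices to show $\QQ$ is admissibly covariantly finite in $\smodad(\EE')$. Given $F\in \smodad(\EE')$, choose an admissible presentation $\Upsilon(A)\xrightarrow{\Upsilon(f)}\Upsilon(B)\to F\to 0$; then $\Upsilon(B)\in\QQ$ and the composite $F\to \Upsilon(\coker f)$ — wait, one wants a map \emph{from} $F$, so instead I would use that by \Cref{Proposition:EffAreTorsion} and \Cref{Proposition:TorsionTheoryProperties}.\eqref{Proposition:TorsionTheoryPropertiesB} every $F$ sits in a conflation $\tt F\inflation F\deflation \ff F$ with $\tt F\in\eff(\EE')={}^\perp\QQ$ and $\ff F\in\cogen(\QQ)$; since $\ff F$ has projective dimension $\leq 1$, there is an inflation $\ff F\inflation \Upsilon(Z)$ with $\Upsilon(Z)\in\QQ$, and the composite $F\deflation \ff F\inflation \Upsilon(Z)$ is an admissible morphism. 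One checks this is a left $\QQ$-approximation: any $F\to \Upsilon(W)$ with $\Upsilon(W)\in\QQ$ kills $\tt F\in{}^\perp\QQ$, hence factors through $\ff F$, and then through $\Upsilon(Z)$ since $\Ext^1(\coker,\QQ)=0$ by \Cref{Proposition:TorsionTheoryProperties}.\eqref{Proposition:TorsionTheoryPropertiesC}. Transporting back along the equivalence $\EE\cong\smodad(\EE')$ gives the admissible left $\PP$-approximation $E\to P$ for each $E\in\EE$.

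For part \eqref{Item:CovariantlyFiniteGivesTorsion}, I would verify the two torsion pair axioms for $({}^\perp\PP,\cogen(\PP))$ directly. First, $\Hom_{\EE}({}^\perp\PP,\cogen(\PP))=0$: if $T\in{}^\perp\PP$ and $C\in\cogen(\PP)$, pick an inflation $C\inflation P$ with $P\in\PP$; any $T\to C$ composed with $C\inflation P$ lands in $\Hom_{\EE}(T,P)=0$, and since $C\inflation P$ is a monomorphism the original map is zero. Second, for $M\in\EE$ I would produce a conflation $T\inflation M\deflation F$ with $T\in{}^\perp\PP$, $F\in\cogen(\PP)$. Using that $\PP$ is admissibly covariantly finite, take an admissible left $\PP$-approximation $g\colon M\to P$; factor it as $M\deflation \im(g)\inflation P$, so $\im(g)\in\cogen(\PP)$. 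Set $T=\ker(g)$, giving a conflation $T\inflation M\deflation \im(g)$. It remains to show $T\in{}^\perp\PP$, i.e.\ $\Hom_{\EE}(T,P')=0$ for all $P'\in\PP$. Since $\domdim(\EE)\geq 1$, it suffices (as in \Cref{Lemma:DomdimImpliesCloseUnderSubmodules}) to treat $P'$ projective-injective; then injectivity of $P'$ makes $\Hom_{\EE}(M,P')\to\Hom_{\EE}(T,P')$ surjective, and any $M\to P'$ factors through the approximation $g$, so it factors through $T\inflation M$ as a zero map — hence $\Hom_{\EE}(T,P')=0$. Finally, one must check $T\inflation M$ really is the torsion subobject, i.e.\ that it is unique with these properties, which follows from the orthogonality just established.

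The main obstacle I anticipate is the last verification in part \eqref{Item:CovariantlyFiniteGivesTorsion}: showing $\ker(g)\in{}^\perp\PP$ cleanly. The subtlety is that "$\Hom_{\EE}(M,P')$ factors through $g$" gives a factorization through $M\to P$, not obviously through the kernel inclusion, so one must chase the diagram carefully — compose $M\to P'$ with the approximation, use that it equals $h\circ g$ for some $h\colon P\to P'$, and then restrict along $T\inflation M$ where $g$ vanishes. Care is also needed that $\im(g)\in\cogen(\PP)$ requires $M\deflation\im(g)$, i.e.\ that $g$ being admissible is genuinely used, and that the reduction to projective-injective $P'$ via $\domdim(\EE)\geq 1$ is applied correctly. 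Everything else is routine diagram-chasing with the torsion/approximation formalism already developed.
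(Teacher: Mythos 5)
Your proof is correct, and part \eqref{Item:CovariantlyFiniteGivesTorsion} coincides with the paper's argument essentially verbatim: take an admissible left $\PP$-approximation $g\colon M\to P$, form the conflation $\ker(g)\inflation M\deflation \im(g)$, and use $\domdim(\EE)\geq 1$ to reduce the vanishing of $\Hom_{\EE}(\ker(g),\PP)$ to projective-injective targets. For part \eqref{Item:AuslanderGivesCovariantlyFinite} the paper does not pass through the Auslander correspondence; it argues directly in $\EE$, taking the torsion decomposition $\tt E\inflation E\deflation \ff E$ from axiom (i) and a conflation $\ff E\inflation P\deflation T$ with $P\in\PP$, $T\in{}^{\perp}\PP$ supplied by \Cref{Lemma:TorsionFreeHasWeakInflationToProjective}.\eqref{Item:TorsionFreeHasWeakInflationToProjective3}, then checking the approximation property using $\Hom_{\EE}({}^{\perp}\PP,\PP)=0$ and $\Ext^1_{\EE}({}^{\perp}\PP,\PP)=0$. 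Your detour through $\EE\cong\smodad(\EE')$ is logically available at this point in the paper and the computation inside $\smodad(\EE')$ is the same one, so nothing is gained or lost beyond an extra invocation of \Cref{FirstAuslanderCorrespondence}; the one step you should make explicit is that the inflation $\ff F\inflation\Upsilon(Z)$ must be \emph{chosen} so that its cokernel lies in $\eff(\EE')={}^{\perp}\QQ$ (as in the proof of \Cref{Proposition:TorsionTheoryProperties}.\eqref{Proposition:TorsionTheoryPropertiesB}, or via \Cref{Lemma:TorsionFreeHasWeakInflationToProjective}.\eqref{Item:TorsionFreeHasWeakInflationToProjective3}), since $\Ext^1(-,\QQ)$ need not vanish on the cokernel of an arbitrary inflation into a projective. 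The uniqueness verification you flag at the end of part \eqref{Item:CovariantlyFiniteGivesTorsion} is not required: the paper's definition of a torsion pair only asks for the existence of the conflation, and uniqueness then follows formally from the orthogonality.
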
  

\begin{proof}
\begin{enumerate}
\item For $E\in \EE$, choose a conflation $\tt E\inflation E\deflation \ff E$ with $\tt E\in {}^{\perp}\PP$ and $\ff E\in \cogen(\PP)$. By \Cref{Lemma:TorsionFreeHasWeakInflationToProjective} . \eqref{Item:TorsionFreeHasWeakInflationToProjective3} we can choose a conflation $\ff E\inflation P\deflation T$ with $P\in \PP$ and $T\in {}^{\perp}\PP$. Since $\Hom_{\EE}(\tt E,\PP)=0$, any morphism $E\to Q$ with $Q\in \PP$ must factor through $\ff  E$. Also, since $\Ext^1_{\EE}(T,\PP)=0$, any morphism $\ff E\to Q$ with $Q\in \PP$ must factor through $P$. This shows that $E\deflation \ff E\inflation P$ is the required left $\PP$-approximation.
\item Let $X$ be an arbitrary object in $\EE$, and choose an admissible left $\PP$-approximation $f\colon X\to P$. Consider the conflation $\ker (f)\inflation X\deflation \im (f)$. Let $I$ be a projective and injective object in $\EE$. Applying $\Hom_{\EE}(-,I)$ to the conflation above, we get a short exact sequence of abelian groups
\[
\Hom_{\EE}(\im(f),I)\inflation \Hom_{\EE}(X,I)\deflation \Hom_{\EE}(\ker(f),I).
\]
Since $f$ is a left $\PP$-approximation and $I$ is projective, it follows that $\Hom_{\EE}(\im(f),I)\inflation \Hom_{\EE}(X,I)$ is surjective, so $\Hom_{\EE}(\ker(f),I)=0$. Since $\domdim (\EE)\geq 1$, any projective object is a subobject of a projective and injective object. Hence $\ker (f)\in {}^{\perp}\PP$. Since $\im(f)\in \cogen (\PP)$, this proves the claim.\qedhere
\end{enumerate}
\end{proof}

We finish this section by showing that if $\EE$ has enough injectives, then $\smodad(\EE)$ has enough injectives. In the following we let $\II$ denote the subcategory of injective objects in $\EE$.

\begin{remark}\label{Remark:YouCanTruncateThoseSequences}
 For all $n\geq 1$, $\cogen_n(\II)\subseteq \cogen_{n-1}(\II)\subseteq \cdots \subseteq \cogen_{1}(\II)$. If $\EE$ has enough injectives, then $\EE=\cogen_{1}(\II)=\cogen_{2}(\II)=\cdots$.
\end{remark}

\begin{lemma}\label{injectivesyzygies}
	Let $\EE$ be an exact category, let $n\geq 1$ be an integer, and let $X\inflation Y\deflation Z$ be a conflation in $\EE$. The following hold:
		\begin{enumerate}
			\item If $X\in \cogen_{n}(\II)$ and $Z\in \cogen_{n}(\II)$, then $Y\in \cogen_{n}(\II)$.
			\item If $X\in \cogen_{n}(\II)$ and $Y\in \cogen_{n-1}(\II)$, then $Z\in \cogen_{n-1}(\II)$.
		\end{enumerate} 
\end{lemma}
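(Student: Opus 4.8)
The plan is to prove both statements by induction on $n$, using the ``horseshoe''-type constructions for injective copresentations that mirror the projective arguments used earlier in the paper (e.g. in \Cref{Proposition:AdmissiblyPresentedLiesExtensionClosed} and \Cref{Proposition:TorsionTheoryProperties}). I will work directly with the defining cofiltrations $0\to W\to J_1\to\cdots\to J_k$ with $J_i\in\II$, rather than with functor categories, since the statement is purely about $\EE$.

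\textbf{Proof of (1).} First I would handle the base case $n=1$. Given $X\inflation Y\deflation Z$ with $X,Z\in\cogen_1(\II)$, choose inflations $X\inflation I$ and $Z\inflation J$ with $I,J\in\II$. Since $I$ is injective, the inflation $X\inflation I$ extends along $X\inflation Y$ to a morphism $Y\to I$; combining with the composite $Y\deflation Z\inflation J$ gives a morphism $Y\to I\oplus J$. A short diagram chase (or the dual of the obscure axiom / \cite[Proposition~2.15]{Buhler10}-type manipulation) shows this morphism is an inflation: its restriction to the subobject $X$ is the inflation $X\inflation I$ followed by a split inflation, and modulo $X$ it is the inflation $Z\inflation J$; hence the kernel is zero and the cokernel is built from conflations, so by \ref{L1} and closure of inflations under extension it is an inflation. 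Thus $Y\in\cogen_1(\II)$. For the inductive step, suppose the claim holds for $n-1$ (and also $n=1$). Given $X,Z\in\cogen_n(\II)$, pick conflations $X\inflation I_X\deflation X'$ and $Z\inflation I_Z\deflation Z'$ with $I_X,I_Z\in\II$ and $X',Z'\in\cogen_{n-1}(\II)$. As in the base case, form the inflation $Y\inflation I_X\oplus I_Z$ and let $Y'$ be its cokernel. One then obtains a conflation $X'\inflation Y'\deflation Z'$ (this is the $3\times3$-lemma \cite[Corollary~3.6]{Buhler10} applied to the diagram with rows $X\inflation Y\deflation Z$, $I_X\inflation I_X\oplus I_Z\deflation I_Z$, $X'\inflation Y'\deflation Z'$ and the evident columns). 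By the induction hypothesis $Y'\in\cogen_{n-1}(\II)$, and splicing $Y\inflation I_X\oplus I_Z\deflation Y'$ with a length-$(n-1)$ injective cofiltration of $Y'$ gives one of length $n$ for $Y$, so $Y\in\cogen_n(\II)$.

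\textbf{Proof of (2).} Again induct on $n$; the case $n=1$ asserts: if $X\in\cogen_1(\II)$ and $Y\in\cogen_0(\II)=\EE$, then $Z\in\cogen_0(\II)=\EE$, which is vacuous. For $n\geq 2$, assume $X\in\cogen_n(\II)$ and $Y\in\cogen_{n-1}(\II)$. Choose $X\inflation I\deflation X'$ with $I\in\II$, $X'\in\cogen_{n-1}(\II)$, and $Y\inflation J\deflation Y'$ with $J\in\II$, $Y'\in\cogen_{n-2}(\II)$. Since $I$ is injective, $X\inflation I$ extends along $X\inflation Y$ to $Y\to I$; taking the pushout of $X\inflation I$ along $X\inflation Y$ one obtains a conflation $Y\inflation I\oplus_X \text{(something)}$ — more cleanly, form the pushout square so that $I\to I\cup_X Y =: W$ and $Y\inflation W$, with cokernel of $Y\inflation W$ equal to $X'$ and cokernel of $X\inflation I$ (namely $X'$) also appearing; then $Z\cong\coker(Y\inflation W)/\cdots$. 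The clean way: apply the $3\times 3$ lemma to the diagram with first column $X\inflation Y\deflation Z$, first row $X\inflation I\deflation X'$, and reconstruct a conflation $Z\inflation (\text{cofiber})\deflation(\text{cofiber})$ exhibiting $Z$ as an admissible subobject of an object in $\cogen_{n-1}(\II)$ with quotient in $\cogen_{n-1}(\II)$; then apply part (1) appropriately, or directly read off a length-$(n-1)$ injective cofiltration of $Z$. Concretely, pushing out $X\inflation I$ along $X\inflation Y$ gives a conflation $I\inflation P\deflation Z$ with $P$ sitting in a conflation $Y\inflation P\deflation X'$; since $I\in\II\subseteq\cogen_{n-1}(\II)$ and $X'\in\cogen_{n-1}(\II)$, part (1) (with $n-1$ in place of $n$) gives $P\in\cogen_{n-1}(\II)$, and then the conflation $Y\inflation P\deflation X'$ together with $Y\in\cogen_{n-1}(\II)$ is not quite what we want — instead use $I\inflation P\deflation Z$: as $I$ is injective this conflation splits, so $P\cong I\oplus Z$, whence $Z$ is a direct summand of $P\in\cogen_{n-1}(\II)$, and one checks (splitting off the injective summands appropriately, or just noting $\cogen_{n-1}(\II)$ is closed under direct summands since we may add trivial terms) that $Z\in\cogen_{n-1}(\II)$.

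\textbf{Main obstacle.} The routine-looking steps hide two genuine subtleties in an \emph{exact} (as opposed to abelian) category: verifying that the assembled maps $Y\to I\oplus J$ etc.\ are honestly \emph{inflations} (not merely monomorphisms), and that $\cogen_k(\II)$ is closed under direct summands. The former I expect to handle via the obscure-axiom-style lemmas already invoked in the paper (\cite[Proposition~2.12, Proposition~2.15]{Buhler10}, \cite[Proposition~3.4]{HenrardvanRoosmalen20Obscure}) together with the $3\times 3$ lemma \cite[Corollary~3.6]{Buhler10}; the latter requires either working in the idempotent completion or observing that prepending $0\inflation 0$ and using that injectives are closed under summands lets one pad a cofiltration of a summand — this is the point I would be most careful about, and it is presumably why the paper states these lemmas with $\cogen_n$ rather than a cleaner formulation.
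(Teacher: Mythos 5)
Your argument follows essentially the same route as the paper: part (1) is the (dual) Horseshoe Lemma, which the paper invokes directly and you reprove by induction, and part (2) proceeds by pushing out $X\inflation I$ along $X\inflation Y$, splitting the resulting conflation $I\inflation P\deflation Z$ to get $P\cong I\oplus Z$, applying part (1), and then stripping off the injective summand. Two slips in your writeup of (2) are worth correcting. First, the application of part (1) is garbled: the relevant conflation is $Y\inflation P\deflation X'$, whose outer terms are $Y$ (in $\cogen_{n-1}(\II)$ by hypothesis) and $X'$ (in $\cogen_{n-1}(\II)$ by choice) --- not ``$I$ and $X'$'', which are not the ends of a single conflation --- and this conflation is exactly ``what we want'', contrary to your aside. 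Second, your fallback that $\cogen_{n-1}(\II)$ is ``closed under direct summands since we may add trivial terms'' is not a valid argument (padding lengthens a cofiltration of the whole object; it does not produce one for a summand); only your primary suggestion works, and it is what the paper carries out explicitly: since $I$ is injective, the composite $I\inflation I\oplus Z\inflation I_1$ splits, so $I_1\cong I\oplus I_1'$ with $I_1'$ injective, and cancelling $I$ yields an inflation $Z\inflation I_1'$ continuing the given cofiltration. With those repairs your proof coincides with the paper's.
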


\begin{proof}
	\begin{enumerate}
		\item Choose exact sequences $0\to X \to I^X_1\to \cdots \to I^X_{n}\to C^X\to 0$ and $0\to Z \to I^Z_1\to \cdots \to I^Z_{n}\to C^Z\to 0$ with each $I^X_i,I^Z_j$ injective.  By the Horseshoe Lemma, there is an exact sequence 
		\[0\to Y\to I^X_1\oplus I^Z_{1} \to \cdots \to I^X_{n}\oplus I^Z_n\to C^X\oplus C^Z\to 0.\] This shows that $Y\in \cogen_{n}(\II)$.

		\item As $X\in \cogen_{n}(\II)$ and $n\geq 1$, there is a conflation $X\stackrel{i}{\inflation}I\deflation C$ with $I\in \Inj(\EE)$ and $C\in \cogen_{n-1}(\II)$.  Taking the pushout of $i$ along $X\inflation Y$ we obtain the commutative diagram
		\[\xymatrix{
			X\ar@{>->}[r]\ar@{>->}[d]^i & Y\ar@{->>}[r]\ar@{>->}[d] & Z\ar@{=}[d]\\
			I\ar@{>->}[r]\ar@{->>}[d] & U\ar@{->>}[r]\ar@{->>}[d] & Z\\
			C \ar@{=}[r]& C &
		}\] where the rows and columns are conflations. As $I$ is injective, the middle row is a split conflation and thus $U\cong I\oplus Z$. Applying the first part of this lemma to the conflation $Y\inflation I\oplus Z\deflation C$ yields that $I\oplus Z\in \cogen_{n-1}(\II)$. Hence there is an exact sequence 
		\[0\to I\oplus Z\to I_1\to \cdots \to I_{n-1}\to D\to 0\] with each $I_j\in \Inj(\EE)$. The composition $\xymatrix{I\ar@{>->}[r]^-{\begin{psmallmatrix}1 \\0\end{psmallmatrix}} & I\oplus Z\ar@{>->}[r] & I_1}$ is a split inflation as $I$ is injective. It follows that $I_1\cong I\oplus I_1'$ and $I_1'\in \Inj(\EE)$ as injectives are closed under direct summands. The exact sequence above becomes
\[0\to I\oplus Z\xrightarrow{\begin{psmallmatrix}1&0\\0&f\end{psmallmatrix}}I\oplus I_1'\to \cdots \to I_{n-1}\to D\to 0\] for some map $f\colon Z\to I_1'$. Taking the pushout of the canonical projection $I\oplus Z\deflation Z$ along $I\oplus Z\stackrel{\begin{psmallmatrix}1&0\\0&f\end{psmallmatrix}}{\inflation}I\oplus I_1'$, one sees that $f\colon Z \inflation I_1'$ is an inflation. Hence the sequence
\[0\to  Z\xrightarrow{f} I_1'\to I_2\to \cdots \to I_{n-1}\to D\to 0\]
is exact, which shows that $Z\in \cogen_{n-1}(\II)$.\qedhere
	\end{enumerate}
\end{proof}

\begin{proposition}\label{Proposition:EnoughInjectives}
Let $\EE$ be an exact category with enough projectives and assume 
\[\domdim (\EE)\geq 2\geq \gldim (\EE).\]
Then $\EE$ has enough injectives.
\end{proposition}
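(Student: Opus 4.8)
The plan is to prove the (essentially definitional) assertion that every $X\in\EE$ admits an inflation into an injective object, i.e.\ that $\EE\subseteq\cogen_1(\II)$; in view of \Cref{Lemma:EquivalentCharacterizationOfDomDim} one could also phrase the goal as $\domdim(\smodad(\EE))\geq 1$, but working directly inside $\EE$ with the $\cogen_n(\II)$--calculus of \Cref{injectivesyzygies} seems cleanest. The two hypotheses feed in as follows. Since $\domdim(\EE)\geq 2$, every projective $P$ fits into an exact sequence $0\to P\to I_1\to I_2\to C\to 0$ with $I_1,I_2$ projective--injective, whence $\Proj(\EE)\subseteq\cogen_2(\II)$. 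Since $\gldim(\EE)\leq 2$, every $X$ has a projective resolution $0\to L\to P_1\to P_0\to X\to 0$ with $L$ projective; splitting it yields conflations $L\inflation P_1\deflation\Omega X$ and $\Omega X\inflation P_0\deflation X$, with $\Omega X$ of projective dimension $\leq 1$.

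First I would record the elementary stability properties of the classes $\cogen_n(\II)$ that are needed alongside \Cref{injectivesyzygies}: injective objects lie in every $\cogen_n(\II)$ (pad the coresolution with zeros); $\cogen_n(\II)$ is closed under finite direct sums and, by the ``split off an injective direct summand'' argument already used in the proof of \Cref{injectivesyzygies}(2), under direct summands; and $\cogen_n(\II)\subseteq\cogen_{n-1}(\II)$ as in \Cref{Remark:YouCanTruncateThoseSequences}. Applying \Cref{injectivesyzygies}(2) with $n=2$ to a conflation $P_1'\inflation P_0'\deflation Y$ obtained from a length--one projective resolution, and using $P_1'\in\cogen_2(\II)$ and $P_0'\in\cogen_1(\II)$, gives that every object of projective dimension $\leq 1$ lies in $\cogen_1(\II)$. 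It then remains to treat objects of projective dimension $2$, and hence, by $\gldim(\EE)\leq 2$, all objects: for $X$ with resolution as above one coresolves the projectives $L$ and $P_1$ via $\domdim(\EE)\geq 2$, pushes the conflations $L\inflation P_1\deflation\Omega X$ and $\Omega X\inflation P_0\deflation X$ out along the coresolution inflations so that the projective--injective summands split off, and propagates $\cogen_n(\II)$--membership downward using both parts of \Cref{injectivesyzygies}; the intended point is that the bound $2$ in $\domdim(\EE)\geq 2$ is exactly matched by the bound $2$ in $\gldim(\EE)\leq 2$, so the relevant lengths balance. Having shown $X\in\cogen_1(\II)$ for all $X$, one concludes, and records the final clause that $\EE$ then has enough injectives.

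The main obstacle I expect is precisely this last propagation step. The two--step coresolution of a projective $P$ furnished by $\domdim(\EE)\geq 2$ has a cokernel $C$ whose projective dimension is only known a priori to be $\leq 2$, and to push the induction through one needs $C$ itself to embed into an injective; this forces a careful simultaneous accounting of coresolution lengths against projective--resolution lengths, and it is here that $\gldim(\EE)\leq 2$ is used in an essential way --- a weaker global--dimension bound would make a single invocation of $\domdim(\EE)\geq 2$ insufficient. All the homological input is contained in \Cref{injectivesyzygies} together with the stability properties above; the work is in combining them with the right pushout diagrams so that the numerology closes up.
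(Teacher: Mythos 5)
Your setup is the same as the paper's: reduce to showing $\EE=\cogen_1(\II)$, feed $\domdim(\EE)\geq 2$ in as $\Proj(\EE)\subseteq\cogen_2(\II)$, and descend along a length-two projective resolution using \Cref{injectivesyzygies}. Your treatment of objects of projective dimension $\leq 1$ is correct. But the step you yourself flag as ``the main obstacle'' is a genuine gap, and the pushout/length-bookkeeping you sketch does not close it. Counting through \Cref{injectivesyzygies}(2): starting from $\Proj(\EE)\subseteq\cogen_2(\II)$, the first application (to $P_2\inflation P_1\deflation\Omega X$) only yields $\Omega X\in\cogen_1(\II)$, and the second application (to $\Omega X\inflation P_0\deflation X$) then needs $\Omega X\in\cogen_2(\II)$ and lands you at the vacuous $\cogen_0$. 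Your proposed fix --- coresolving $L$ and $P_1$ and pushing out so that injective summands split off --- runs into exactly the same wall: to promote $\Omega X$ to $\cogen_2(\II)$ you need the tail object $C$ of the coresolution $0\to P\to I_1\to I_2\to C\to 0$ to embed into an injective, which is an instance of the statement you are trying to prove. The numerology does not ``balance'' with $\cogen_2$ alone; you need $\Proj(\EE)\subseteq\cogen_3(\II)$.

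The missing idea, which is the one nontrivial observation in the paper's proof, is that $C$ is automatically \emph{injective}. Writing $J=\im(I_1\to I_2)$, one has conflations $P\inflation I_1\deflation J$ and $J\inflation I_2\deflation C$, and dimension shifting along the injectives $I_1,I_2$ gives $\Ext^1_{\EE}(Z,C)\cong\Ext^2_{\EE}(Z,J)\cong\Ext^3_{\EE}(Z,P)=0$ for all $Z$, the last vanishing by $\gldim(\EE)\leq 2$. Hence $0\to P\to I_1\to I_2\to C\to 0$ with $C$ injective exhibits $P\in\cogen_3(\II)$, and the two applications of \Cref{injectivesyzygies}(2) then descend $3\to 2\to 1$ exactly as you intended. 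This is where $\gldim(\EE)\leq 2$ enters --- not in the downward propagation itself, but in upgrading the dominant-dimension coresolution by one step before the propagation starts.
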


\begin{proof}
	For any projective $P\in \EE$, there exists an exact sequence
		\[0\to P\to I_1\to I_2\to C\to 0\]
	where $I_1$ and $I_2$ are projective and injective. Since $\gldim (\EE)\leq 2$ and $C$ is a cokernel of an admissible morphism between injective objects, it 	follows that $C$ is injective. Hence $P\in \cogen_{3}(\II)$. 

	Now let $X\in \EE$ be an arbitrary object, and choose an exact sequence $0\to P_2\xrightarrow{f}P_1\to P_0\to X\to 0$ with $P_0,P_1,P_2$ projective in $\EE$. Applying \Cref{injectivesyzygies} to the conflation $P_2\xrightarrow{f}P_1\to \coker (f)$ and using that $P_2,P_1\in \cogen_{3}(\II)$, it follows that $\coker (f)\in \cogen_{2}(\II)$. Similarly, applying \Cref{injectivesyzygies} to the conflation $\coker (f)\to P_0\to X$ we get that $X\in \cogen_{1}(\II)$. This proves the claim.
\end{proof}

\begin{definition}
Let $\IEx_L$ be the $2$-subcategory of $\Ex_L$ consisting of the exact categories with enough injectives.
\end{definition}

Combining the results of this section, we obtain the following theorem. 

\begin{theorem}[Second version of Auslander correspondence]\label{Theorem:AuslanderSecond} Let $\EE$ be an exact category with enough projectives $\PP=\Proj(\EE)$ and satisfying the following conditions:
\begin{enumerate}[label=(\alph*)]
		\item\label{Item:Dimension} $\domdim (\EE)\geq 2\geq \gldim (\EE)$.
		\item\label{Item:A2} Any morphism $X\to E$ with $E\in{}^{\perp}\PP$ is admissible.
		\item\label{Item:ContravariantlyFinite} $\PP$ is admissibly covariantly finite.
	\end{enumerate} The following hold:
\begin{enumerate}
\item $\EE$ is an Auslander exact category with enough injectives.
\item The equivalence of $2$-categories $\smodad(-)\colon \Ex_L\to \AEx$  restricts to an equivalence from $\IEx_L$ onto the subcategory of $\AEx$ consisting of the exact categories satisfying \ref{Item:Dimension}, \ref{Item:A2} and \ref{Item:ContravariantlyFinite} above.  
\end{enumerate}
\end{theorem}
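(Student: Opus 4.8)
The plan is to deduce part (1) by verifying, one at a time, the four conditions in the definition of an Auslander exact category (\Cref{AuslanderExactCategories}) and then invoking \Cref{Proposition:EnoughInjectives}, and to deduce part (2) by restricting the $2$-equivalence $\smodad(-)\colon\Ex_L\to\AEx$ of \Cref{FirstAuslanderCorrespondence} to appropriate full sub-$2$-categories.

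For part (1) I would argue as follows. Condition (iv) of \Cref{AuslanderExactCategories} is exactly hypothesis \ref{Item:Dimension}. Since \ref{Item:Dimension} gives $\domdim(\EE)\geq 2$, condition (iii) holds by \Cref{Lemma:DomdimImpliesRigid}. For condition (ii): hypothesis \ref{Item:A2} makes every morphism $f\colon E\to E'$ with $E'\in{}^{\perp}\PP$ admissible, and factoring $f$ as $E\deflation\im(f)\inflation E'$ exhibits $\im(f)$ as an admissible subobject of $E'\in{}^{\perp}\PP$, so $\im(f)\in{}^{\perp}\PP$ by \Cref{Lemma:DomdimImpliesCloseUnderSubmodules} (applicable since $\domdim(\EE)\geq 1$). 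For condition (i): hypothesis \ref{Item:ContravariantlyFinite} together with $\domdim(\EE)\geq 1$ gives, by \Cref{Lemma:CovariantlyFiniteTorsion}.\eqref{Item:CovariantlyFiniteGivesTorsion}, that $({}^{\perp}\PP,\cogen(\PP))$ is a torsion pair. Hence $\EE\in\AEx$; and $\EE$ has enough injectives by \Cref{Proposition:EnoughInjectives}, whose hypotheses are precisely \ref{Item:Dimension}.

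For part (2), recall from \Cref{FirstAuslanderCorrespondence} that $\smodad(-)\colon\Ex_L\to\AEx$ is a $2$-equivalence, a quasi-inverse of which sends $\EE\in\AEx$ to $\Proj(\EE)$ with the exact structure of \Cref{Proposition:KillingTorsionGivesProjectives}, the natural isomorphism $\smodad(\Proj(\EE))\cong\EE$ coming from \Cref{Proposition:ProjectivesLeftExact}. Since $\IEx_L$ and the subcategory of $\AEx$ named in the statement are full sub-$2$-categories, it suffices to check that $\smodad(-)$ and this quasi-inverse send the two classes of objects to one another. First observe that any $\EE\in\AEx$ automatically satisfies \ref{Item:A2} (which is weaker than condition (ii) of \Cref{AuslanderExactCategories}) and \ref{Item:ContravariantlyFinite} (by \Cref{Lemma:CovariantlyFiniteTorsion}.\eqref{Item:AuslanderGivesCovariantlyFinite}), and that $\gldim(\EE)\leq 2$ by condition (iv); hence for objects of $\AEx$ the conjunction \ref{Item:Dimension}--\ref{Item:ContravariantlyFinite} reduces to $\domdim(\EE)\geq 2$. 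Now if $\EE'\in\IEx_L$ then $\smodad(\EE')\in\AEx$ by \Cref{Corollary:ImageModAdmissible}, and $\domdim(\smodad(\EE'))\geq 2$ by \Cref{Lemma:EquivalentCharacterizationOfDomDim} (as $\EE'$ has enough injectives), so $\smodad(\EE')$ lies in the prescribed subcategory. Conversely, if $\EE\in\AEx$ satisfies \ref{Item:Dimension}--\ref{Item:ContravariantlyFinite}, then $\domdim(\EE)\geq 2$, hence $\domdim(\smodad(\Proj(\EE)))\geq 2$ via $\smodad(\Proj(\EE))\cong\EE$, hence $\Proj(\EE)$ has enough injectives by \Cref{Lemma:EquivalentCharacterizationOfDomDim}, i.e.\ the quasi-inverse carries $\EE$ into $\IEx_L$. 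Consequently the $2$-equivalence $\smodad(-)$ restricts to a $2$-equivalence between $\IEx_L$ and the subcategory of $\AEx$ cut out by \ref{Item:Dimension}--\ref{Item:ContravariantlyFinite}.

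Most of the homological content has already been isolated in the preparatory lemmas of this section, so the remaining work is largely organisational. The step needing the most care is the reverse direction in part (2): one must apply the characterisation of ``having enough injectives'' (\Cref{Lemma:EquivalentCharacterizationOfDomDim}) to the base category $\Proj(\EE)$ rather than to $\EE$ itself, and transport the dominant dimension across the equivalence $\smodad(\Proj(\EE))\cong\EE$ of \Cref{Proposition:ProjectivesLeftExact}; it is also worth recording first that \ref{Item:A2} and \ref{Item:ContravariantlyFinite} are automatic for objects of $\AEx$, so that only \ref{Item:Dimension} genuinely has to be matched across the equivalence.
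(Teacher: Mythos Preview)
Your proof is correct and follows essentially the same approach as the paper: part (1) is obtained by assembling \Cref{Lemma:DomdimImpliesRigid}, \Cref{Lemma:DomdimImpliesCloseUnderSubmodules}, \Cref{Lemma:CovariantlyFiniteTorsion}.\eqref{Item:CovariantlyFiniteGivesTorsion} and \Cref{Proposition:EnoughInjectives}, and part (2) by combining \Cref{Lemma:EquivalentCharacterizationOfDomDim} with \Cref{Lemma:CovariantlyFiniteTorsion}.\eqref{Item:AuslanderGivesCovariantlyFinite}. Your write-up is in fact more explicit than the paper's in two respects: you spell out how hypothesis \ref{Item:A2} together with \Cref{Lemma:DomdimImpliesCloseUnderSubmodules} yields the full axiom (ii) (image lies in ${}^{\perp}\PP$), and you record that \ref{Item:A2} and \ref{Item:ContravariantlyFinite} are automatic for objects of $\AEx$, so that only the dominant-dimension condition must be matched across the equivalence.
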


\begin{proof}
\begin{enumerate}
\item  The category $\EE$ has enough injectives by \Cref{Proposition:EnoughInjectives}, and is an Auslander exact category by \Cref{Lemma:DomdimImpliesRigid}, \Cref{Lemma:DomdimImpliesCloseUnderSubmodules} and \Cref{Lemma:CovariantlyFiniteTorsion}. \eqref{Item:CovariantlyFiniteGivesTorsion}.

\item We only need to verify that $\EE$ has enough injectives if and only if $\smodad(\EE)$ satisfies \ref{Item:Dimension}, \ref{Item:A2} and \ref{Item:ContravariantlyFinite} above. The "only if" direction follows from \Cref{Lemma:EquivalentCharacterizationOfDomDim}, and the "if" direction follows from \Cref{Lemma:EquivalentCharacterizationOfDomDim} and \Cref{Lemma:CovariantlyFiniteTorsion}. \eqref{Item:AuslanderGivesCovariantlyFinite}. \qedhere
\end{enumerate}
\end{proof}

\begin{remark}
Note that the abelian categories satisfying \ref{Item:Dimension} in \Cref{Theorem:AuslanderSecond} are precisely the free abelian categories, see \cite[Theorem 6.6]{Beligiannis00}.
\end{remark}

\begin{remark}
We explain how to recover the classical Auslander correspondence. Assume $\Lambda$ is a representation-finite artin algebra over a commutative artin ring $k$, and choose an additive generator $M$ of $\operatorname{mod}\Lambda$.  We then have an equivalence of categories
\[
\operatorname{mod}\Gamma\cong \operatorname{mod}(\operatorname{mod}(\Lambda))
\]
where $\Gamma=\End_{\Lambda}(M)$. Since $\operatorname{mod}(\Lambda)$ is an abelian category with enough injectives, we get that \[
\domdim \Gamma \geq 2\geq \gldim \Gamma
\]
 by \Cref{Theorem:AuslanderSecond}. This gives one direction of the classical Auslander correspondence.

Conversely, assume $\Gamma$ is an artin $k$-algebra satisfying $\domdim \Gamma \geq 2\geq \gldim \Gamma$. Since $\Gamma$ is an artin algebra, the subcategory $\operatorname{proj}\Gamma$ of projective objects in $\operatorname{mod}(\Gamma)$ is a covariantly finite subcategory. Hence, $\operatorname{mod}(\Gamma)$ satisfies \ref{Item:Dimension}, \ref{Item:A2} and \ref{Item:ContravariantlyFinite} in \Cref{Theorem:AuslanderSecond}, and combining this with \Cref{AbelianCase} we therefore get that $\operatorname{proj}\Gamma$ is an abelian category with enough injectives. Hence, $(\operatorname{proj}\Gamma)^{\operatorname{op}}\cong \operatorname{mod}(\Lambda')$ for some finite-dimensional algebra $\Lambda'$. Composing with the duality 
\[
D=\Hom_k(-,I)\colon \operatorname{mod}(\Lambda'^{\operatorname{op}})\xrightarrow{\cong} (\operatorname{mod}(\Lambda'))^{\operatorname{op}}
\]
where $I=\bigoplus_{i=1}^nI(S_i)$ is the sum of the injective envelopes $I(S_i)$ of the simple $k$-modules $S_i$, we get that $\operatorname{proj}\Gamma\cong \operatorname{mod}(\Lambda'^{\operatorname{op}})$. This recovers the other direction of the classical Auslander correspondence. 
\end{remark}

\subsection{Exact categories with enough projectives}\label{subsection:EnoughProjectives}

In this section we investigate how the property of an exact category $\EE$ having enough projectives is reflected in $\smodad (\EE)$. In particular, we show that if $\EE$ is an exact category with enough projectives, the category $\eff(\EE)$ induces a torsion-torsion-free triple ($\mathsf{TTF}$-triple) on $\smodad(\EE)$. Furthermore, this $\mathsf{TTF}$-triple induces a recollement of exact categories (in the sense of \cite{WangWeiZhang20}) analogous to \cite[Proposition~2.17]{Ogawa19}.

\begin{lemma}\label{Lemma:ProjectiveObjectsInEETranslateToPerpendicularsOfTorsion}
	An object $P$ of an exact category $\EE$ is projective if and only if $\Upsilon(P)\in {}^{\perp}\eff(\EE)$.
\end{lemma}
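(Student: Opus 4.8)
The statement to prove is: an object $P \in \EE$ is projective if and only if $\Upsilon(P) \in {}^{\perp}\eff(\EE)$, where ${}^{\perp}\eff(\EE)$ is computed inside $\smodad(\EE)$. Recall from \Cref{Proposition:TorsionTheoryProperties}.\eqref{Proposition:TorsionTheoryPropertiesD} that $\Upsilon$ identifies $\EE$ with $\QQ = \Proj(\smodad(\EE))$, and that $(\eff(\EE), \FF)$ is a torsion pair with $\eff(\EE) = {}^{\perp}\QQ$ by \eqref{Proposition:TorsionTheoryPropertiesA}. So representable functors all lie in $\FF = \cogen(\QQ)$, but being representable is strictly stronger than lying in $\FF$; the point is to detect representables among all objects of $\smodad(\EE)$ by a vanishing condition against $\eff(\EE)$.

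For the forward direction, suppose $P$ is projective in $\EE$. I want $\Hom_{\smodad(\EE)}(\Upsilon(P), E) = 0$ for every $E \in \eff(\EE)$. Pick a projective resolution $0 \to \Upsilon(\ker e) \to \Upsilon(A) \xrightarrow{\Upsilon(e)} \Upsilon(B) \to E \to 0$ coming from a conflation $\ker e \inflation A \xrightarrow{e} B$ (using that $e$ may be taken to be a deflation, and invoking \Cref{Lemma:YonedaEmbeddingIsLeftExact}). Applying $\Hom_{\smodad(\EE)}(\Upsilon(P), -)$ and using that $\Upsilon$ is fully faithful together with projectivity of $\Upsilon(P)$, $\Hom_{\smodad(\EE)}(\Upsilon(P), E)$ is the cokernel of $\Hom_{\EE}(P, A) \xrightarrow{e \circ -} \Hom_{\EE}(P, B)$. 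Since $P$ is projective in $\EE$ and $e$ is a deflation, this map is surjective, so the cokernel — hence $\Hom_{\smodad(\EE)}(\Upsilon(P), E)$ — vanishes. Thus $\Upsilon(P) \in {}^{\perp}\eff(\EE)$.

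For the converse, suppose $\Upsilon(P) \in {}^{\perp}\eff(\EE)$; I must show $P$ is projective in $\EE$, i.e. that for every deflation $g \colon Y \deflation Z$ in $\EE$ the induced map $\Hom_{\EE}(P, Y) \to \Hom_{\EE}(P, Z)$ is surjective. Given such a $g$, form $E = \coker(\Upsilon(g)) \in \eff(\EE)$, which fits into an exact sequence $\Upsilon(Y) \xrightarrow{\Upsilon(g)} \Upsilon(Z) \to E \to 0$ in $\smodad(\EE)$; since $g$ is a deflation it has a kernel, so this extends to a projective resolution and we also have left-exactness of $\Upsilon$. Applying $\Hom_{\smodad(\EE)}(\Upsilon(P), -)$ to $\Upsilon(Y) \to \Upsilon(Z) \to E \to 0$ and using fully faithfulness of $\Upsilon$ gives the exact sequence
\[
\Hom_{\EE}(P, Y) \xrightarrow{g \circ -} \Hom_{\EE}(P, Z) \to \Hom_{\smodad(\EE)}(\Upsilon(P), E).
\]
By hypothesis the right-hand term is zero, so $\Hom_{\EE}(P, Y) \to \Hom_{\EE}(P, Z)$ is surjective, proving $P$ is projective. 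The main thing to be careful about is ensuring the $\Hom$-sequences are genuinely exact in the relevant spot: this rests on $\Upsilon$ being fully faithful and on $\Upsilon(g)$ (resp. $\Upsilon(e)$) having the expected kernel in $\Mod(\EE)$, which is exactly \Cref{Lemma:YonedaEmbeddingIsLeftExact}; no deeper obstacle arises, and both directions are essentially the computation of $\Hom_{\smodad(\EE)}(\Upsilon(P), -)$ on effaceable functors via their projective presentations.
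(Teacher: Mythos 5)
Your proposal is correct and follows essentially the same route as the paper: both reduce the statement to the observation that applying $\Hom_{\smodad(\EE)}(\Upsilon(P),-)$ to a projective presentation $\Upsilon(X)\xrightarrow{\Upsilon(f)}\Upsilon(Y)\to F\to 0$ of an effaceable $F$ yields (via the Yoneda lemma) the right-exact sequence $\Hom_{\EE}(P,X)\xrightarrow{f\circ -}\Hom_{\EE}(P,Y)\to\Hom_{\smodad(\EE)}(\Upsilon(P),F)\to 0$, so that vanishing of the last term for all deflations $f$ is exactly projectivity of $P$. The only cosmetic difference is that you split the equivalence into two directions while the paper does both at once; the exactness you worry about at the end is most cleanly justified by $\Hom(\Upsilon(P),-)\cong\operatorname{ev}_P$ being exact on $\Mod(\EE)$, rather than by \Cref{Lemma:YonedaEmbeddingIsLeftExact}.
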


\begin{proof}
This is a reformulation of \cite[Proposition~2.11]{Enomoto18}.  Note that the proof does not require $\EE$ to be idempotent complete.
\end{proof}

\begin{proposition}\label{Proposition:EnoughProjectivesInEEMakesTTTorsionFree}
	Let $\EE$ be an exact category and write $\QQ$ for $\Proj(\smodad(\EE))$. The following are equivalent:
	\begin{enumerate}
		\item\label{Item:EnoughProjectivesInEEMakesTTTorsionFree1} The category $\EE$ has enough projectives.
		\item\label{Item:EnoughProjectivesInEEMakesTTTorsionFree2}  $(\gen(\QQ\cap {}^{\perp}\eff(\EE)), \eff(\EE))$ is a torsion pair in $\smodad(\EE)$.
		\item\label{Item:EnoughProjectivesInEEMakesTTTorsionFree3} For all $P\in \QQ$, there exists an exact sequence $$0\to F'\to P\to F''\to 0$$ with $F'\in \gen(\QQ\cap {}^{\perp}\eff(\EE))$ and $F''\in \eff(\EE)$.
	\end{enumerate}
\end{proposition}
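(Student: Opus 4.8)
The plan is to prove the cycle $(1)\Rightarrow(3)\Rightarrow(2)\Rightarrow(1)$. Throughout write $\PP=\Proj(\EE)$; by \Cref{Proposition:TorsionTheoryProperties}.\eqref{Proposition:TorsionTheoryPropertiesD} and \Cref{Lemma:ProjectiveObjectsInEETranslateToPerpendicularsOfTorsion} we have $\QQ\cap{}^{\perp}\eff(\EE)=\{\Upsilon(P)\mid P\in\PP\}$. The key tool is the exact functor $L\colon\smodad(\EE)\to\EE$ of \Cref{corollary:YonedaLeftAdjoint}: it satisfies $L\circ\Upsilon=1_{\EE}$ and $\ker(L)=\eff(\EE)$, and, being — up to the equivalence of \Cref{Theorem:AuslandersFormulaForExactCategories} — the localization functor of \Cref{Theorem:QReflectsAdmissibles}, it reflects admissible morphisms. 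I also record the inclusion that holds unconditionally: $\Hom_{\smodad(\EE)}(\gen(\QQ\cap{}^{\perp}\eff(\EE)),\eff(\EE))=0$, since any map $G\to E$ with $\Upsilon(P)\deflation G$ ($P\in\PP$) and $E\in\eff(\EE)$ becomes zero after precomposing with $\Upsilon(P)\deflation G$, because $\Upsilon(P)\in{}^{\perp}\eff(\EE)$.

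$(1)\Rightarrow(3)$. Given $P\cong\Upsilon(Z)$, use (1) to choose a deflation $p\colon P_0\deflation Z$ in $\EE$ with $P_0\in\PP$ and kernel $k\colon K_0\inflation P_0$. Since $\Upsilon$ is left exact (\Cref{Theorem:Universal property}), $\Upsilon(p)$ is an admissible morphism of $\smodad(\EE)$ with kernel $\Upsilon(k)$, so its image $F':=\im(\Upsilon(p))$ sits in a conflation $\Upsilon(K_0)\inflation\Upsilon(P_0)\deflation F'$ — whence $F'\in\gen(\QQ\cap{}^{\perp}\eff(\EE))$ — and in a conflation $F'\inflation\Upsilon(Z)\deflation\coker(\Upsilon(p))$ with $\coker(\Upsilon(p))\in\eff(\EE)$ because $p$ is a deflation. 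This is the required sequence.

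$(3)\Rightarrow(2)$. By the unconditional inclusion it suffices, for each $F\in\smodad(\EE)$, to build a conflation $F'\inflation F\deflation F''$ with $F'\in\gen(\QQ\cap{}^{\perp}\eff(\EE))$ and $F''\in\eff(\EE)$. Pick a projective presentation $\Upsilon(X)\xrightarrow{\Upsilon(f)}\Upsilon(Y)\xrightarrow{\pi}F\to0$ with $f$ admissible; left exactness of $\Upsilon$ gives $\ker(\pi)=\im(\Upsilon(f))\cong\coker(\Upsilon(\ker f\inflation X))\in\smodad(\EE)$, so $\pi$ is a deflation in $\smodad(\EE)$. Apply (3) to $\Upsilon(Y)\in\QQ$ to obtain a conflation $A\inflation\Upsilon(Y)\deflation B$ with $A\in\gen(\QQ\cap{}^{\perp}\eff(\EE))$, $B\in\eff(\EE)$, and set $g:=\pi\circ(A\inflation\Upsilon(Y))\colon A\to F$. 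Applying the exact functor $L$: since $L(B)=0$ the map $L(A)\to L(\Upsilon(Y))$ is an isomorphism, while $L(\pi)$ is a deflation, so $L(g)$ is a deflation in $\EE$, hence admissible; as $L$ reflects admissible morphisms, $g$ is admissible in $\smodad(\EE)$, and we factor it as $A\deflation F'\inflation F$. Then $F'$ is a deflation quotient of $A$, so $F'\in\gen(\QQ\cap{}^{\perp}\eff(\EE))$, and $F'':=\coker(g)\in\smodad(\EE)$ satisfies $L(F'')=\coker(L(g))=0$, so $F''\in\ker(L)=\eff(\EE)$. Together with the unconditional inclusion, this exhibits $(\gen(\QQ\cap{}^{\perp}\eff(\EE)),\eff(\EE))$ as a torsion pair in $\smodad(\EE)$.

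$(2)\Rightarrow(1)$. For $X\in\EE$ take the torsion decomposition $F'\inflation\Upsilon(X)\deflation F''$ with $F'\in\gen(\QQ\cap{}^{\perp}\eff(\EE))$ and $F''\in\eff(\EE)$, and apply $L$: since $L(F'')=0$ we get $L(F')\cong L(\Upsilon(X))=X$, and a deflation $\Upsilon(P)\deflation F'$ with $P\in\PP$ then yields a deflation $P=L(\Upsilon(P))\deflation L(F')\cong X$ in $\EE$; hence $\EE$ has enough projectives. The one delicate point in the whole argument is making sure that the objects formed in $\Mod(\EE)$ (kernels, images and cokernels of composites) genuinely lie in $\smodad(\EE)$ and fit the exact structure; the device I expect to carry the weight — and the crux of $(3)\Rightarrow(2)$ — is to discharge these verifications by transporting everything through the exact, admissibility-reflecting functor $L$ instead of chasing diagrams in $\smodad(\EE)$ directly.
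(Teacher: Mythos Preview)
Your proof is correct, but it routes the implications differently from the paper and uses a genuinely different mechanism for the hard step.

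The paper proves $(1)\Rightarrow(2)\Rightarrow(3)\Rightarrow(1)$, with the substantial work in $(1)\Rightarrow(2)$: for a general $F=\coker(\Upsilon(f))$ with $f\colon X\to Y$ admissible, it chooses a deflation $p\colon P\deflation Y$ with $P\in\PP$, takes the pullback of $f$ along $p$, and reads off the torsion decomposition from the two rightmost squares of the diagram in \Cref{Lemma:TheFamousDiagramChase}. This is a direct diagram chase in $\smodad(\EE)$ that does not invoke the localization theory beyond the existence of the torsion pair.

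You instead prove $(1)\Rightarrow(3)\Rightarrow(2)\Rightarrow(1)$. Your $(1)\Rightarrow(3)$ is a short special case of the paper's construction (only representables are needed). The new content is your $(3)\Rightarrow(2)$: rather than building the torsion sequence of a general $F$ by hand, you compose the inflation $A\inflation\Upsilon(Y)$ coming from $(3)$ with the deflation $\pi\colon\Upsilon(Y)\deflation F$ and then argue that the composite $g$ is admissible because $L(g)$ is a deflation and $L$ \emph{reflects admissible morphisms} --- this is exactly \Cref{Theorem:QReflectsAdmissibles}, transported along the equivalence of \Cref{Theorem:AuslandersFormulaForExactCategories}. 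That single appeal replaces the explicit pullback computation.

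What each buys: the paper's argument is self-contained and does not need \Cref{Theorem:QReflectsAdmissibles}; yours is cleaner and more conceptual, and illustrates that once the localization $L$ is known to reflect admissibles, torsion decompositions in $\smodad(\EE)$ can be detected in $\EE$. Your $(2)\Rightarrow(1)$ is essentially the same as the paper's $(3)\Rightarrow(1)$, both applying $L$ to the torsion sequence of $\Upsilon(X)$.
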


\begin{proof}
	Assume \eqref{Item:EnoughProjectivesInEEMakesTTTorsionFree1}. We first show that $\Hom(\gen(\QQ\cap {}^{\perp}\eff(\EE)), \eff(\EE))=0$. Let $g\colon E\to F$ be a map with $E\in \gen(\QQ\cap {}^{\perp}\eff(\EE))$ and $F\in \eff(\EE)$. By definition, there is an object $E'\in \QQ\cap {}^{\perp}\eff(\EE)$ and a deflation $E'\deflation E$.	By \Cref{Proposition:TorsionTheoryProperties}. \eqref{Proposition:TorsionTheoryPropertiesD} the functor $\Upsilon\colon \EE\to \QQ$ is an equivalence and thus there is an object $P\in \EE$ such that $\Upsilon(P)\cong E'$. By \Cref{Lemma:ProjectiveObjectsInEETranslateToPerpendicularsOfTorsion}, $P\in \Proj(\EE)$. Let $f\colon X\deflation Y$ be a deflation in $\EE$ such that $\coker(\Upsilon(f))\cong F$. Since $\Upsilon(P)$ is projective and $\Upsilon$ is fully faithful, there exists a map $h\colon P\to Y$ such that the composite $\Upsilon(P)\deflation E\xrightarrow{g}F$ is equal to $\Upsilon(P)\xrightarrow{\Upsilon(h)}\Upsilon(Y)\deflation F$. Now since $f$ is a deflation and $P$ is projective, $h$ must factor through $f$, and hence the composite above must be $0$. As $\Upsilon(P)\deflation E$ is surjective, $g$ must be zero. We conclude that $\Hom(\gen(\QQ\cap {}^{\perp}\eff(\EE)), \eff(\EE))=0$.
	
	Let $F\in \smodad(\EE)$ and let $F\cong\coker(\Upsilon(f))$ for some admissible morphism $f\colon X\to Y$ in $\EE$. As $\EE$ has enough projectives, there is a deflation $P\stackrel{p}{\deflation} Y$ with $P\in \Proj(\EE)$. The pullback $Z$ of $f$ along $p$ yields an admissible morphism $g\colon Z\to P$. Note that this pullback square induces a conflation $Z\inflation X\oplus P\stackrel{\begin{psmallmatrix}f&p\end{psmallmatrix}}{\deflation} Y$ by (the dual of) \cite[Proposition~2.12]{Buhler10}. Let $F'=\coker(\Upsilon(g))$ and let $F''=\coker(\Upsilon\begin{psmallmatrix}f&p\end{psmallmatrix})\in \eff(\EE)$. Considering the two rightmost squares in diagram \eqref{Thefamousdiagram} in \Cref{Lemma:TheFamousDiagramChase}, we see that we have a conflation $F'\inflation F\deflation F''$. It remains to show that $F'\in \gen(\QQ\cap {}^{\perp}\eff(\EE))$. By construction there is a deflation $\Upsilon(P)\deflation F'$ and as $P\in \Proj(\EE)$, \Cref{Lemma:ProjectiveObjectsInEETranslateToPerpendicularsOfTorsion} yields that $\Upsilon(P)\in {}^{\perp}\eff(\EE)$, thus $F'\in \gen(\QQ\cap {}^{\perp}\eff(\EE))$. This shows the implication $\eqref{Item:EnoughProjectivesInEEMakesTTTorsionFree1}\Rightarrow \eqref{Item:EnoughProjectivesInEEMakesTTTorsionFree2}$.
	
	The implication $\eqref{Item:EnoughProjectivesInEEMakesTTTorsionFree2}\Rightarrow \eqref{Item:EnoughProjectivesInEEMakesTTTorsionFree3}$ is trivial. Now assume \eqref{Item:EnoughProjectivesInEEMakesTTTorsionFree3}. Let $A\in \EE$. By assumption there is a short exact sequence 
	\[F'\inflation \Upsilon(A)\deflation F''\]
	with $F'\in \gen(\QQ\cap {}^{\perp}\eff(\EE))$ and $F''\in \eff(\EE)$. By definition there is a deflation $\Upsilon(P)\deflation F'$ with $P\in \Proj(\EE)$. Letting $K$ be the kernel of the map $\Upsilon(P)\deflation F'\inflation \Upsilon(A)$, we get an exact sequence 
	\[
	0\to K\to \Upsilon(P)\to \Upsilon(A)\to F''\to 0
	\]
	in $\smodad(\EE)$. Applying the functor $L\colon \smodad(\EE)\to \EE$ in \Cref{corollary:YonedaLeftAdjoint} and using that $L(F'')=0$ since $F''\in \eff(\EE)$, we get the conflation $L(K)\inflation P\deflation A$ in $\EE$. Hence $\EE$ has enough projectives.
\end{proof}

We recall the notion of a torsion-torsion-free triple (see \cite{Jans65}). 

\begin{definition}
	Let $\EE$ be an exact category. A triple $(\mathcal{X},\mathcal{Y},\mathcal{Z})$ of full subcategories is called a \emph{torsion-torsion-free triple} (or $\mathsf{TTF}$-triple) if both $(\mathcal{X},\mathcal{Y})$ and $(\mathcal{Y},\mathcal{Z})$ are torsion pairs. 
\end{definition}

\begin{corollary}\label{Corollary:TTFTriple}
	Let $\EE$ be an exact category and write $\QQ$ for $\Proj(\smodad(\EE))$. If $\EE$ has enough projectives, then the triple $(\gen(\QQ\cap {}^{\perp}\eff(\EE)),\eff(\EE),\cogen(\QQ))$ is a $\mathsf{TTF}$-triple. 
\end{corollary}

\begin{proof}
	This follows directly from \Cref{Proposition:EnoughProjectivesInEEMakesTTTorsionFree,Proposition:EffAreTorsion} and \Cref{Proposition:TorsionTheoryProperties}.\eqref{Proposition:TorsionTheoryPropertiesB}.
\end{proof}

In \cite{PsaroudakisVitoria14}, a bijection between certain $\mathsf{TTF}$-triples and recollements of abelian categories is established.  In \cite[Definition~3.1]{WangWeiZhang20} a general definition of a recollement of extriangulated categories is given. \Cref{Definition:RecollementOfExactCategory} below is the specialization of this general definition to the case of exact categories. In \Cref{Proposition:TTFTripleYieldsRecollement} below, we show that the $\mathsf{TTF}$-triple of \Cref{Corollary:TTFTriple} induces a recollement of exact categories.

\begin{definition}\label{Definition:RecollementOfExactCategory}
	A \emph{recollement of exact categories} is a triple of exact categories $(\AA,\BB,\CC)$ fitting together in a diagram
	\[\xymatrix{
		\AA\ar[rr]|{i_*} && \BB \ar@/^1pc/[ll]^{i^!}\ar@/_1pc/[ll]_{i^*} \ar[rr]|{j^*} && \CC\ar@/^1pc/[ll]^{j_*}\ar@/_1pc/[ll]_{j_!}
	}\] with $6$ additive functors satisfying the following conditions:
	\begin{enumerate}
		\item The triples $(i^*,i_*,i^!)$ and $(j_!,j^*,j_*)$ are \emph{adjoint triples}, i.e.~$i^*\dashv i_*\dashv i^!$ and $ j_!\dashv j^*\dashv j_*$.
		\item $\im(i_*)=\ker(j^*)$.
		\item $i_*,j_!$ and $j_*$ are fully faithful.
		\item For each $X\in \BB$, there exists an exact sequence
		\[\xymatrix{
			i_*i^!(X) \ar@{>->}[r]^-{\theta_X} & X\ar[r]|-{\circ}^-{\vartheta_X} & j_*j^*(X)\ar@{->>}[r] & i_* A.
		}\] Here, $\theta$ is the counit of the adjunction $i_*\dashv i^!$, $\vartheta$ is the unit of the adjunction $j_*\dashv j^*$ and $A\in \AA$.
		\item For each $X\in \BB$, there exists an exact sequence
		\[\xymatrix{
			i_*A'\ar@{>->}[r] & j_!j^*(X)\ar[r]|-{\circ}^-{v_X} & X\ar@{->>}[r]^-{\nu_X} & i_*i^*(X).
		}\] Here $v$ is the unit of the adjunction $j_!\dashv j^*$, $\nu$ is the counit of the adjunction $i^*\dashv i_*$ and $A'\in \AA$.
	\end{enumerate}
\end{definition}

\begin{proposition}\label{Proposition:TTFTripleYieldsRecollement}
	Let $\EE$ be an exact category. If $\EE$ has enough projectives, we obtain a recollement
	\[\xymatrix{
		\eff(\EE)\ar[rr]|(.49){i_*} && \smodad(\EE) \ar@/^1pc/[ll]^-{i^!}\ar@/_1pc/[ll]_-{i^*} \ar[rr]|(.57){L} && \EE\ar@/^1pc/[ll]^-{\Upsilon}\ar@/_1pc/[ll]_-{j_!}
	}\] of exact categories.
\end{proposition}

\begin{proof}
	We first describe all $6$ functors in the above diagram. The localization sequence $\eff(\EE)\xrightarrow{i_*}\smodad(\EE)\xrightarrow{L}\EE$ is obtained in \Cref{Theorem:AuslandersFormulaForExactCategories}. As $(\eff(\EE),\cogen(\QQ))$ is a torsion pair in $\smodad(\EE)$, the inclusion functor $i_*\colon \eff(\EE)\to \smodad(\EE)$ has a right adjoint $i^!\colon \smodad(\EE)\to \eff(\EE)$ by \Cref{Proposition:BasicPropertiesTorsionPair}. Similarly, as $(\gen(\QQ\cap {}^{\perp}\eff(\EE)),\eff(\EE))$ is a torsion pair in $\smodad(\EE)$, the inclusion $i_*$ has a left adjoint $i^*\colon \smodad(\EE)\to \eff(\EE)$ by \Cref{Proposition:BasicPropertiesTorsionPair}. This yields the adjoint triple $(i^*,i_*,i^!)$. The adjunction $L\dashv \Upsilon$ is given by \Cref{corollary:YonedaLeftAdjoint}. As $\PP$ is a subcategory of $\EE$, we obtain the restriction functor $\text{Res}\colon \Mod(\EE)\to \Mod(\PP)$. The restriction functor has a left adjoint $-\otimes_\PP\EE\colon \Mod(\PP)\to \Mod(\EE)$ which is the unique right exact functor preserving representables. Consider now the diagram:
	\[\xymatrix{
		\smodad(\EE)\ar[r]^L\ar[d] & \EE\ar[d]\\
		\Mod(\EE)\ar[r]_{\text{Res}} & \Mod(\PP) \ar@/_/[l]_{-\otimes_\PP\EE}
	}\] The functor $\EE\to \Mod(\PP)$ is determined by mapping an object $X\in \EE$ to $\Hom(-,X)_{\mid \PP}$. We now show that $-\otimes_\PP\EE$ restricts to a well-defined functor $j_!\colon\EE\to \smodad(\EE)$. Note that $-\otimes_\PP\EE$ maps $\Upsilon(P)$ to itself for any projective $P$. Now, let $C\in \EE$ and let $Q\xrightarrow{f} P\deflation C$ be a projective resolution of $C$ (where $f$ is admissible). As $-\otimes_\EE\PP$ is a right exact functor, we find that $j_!(C)\cong \coker(\Upsilon(f))\in \smodad(\EE)$. This shows that $j_!$ is well-defined and thus $j_!\dashv L$ as $-\otimes_\PP\EE\dashv \text{Res}$. Hence we obtained the adjoint triple $(j_!,L,\Upsilon)$.
	
	By \Cref{corollary:YonedaLeftAdjoint}, $\ker(L)=\eff(\EE)$ and thus $\im(i_*)= \ker(L)$. Clearly $i_*$ is fully faithful. As $L$ is a localization functor (see \Cref{Remark:WeSeeLAsALocalizationFunctor}), \cite[Proposition~1.3]{GabrielZisman67} yields that $j_!$ and $\Upsilon$ are fully faithful.
	
	Now choose $F\in \smodad(\EE)$ and let $f\colon X\to Y$ be an admissible morphism such that $F\cong \coker(\Upsilon(f))$. Consider the commutative diagram
	\[\xymatrix{
		X\ar@{=}[r]\ar@{->>}[d]^{f'} & X\ar@{->>}[r]^{f'}\ar[d]|{\circ}^{f} & \im(f)\ar[r]\ar@{>->}[d]^{f''} & 0\ar[r]\ar[d] & Y\ar@{->>}[d]^{c}\\
		\im(f)\ar@{>->}[r]^-{f''} & Y\ar@{=}[r] & Y\ar@{->>}[r]^-{c} & \coker(f)\ar@{=}[r] & \coker(f)
	}\] in $\EE$ where $f''\circ f'=f$. By \Cref{Lemma:TheFamousDiagramChase} and the proof of \Cref{corollary:YonedaLeftAdjoint}, the above diagram induces the exact sequence $i_*i^!(F)\inflation F \to \Upsilon L(F)\deflation i_*(G)$ where $G=\coker(\Upsilon(c))\in \eff(\EE)$.
	
	To obtain the last exact sequence, choose a deflation $P\stackrel{p}{\deflation} Y$ with $P\in \PP$. Taking the pullback of $f$ along $p$ yields a deflation $Z\stackrel{p'}{\deflation} X$ and an admissible morphism $Z\xrightarrow{g} Y$. Again, choose a deflation $Q\stackrel{q}{\deflation} Z$ with $Q\in \PP$. Note that $g\circ q$ is admissible and yields a projective resolution $Q\xrightarrow{g\circ q}P \deflation \coker(f)$. Consider the commutative diagram 
	\[\xymatrix{
		Q\oplus \ker(g)\ar@{->>}[r]^-{\begin{psmallmatrix}1&0\end{psmallmatrix}}\ar[d]^{\begin{psmallmatrix}q& i\end{psmallmatrix}} & Q\ar@{->>}[r]^{q}\ar[d]^{gq} & Z\ar@{->>}[r]^{p'}\ar[d]^{g} & X\ar[r]^-{\begin{psmallmatrix}1\\0\end{psmallmatrix}}\ar[d]^{f} & X\oplus P\ar[d]^{\begin{psmallmatrix}f&p\end{psmallmatrix}}\\
		Z\ar[r]^{g} & P\ar@{=}[r] & P\ar@{->>}[r]^{p} & Y\ar@{=}[r] & Y
	}\] in $\EE$. By \Cref{Lemma:TheFamousDiagramChase} and the proof of \Cref{corollary:YonedaLeftAdjoint}, we obtain the exact sequence $i_*(H)\inflation j_!L(F) \to F \deflation i_*i^*(F)$ in $\EE$ where $H=\coker(\Upsilon(\begin{psmallmatrix}q& i\end{psmallmatrix}))$ where $i\colon \ker(g)\inflation Z$ is the kernel of $g$. By (the dual of) \cite[Proposition~2.12]{Buhler10}, the map $\begin{psmallmatrix}q& i\end{psmallmatrix}$ is a deflation and thus $H\in \eff(\EE)$. This completes the proof.	
\end{proof}

\subsection{Gorenstein projectives and Cohen-Macaulay modules}

If $\EE'$ is an extension closed subcategory of an exact category $\EE$, then $\EE'$ inherits an exact structure from $\EE$. In this case, since the inclusion $\EE'\to \EE$ is fully faithful and exact, applying the $2$-functor $\smodad (-)$ we get a fully faithful exact functor
\[
\smodad(\EE')\to \smodad(\EE).
\]
Hence $\smodad(\EE')$ is equivalent to a full subcategory of $\smodad(\EE)$. Our goal in this section is to characterize this subcategory in certain examples. In the following $Q\colon \smodad(\EE)\to \EE$ denotes the localization functor, $\Upsilon(\EE')\subseteq \smodad(\EE)$ denotes the subcategory of objects of the form $\Upsilon(E)$ with $E\in \EE'$, and $Q^{-1}(\EE')\subseteq \smodad(\EE)$ denotes the subcategory of objects $F$ satisfying $Q(F)\in \EE'$.

\begin{lemma}\label{Lemma:AdmissibleModSubcategories}
Let $\EE$ be an exact category, and let $\EE'$ be a subcategory of $\EE$ closed under extensions and kernels of deflations. Then $\smodad(\EE')$ is equivalent to the subcategory $Q^{-1}(\EE')\cap \operatorname{gen}_2(\Upsilon(\EE'))$ of $\smodad(\EE)$.
\end{lemma}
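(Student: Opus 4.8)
The plan is to exhibit the equivalence explicitly using the fully faithful exact functor $\iota\colon \smodad(\EE')\to \smodad(\EE)$ induced by $\smodad(-)$ applied to the inclusion $\EE'\hookrightarrow\EE$, and then identify its essential image with $Q^{-1}(\EE')\cap \gen_2(\Upsilon(\EE'))$. First I would check that the image lands in the claimed subcategory: if $F\in\smodad(\EE')$ has a projective presentation $\Upsilon_{\EE'}(X)\to\Upsilon_{\EE'}(Y)\to F\to 0$ with $f\colon X\to Y$ admissible in $\EE'$, then since $\EE'$ is closed under extensions and kernels of deflations, any admissible morphism in $\EE'$ is still admissible in $\EE$ (its image and kernel computed in $\EE$ lie in $\EE'$), so $\iota(F)$ has a presentation by representables $\Upsilon(X)\to\Upsilon(Y)\to\iota(F)\to 0$ with $X,Y\in\EE'$; this already shows $\iota(F)\in\gen_1(\Upsilon(\EE'))\subseteq\gen_2(\Upsilon(\EE'))$. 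For the other condition, Auslander's formula (\Cref{Theorem:AuslandersFormulaForExactCategories}) gives $Q(\iota(F))\cong\coker(f)$ computed in $\EE$, which equals the cokernel computed in $\EE'$ (again using that $\EE'$ is closed under the relevant constructions), hence lies in $\EE'$; so $\iota(F)\in Q^{-1}(\EE')$.

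The substantive direction is the reverse inclusion: given $F\in\smodad(\EE)$ with $Q(F)\in\EE'$ and $F\in\gen_2(\Upsilon(\EE'))$, I must produce a presentation of $F$ by representables $\Upsilon(X')\to\Upsilon(Y')$ with $X',Y'\in\EE'$ and the map admissible \emph{in $\EE'$}, so that $F$ lies in the image of $\iota$. The hypothesis $F\in\gen_2(\Upsilon(\EE'))$ gives an exact sequence $\Upsilon(A)\to\Upsilon(B)\to F\to 0$ in $\smodad(\EE)$ with $A,B\in\EE'$ and, by definition of $\gen_2$, the map $\Upsilon(A)\to\Upsilon(B)$ factors through some $\Upsilon(C)$ with $C\in\EE'$ in a way witnessing exactness one step further; concretely there is $A\to B$ in $\EE$ inducing this, and I want to replace it by an admissible morphism in $\EE'$. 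The idea is: let $g\colon A\to B$ be the underlying morphism; since $F\in\smodad(\EE)$, $g$ is admissible in $\EE$, factoring as $A\deflation I\inflation B$ with $I=\im(g)\in\EE$. I would argue $I\in\EE'$: the cokernel of $g$ in $\smodad(\EE)$ is $F$ with $Q(F)=\coker(g)\in\EE'$, and $I$ is the kernel of $B\deflation\coker(g)$; since $B\in\EE'$, $\coker(g)\in\EE'$, and $\EE'$ is closed under kernels of deflations, we get $I\in\EE'$. Then $A\deflation I$ and $I\inflation B$ are both admissible morphisms between objects of $\EE'$, with image $I\in\EE'$, so $g$ is admissible in $\EE'$; hence the presentation $\Upsilon_{\EE'}(A)\to\Upsilon_{\EE'}(B)\to F'\to 0$ in $\smodad(\EE')$ maps to $F$ under $\iota$, giving $F\in\operatorname{ess.im}(\iota)$.

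The main obstacle I anticipate is bookkeeping around \emph{which} cokernels and kernels are being computed where, and making sure the two $\gen_2$-conditions are used correctly — in particular, why $\gen_1(\Upsilon(\EE'))$ together with $Q^{-1}(\EE')$ is already enough in one direction while the $\gen_2$ (rather than $\gen_1$) is what one genuinely gets from $F\in\smodad(\EE')$, since the underlying admissible morphism of $F$ may only factor through $\EE'$ after passing to the image. I would be careful to invoke that $\EE'$ closed under extensions and kernels of deflations implies $\EE'\hookrightarrow\EE$ is exact and reflects admissibility of morphisms with image in $\EE'$, and that $\iota$ being fully faithful (from \Cref{Corollary:Left2Adjoint} / the $2$-functoriality of $\smodad(-)$) lets me conclude the equivalence once the essential image is pinned down. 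A final remark: one should double-check that $Q(F)\in\EE'$ is genuinely needed and not implied by $F\in\gen_2(\Upsilon(\EE'))$ alone — it is, because $\gen_2$ only controls a two-step resolution by representables from $\EE'$ and does not by itself force the cokernel (an object two steps down) to stay in $\EE'$ unless one also knows the relevant admissibility, which is exactly what $Q^{-1}(\EE')$ supplies.
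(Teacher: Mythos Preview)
Your approach is essentially the same as the paper's: use the fully faithful functor $\smodad(\EE')\to\smodad(\EE)$ and identify its essential image. The forward inclusion and the argument that $\im(g)\in\EE'$ via closure under kernels of deflations are exactly what the paper does. Two points need tightening, though.

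First, a small slip: you write $\gen_1(\Upsilon(\EE'))\subseteq\gen_2(\Upsilon(\EE'))$, but the inclusion goes the other way (a two-step acyclic resolution in particular gives a one-step one). This is harmless, since your presentation $\Upsilon(X)\to\Upsilon(Y)\to\iota(F)\to 0$ already witnesses $\gen_2$ directly.

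Second, and more substantively, your justification ``since $F\in\smodad(\EE)$, $g$ is admissible in $\EE$'' is not quite right as stated. Membership of $F$ in $\smodad(\EE)$ only guarantees that \emph{some} projective presentation has an admissible underlying map; it does not obviously force every presentation to. The paper's argument is cleaner and avoids this: by the paper's definition of an exact sequence in an exact category, each differential is required to be admissible, so saying $F\in\gen_2(\Upsilon(\EE'))$ inside $\smodad(\EE)$ already means $\Upsilon(g)$ is admissible in $\smodad(\EE)$. Then, since the localization functor $Q$ is exact and hence preserves admissible morphisms, $g=Q(\Upsilon(g))$ is admissible in $\EE$, and $\coker(g)=Q(F)\in\EE'$. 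From there your argument (image in $\EE'$ by closure under kernels of deflations, hence $g$ admissible in $\EE'$) is correct and matches the paper.
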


\begin{proof}
An object $F\in \smodad(\EE)$ is in the essential image of the functor $\smodad(\EE')\to \smodad(\EE)$ if and only if there exists a projective presentation 
\begin{equation}\label{projective presentation}
\Upsilon(X)\xrightarrow{\Upsilon(f)}\Upsilon(Y)\to F\to 0
\end{equation}
where $X,Y\in \EE'$ and $f$ is admissible in $\EE'$. Since $Q(F)=\coker (f)$, it is clear that the essential image is contained in $Q^{-1}(\EE')\cap \operatorname{gen}_2(\Upsilon(\EE'))$. Conversely, if $F$ is in $Q^{-1}(\EE')\cap \operatorname{gen}_2(\Upsilon(\EE'))$, then there exists a projective presentation \eqref{projective presentation} where $X,Y\in \EE'$. Since $Q$ preserves admissible morphisms, it follows that $f$ is admissible and $\coker (f)\in \EE'$. But then $f$ must be admissible in $\EE'$, since $\EE'$ is closed under kernels of deflations. This proves the claim.
\end{proof}

We get the following corollary for subcategories of abelian categories with enough projectives.

\begin{proposition}\label{Corollary:SubcategoriesAbelian}
Let $\PP$ be an additive category, and assume $\smod(\PP)$ is an abelian category. Let $\XX$ be a subcategory of $\smod(\PP)$ closed under extensions, kernels of deflations, and which contain $\Upsilon(\PP)$. Then 
\[
\smodad(\XX)=\{F\in \smod(\XX)\mid F|_{\PP}\in \XX\}.
\]
\end{proposition}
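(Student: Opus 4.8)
The plan is to deduce this proposition directly from \Cref{Lemma:AdmissibleModSubcategories} by verifying that its hypotheses apply to the inclusion $\XX \subseteq \smod(\PP)$, and then identifying the subcategory $Q^{-1}(\XX) \cap \operatorname{gen}_2(\Upsilon(\XX))$ with $\{F \in \smod(\XX) \mid F|_{\PP} \in \XX\}$. First I would set $\EE = \smod(\PP)$ and $\EE' = \XX$. The hypotheses of \Cref{Lemma:AdmissibleModSubcategories} require $\EE'$ to be closed under extensions and under kernels of deflations in $\EE$; both are assumed. Since $\smod(\PP)$ is abelian, it is an exact category and $\XX$ inherits an exact structure; its deflations are the epimorphisms whose kernel lies in $\XX$, and \Cref{Lemma:AdmissibleModSubcategories} gives $\smodad(\XX) \cong Q^{-1}(\XX) \cap \operatorname{gen}_2(\Upsilon(\XX))$ as subcategories of $\smodad(\smod(\PP))$.

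The second step is to compute $\smodad(\smod(\PP))$ and the functor $Q$ in the present situation. Since $\smod(\PP)$ is abelian, by \Cref{AbelianCase} we have $\smodad(\smod(\PP)) = \smod(\smod(\PP))$, and the localization functor $Q\colon \smod(\smod(\PP)) \to \smod(\PP)$ is the ``Auslander'' functor evaluating a finitely presented functor on $\smod(\PP)$-presentations. The key observation is that $\smod(\PP)$ is itself a functor category, so a functor $F$ on $\smod(\PP)$ can be restricted along the Yoneda embedding $\PP \hookrightarrow \smod(\PP)$ to give $F|_{\PP}$, an object of $\Mod(\PP)$. When $F$ is finitely presented over $\smod(\PP)$, this restriction is finitely presented over $\PP$, i.e.\ lies in $\smod(\PP)$ — this is because taking $Q$ of $F$ (which is $\coker$ of a map of representables over $\smod(\PP)$, hence a cokernel of a map between objects of $\smod(\PP)$) is naturally isomorphic to $F|_{\PP}$; indeed $Q$ is precisely restriction along $\PP \hookrightarrow \smod(\PP)$ composed with the identification of representables. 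So $Q(F) \cong F|_{\PP}$ naturally.

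Granting that identification, $Q^{-1}(\XX) = \{F \mid F|_{\PP} \in \XX\}$ directly. It remains to show that the extra condition $F \in \operatorname{gen}_2(\Upsilon(\XX))$ is automatic once $F|_{\PP} \in \XX$ and $F \in \smod(\smod(\PP))$. Here I would use that $\XX$ contains $\Upsilon(\PP)$, i.e.\ all representable functors on $\PP$: given $F \in \smod(\smod(\PP))$, choose a projective presentation $\Upsilon(X) \to \Upsilon(Y) \to F \to 0$ over $\smod(\PP)$ with $X, Y \in \smod(\PP)$; applying $Q$ gives an exact sequence $X \to Y \to Q(F) \to 0$ in $\smod(\PP)$. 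Since $Q(F) \cong F|_{\PP} \in \XX$ and $\XX$ is closed under kernels of deflations (hence under the relevant syzygies built from $X, Y$, which are themselves quotients of representables on $\PP$ and so lie in $\XX$ as $\Upsilon(\PP) \subseteq \XX$ and $\XX$ is closed under the appropriate cokernels)… the precise bookkeeping is that $X$ and $Y$, being finitely presented $\PP$-modules, are cokernels of maps of representables on $\PP$, hence lie in $\operatorname{gen}(\Upsilon(\PP)) \subseteq \operatorname{gen}(\XX)$, which puts $F \in \operatorname{gen}_2(\Upsilon(\XX))$. The main obstacle I anticipate is getting the identification $Q \cong (-)|_{\PP}$ cleanly and pinning down exactly why $\operatorname{gen}_2(\Upsilon(\XX))$ becomes vacuous — i.e.\ carefully tracking that every $F$ in $\smod(\smod(\PP))$ with $F|_\PP \in \XX$ already admits a two-step resolution by objects of $\Upsilon(\XX)$, using only that $\XX \supseteq \Upsilon(\PP)$ is extension-closed and closed under kernels of deflations. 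Once that is in hand, combining with \Cref{Lemma:AdmissibleModSubcategories} yields the stated equality.
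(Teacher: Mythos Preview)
Your overall strategy---apply \Cref{Lemma:AdmissibleModSubcategories} with $\EE=\smod(\PP)$ and $\EE'=\XX$, then identify the localization $Q$ with restriction along $\PP\hookrightarrow\smod(\PP)$---is exactly the paper's approach, and your identification $Q(F)\cong F|_{\PP}$ is correct. The problem is in your final step.

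You attempt to show that $\operatorname{gen}_2(\Upsilon(\XX))$ is ``automatic'', meaning that every $F\in\smod(\smod(\PP))$ with $F|_{\PP}\in\XX$ already lies in $\operatorname{gen}_2(\Upsilon(\XX))$. This is false in general, and your argument for it contains a level confusion: from a presentation $\Upsilon(X)\to\Upsilon(Y)\to F\to 0$ with $X,Y\in\smod(\PP)$, you observe that $X,Y\in\operatorname{gen}(\Upsilon(\PP))\subseteq\operatorname{gen}(\XX)$ inside $\smod(\PP)$, but this does not put $F$ in $\operatorname{gen}_2(\Upsilon(\XX))$ inside $\smod(\smod(\PP))$---for that you would need $X,Y\in\XX$ themselves, not merely quotients of objects of $\XX$. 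For a concrete counterexample, take $\Lambda=k[x]/(x^2)$, $\PP=\operatorname{proj}\Lambda$, $\XX=\operatorname{add}(\Lambda)$, and let $F=\coker\Upsilon(\Lambda\twoheadrightarrow k)$. Then $Q(F)=0\in\XX$, but $F(\Lambda)=0$ while $F(k)\cong k$, and one checks that no presentation of $F$ by representables on free modules exists (any surjection $\Lambda^n\twoheadrightarrow\Lambda^m$ splits, forcing $F(k)=0$).

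The fix is to notice what the statement actually asserts: the right-hand side is $\{F\in\smod(\XX)\mid F|_{\PP}\in\XX\}$, not $\{F\in\smod(\smod(\PP))\mid F|_{\PP}\in\XX\}$. So you do not need the (false) inclusion $Q^{-1}(\XX)\subseteq\operatorname{gen}_2(\Upsilon(\XX))$; you need the (tautological) identification $\operatorname{gen}_2(\Upsilon(\XX))=\smod(\XX)$ as subcategories of $\smod(\smod(\PP))$, which is just the statement that the essential image of the fully faithful functor $\smod(\XX)\to\smod(\smod(\PP))$ consists of those functors admitting a presentation by representables on objects of $\XX$. This is precisely the paper's route.
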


\begin{proof}
Note that the localization $Q\colon \smod(\smod(\PP))\to \smod(\PP)$ is given by the restriction functor $Q(F)=F|_{\PP}$. The claim follows therefore from \Cref{Lemma:AdmissibleModSubcategories} and the fact that $\smod(\XX)$ can be identified with the subcategory $\operatorname{gen}_2(\Upsilon(\XX))$ of $\smod(\AA)$.
\end{proof}

In particular, \Cref{Corollary:SubcategoriesAbelian} holds in the following cases:
\begin{itemize}
\item $\XX$ is the subcategory 
\[
\operatorname{GP}(\PP)=\{M\in \smod(\PP)\mid M=Z^0(P_{\bullet}) \mbox{ for a totally acyclic complex }P_{\bullet}\}
\] of Gorenstein-projective objects in $\smod(\PP)$, where an acyclic complex $P_{\bullet}$ with projective components is called totally acyclic if the complex $\operatorname{Hom}_{\smod(\PP)}(P_{\bullet},Q)$ is acyclic for all projective objects $Q$ in $\smod(\PP)$.
\item $\XX$ is the subcategory of objects of projective dimension $\leq i$ for some integer $i$.
\item $\XX$ is the subcategory of objects of finite projective dimension.
\item $\XX={}^{{}_{>0}\perp}U=\{M\in \smod(\PP)\mid \Ext^i_{\smod(\PP)}(M,U)=0$ for all $i>0\}$ for some object $U\in \smod(\PP)$. In particular, this covers orthogonal categories of cotilting modules as studied in \cite{Iyama07} and \cite{Enomoto18}.

\item $R$ is a commutative Cohen-Macaulay local ring, $\Lambda$ is an $R$-order, i.e. a noetherian $R$-algebra which is maximal Cohen-Macaulay as an $R$-module, see \cite[Section 2.2]{Iyama18} and $\XX$ is the subcategory
\[
\operatorname{CM}(\Lambda)=\{M\in \smod(\Lambda)\mid M \text{ is maximal Cohen-Macaulay as an } R \text{-module}\}  
\]
of maximal Cohen-Macaulay modules of $\Lambda$.
\end{itemize}

We end this section by restating \Cref{Corollary:SubcategoriesAbelian} when the subcategories of Gorenstein projective modules or Cohen-Macaulay modules are of finite type. Here, for an exact category $\EE'$ with an additive generator $M$ and endomorphism ring $\Gamma=\End_{\EE'}(M)$, we let $\smodad(\Gamma)$ be the subcategory of $\smod(\Gamma)$ which corresponds to the subcategory $\smodad(\EE')$ under the equivalence $\smod(\Gamma)\cong \smod(\EE')$.

\begin{corollary}\label{Corollary:GorensteinProjective}
Let $\Lambda$ be a noetherian ring, and assume the subcategory $\operatorname{GP}(\Lambda)$ of Gorenstein projective modules in $\smod(\Lambda)$ has an additive generator $M\in \operatorname{GP}(\Lambda)$. Assume also for simplicity that $\Lambda$ is a summand of $M$. Set $\Gamma=\operatorname{End}_{\Lambda}(M)$ and let $e\in \Gamma$ be the idempotent corresponding to $\Lambda$, so that $e\Gamma e\cong \Lambda$. Then 
\[
\smodad(\Gamma)=\{M\in \smod(\Gamma)\mid Me\in \operatorname{GP}(\Lambda)\}.
\]
\end{corollary}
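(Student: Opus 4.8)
The plan is to read off the statement from \Cref{Corollary:SubcategoriesAbelian}, applied to the additive category $\PP=\operatorname{proj}(\Lambda)$ and the subcategory $\XX=\operatorname{GP}(\Lambda)$ of $\smod(\PP)\simeq\smod(\Lambda)$, and then to rephrase the resulting identity of functor categories in terms of $\Gamma$-modules by means of the additive generator $M$.

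First I would check that $(\PP,\XX)=(\operatorname{proj}(\Lambda),\operatorname{GP}(\Lambda))$ satisfies the hypotheses of \Cref{Corollary:SubcategoriesAbelian}. Since $\Lambda$ is noetherian, $\smod(\Lambda)\simeq\smod(\operatorname{proj}\Lambda)$ is abelian, and $\Upsilon(\operatorname{proj}\Lambda)$ is the subcategory of projective $\Lambda$-modules, which is contained in $\operatorname{GP}(\Lambda)$. It is standard that over a noetherian ring the Gorenstein projective modules are closed under extensions, and that they are closed under kernels of deflations, i.e.\ if $0\to K\to G_1\to G_0\to 0$ is exact with $G_0,G_1\in\operatorname{GP}(\Lambda)$ then $K\in\operatorname{GP}(\Lambda)$. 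Hence \Cref{Corollary:SubcategoriesAbelian} gives
\[
\smodad(\operatorname{GP}(\Lambda))=\{F\in\smod(\operatorname{GP}(\Lambda))\mid F|_{\operatorname{proj}\Lambda}\in\operatorname{GP}(\Lambda)\},
\]
the restriction being taken along the inclusion $\operatorname{proj}(\Lambda)\hookrightarrow\operatorname{GP}(\Lambda)$.

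Next I would transport this identity across the equivalence $\smod(\operatorname{GP}(\Lambda))\simeq\smod(\Gamma)$ induced by the additive generator $M$ (so that $\operatorname{GP}(\Lambda)=\add(M)$ and $\Gamma=\operatorname{End}_\Lambda(M)$), under which $\smodad(\operatorname{GP}(\Lambda))$ corresponds to $\smodad(\Gamma)$ by the very definition of the latter. The one point that needs attention is the identification of the functor $F\mapsto F|_{\operatorname{proj}\Lambda}$ with the functor $M'\mapsto M'e$ on $\smod(\Gamma)$: since $\Lambda$ is, by hypothesis, the direct summand of $M$ cut out by the idempotent $e\in\Gamma$, the idempotent endomorphism $F(e)$ of $F(M)$ has image naturally isomorphic to $F(\Lambda)$, and, tracking the (right) $\Gamma$-module structure through $e\Gamma e\cong\operatorname{End}_\Lambda(\Lambda)\cong\Lambda$, this image is precisely $F(M)e$. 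Substituting into the displayed equality and renaming the $\Gamma$-module $F(M)$ to $M$ (as in the statement) then yields $\smodad(\Gamma)=\{M\in\smod(\Gamma)\mid Me\in\operatorname{GP}(\Lambda)\}$.

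The genuinely nontrivial input is the closure of $\operatorname{GP}(\Lambda)$ under kernels of deflations, which is where noetherianity is used and which is exactly what makes $\operatorname{GP}(\Lambda)$ (rather than just $\add(M)$) the correct subcategory to feed into \Cref{Corollary:SubcategoriesAbelian}; everything else is bookkeeping, the only delicate part of which is pinning down the module conventions so that restriction along $\operatorname{proj}(\Lambda)\hookrightarrow\operatorname{GP}(\Lambda)$ really does become right multiplication by $e$.
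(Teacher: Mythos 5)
Your proposal is correct and is exactly the argument the paper intends: the corollary is presented as a restatement of \Cref{Corollary:SubcategoriesAbelian} applied to $\PP=\operatorname{proj}(\Lambda)$ and $\XX=\operatorname{GP}(\Lambda)$, with the restriction functor $F\mapsto F|_{\operatorname{proj}\Lambda}$ identified with $N\mapsto Ne$ via the additive generator $M$, just as you do. The only cosmetic remark is that noetherianity is used to make $\smod(\Lambda)$ abelian, while the closure of $\operatorname{GP}(\Lambda)$ under extensions and kernels of deflations is the standard resolving property and holds more generally.
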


\begin{corollary}\label{Corollary:CohenMacaulay}
Let $\Lambda$ be an $R$-order over a local commutative Cohen-Macaulay ring $R$, and assume the subcategory $\operatorname{CM}(\Lambda)$ of maximal Cohen-Macaulay has an additive generator $M\in \operatorname{CM}(\Lambda)$. Assume also for simplicity that $\Lambda$ is a summand of $M$. Set $\Gamma=\operatorname{End}_{\Lambda}(M)$ and let $e\in \Gamma$ be the idempotent corresponding to $\Lambda$. Then 
\[
\smodad(\Gamma)=\{M\in \smod(\Gamma)\mid Me\in \operatorname{CM}(\Lambda)\}.
\]
\end{corollary}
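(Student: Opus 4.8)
The plan is to deduce \Cref{Corollary:CohenMacaulay} from \Cref{Corollary:SubcategoriesAbelian} and then transport the resulting description along the equivalence $\smod(\operatorname{CM}(\Lambda)) \simeq \smod(\Gamma)$ afforded by the additive generator $M$. Concretely, I would first apply \Cref{Corollary:SubcategoriesAbelian} with $\PP = \operatorname{proj}(\Lambda)$, so that $\smod(\PP) \simeq \smod(\Lambda)$ is abelian, and with $\XX = \operatorname{CM}(\Lambda) \subseteq \smod(\Lambda)$. This is one of the cases recorded immediately after \Cref{Corollary:SubcategoriesAbelian}: $\operatorname{CM}(\Lambda)$ contains $\operatorname{proj}(\Lambda) \cong \Upsilon(\PP)$, since $\Lambda$ is maximal Cohen-Macaulay over $R$ and this property is inherited by finite direct sums and direct summands, and $\operatorname{CM}(\Lambda)$ is closed under extensions and kernels of deflations by the standard depth estimates for a short exact sequence of $\Lambda$-modules. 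The Proposition then yields
\[
\smodad(\operatorname{CM}(\Lambda)) = \{F \in \smod(\operatorname{CM}(\Lambda)) \mid F|_{\operatorname{proj}(\Lambda)} \in \operatorname{CM}(\Lambda)\},
\]
with $F|_{\operatorname{proj}(\Lambda)}$ viewed as an object of $\smod(\operatorname{proj}(\Lambda)) \simeq \smod(\Lambda)$.

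Next I would transport this equality along the equivalence $\smod(\operatorname{CM}(\Lambda)) \xrightarrow{\sim} \smod(\Gamma)$, $F \mapsto F(M)$, under which $\smodad(\operatorname{CM}(\Lambda))$ is identified with $\smodad(\Gamma)$ by definition. Writing $M = \Lambda \oplus M'$ with $e$ the idempotent projecting onto $\Lambda$, so that $e\Gamma e \cong \Lambda$, one has for $N = F(M)$ a natural identification $Ne = F(M)e \cong F(\Lambda)$; moreover the object of $\smod(\Lambda)$ corresponding to $F|_{\operatorname{proj}(\Lambda)}$ under $\smod(\operatorname{proj}(\Lambda)) \simeq \smod(\Lambda)$ is precisely $F(\Lambda)$. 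Since membership in $\operatorname{CM}(\Lambda)$ depends only on the underlying $R$-module, the condition $F|_{\operatorname{proj}(\Lambda)} \in \operatorname{CM}(\Lambda)$ becomes $Ne \in \operatorname{CM}(\Lambda)$, and the displayed equality turns into
\[
\smodad(\Gamma) = \{N \in \smod(\Gamma) \mid Ne \in \operatorname{CM}(\Lambda)\},
\]
which is the assertion (with $N$ in place of the bound variable $M$ appearing in the statement).

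The closure properties of $\operatorname{CM}(\Lambda)$ and the inclusion $\operatorname{proj}(\Lambda) \subseteq \operatorname{CM}(\Lambda)$ are standard facts from commutative algebra, and the reduction to \Cref{Corollary:SubcategoriesAbelian} is immediate; so the only step that needs genuine care is the final bookkeeping, namely matching the restriction functor $F \mapsto F|_{\operatorname{proj}(\Lambda)}$ with the idempotent truncation $N \mapsto Ne$ compatibly with both equivalences --- i.e.\ checking that evaluating $F$ at the summand $\Lambda$ of $M$ and cutting $F(M)$ down by $e$ produce the same $\Lambda$-module, not merely the same abelian group. I expect this (mild) point to be the main obstacle; everything else is a direct citation.
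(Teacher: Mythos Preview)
Your proposal is correct and follows exactly the approach the paper intends: \Cref{Corollary:CohenMacaulay} is stated without proof because it is an immediate instance of \Cref{Corollary:SubcategoriesAbelian} applied to $\XX=\operatorname{CM}(\Lambda)\subseteq \smod(\Lambda)$ (one of the bulleted cases listed just before the corollary), followed by transport along the equivalence $\smod(\operatorname{CM}(\Lambda))\simeq \smod(\Gamma)$. The bookkeeping point you flag---that restriction to $\operatorname{proj}(\Lambda)$ corresponds to the idempotent truncation $N\mapsto Ne$---is the only thing to check, and it is routine.
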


We end this section with a description of $\modad(\EE)$ where $\EE$ does not have enough projectives.

\begin{example}
We will use notation and conventions from \cite{Hartshorne}.  Let $k$ be an algebraically closed field.  Let $S$ be a noetherian graded $k$-algebra generated in degrees zero and one.  We write $\bX = \Proj S$ for the associated projective scheme.

Let $\Gr S$ be the category of graded $S$-modules and $\Tor S$ be the full subcategory of torsion modules. We write
\[\QGr \bX \coloneqq \frac{\Gr S}{\Tor S},\]
and we write $\pi$ for the quotient $\Gr S \to \QGr S$.  For each $n \in \bZ$, we write $\OO_\bX(n)$ for $\pi(S(n))$.  By Serre's theorem (see \cite[Proposition~7.8]{Serre55}, \cite[Proposition II.5.15]{Hartshorne}, \cite[EGAII, 3.3.5]{EGAII}), the functor
\[\Gamma_*\colon \QCoh \bX \to \Gr S\colon \FF \mapsto \oplus_{n \in \bZ} \Hom(\OO_\bX(-n), \FF)\]
induces an equivalence $\pi \circ \Gamma_*\colon \QCoh \bX \xrightarrow{\simeq} \QGr S$.  The $S$-module structure of $\Gamma_*(\FF)$ is as in \cite[p.118]{Hartshorne}.  The functor $\Gamma_*$ factors as $\Coh \bX \xrightarrow{\Yoneda} \mod (\Coh \bX) \xrightarrow{G} \Gr S$, where $G$ is the exact functor given by $M \mapsto \oplus_{n \in \bZ} M(\OO_\bX(-n)).$

A map $f \colon \FF \to \GG$ in $\Coh \bX$ is a deflation if and only if $\coker(\Gamma_*(f)) \in \Tor S.$  As $\GG \in \Coh \bX$, we know that $\Gamma_{\geq 0}(\GG) \coloneqq \oplus_{n \geq 0} \Hom(\OO_X(-n), \GG)$ is a finitely generated graded $S$-module (see \cite[\S60, Th\'{e}or\`{e}me~2]{Serre55}).  Hence, $C \coloneqq \coker \Yoneda(f)$ is effaceable if and only if $C(\OO_\bX(-n)) = 0$ for $-n \ll 0$.  We find the following description of the effaceable modules:
\[\eff (\Coh \bX) = \{C \in \mod (\Coh \bX) \mid \mbox{$C(\OO_\bX(n)) = 0$, for $0 \ll n$}\}.\]
Consider the localization sequence $\eff(\Coh \bX) \to \mod (\Coh \bX) \xrightarrow{Q} \Coh \bX$, given by Auslander's formula (see, for example, \Cref{Theorem:AuslandersFormulaForExactCategories}).  Here, $Q$ is the unique right exact functor satisfying $Q \circ \Yoneda \cong 1$.  For notational convenience, we identify $\QCoh \bX$ and $\QGr S$ using $\pi \circ \Gamma_*$.  As $1 \cong \pi \circ \Gamma_* \cong \pi \circ G \circ \Yoneda$, and both $\pi$ and $G$ are right exact, we find that $Q \cong \pi \circ G$.

Write $\EE$ for the full subcategory of $\Coh \bX$ consisting of the locally free sheaves.  It follows from \Cref{Lemma:AdmissibleModSubcategories} that
\[\modad(\EE) = \left\{ M \in \mod(\EE) \mid \pi \left( \oplus_{n \in \bZ} M(\OO_\bX(-n)) \right) \in \EE \right\}.\]
\end{example}

\begin{remark}
We now consider the case where $S = k[x,y]$, and hence $\bX \cong \bP^1_k$.  It is well-known that each line bundle is isomorphic to $\OO(n)$, for some $n \in \bZ$ and that each vector bundle is a direct sum of line bundles (see \cite{Grothendieck57b} when $k = \bC$ or \cite{HazewinkelMartin82} for more general fields $k$).  For each $m,n \in \bZ$, we have $\Hom(\OO(m), \OO(n)) \cong S_{m-n}$ (see, for example, \cite[Proposition~II.5.13]{Hartshorne}).  From these results, we find $\mod(\EE) \cong \gr S$, where $\gr S$ is the category of finitely generated graded $S$-modules.  In particular, $\eff(\EE)$ is a length category.  Let $M = k(n)$ be an arbitrary simple $S$-module (that is, $M_d = k$ when $d = -n$ and zero otherwise).  Consider the Koszul resolution
\[0 \to S(n-2) \to S(n-1)^{\oplus 2} \to S(n) \to k(n) \to 0\]
in $\Gr(S).$  Applying $\pi$, we find the exact sequence $0 \to \OO(n-2) \to \OO(n-1)^{\oplus 2} \to \OO(n) \to 0$, which is the Euler sequence for $\bP^1_k$ when $n=2$, see \cite[Proposition~II.8.13]{Hartshorne} or \cite[Proposition~2.4.4]{Huybrechts}.  It follows from \cite[Proposition~A.2]{Enomoto18} that the morphism $\OO(n-1)^{\oplus 2} \to \OO(n)$ is right almost split, so that the exact sequence is an almost split sequence in $\EE$ (see \cite[Proposition~2.12]{IyamaNakaokaPalu18} \cite[Theorem~4.19]{Shah20}).  Note that it is known (\cite[Corollary~5]{Jorgensen06}, see also \cite{ReitenVendenBergh02}) that the Euler sequence is an almost split sequence in $\Coh \bP^1_k$.
\end{remark}
\section{Characterizing exact structures via resolving subcategories}\label{section:ViaResolvingSubcategories}

Let $\CC$ be an idempotent complete additive category. In this section we characterize the subcategories of $\smod (\CC)$ of the form $\smodad (\EE)$ where $\EE$ is an exact category obtained by choosing an exact structure on $\CC$. 
\subsection{Resolving subcategories}

Fix an idempotent complete exact category $\EE$. Our goal in this section is to investigate the relationship between $\eff (\EE)$ and $\smodad (\EE)$ as subcategories of $\smod (\EE)$.

\begin{definition}\label{Definition:Resolving}
A subcategory $\XX$ of $\EE$ is called \emph{resolving} if it is closed under extensions, direct summands, and kernel of deflations, and if it is \emph{generating}, i.e. $\gen(\XX)=\EE$.
\end{definition}

\begin{definition}
Let $\PP^2(\EE)$ denote the subcategory of $\smod (\EE)$ consisting of all $F$ for which there exists an exact sequence
\[
0\to \Hom_{\EE}(-,X)\to \Hom_{\EE}(-,Y)\to \Hom_{\EE}(-,Z)\to F\to 0.
\]
\end{definition}

\begin{lemma}
$\PP^2(\EE)$ is closed under extensions, direct summands and kernels of epimorphisms.
\end{lemma}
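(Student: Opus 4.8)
The claim is that $\PP^2(\EE)$ — the functors $F$ admitting a projective resolution of length at most $2$ by representables — is closed under extensions, direct summands, and kernels of epimorphisms (i.e. cokernels of monomorphisms, dually). Throughout we work inside the abelian category $\smod(\EE)$, in which every object has a projective resolution by representables since $\EE$ has weak kernels; the point is to control the \emph{length}. The key homological facts are: (a) a finitely presented functor $F$ lies in $\PP^2(\EE)$ iff its second syzygy $\Omega^2 F$ (taken with respect to some, equivalently any, projective presentation) is representable, equivalently projective; and (b) syzygies are well-defined up to projective summands by Schanuel's lemma. So the strategy is to reduce each closure property to a statement about syzygies and then invoke Schanuel together with the Horseshoe Lemma.

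\textbf{Direct summands.} If $F = F_1 \oplus F_2 \in \PP^2(\EE)$, pick the length-$2$ resolution of $F$; restricting via the idempotent gives $\Omega^2 F \cong \Omega^2 F_1 \oplus \Omega^2 F_2$ up to projective summands. Since $\EE$ (hence $\Proj(\smod(\EE)) = \Upsilon(\EE)$, by \Cref{Proposition:TorsionTheoryProperties}.\eqref{Proposition:TorsionTheoryPropertiesD}) is idempotent complete, $\Proj(\smod(\EE))$ is closed under direct summands, so each $\Omega^2 F_i$ is projective, whence $F_i \in \PP^2(\EE)$. This is where idempotent completeness of $\EE$ is used.

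\textbf{Extensions.} Given $0 \to F' \to F \to F'' \to 0$ with $F', F'' \in \PP^2(\EE)$, apply the Horseshoe Lemma to the chosen length-$2$ resolutions of $F'$ and $F''$: one obtains a resolution $0 \to \Upsilon(X' \oplus X'') \to \Upsilon(Y' \oplus Y'') \to \Upsilon(Z' \oplus Z'') \to F \to 0$ of $F$, and all three left-hand terms are representable. Hence $F \in \PP^2(\EE)$ directly — no Schanuel needed.

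\textbf{Kernels of epimorphisms.} Suppose $0 \to K \to F \xrightarrow{\pi} G \to 0$ is exact in $\smod(\EE)$ with $F, G \in \PP^2(\EE)$; we must show $K \in \PP^2(\EE)$ (note $K$ is automatically finitely presented since $\smod(\EE)$ has kernels: $\EE$ has weak kernels, so $\smod(\EE)$ is abelian by \cite{Freyd66}). Take a representable projective cover step $\Upsilon(P) \deflation G$; pull back along $\pi$ to get $0 \to K \to F \times_G \Upsilon(P) \to \Upsilon(P) \to 0$, which splits, so $F \times_G \Upsilon(P) \cong K \oplus \Upsilon(P)$. On the other hand the pullback sits in $0 \to \Omega G \to F \times_G \Upsilon(P) \to F \to 0$ where $\Omega G$ is a first syzygy of $G$, hence lies in $\PP^1(\EE) \subseteq \PP^2(\EE)$ (a length-$1$ resolution truncates to a length-$2$ one by \Cref{Remark:YouCanTruncateThoseSequences}-style padding, or just by prepending $0$). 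Now $F \times_G \Upsilon(P)$ is an extension of $F \in \PP^2(\EE)$ by $\Omega G \in \PP^2(\EE)$, so by the extension-closure just proved it lies in $\PP^2(\EE)$; then by the direct-summand-closure, its summand $K$ lies in $\PP^2(\EE)$.

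\textbf{Main obstacle.} The only subtle point is getting the short exact sequence $0 \to \Omega G \to F\times_G \Upsilon(P) \to F \to 0$ correct and confirming $\Omega G \in \PP^2(\EE)$: since $G$ has a length-$2$ resolution $0 \to \Upsilon(X)\to \Upsilon(Y)\to \Upsilon(P')\to G\to 0$, comparing with any other representable cover $\Upsilon(P)\deflation G$ via Schanuel shows the kernel $\Omega G$ of $\Upsilon(P)\deflation G$ differs from $\ker(\Upsilon(P')\deflation G) = \coker(\Upsilon(X)\to\Upsilon(Y))$ only by a projective summand, hence has a length-$1$ representable resolution, hence lies in $\PP^2(\EE)$. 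Everything else is the standard "syzygies up to projectives" bookkeeping.
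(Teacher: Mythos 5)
Your proof is correct, but it takes a genuinely different route from the paper's. The paper disposes of the lemma in two lines: it identifies $\PP^2(\EE)$ with the objects of projective dimension $\leq 2$ inside $\smod_{\infty}(\EE)$ (the functors admitting arbitrarily long resolutions by representables), cites \cite[Proposition 2.6]{Enomoto17} for the closure of $\smod_{\infty}(\EE)$ under the three operations, and leaves the descent to the projective-dimension-$\leq 2$ part implicit. You instead give a self-contained verification: the Horseshoe Lemma for extensions, generalized Schanuel plus idempotent completeness of $\EE$ (so that direct summands of representables are again representable) for direct summands, and the pullback-along-a-representable-cover trick reducing kernels of epimorphisms to the two closures already established. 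What your approach buys is that it makes explicit exactly the ``syzygies up to projectives'' bookkeeping that the paper's citation leaves to the reader, it correctly orders the three properties so that the third only uses the first two, and it isolates precisely where idempotent completeness of $\EE$ enters; what the paper's approach buys is brevity and the reuse of Enomoto's result, which also handles the (nontrivial) fact that the ambient class of functors with representable resolutions is itself closed under these operations.

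One side remark in your argument is wrong, though harmless: you justify that $K$ is finitely presented by asserting that $\EE$ has weak kernels and hence $\smod(\EE)$ is abelian by Freyd's theorem. An exact category need not have weak kernels, and the paper stresses in its introduction that $\smod(\EE)$ is in general not abelian --- this is the very reason $\smodad(\EE)$ is introduced. Fortunately you do not need this claim: finite presentability of $K$ falls out of your own argument, since $K$ is a direct summand of $F\times_G \Upsilon(P)$, which you show lies in $\PP^2(\EE)\subseteq \smod(\EE)$. Delete the parenthetical and the proof stands.
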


\begin{proof}
$\PP^2(\EE)$ is the subcategory of objects of projective dimension $\leq 2$ in the category $\smod_{\infty}(\EE)$ consisting of all $F$ which admits an exact sequence
\[
\cdots \to \Hom_{\EE}(-,X_2)\to \Hom_{\EE}(-,X_1)\to F\to 0.
\]
Since $\smod_{\infty}(\EE)$ is closed under extensions, direct summands and kernels of epimorphisms by \cite[Proposition 2.6]{Enomoto17}, the same must hold for $\PP^2(\EE)$.
\end{proof}

\begin{proposition}\label{Proposition:SmallestResolvingSubcategory} 
$\smodad (\EE)$ is the smallest resolving subcategory of $\PP^2(\EE)$ containing $\eff (\EE)$.
\end{proposition}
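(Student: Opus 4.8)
The plan is to show that $\smodad(\EE)$ is a resolving subcategory of $\PP^2(\EE)$ which contains $\eff(\EE)$, and then to show that any resolving subcategory of $\PP^2(\EE)$ containing $\eff(\EE)$ must contain $\smodad(\EE)$; minimality follows. That $\smodad(\EE)\subseteq \PP^2(\EE)$ is immediate from \Cref{Lemma:YonedaEmbeddingIsLeftExact}, which provides the required four-term projective resolution, and $\eff(\EE)\subseteq \smodad(\EE)$ holds by definition. So the first substantive task is to verify that $\smodad(\EE)$ is resolving inside $\PP^2(\EE)$: it is closed under extensions by \Cref{Proposition:AdmissiblyPresentedLiesExtensionClosed}; closure under direct summands follows from \Cref{Corollary:(Weak)IdempotentCompleteness} (using that $\EE$ is idempotent complete, hence so is $\smodad(\EE)$, and a summand of an object in $\smodad(\EE)$, computed in $\PP^2(\EE)$ or in $\smod(\EE)$, again lies in $\smodad(\EE)$ — alternatively one argues directly that a summand of an admissibly presented functor is admissibly presented using idempotent completeness of $\EE$); closure under kernels of epimorphisms within $\PP^2(\EE)$ must be checked by a syzygy/horseshoe argument analogous to the proof of \Cref{Proposition:AdmissiblyPresentedLiesExtensionClosed}; and generation, i.e. $\gen(\smodad(\EE)) = \PP^2(\EE)$, holds because every $F\in\PP^2(\EE)$ admits a projective presentation $\Upsilon(Y)\to F\to 0$ with $\Upsilon(Y)\in\QQ\subseteq\smodad(\EE)$.

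For the reverse inclusion, let $\XX$ be any resolving subcategory of $\PP^2(\EE)$ with $\eff(\EE)\subseteq\XX$, and let $F\in\smodad(\EE)$. The key structural input is \Cref{Proposition:EffAreTorsion}: the torsion pair $(\eff(\EE),\FF)$ on $\smodad(\EE)$ gives a conflation $F'\inflation F\deflation F''$ with $F'\in\eff(\EE)$ and $F''\in\FF=\cogen(\QQ)$. Since $\XX$ is closed under extensions and contains $\eff(\EE)$, it suffices to show $F''\in\XX$. So I reduce to the case $F\in\FF$: by \Cref{Proposition:TorsionTheoryProperties}.\eqref{Proposition:TorsionTheoryPropertiesB}, such an $F$ fits in a conflation $F\inflation \Upsilon(Z)\deflation H$ with $\Upsilon(Z)\in\QQ$ and $H\in\eff(\EE)$ (this is exactly the construction in the proof of that proposition). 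Now $\Upsilon(Z)\in\XX$ since $\XX$ is generating (every representable is a summand of — in fact equal to — an object hit by a deflation from $\XX$; more simply, $\Upsilon(Z)\in\gen(\XX)=\PP^2(\EE)$ only gives a deflation onto it, but representables are projective in $\smod(\EE)$, so this deflation splits and $\Upsilon(Z)$ is a summand of an object of $\XX$, hence lies in $\XX$ by summand-closure), and $H\in\eff(\EE)\subseteq\XX$; therefore $F$, being the kernel of the deflation $\Upsilon(Z)\deflation H$ (an epimorphism in $\smod(\EE)$, hence its kernel is a kernel of an epimorphism of objects of $\XX$ lying in $\PP^2(\EE)$), lies in $\XX$ by closure under kernels of epimorphisms.

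I expect the main obstacle to be the bookkeeping in the two closure properties of $\smodad(\EE)$ inside $\PP^2(\EE)$ — specifically, checking that kernels of epimorphisms (in the ambient abelian category $\smod_\infty(\EE)$, or in $\smod(\EE)$) between admissibly presented functors are again admissibly presented. The natural approach is: given $0\to K\to F\to G\to 0$ with $F,G\in\smodad(\EE)$, lift to a map of chosen admissible projective presentations, pass to the induced map on syzygies, and use that the mapping cone of a morphism of acyclic complexes is acyclic (the argument via \cite[Lemma~1.1]{Neeman90} already used in \Cref{Proposition:AdmissiblyPresentedLiesExtensionClosed}) to conclude that the representing morphism for $K$ is admissible; idempotent completeness of $\EE$ is what lets one pass freely between these presentations without losing admissibility. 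The torsion-theoretic half of the argument is comparatively clean, since all the needed conflations are already produced in \Cref{Proposition:EffAreTorsion} and \Cref{Proposition:TorsionTheoryProperties}.
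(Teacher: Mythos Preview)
Your outline is correct and the minimality half matches the paper exactly (the paper phrases it via \Cref{Lemma:TorsionFreeHasWeakInflationToProjective}.\eqref{Item:TorsionFreeHasWeakInflationToProjective3}, which is precisely the conflation $F\inflation\Upsilon(Z)\deflation H$ you construct). The genuine difference is in proving closure of $\smodad(\EE)$ under kernels of epimorphisms. The paper does \emph{not} use a horseshoe/mapping-cone argument: instead it invokes \cite[Lemma~2.5]{Enomoto17} to reduce to the case $0\to K\to\Upsilon(E)\to G\to 0$ with the middle term representable, then applies the torsion decomposition $F\inflation G\deflation H$ of $G$ (with $F\in\eff(\EE)$, $H\in\FF$), pulls back along $\Upsilon(E)\to G$ to obtain $K'\to\Upsilon(E)\to H$, observes $K'$ is projective since $H$ has projective dimension $\leq 1$, hence $K'$ is representable by idempotent completeness, and finally uses axiom~\ref{A2} (\Cref{Proposition:EffAreTorsion}) to see that the induced epimorphism $K'\to F$ is admissible with kernel $K\in\smodad(\EE)$.

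Your mapping-cone route can be made to work, but it is substantially more delicate than you indicate: you must argue that the epimorphism $F\to G$ forces $(\alpha,g)\colon Y\oplus W\to Z$ to be split epi (via Yoneda), whence $E':=\ker(\alpha,g)$ exists and is a summand; then the acyclic cone in $\EE$ gives that $X\oplus\ker(g)\to Y\oplus W$ is admissible with image $\ker(d'_0)\subseteq E'$, and one still needs weak idempotent completeness (\cite[Proposition~7.6]{Buhler10}) to conclude that $\ker(d'_0)\hookrightarrow E'$ is an inflation, so that the composite $X\oplus\ker(g)\to E'$ is admissible. The paper's approach avoids this bookkeeping entirely by reusing the torsion-theoretic machinery already established; yours is more elementary in spirit but demands the extra verifications above.
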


\begin{proof}
We first show that $\smodad(\EE)$ is a resolving subcategory of $\PP^2(\EE)$. Note that $\smodad(\EE)$ is closed under extensions by \Cref{Proposition:AdmissiblyPresentedLiesExtensionClosed} and is idempotent complete by \Cref{Corollary:(Weak)IdempotentCompleteness}, and therefore closed under direct summands. Also, since $\smodad(\EE)$ contains the representable functors, it is generating in $\PP^2(\EE)$. To show closure under kernels of deflations, it suffices by \cite[Lemma 2.5]{Enomoto17} to show that if $0\to K\to \Hom_{\EE}(-,E)\xrightarrow{p}G\to 0$ is an exact sequence with $G\in \smodad(\EE)$, then $K\in \smodad(\EE)$. To this end, let $0\to F\to G\xrightarrow{q}H\to 0$ be a conflation with $F\in \eff (\EE)$ and $H\in \FF$. Then we get a commutative diagram
\[\xymatrix{
			K'\ar[d]^{r}\ar@{>->}[r] & \Hom_{\EE}(-,E)\ar[d]^{p}\ar@{->>}[r]^{} & H\ar@{=}[d]\\
			F\ar@{>->}[r] & G\ar@{->>}[r]^{q}& H
		}\]
where the rows are conflations. It follows that the left hand square is a pullback square, so $r$ is an epimorphism and $\ker (r)\cong \ker (p)\cong K$. Since $H$ has projective dimension $\leq 1$, it follows that $K'$ is projective, and hence $K'\cong \Hom_{\EE}(-,E')$ for some $E'\in \EE$ since $\EE$ is idempotent complete. Hence $K'\in \smodad (\EE)$, and therefore $r$ is admissible in $\smodad(\EE)$ by \Cref{Proposition:EffAreTorsion}. Hence $K\cong \ker (r)\in \smodad(\EE)$, so $\smodad (\EE)$ is closed under kernels of deflations and therefore a resolving subcategory of $\PP^2(\EE)$. 		
		
Now let $\XX\subset \PP^2(\EE)$ be a resolving subcategory containing $\eff (\EE)$. Then by \Cref{Lemma:TorsionFreeHasWeakInflationToProjective}.\eqref{Item:TorsionFreeHasWeakInflationToProjective3} we have that $\XX$ must contain $\FF$. Since $(\eff (\EE),\FF)$ is a torsion pair in $\smodad(\EE)$ and $\XX$ is closed under extensions, $\XX$ must contain $\smodad(\EE)$. This proves the claim.			
\end{proof}

\begin{remark}\label{remark:WhyIdempotentComplete}
Note that \Cref{Proposition:SmallestResolvingSubcategory} and \Cref{Theorem:ExactStructuresResolving} below are not true without assuming the underlying category of $\EE$ is idempotent complete. Indeed, if $\smodad(\EE)$ is resolving, then $\smodad(\EE)$ is closed under direct summands. Therefore $\smodad(\EE)$ must be idempotent complete. By \Cref{Corollary:(Weak)IdempotentCompleteness} this forces $\EE$ to be idempotent complete.
\end{remark}

Using \Cref{Proposition:SmallestResolvingSubcategory}, we get the following analogue of \Cref{Lemma:ResolutionsOfEffaceables} for admissible morphisms in an idempotent complete exact category.

\begin{proposition}\label{Proposition:ResolutionsOfAdmissibles}
	Let $\Upsilon(X)\xrightarrow{\Upsilon(g)}\Upsilon(Y)\deflation E\to 0$ be a projective presentation of $E\in \smodad(\EE)$. Then $g\colon X\to Y$ is an admissible morphism in $\EE$.
\end{proposition}

\begin{proof}
Since $\smodad(\EE)$ is closed under kernels of deflations by \Cref{Proposition:SmallestResolvingSubcategory}, and $\Upsilon(X),\Upsilon(Y)$ and $E$ are in $\smodad(\EE)$, it follows that $\Upsilon(g)$ is an admissible morphism in $\smodad(\EE)$. Furthermore, since the localization functor $\smodad(\EE)\to \EE$ is exact, it preserves admissible morphisms. Applying it to $\Upsilon(g)$, we get that $g\colon X\to Y$ is an admissible morphism in $\EE$. This proves the claim. 
\end{proof}

\subsection{The Auslander-Bridger transpose}
Fix an idempotent complete additive category $\CC$. We let $\underline{\smod}(\CC)$ denote the projectively stable category of $\smod (\CC)$. Explicitly, $\underline{\smod}(\CC)$ has the same objects as $\smod (\CC)$, and
\[
\Hom_{\underline{\smod}(\CC)}(F,G)=\Hom_{\smod (\CC)}(F,G)/\PP(F,G)
\]
where $\PP(F,G)$ denotes the set of morphisms $F\to G$ factoring through a representable functor.

We now recall the Auslander-Bridger transpose, which was first introduced and studied in \cite{Auslander65} and \cite{AuslanderBridger69}.

\begin{definition}
The \emph{Auslander-Bridger transpose} is a functor 
\[\operatorname{Tr}\colon \underline{\smod}(\CC)\to \underline{\smod}(\CC^{\operatorname{op}})^{\operatorname{op}}
\]
defined as follows:
\begin{enumerate}
\item For each object $F\in \underline{\smod}(\CC)$ choose a projective presentation 
\[
\Hom_{\CC}(-,X)\xrightarrow{\Hom_{\CC}(-,f)}\Hom_{\CC}(-,Y)\to F\to 0
\]
in $\smod (\CC)$ and set 
\[
\operatorname{Tr}(F)=\coker(\Hom_{\CC}(Y,-)\xrightarrow{\Hom_{\CC}(f,-)}\Hom_{\CC}(X,-)).
\]
\item For each morphism morphism $\underline{\phi}\colon F\to G$ in $\underline{\smod}(\CC)$ choose a representative $\phi\colon F\to G$ in $\smod (\CC)$ and a lift
\[\xymatrix@C=4em{
		\Hom_{\CC}(-,X)\ar[r]^{\Hom_{\CC}(-,f)}\ar[d]^{\Hom_{\CC}(-,k)} & \Hom_{\CC}(-,Y)\ar[r]^{}\ar[d]^{\Hom_{\CC}(-,h)}  & F\ar[d]^{\phi} \ar[r]&  0\\
		\Hom_{\CC}(-,X')\ar[r]^{\Hom_{\CC}(-,f')} & \Hom_{\CC}(-,Y')\ar[r]^{}  & G \ar[r] & 0
	}\]
	and define $\operatorname{Tr}(\underline{\phi})\colon \operatorname{Tr}(F)\to \operatorname{Tr}(G)$ to be the image in $\underline{\smod}(\CC)$ of the unique morphism in $\smod(\CC)$ making the diagram 
\[\xymatrix@C=4em{
		\Hom_{\CC}(Y',-)\ar[r]^{\Hom_{\CC}(f',-)}\ar[d]^{\Hom_{\CC}(h,-)} & \Hom_{\CC}(X',-)\ar[r]^{}\ar[d]^{\Hom_{\CC}(k,-)}  & \operatorname{Tr}(G)\ar[d]^{} \ar[r]&  0\\
		\Hom_{\CC}(Y,-)\ar[r]^{\Hom_{\CC}(f,-)} & \Hom_{\CC}(X,-)\ar[r]^{}  & \operatorname{Tr}(F) \ar[r] & 0
	}\]
	commutative.	
\end{enumerate}
\end{definition}
By definition we have that $\operatorname{Tr}^2\cong 1_{\underline{\smod}(\CC)}$, and $\operatorname{Tr}$ therefore induces an equivalence between $\underline{\smod}(\CC)$ and $ \underline{\smod}(\CC^{\operatorname{op}})^{\operatorname{op}}$. Since $\operatorname{Tr}$ is a functor to the projectively stable category, the object $\operatorname{Tr}(F)$ is well-defined in $\smod(\CC)$ up to projective summands. By abuse of notation, we sometimes write $\operatorname{Tr}(F)$ for a choice of an object in $\smod(\CC^{\operatorname{op}})$ which is isomorphic in $\underline{\smod}(\CC^{\operatorname{op}})$ to the image of $\operatorname{Tr}\colon \underline{\smod}(\CC)\to \underline{\smod}(\CC^{\operatorname{op}})^{\operatorname{op}}$ at $F\in \underline{\smod}(\CC)$.  

In the following, for a functor $F\in \Mod(\CC)$ we let $F^*\in \Mod(\CC^{\operatorname{op}})$ denote the functor given by 
\[
F^*(X)=\Hom_{\Mod(\CC)}(F,\Hom_{\CC}(-,X))
\]
and we let $\operatorname{ev}_F\colon F\to F^{**}$ denote the natural transformation sending $x\in F(X)$ to 
\[
\operatorname{ev}_F(x)\colon F^*\to \Hom_{\CC}(X,-) \quad \operatorname{ev}_F(x)(\psi)=\psi(x).
\]
Finally, for $i>0$ we let $\Ext^i_{\CC}(F,\CC)\in \Mod(\CC^{\operatorname{op}})$ denote the functor given by 
\[
\Ext^i_{\CC}(F,\CC)(X)=\Ext^i_{\Mod(\CC)}(F,\Hom_{\CC}(-,X)).
\]

\begin{proposition}[Proposition 6.3 in \cite{Auslander65}]\label{Proposition:AuslanderExactSeq}
For any $F\in \smod(\CC)$ we have an exact sequence 
\[
0\to \Ext^1_{\CC^{\operatorname{op}}}(\operatorname{Tr}(F),\CC^{\operatorname{op}})\to F\xrightarrow{\operatorname{ev}_F}F^{**}\to \Ext^2_{\CC^{\operatorname{op}}}(\operatorname{Tr}(F),\CC^{\operatorname{op}})\to 0.
\]
\end{proposition}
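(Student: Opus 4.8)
The plan is to deduce the exact sequence from a single two‑fold dualization. Fix a projective presentation $\Hom_{\CC}(-,X)\xrightarrow{d}\Hom_{\CC}(-,Y)\xrightarrow{\pi}F\to 0$ in $\Mod(\CC)$, and write $P_{1}=\Hom_{\CC}(-,X)$, $P_{0}=\Hom_{\CC}(-,Y)$. The functor $(-)^{*}=\Hom_{\Mod(\CC)}(-,\Hom_{\CC}(-,\bullet))\colon\Mod(\CC)\to\Mod(\CC^{\operatorname{op}})$ is additive, contravariant and left exact, sends $P_{0},P_{1}$ to the representable functors $\Hom_{\CC}(Y,-)$, $\Hom_{\CC}(X,-)$ (by Yoneda), and its higher right derived functors in the first variable are the $\Ext^{i}_{\CC}(-,\CC)$; in particular representables are projective in both $\Mod(\CC)$ and $\Mod(\CC^{\operatorname{op}})$, so $\Ext^{i}_{\CC}(P_{j},\CC)=0$ for $i>0$, and symmetrically over $\CC^{\operatorname{op}}$. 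Applying $(-)^{*}$ to the presentation and using that $\operatorname{Tr}(F)=\coker(d^{*})$ by definition, one obtains an exact sequence $0\to F^{*}\to P_{0}^{*}\xrightarrow{d^{*}}P_{1}^{*}\to\operatorname{Tr}(F)\to 0$ in $\Mod(\CC^{\operatorname{op}})$. I would split it at $N:=\im(d^{*})$ into short exact sequences $0\to F^{*}\to P_{0}^{*}\to N\to 0$ and $0\to N\to P_{1}^{*}\to\operatorname{Tr}(F)\to 0$.

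Next I would apply the dualization functor $(-)^{*}\colon\Mod(\CC^{\operatorname{op}})\to\Mod(\CC)$ to both sequences and extract the long exact $\Ext$-sequences. From $0\to N\to P_{1}^{*}\to\operatorname{Tr}(F)\to 0$, using $\Ext^{i}_{\CC^{\operatorname{op}}}(P_{1}^{*},\CC^{\operatorname{op}})=0$ for $i>0$, one reads off $\Ext^{1}_{\CC^{\operatorname{op}}}(\operatorname{Tr}(F),\CC^{\operatorname{op}})\cong\coker\big(P_{1}^{**}\to N^{*}\big)$ and $\Ext^{2}_{\CC^{\operatorname{op}}}(\operatorname{Tr}(F),\CC^{\operatorname{op}})\cong\Ext^{1}_{\CC^{\operatorname{op}}}(N,\CC^{\operatorname{op}})$. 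From $0\to F^{*}\to P_{0}^{*}\to N\to 0$ one reads off $\Ext^{1}_{\CC^{\operatorname{op}}}(N,\CC^{\operatorname{op}})\cong\coker\big(P_{0}^{**}\to F^{**}\big)$ together with $N^{*}=\ker\big(P_{0}^{**}\to F^{**}\big)$. I would then invoke reflexivity of representables: the evaluation $\operatorname{ev}$ is an isomorphism $P_{i}^{**}\xrightarrow{\sim}P_{i}$, and, by naturality of $\operatorname{ev}$ applied to $\pi$ and to $d$, the map $P_{0}^{**}\to F^{**}$ is identified with $\operatorname{ev}_{F}\circ\pi\colon P_{0}\to F^{**}$, while the composite $P_{1}^{**}\to N^{*}\hookrightarrow P_{0}^{**}$ is identified with $d^{**}\cong d\colon P_{1}\to P_{0}$.

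Finally I would assemble the conclusion. Since $\pi$ is an epimorphism, $\coker(\operatorname{ev}_{F})=\coker(\operatorname{ev}_{F}\circ\pi)\cong\Ext^{1}_{\CC^{\operatorname{op}}}(N,\CC^{\operatorname{op}})\cong\Ext^{2}_{\CC^{\operatorname{op}}}(\operatorname{Tr}(F),\CC^{\operatorname{op}})$. For the kernel, $\ker(\pi)=\im(d)\subseteq\ker(\operatorname{ev}_{F}\circ\pi)=N^{*}$ as subobjects of $P_{0}$, so $\pi$ being epi yields $\ker(\operatorname{ev}_{F})\cong N^{*}/\im(d)$; and since the monomorphism $N^{*}\hookrightarrow P_{0}$ carries $\im\big(P_{1}^{**}\to N^{*}\big)$ onto $\im(d)$, this quotient is exactly $\coker\big(P_{1}^{**}\to N^{*}\big)\cong\Ext^{1}_{\CC^{\operatorname{op}}}(\operatorname{Tr}(F),\CC^{\operatorname{op}})$. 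Splicing these identifications into the exact sequence $0\to\ker(\operatorname{ev}_{F})\to F\xrightarrow{\operatorname{ev}_{F}}F^{**}\to\coker(\operatorname{ev}_{F})\to 0$ gives the claim; independence of the chosen presentation, and naturality in $F$, then follow because $\operatorname{Tr}(F)$ is well defined up to projective summands in $\underline{\smod}(\CC^{\operatorname{op}})$ while $\Ext^{\geq 1}_{\CC^{\operatorname{op}}}(-,\CC^{\operatorname{op}})$ annihilates projectives. The step I expect to be the main obstacle is precisely this last bookkeeping: keeping the connecting homomorphisms, the reflexivity isomorphisms $P_{i}^{**}\cong P_{i}$, and the inclusion $N^{*}\hookrightarrow P_{0}$ mutually compatible, so that $\im(P_{1}^{**}\to N^{*})$ is genuinely $\im(d)$ and $\ker(\operatorname{ev}_{F}\circ\pi)$ is genuinely $N^{*}$ — two short diagram chases using the left exactness of $(-)^{*}$.
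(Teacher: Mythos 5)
Your argument is correct and is essentially the classical one: the paper does not prove this proposition but cites it as Proposition 6.3 of Auslander's \emph{Coherent functors}, and your dualization of the presentation, splitting at $N=\im(d^{*})$, and splicing of the two long exact sequences (using projectivity and reflexivity of representables plus naturality of $\operatorname{ev}$) is exactly the standard proof given there. The two diagram chases you flag at the end do go through as you describe, since $d^{**}$ factors as $P_{1}^{**}\to N^{*}\hookrightarrow P_{0}^{**}$ with the second map monic by left exactness of $(-)^{*}$.
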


We end this subsection by recalling the definition of the grade, which was first introduced in \cite{Rees57}.

\begin{definition}\label{Definition:grade}
The \emph{grade} of $F\in \operatorname{Mod}(\CC)$, denoted $\operatorname{grade}F$, is the biggest integer $i\geq 0$ such that $\Ext^j_{\Mod(\CC)}(F,\Hom_{\CC}(-,X))=0$ for all $j<i$ and $X\in \CC$. Here we let $\Ext^0_{\Mod(\CC)}(F,\Hom_{\CC}(-,X)):=\Hom_{\Mod(\CC)}(F,\Hom_{\CC}(-,X))$).
\end{definition}

Let $\EE$ be an exact category with underlying additive category $\CC$, and set $\QQ=\operatorname{Proj}(\smodad(\EE))$. Recall that
\[
\FF=\operatorname{cogen}(\QQ)=\{F\in \smodad(\EE)\mid\operatorname{proj.dim}F\leq 1\}
\]
The first equality follows from \Cref{Proposition:TorsionTheoryProperties} \eqref{Proposition:TorsionTheoryPropertiesB}, and the second equality follows by definition since $\FF$ consists of all functors which admit a projective presentation by an inflation. Similarly
\[
\eff(\EE)={}^{\perp}\QQ\subset \{0\}\cup\{F\in \smodad(\EE)\mid\operatorname{proj.dim}F=2\}
\]
where the first equality follows from \Cref{Proposition:TorsionTheoryProperties} \eqref{Proposition:TorsionTheoryPropertiesA}.  To see that the inclusion holds, let $F\in \eff(\EE)$ and let $0\to \Upsilon(X)\xrightarrow{\Upsilon(f)}\Upsilon(Y)\xrightarrow{\Upsilon(g)}\Upsilon(Z)\xrightarrow{} F\to 0$ be a projective resolution where $g$ is a deflation. Then either $g$  is a split epimorphism, in which case $F\cong 0$, or $g$ is not a split epimorphism, in which case $\operatorname{proj.dim}F=2$ since $\Upsilon(f)$ is not a split monomorphism. It follows from \Cref{Proposition:TorsionTheoryProperties} \eqref{Proposition:TorsionTheoryPropertiesC} that $\eff(\EE)$ also has the following description using the notion of grade
\[
\eff(\EE)=\{F\in \smodad(\EE)\mid\operatorname{grade}F=2\}
\]
Now consider the Auslander-Bridger transpose $\operatorname{Tr}\colon  \underline{\smod}(\CC)\to \underline{\smod}(\CC^{\operatorname{op}})^{\operatorname{op}}$ . It restricts to an equivalence
\[
\operatorname{Tr}\colon  \underline{\smodad}(\EE)\to \underline{\smodad}(\EE^{\operatorname{op}})^{\operatorname{op}}
\]
and to equivalences 
\[
\operatorname{Tr}\colon  \underline{\FF}\to \underline{\eff}(\EE^{\operatorname{op}})^{\operatorname{op}} \quad \text{and} \quad \operatorname{Tr}\colon  \underline{\eff}(\EE)\to \underline{\FF'}^{\operatorname{op}}
\]
where $\FF'\subset \smodad(\EE^{\operatorname{op}})$ denotes the subcategory of functors admitting a projective presentation by an inflation. Here $\underline{\smodad}(\EE)$ and $\underline{\FF}$ and $\underline{\eff}(\EE)$ denotes the full subcategories closed under isomorphism in $\underline{\smod}(\CC)$ and containing the objects in $\smodad(\EE)$ and $\FF$ and $\eff(\EE)$, respectively.
\subsection{Main theorem}
Fix an idempotent complete additive category $\CC$. We are now ready to state our main theorem relating subcategories of $\smod(\CC)$ with exact structures on $\CC$. Here, for a subcategory $\XX$ of $\smod(\CC)$ we let $\operatorname{Tr}(\XX)$ denote the subcategory of $\smod(\CC^{\operatorname{op}})$ consisting of all objects $G$ which are isomorphism in $\underline{\smod}(\CC^{\operatorname{op}})$ to an object $\operatorname{Tr}(F)$ with $F\in \XX$. Since we encounter multiple exact structures on $\CC$, we use the letters $\mathfrak{C}$, $\mathfrak{D}$ and $\mathfrak{E}$ for classes of conflations in $\CC$.  Thus, a conflation category or an exact category is written as a pair $(\CC,\mathfrak{C})$, and the associated Auslander exact category is denoted by $\smodad(\mathfrak{C})$.

\begin{theorem}\label{Theorem:ExactStructuresResolving}
Let $\CC$ be an idempotent complete additive category. The association $\mathfrak{C}\mapsto \smodad(\mathfrak{C})$ gives a bijection between:
\begin{enumerate}
\item Classes of conflations $\mathfrak{C}$ in $\CC$ such that $(\CC,\mathfrak{C})$ is an exact category.
\item Subcategories $\XX$ of $\smod(\CC)$ satisfying the following:
\begin{enumerate}
\item\label{Property:1} $\XX$ is a resolving subcategory of $\PP^2(\CC)$ and $\operatorname{Tr}(\XX)$ is a resolving subcategory of $\PP^2(\CC^{\operatorname{op}})$, and
\item\label{Property:2} $\XX$ and $\operatorname{Tr}(\XX)$ have no objects of grade $1$.
\end{enumerate}
\end{enumerate}
\end{theorem}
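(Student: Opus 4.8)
The plan is to establish the bijection in two directions. For the forward direction, suppose $\mathbb{C}$ is an exact structure on $\CC$; write $\EE = (\CC,\mathbb{C})$. I must check that $\XX = \smodad(\mathbb{C})$ satisfies \eqref{Property:1} and \eqref{Property:2}. That $\XX$ is a resolving subcategory of $\PP^2(\CC)$ is exactly \Cref{Proposition:SmallestResolvingSubcategory} (together with \Cref{Lemma:YonedaEmbeddingIsLeftExact}, which puts $\smodad(\EE)$ inside $\PP^2(\CC)$). For the statement about $\operatorname{Tr}(\XX)$, the key observation is that the Auslander--Bridger transpose sends a functor $F$ with projective presentation $\Upsilon(X)\xrightarrow{\Upsilon(f)}\Upsilon(Y)\to F\to 0$ to the cokernel of $\Hom_\CC(Y,-)\xrightarrow{\Hom_\CC(f,-)}\Hom_\CC(X,-)$; hence $F\in\smodad(\mathbb{C})$, i.e. $f$ admissible in $\EE$, forces $\operatorname{Tr}(F)$ to lie in $\smodad(\mathbb{C}^{\operatorname{op}})$ for the opposite exact structure $\mathbb{C}^{\operatorname{op}}$ on $\CC^{\operatorname{op}}$ (an admissible morphism in $\EE$ is admissible in $\EE^{\operatorname{op}}$). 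So $\operatorname{Tr}(\smodad(\mathbb{C})) = \smodad(\mathbb{C}^{\operatorname{op}})$, and \eqref{Property:1} for $\operatorname{Tr}(\XX)$ follows by applying \Cref{Proposition:SmallestResolvingSubcategory} to $\EE^{\operatorname{op}}$. For \eqref{Property:2}: an object of grade $\geq 1$ is one with $\Hom_{\Mod(\CC)}(F,\Upsilon(X)) = 0$ for all $X$, i.e. $F\in{}^\perp\QQ$ inside $\smod(\CC)$; grade exactly $1$ would additionally force $\operatorname{ev}_F$ to be mono (by \Cref{Proposition:AuslanderExactSeq}, $\Ext^1_{\CC^{\operatorname{op}}}(\operatorname{Tr}(F),\CC^{\operatorname{op}}) = 0$ iff grade $\geq 2$), and I claim no nonzero object of $\smodad(\mathbb{C})$ has grade exactly $1$: if $F\in\smodad(\mathbb{C})$ has grade $\geq 1$ then $F$ is effaceable (by \Cref{Proposition:TorsionTheoryProperties}.\eqref{Proposition:TorsionTheoryPropertiesA}, since ${}^\perp\QQ$ computed in $\smodad(\mathbb{C})$ agrees with the vanishing of $\Hom(F,\Upsilon(X))$), so $F$ has a projective presentation by a deflation, whence $\operatorname{Tr}(F)\cong 0$ in $\underline{\smod}(\CC^{\operatorname{op}})$ and all higher $\Ext^i_{\CC^{\operatorname{op}}}(\operatorname{Tr}(F),\CC^{\operatorname{op}})$ vanish, giving grade $\geq 2$ or $F$ itself projective. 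The symmetric argument applied to $\operatorname{Tr}(\XX) = \smodad(\mathbb{C}^{\operatorname{op}})$ handles the second half of \eqref{Property:2}.

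For the reverse direction, given a subcategory $\XX$ satisfying \eqref{Property:1} and \eqref{Property:2}, I must produce an exact structure $\mathbb{C}$ on $\CC$ with $\smodad(\mathbb{C}) = \XX$, and show this is inverse to the forward map. The candidate for $\mathbb{C}$ is forced: a kernel-cokernel pair $X\xrightarrow{f}Y\xrightarrow{g}Z$ should be a conflation precisely when $\coker(\Upsilon(g))\in\XX$ — equivalently, using the symmetry, when both $\coker(\Upsilon(g))\in\XX$ in $\smod(\CC)$ and its transpose lies in $\operatorname{Tr}(\XX)$. First I would define $\eff$ to be the "deflation part" — the objects $F\in\XX$ with $F\in{}^\perp(\Upsilon(\CC))$ — and $\FF$ the "inflation part"; using that $\XX$ is resolving in $\PP^2(\CC)$ together with the no-grade-$1$ hypotheses, I would show $(\eff,\FF)$ is a torsion pair in $\XX$ and that $\XX$ satisfies the Auslander-exact axioms of \Cref{AuslanderExactCategories} with $\Proj(\XX) = \Upsilon(\CC)$ (the torsion-pair axiom and axiom \ref{A2} come from the resolving/grade conditions; $\Ext^1(\eff,\Upsilon(\CC)) = 0$ and $\gldim\leq 2$ come from $\XX\subseteq\PP^2(\CC)$; the grade-$1$ condition is what rules out "extra" effaceable functors that would break axiom \ref{A2} or the torsion pair). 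Then \Cref{FirstAuslanderCorrespondence} gives an exact category $\EE$ with $\smodad(\EE)\cong\XX$, and since $\CC$ is idempotent complete and $\Proj(\smodad(\EE))\cong\EE$ with $\Proj(\XX) = \Upsilon(\CC)$, the underlying additive category of $\EE$ is $\CC$; transporting the exact structure of $\EE$ back along this identification gives the desired $\mathbb{C}$. Injectivity of $\mathbb{C}\mapsto\smodad(\mathbb{C})$ follows because $\smodad(\mathbb{C})$ determines $\EE = \Proj(\smodad(\mathbb{C}))$ as an exact category, hence determines $\mathbb{C}$; surjectivity onto the described class is what the reverse construction supplies.

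The main obstacle I anticipate is the reverse direction, specifically verifying that the grade-$1$ conditions on $\XX$ and $\operatorname{Tr}(\XX)$ are exactly the right hypotheses to force the candidate $(\eff,\FF)$ to be a genuine torsion pair satisfying axiom \ref{A2} — in other words, that resolving-in-$\PP^2$ plus no-grade-$1$ is equivalent to the Auslander-exact axioms. The subtlety is that an arbitrary resolving subcategory of $\PP^2(\CC)$ need not be closed under the operations needed for \Cref{AuslanderExactCategories}(ii); the no-grade-$1$ condition on $\operatorname{Tr}(\XX)$ should be what guarantees that the "torsion part" $\eff\cap\XX$ behaves well, via the exact sequence of \Cref{Proposition:AuslanderExactSeq} which links grade-$1$ phenomena on $F$ to $\Ext^1$ on $\operatorname{Tr}(F)$. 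A second, more routine obstacle is bookkeeping the identification of $\CC$ with $\Proj(\XX)$ compatibly with idempotent completeness, so that \Cref{Corollary:(Weak)IdempotentCompleteness} applies and the recovered exact structure genuinely lives on $\CC$ rather than on some equivalent category; here one uses that resolving subcategories are closed under direct summands by definition, so $\Upsilon(\CC)\subseteq\XX$ is closed under summands inside an idempotent complete ambient category.
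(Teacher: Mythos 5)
Your forward direction follows the paper for condition \eqref{Property:1}, but your argument for condition \eqref{Property:2} contains an error. You claim that an effaceable $F$, presented by a deflation $f\colon X\deflation Y$, has $\operatorname{Tr}(F)\cong 0$ in $\underline{\smod}(\CC^{\operatorname{op}})$. This is false in general: $\operatorname{Tr}(F)=\coker(\Hom_{\CC}(f,-))$ embeds into $\Hom_{\CC}(\ker f,-)$ but is typically nonzero and nonprojective (for $\Lambda=k[x]/(x^2)$ and $f\colon \Lambda\deflation k$ it is the functor $M\mapsto xM$). Moreover, the groups $\Ext^i_{\CC^{\operatorname{op}}}(\operatorname{Tr}(F),\CC^{\operatorname{op}})$ you invoke compute $\ker(\operatorname{ev}_F)$ and $\coker(\operatorname{ev}_F)$ via \Cref{Proposition:AuslanderExactSeq}; they are not the groups $\Ext^j_{\Mod(\CC)}(F,\Upsilon(X))$ appearing in \Cref{Definition:grade}, so even if they vanished you would not have shown the grade is $\geq 2$. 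The correct (and shorter) argument is the one the paper uses: grade $\geq 1$ means $F\in{}^{\perp}\QQ=\eff(\EE)$, and then $\Ext^1_{\Mod(\CC)}(F,\Upsilon(X))=0$ by \Cref{Proposition:TorsionTheoryProperties}.\eqref{Proposition:TorsionTheoryPropertiesC}, whence the grade is $\geq 2$.

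The more serious issue is the reverse direction. Your strategy --- prove directly that $\XX$ is an Auslander exact category with $\Proj(\XX)=\Upsilon(\CC)$ and then invoke \Cref{FirstAuslanderCorrespondence} --- is genuinely different from the paper's, which instead defines $\XX$-inflations and $\XX$-deflations on $\CC$ itself and verifies Quillen's axioms and $\smodad(\mathbb{C})=\XX$ directly (\Cref{Proposition:ResolvingGivesExactStructure}, \Cref{Proposition:ResolvingGivesExactStructure:2}, \Cref{Proposition:AdmissibleMorphisms}). Your route is plausible in principle, and the final bookkeeping (identifying $\Proj(\XX)$ with $\CC$ using idempotent completeness) is fine. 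But the step you defer --- that conditions \eqref{Property:1} and \eqref{Property:2} force the torsion pair $({}^{\perp}\Upsilon(\CC)\cap\XX,\cogen(\Upsilon(\CC)))$ and axiom \ref{A2} of \Cref{AuslanderExactCategories} --- is exactly where the content of the theorem lies, and you only gesture at it (``come from the resolving/grade conditions''). Concretely, to produce the torsion conflation for $F\in\XX$ one must show that $\operatorname{ev}_F\colon F\to F^{**}$ is admissible in $\XX$ with representable target and torsion kernel, and to verify axiom \ref{A2} one must control images of arbitrary maps into ${}^{\perp}\Upsilon(\CC)\cap\XX$; both require the dual resolving hypothesis on $\operatorname{Tr}(\XX)$ through arguments of the type carried out in \Cref{Lemma:WhenXandTr(X)AreResolving} and \Cref{Lemma:EquivalentConditions}, none of which appear in your sketch. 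As written, the proposal identifies the right obstacle but does not overcome it, so the reverse direction is not proved.
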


\begin{remark}
The conflations $\mathfrak{C}$ on $\CC$ for which $(\CC,\mathfrak{C})$ is an exact category form a poset under inclusion. In fact, this is a bounded complete lattice where the meet is given by the intersection, see  \cite[Theorem 5.3]{BrustleHassounLangfordRoy20}. Similarly, the classes of subcategories $\XX$ of $\smod(\CC)$ satisfying condition \eqref{Property:1} and \eqref{Property:2} in \Cref{Theorem:ExactStructuresResolving} form a poset under inclusion, with meet given by the intersection. Since for two exact categories  $(\CC,\mathfrak{C})$ and  $(\CC,\mathfrak{C}')$  we have $\mathfrak{C}\subseteq \mathfrak{C}'$ if and only if $\smodad(\mathfrak{C})\subseteq \smodad(\mathfrak{C}')$, the bijection in \Cref{Theorem:ExactStructuresResolving} is an isomorphism of posets. In particular, the classes of subcategories $\XX$ of $\smod(\CC)$ satisfying condition \eqref{Property:1} and \eqref{Property:2} form a complete bounded lattice under inclusion.
\end{remark}

Our goal in this section is to prove \Cref{Theorem:ExactStructuresResolving}. 
\begin{lemma}\label{Lemma:WhenXandTr(X)AreResolving}
Assume $\XX$ is a subcategory of $\smod(\CC)$ satisfying condition \eqref{Property:1} in \Cref{Theorem:ExactStructuresResolving}, and let $F\in \XX$. The following hold:
\begin{enumerate}
\item\label{Item:DualRepresentable} $F^*\cong \Hom_{\CC}(Z,-)$ for some $Z\in \CC$.
\item\label{Item:EvaluationAdmissible} The morphism $\operatorname{ev}_F\colon F\to F^{**}$ is admissible in $\XX$.
\item\label{Item:ExtInX} $\Ext^1_{\CC}(F,\CC)\in \operatorname{Tr}(\XX)$ and $\Ext^2_{\CC}(F,\CC)\in \operatorname{Tr}(\XX)$.
\end{enumerate}
\end{lemma}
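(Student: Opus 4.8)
The three statements are all consequences of understanding how $\operatorname{Tr}$, the dual functor $(-)^*$, and the $\operatorname{Ext}^i_\CC(-,\CC)$ interact, together with the symmetry between condition \eqref{Property:1} for $\XX$ and for $\operatorname{Tr}(\XX)$. I would start from a fixed projective presentation $\Hom_\CC(-,X)\xrightarrow{\Hom_\CC(-,f)}\Hom_\CC(-,Y)\to F\to 0$ of $F$ and the induced presentation $\Hom_\CC(Y,-)\xrightarrow{\Hom_\CC(f,-)}\Hom_\CC(X,-)\to \operatorname{Tr}(F)\to 0$ of $\operatorname{Tr}(F)$ in $\smod(\CC^{\operatorname{op}})$. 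Applying the Yoneda lemma, $F^*$ is computed as the kernel of $\Hom_\CC(Y,-)\xrightarrow{\Hom_\CC(f,-)}\Hom_\CC(X,-)$, i.e. $F^*$ sits in $0\to F^*\to \Hom_\CC(Y,-)\to \Hom_\CC(X,-)\to \operatorname{Tr}(F)\to 0$; since $\operatorname{Tr}(F)\in\operatorname{Tr}(\XX)\subseteq\PP^2(\CC^{\operatorname{op}})$, this four-term exact sequence shows $F^*$ is the first syzygy of $\operatorname{Tr}(F)$ and hence has projective dimension $\le 0$, so $F^*$ is projective, which gives \eqref{Item:DualRepresentable} once one uses idempotent completeness of $\CC$ to conclude $F^*\cong\Hom_\CC(Z,-)$ for some $Z\in\CC$. (Concretely, $Z$ is the image of $f$ in a completion, but it lives in $\CC$ because $\CC$ is idempotent complete and $\PP^2$ is closed under direct summands/kernels.)

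For \eqref{Item:ExtInX}, I would read off $\operatorname{Ext}^1_\CC(F,\CC)$ and $\operatorname{Ext}^2_\CC(F,\CC)$ from the same presentation: dualizing $\Hom_\CC(-,X)\to\Hom_\CC(-,Y)\to F\to 0$ (after prepending the kernel and the second syzygy, using $F\in\PP^2(\CC)$) and taking cohomology of $\Hom_\CC(Y,-)\to\Hom_\CC(X,-)\to\Hom_\CC(W,-)$-type complexes identifies $\operatorname{Ext}^1_\CC(F,\CC)$ with $\operatorname{Tr}(F)$ up to projective summands, and $\operatorname{Ext}^2_\CC(F,\CC)$ with $\operatorname{Tr}(\Omega F)$ where $\Omega F$ is the first syzygy of $F$ in $\smod(\CC)$. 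Since $\XX$ is resolving in $\PP^2(\CC)$ it is closed under syzygies (kernels of deflations from representables), so $\Omega F\in\XX$ when $F\in\XX$, and therefore both $\operatorname{Ext}^1_\CC(F,\CC)\cong\operatorname{Tr}(F)$ and $\operatorname{Ext}^2_\CC(F,\CC)\cong\operatorname{Tr}(\Omega F)$ lie in $\operatorname{Tr}(\XX)$. One should be slightly careful that $\operatorname{Tr}(F)$ really lies in $\PP^2(\CC^{\operatorname{op}})$, but that is part of condition \eqref{Property:1}, so nothing extra is needed.

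For \eqref{Item:EvaluationAdmissible}, I would invoke \Cref{Proposition:AuslanderExactSeq}, which gives the exact sequence
\[
0\to \operatorname{Ext}^1_{\CC^{\operatorname{op}}}(\operatorname{Tr}(F),\CC^{\operatorname{op}})\to F\xrightarrow{\operatorname{ev}_F}F^{**}\to \operatorname{Ext}^2_{\CC^{\operatorname{op}}}(\operatorname{Tr}(F),\CC^{\operatorname{op}})\to 0
\]
in $\smod(\CC)$. By \eqref{Item:ExtInX} applied to $\operatorname{Tr}(F)\in\operatorname{Tr}(\XX)$ in place of $F$ (here the symmetry of \eqref{Property:1} is essential, and one uses $\operatorname{Tr}^2\cong 1$ to identify $\operatorname{Tr}(\operatorname{Tr}(F))$ with $F$ up to projectives), both $\operatorname{Ext}^1_{\CC^{\operatorname{op}}}(\operatorname{Tr}(F),\CC^{\operatorname{op}})$ and $\operatorname{Ext}^2_{\CC^{\operatorname{op}}}(\operatorname{Tr}(F),\CC^{\operatorname{op}})$ lie in $\operatorname{Tr}(\operatorname{Tr}(\XX))=\XX$ (up to isomorphism in the stable category; one must check the actual $\smod$-objects land in $\XX$, using that $\XX$ is closed under direct summands and contains the representables). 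Also $F^{**}=(F^*)^*\cong\Hom_\CC(Z,-)^*$ is representable by \eqref{Item:DualRepresentable}, hence lies in $\XX$. So the displayed sequence exhibits $\operatorname{ev}_F$ as a morphism in $\smod(\CC)$ between objects of $\XX$ whose kernel, image (which is the cokernel of the first map, equivalently the kernel of the last map) and cokernel all lie in $\XX$; since $\XX$ is extension-closed in $\smod(\CC)$ and closed under the relevant kernels, the factorization $F\twoheadrightarrow\operatorname{im}(\operatorname{ev}_F)\rightarrowtail F^{**}$ takes place inside $\XX$, i.e. $\operatorname{ev}_F$ is admissible in $\XX$.

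\textbf{Main obstacle.} The delicate point is the bookkeeping between the stable category $\underline{\smod}$ and $\smod$ itself: $\operatorname{Tr}$, $\operatorname{Ext}^i_\CC(-,\CC)$, and $\operatorname{Tr}^2\cong 1$ are only well-defined up to projective (= representable) summands, whereas the statements \eqref{Item:ExtInX} and \eqref{Item:EvaluationAdmissible} are about honest objects and honest admissibility in $\XX$. Resolving this requires using repeatedly that $\XX$ (resp. $\operatorname{Tr}(\XX)$) is closed under direct summands and contains all representables, so that adding or deleting representable summands does not leave $\XX$; and that $\XX$ being resolving in $\PP^2(\CC)$ makes it closed under the kernels and images appearing in the four-term sequences above. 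Once this is set up cleanly, each of the three items is a short diagram argument; the real content is the bridge \Cref{Proposition:AuslanderExactSeq} together with the $X\leftrightarrow\operatorname{Tr}(\XX)$ symmetry built into \eqref{Property:1}.
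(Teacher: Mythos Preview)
Your argument for \eqref{Item:DualRepresentable} is correct and identical to the paper's.

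There is, however, a genuine error in your treatment of \eqref{Item:ExtInX}: the claim that $\Ext^1_\CC(F,\CC)\cong\operatorname{Tr}(F)$ up to projective summands is false. Dualizing a length-two resolution $0\to\Hom_\CC(-,W)\to\Hom_\CC(-,X)\to\Hom_\CC(-,Y)\to F\to 0$ gives a complex $\Hom_\CC(Y,-)\to\Hom_\CC(X,-)\to\Hom_\CC(W,-)$ whose cohomology at the middle term is $\Ext^1_\CC(F,\CC)$; this is the \emph{kernel} of the induced map $\operatorname{Tr}(F)\to\Hom_\CC(W,-)$, not $\operatorname{Tr}(F)$ itself. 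For instance, any nonprojective $F\in\eff(\EE)$ (for a nontrivial exact structure $\EE$ on $\CC$) has $\Ext^1_\CC(F,\CC)=0$ by \Cref{Proposition:TorsionTheoryProperties}\eqref{Proposition:TorsionTheoryPropertiesC}, while $\operatorname{Tr}(F)$ is nonzero in the stable category. Your identification $\Ext^2_\CC(F,\CC)\cong\operatorname{Tr}(\Omega F)$, on the other hand, is correct, and is exactly the computation the paper performs (in its proof of \eqref{Item:EvaluationAdmissible}, with $\CC$ and $\CC^{\operatorname{op}}$ swapped): the paper writes $\Ext^2_{\CC^{\operatorname{op}}}(\operatorname{Tr}(F),\CC^{\operatorname{op}})\cong\operatorname{Tr}(\operatorname{im}(\Hom_\CC(f,-)))$, and $\operatorname{im}(\Hom_\CC(f,-))$ is the first syzygy of $\operatorname{Tr}(F)$ in $\operatorname{Tr}(\XX)$.

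The fix, which is also the paper's approach, is to reverse your order: prove \eqref{Item:EvaluationAdmissible} first. One has $F^{**}\in\XX$ (representable by \eqref{Item:DualRepresentable}) and $\Ext^2_{\CC^{\operatorname{op}}}(\operatorname{Tr}(F),\CC^{\operatorname{op}})\cong\operatorname{Tr}(\Omega\operatorname{Tr}(F))\in\XX$ by your syzygy argument; closure of $\XX$ under kernels of epimorphisms then forces first $\operatorname{im}(\operatorname{ev}_F)\in\XX$ and then $\ker(\operatorname{ev}_F)\in\XX$, giving admissibility. Part \eqref{Item:ExtInX} follows by applying \eqref{Item:EvaluationAdmissible} to $\operatorname{Tr}(F)\in\operatorname{Tr}(\XX)$: the Auslander sequence for $\operatorname{Tr}(F)$ exhibits $\Ext^1_\CC(F,\CC)$ and $\Ext^2_\CC(F,\CC)$ as the kernel and cokernel of the admissible morphism $\operatorname{ev}_{\operatorname{Tr}(F)}$ in $\operatorname{Tr}(\XX)$. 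Your instinct to exploit the $\XX\leftrightarrow\operatorname{Tr}(\XX)$ symmetry is exactly right; the point is only that $\Ext^1$ requires the resolving-closure argument rather than a direct stable identification.
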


\begin{proof}
\begin{enumerate}
\item Recall that there exists an exact sequence
\[
0\to F^*\to \Hom_{\CC}(X,-)\xrightarrow{\Hom_{\CC}(f,-)}\Hom_{\CC}(Y,-)\to \operatorname{Tr}(F)\to 0.
\]
Since $\operatorname{Tr}(F)\in \operatorname{Tr}(\XX)\in \PP^2(\CC^{\operatorname{op}})$, it follows that $F^*$ is projective in $\smod(\CC^{\operatorname{op}})$ and hence $F^*\cong \Hom_{\CC}(Z,-)$ for some $Z\in \CC$ since $\CC$ is idempotent complete.
\item Consider the exact sequence 
\[
0\to \Ext^1_{\CC^{\operatorname{op}}}(\operatorname{Tr}(F),\CC^{\operatorname{op}})\to F\xrightarrow{\operatorname{ev}_F}F^{**}\to \Ext^2_{\CC^{\operatorname{op}}}(\operatorname{Tr}(F),\CC^{\operatorname{op}})\to 0.
\]
from \Cref{Proposition:AuslanderExactSeq}. Since $F^{**}$ is representable by part \eqref{Item:DualRepresentable}, it is contained in $\XX$. Therefore, since $\XX$ is closed under kernels of epimorphisms, it suffices to show that $\Ext^2_{\CC^{\operatorname{op}}}(\operatorname{Tr}(F),\CC^{\operatorname{op}})$ is in $\XX$. Now by choosing an exact sequence
\[
0\to \Hom_{\CC}(Z,-)\to \Hom_{\CC}(Y,-)\xrightarrow{\Hom_{\CC}(f,-)} \Hom_{\CC}(X,-)\to \operatorname{Tr}(F)\to 0
\]
we immediately see that $\Ext^2_{\CC^{\operatorname{op}}}(\operatorname{Tr}(F),\CC^{\operatorname{op}})\cong \operatorname{Tr}(\im (\Hom_{\CC}(f,-)))$. Since $\operatorname{Tr}(\XX)$ is closed under kernels of epimorphisms, we get that $\im (\Hom_{\CC}(f,-))\in \operatorname{Tr}(\XX)$, and hence $\Ext^2_{\CC^{\operatorname{op}}}(\operatorname{Tr}(F),\CC^{\operatorname{op}})\in\XX$.
\item \Cref{Proposition:AuslanderExactSeq} applied to $\operatorname{Tr}(F)$ gives an exact sequence 
\[
0\to \Ext^1_{\CC}(F,\CC)\to \operatorname{Tr}(F)\xrightarrow{\operatorname{ev}_{\operatorname{Tr}(F)}}\operatorname{Tr}(F)^{**}\to \Ext^2_{\CC}(F,\CC)\to 0.
\]
Hence, the claim follows from part \eqref{Item:EvaluationAdmissible} applied to $\operatorname{Tr}(F)$ and $\operatorname{Tr}(\XX)$.\qedhere
\end{enumerate}
\end{proof}

\begin{lemma}\label{Lemma:EquivalentConditions}
Assume $\XX$ is a subcategory of $\smod(\CC)$ satisfying condition \eqref{Property:1} in \Cref{Theorem:ExactStructuresResolving}. The following are equivalent:
\begin{enumerate}
\item For all $F\in \XX$ of projective dimension $\leq 1$, the map $\operatorname{ev}_F\colon F\to F^{**}$ is an inflation in $\XX$.
\item $\operatorname{Tr}(\XX)$ has no objects of grade $1$.
\end{enumerate}
\end{lemma}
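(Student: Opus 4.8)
The plan is to relate the condition ``$\operatorname{Tr}(\XX)$ has no objects of grade $1$'' to the behaviour of $\operatorname{ev}_F$ on objects of projective dimension at most $1$, using the four-term exact sequence of \Cref{Proposition:AuslanderExactSeq} and \Cref{Lemma:WhenXandTr(X)AreResolving}. First I would observe the basic dictionary: for $G\in\smod(\CC^{\operatorname{op}})$, applying \Cref{Proposition:AuslanderExactSeq} to $\operatorname{Tr}(G)$ gives the exact sequence
\[
0\to \Ext^1_{\CC}(G^{\mathrm{op}},\CC)\to \operatorname{Tr}(G)\xrightarrow{\operatorname{ev}_{\operatorname{Tr}(G)}}\operatorname{Tr}(G)^{**}\to \Ext^2_{\CC}(G^{\mathrm{op}},\CC)\to 0,
\]
so that $\operatorname{grade}(G)\geq 1$ means exactly that $\operatorname{Hom}_{\smod(\CC^{\operatorname{op}})}(G,\operatorname{Hom}_{\CC}(-,X))=0$ for all $X$, i.e.\ $G^*=0$, while $\operatorname{grade}(G)\geq 2$ additionally forces $\Ext^1(G,\operatorname{Hom}_{\CC}(-,X))=0$. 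Dually, for $F\in\XX$ the group $\operatorname{grade}(\operatorname{Tr}(F))$ is controlled by the kernel and cokernel of $\operatorname{ev}_F$: by \Cref{Proposition:AuslanderExactSeq} applied to $F$ itself, $\ker(\operatorname{ev}_F)\cong \Ext^1_{\CC^{\operatorname{op}}}(\operatorname{Tr}(F),\CC^{\operatorname{op}})$, and this vanishes precisely when $\operatorname{grade}(\operatorname{Tr}(F))\geq 1$, equivalently when $\operatorname{ev}_F$ is a monomorphism in $\smod(\CC)$.

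Now I would prove the two implications. For (1)$\Rightarrow$(2): suppose every $F\in\XX$ of projective dimension $\leq 1$ has $\operatorname{ev}_F$ an inflation in $\XX$; in particular $\operatorname{ev}_F$ is monic, so $\Ext^1_{\CC^{\operatorname{op}}}(\operatorname{Tr}(F),\CC^{\operatorname{op}})=0$, which already gives $\operatorname{grade}(\operatorname{Tr}(F))\geq 1$. To upgrade to $\geq 2$ (i.e.\ to exclude grade exactly $1$), I would use that $\operatorname{ev}_F$ being an \emph{inflation} means $\operatorname{coker}(\operatorname{ev}_F)\in\XX\subseteq\PP^2(\CC)$; since $\operatorname{coker}(\operatorname{ev}_F)\cong \Ext^2_{\CC^{\operatorname{op}}}(\operatorname{Tr}(F),\CC^{\operatorname{op}})$ and $F^{**}$ is representable (\Cref{Lemma:WhenXandTr(X)AreResolving}.\eqref{Item:DualRepresentable}), the conflation $F\inflation F^{**}\deflation \operatorname{coker}(\operatorname{ev}_F)$ exhibits $\operatorname{coker}(\operatorname{ev}_F)\in\eff$-type position — more precisely, applying $\operatorname{Hom}_{\smod(\CC)}(-,\operatorname{Hom}_{\CC}(-,X))$ and using that $F$ has projective dimension $\le 1$ and $F^{**}$ is projective, the connecting map shows $\Ext^1(\Ext^2_{\CC^{\operatorname{op}}}(\operatorname{Tr}(F),\CC^{\operatorname{op}}),\operatorname{Hom}_{\CC}(-,X))$ surjects onto something vanishing, forcing the relevant $\Ext^1$ of $\operatorname{Tr}(F)$ to vanish. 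Since every object of $\operatorname{Tr}(\XX)$ is, up to projectives, of the form $\operatorname{Tr}(F)$ with $F\in\XX$ (and one may arrange $F$ to have projective dimension $\le 1$ exactly when the relevant $\Ext^2$ vanishes, which is automatic here), we conclude no object of $\operatorname{Tr}(\XX)$ has grade $1$. For (2)$\Rightarrow$(1): given $F\in\XX$ with $\operatorname{pd} F\leq 1$, the sequence $0\to F^*\to \operatorname{Hom}_{\CC}(X,-)\to \operatorname{Hom}_{\CC}(Y,-)\to \operatorname{Tr}(F)\to 0$ combined with $\operatorname{pd} F\le 1$ shows $\operatorname{Tr}(F)$ has projective dimension $\le 1$ as well, hence $\Ext^2_{\CC^{\operatorname{op}}}(\operatorname{Tr}(F),\CC^{\operatorname{op}})=0$, so $\operatorname{ev}_F$ is already epic; and $\operatorname{grade}(\operatorname{Tr}(F))\neq 1$ together with the fact that $F^*\cong\operatorname{Hom}_{\CC}(Z,-)\neq 0$ in general forces $\operatorname{grade}(\operatorname{Tr}(F))\ge 2$, hence $\Ext^1_{\CC^{\operatorname{op}}}(\operatorname{Tr}(F),\CC^{\operatorname{op}})=0$ and $\operatorname{ev}_F$ is monic. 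Thus $\operatorname{ev}_F$ is an isomorphism, in particular an inflation, and it lies in $\XX$ since both $F$ and $F^{**}$ do.

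The main obstacle I anticipate is the bookkeeping in (1)$\Rightarrow$(2) around objects of $\operatorname{Tr}(\XX)$ that are \emph{not} literally of the form $\operatorname{Tr}(F)$ for an $F$ of projective dimension $\le 1$: one must show that if $G\in\operatorname{Tr}(\XX)$ has grade exactly $1$ then one can produce a witness $F\in\XX$ with $\operatorname{pd} F\le 1$ and $\operatorname{ev}_F$ not an inflation, contradicting (1). This requires carefully using \Cref{Lemma:WhenXandTr(X)AreResolving}.\eqref{Item:ExtInX} (that $\Ext^1_{\CC}(F,\CC),\Ext^2_{\CC}(F,\CC)\in\operatorname{Tr}(\XX)$) and the closure properties of $\XX$ and $\operatorname{Tr}(\XX)$ under kernels of epimorphisms and direct summands to reduce a general $G$ to such a witness — essentially, replacing $G$ by $\operatorname{Tr}$ of a syzygy. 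The rest is a routine chase through \Cref{Proposition:AuslanderExactSeq} and the grade definition.
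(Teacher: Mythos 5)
Your overall strategy --- translating the behaviour of $\operatorname{ev}_F$ into vanishing of $\Ext^i_{\CC^{\operatorname{op}}}(\operatorname{Tr}(F),\CC^{\operatorname{op}})$ via \Cref{Proposition:AuslanderExactSeq} and the admissibility statement of \Cref{Lemma:WhenXandTr(X)AreResolving} --- is the same as the paper's, but the central dictionary you set up is wrong, and the errors are not merely cosmetic. You assert that $\ker(\operatorname{ev}_F)\cong\Ext^1_{\CC^{\operatorname{op}}}(\operatorname{Tr}(F),\CC^{\operatorname{op}})$ ``vanishes precisely when $\operatorname{grade}(\operatorname{Tr}(F))\geq 1$''. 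By \Cref{Definition:grade}, $\operatorname{grade}(\operatorname{Tr}(F))\geq 1$ means $\Hom(\operatorname{Tr}(F),\Hom_{\CC}(X,-))=0$ for all $X$, i.e.\ $\operatorname{Tr}(F)^*=0$; the vanishing of $\Ext^1$ is the \emph{additional} condition that upgrades grade $\geq 1$ to grade $\geq 2$. This contradicts your own first paragraph and breaks both implications: in $(1)\Rightarrow(2)$ you deduce ``grade $\geq 1$'' from $\Ext^1=0$, which is a non sequitur, and you never establish the link that is actually needed, namely that (for a presentation $\Hom_{\CC}(-,Y)\xrightarrow{\Hom_{\CC}(-,f)}\Hom_{\CC}(-,X)\to F\to 0$ chosen with $\Hom_{\CC}(-,f)$ monic when $\operatorname{pd}F\leq 1$) one has $\operatorname{Tr}(F)^*\cong\ker(\Hom_{\CC}(-,f))$, so that $\operatorname{grade}\operatorname{Tr}(F)>0$ if and only if $\operatorname{pd}F\leq 1$. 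That identification is the first step of the paper's proof and is what makes the two conditions match up: for $F$ of projective dimension $\leq 1$ the object $\operatorname{Tr}(F)$ already has grade $\geq 1$, and $\operatorname{ev}_F$ monic is exactly ``grade $\neq 1$''; for $F$ of projective dimension $2$ one gets grade $0$, which is harmless. Your subsequent ``upgrade to grade $\geq 2$'' paragraph, which tries to extract information from $\coker(\operatorname{ev}_F)\in\PP^2(\CC)$, is therefore aimed at the wrong target and does not prove anything that is not already contained in ``$\operatorname{ev}_F$ monic''.

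The $(2)\Rightarrow(1)$ direction contains a second genuine error: from $\operatorname{pd}F\leq 1$ you conclude $\operatorname{pd}\operatorname{Tr}(F)\leq 1$, hence $\Ext^2_{\CC^{\operatorname{op}}}(\operatorname{Tr}(F),\CC^{\operatorname{op}})=0$, hence $\operatorname{ev}_F$ epic and ultimately an isomorphism. The four-term sequence $0\to F^*\to\Hom_{\CC}(X,-)\to\Hom_{\CC}(Y,-)\to\operatorname{Tr}(F)\to 0$ only gives $\operatorname{pd}\operatorname{Tr}(F)\leq 2$, and $\operatorname{ev}_F$ is in general not surjective (already for modules of projective dimension one over a ring, $F\to F^{**}$ is rarely onto). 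The lemma does not claim $\operatorname{ev}_F$ is an isomorphism, only an inflation, and the correct route is: $\operatorname{ev}_F$ is \emph{admissible} in $\XX$ by \Cref{Lemma:WhenXandTr(X)AreResolving}.\eqref{Item:EvaluationAdmissible}, and it is \emph{monic} because $\operatorname{grade}\operatorname{Tr}(F)\geq 1$ (from $\operatorname{pd}F\leq 1$) together with hypothesis $(2)$ forces $\operatorname{grade}\operatorname{Tr}(F)\geq 2$, whence $\Ext^1_{\CC^{\operatorname{op}}}(\operatorname{Tr}(F),\CC^{\operatorname{op}})=\ker(\operatorname{ev}_F)=0$; a monic admissible morphism is an inflation. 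Your closing worry about objects of $\operatorname{Tr}(\XX)$ not literally of the form $\operatorname{Tr}(F)$ is legitimate but is resolved by the observation that grade $1$ is detected by $\Ext^{\geq 1}$ together with the vanishing of $\Hom$ into projectives, both of which are controlled along stable isomorphism; no syzygy manoeuvre is needed.
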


\begin{proof}
For each $F\in \XX$ choose a projective presentation $\Hom_{\CC}(-,Y)\xrightarrow{\Hom_{\CC}(-,f)}\Hom_{\CC}(-,X)\to F\to 0$ such that $\Hom_{\CC}(-,f)$ is a monomorphism if $F$ has projective dimension $\leq 1$. Define $\operatorname{Tr}(F)$ via the exact sequence $\Hom_{\CC}(X,-)\xrightarrow{\Hom_{\CC}(f,-)}\Hom_{\CC}(Y,-)\to \operatorname{Tr}(F)$. Then clearly $F$ has projective dimension $\leq 1$ if and only if $\operatorname{grade}\operatorname{Tr}(F)>0$. Furthermore, $\operatorname{ev}_F$ is an inflation in $\XX$ if and only if $\Ext^1_{\CC^{\operatorname{op}}}(\operatorname{Tr}(F),\CC^{\operatorname{op}})\cong 0$ by \Cref{Proposition:AuslanderExactSeq} and \Cref{Lemma:WhenXandTr(X)AreResolving}.\eqref{Item:EvaluationAdmissible}. Hence, $\operatorname{ev}_F$ is an inflation for any object $F\in \XX$ of projective dimension $\leq 1$ if and only if no object of the form $\operatorname{Tr}(F)$ with $F\in \XX$ has grade $1$. Since any object in $\operatorname{Tr}(\XX)$ is isomorphic in $\underline{\smod} (\CC^{\operatorname{op}})$ to an object of the form $\operatorname{Tr}(F)$ with $F\in \XX$, this is equivalent to $\operatorname{Tr}(\XX)$ having no objects of grade $1$.
\end{proof}

\begin{definition}
Assume $\XX$ is a subcategory of $\smod(\CC)$ satisfying condition \eqref{Property:1} and \eqref{Property:2} in \Cref{Theorem:ExactStructuresResolving}, and let $f\colon X\to Y$ be a morphism in $\CC$.
\begin{enumerate}
\item $f$ is an $\XX$\emph{-inflation} if $\Hom_{\CC}(-,f)\colon \Hom_{\CC}(-,X)\to \Hom_{\CC}(-,Y)$ is an inflation in $\XX$.
\item $f$ is an $\XX$\emph{-deflation} if $\Hom_{\CC}(f,-)\colon \Hom_{\CC}(Y,-)\to \Hom_{\CC}(X,-)$ is an inflation in $\operatorname{Tr}(\XX)$.
\item $f$ is $\XX$\emph{-admissible} if $\Hom_{\CC}(-,f)\colon \Hom_{\CC}(-,X)\to \Hom_{\CC}(-,Y)$ is admissible in $\XX$.
\end{enumerate}
\end{definition}

\begin{remark}\label{Remark:self-dualityXadmissible}
Note that $f\colon X\to Y$ being $\XX$-admissible is also equivalent to 
\[
\Hom_{\CC}(f,-)\colon \Hom_{\CC}(Y,-)\to \Hom_{\CC}(X,-)
\]
being admissible in $\operatorname{Tr}(\XX)$. Indeed, since $\XX$ and $\operatorname{Tr}(\XX)$ are closed under kernels of deflations, it follows that $\Hom_{\CC}(-,f)$ or $\Hom_{\CC}(f,-)$ is admissible in $\XX$ or $\operatorname{Tr}(\XX)$ if and only if $\coker \Hom_{\CC}(-,f)\in \XX$ or $\coker \Hom_{\CC}(f,-)\in \operatorname{Tr}(\XX)$, respectively. Since by definition $\coker \Hom_{\CC}(-,f)\in \XX$ if and only if $\coker \Hom_{\CC}(f,-)\in \operatorname{Tr}(\XX)$, the claim follows.   
\end{remark}

\begin{proposition}\label{Proposition:ResolvingGivesExactStructure}
Assume $\XX$ is a subcategory of $\smod(\CC)$ satisfying condition \eqref{Property:1} and \eqref{Property:2} in \Cref{Theorem:ExactStructuresResolving}. Then there exists a unique conflation category $(\CC,\mathfrak{C})$ with inflations and deflations the $\XX$-inflations and $\XX$-deflations, respectively.
\end{proposition}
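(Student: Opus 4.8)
This proposition has a formal half (uniqueness) and a substantive half (existence), and I would treat them in turn. For \emph{uniqueness}, suppose $(\CC,\mathbb{C})$ is a conflation category whose inflations are the $\XX$-inflations and whose deflations are the $\XX$-deflations. Each conflation in $\mathbb{C}$ is a kernel--cokernel pair $(i,p)$ with $i$ an $\XX$-inflation; conversely, every $\XX$-inflation $i$, being an inflation of $\mathbb{C}$, is the inflation part of some conflation, whose deflation part is then a cokernel of $i$. Since a conflation is determined up to isomorphism by its inflation part and $\mathbb{C}$ is closed under isomorphism, $\mathbb{C}$ is forced to be the class of \emph{all} kernel--cokernel pairs $(i,p)$ in $\CC$ with $i$ an $\XX$-inflation. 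It then remains to prove that this class really is the conflation class of a conflation category with the prescribed inflations and deflations.

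The core claim, which I would prove first, is: \emph{if $i\colon X\to Y$ is an $\XX$-inflation, then $i$ has a cokernel $p\colon Y\to Z$ in $\CC$, the pair $(i,p)$ is a kernel--cokernel pair, and $p$ is an $\XX$-deflation.} To prove it, set $C=\coker(\Upsilon(i))\in\XX$; from the exact sequence $0\to\Upsilon(X)\to\Upsilon(Y)\to C\to0$ the object $C$ has projective dimension $\le1$ in $\Mod(\CC)$, so by \Cref{Lemma:EquivalentConditions} (which uses that $\operatorname{Tr}(\XX)$ has no object of grade $1$) the evaluation $\operatorname{ev}_C\colon C\to C^{**}$ is an inflation in $\XX$, and by \Cref{Lemma:WhenXandTr(X)AreResolving}.\eqref{Item:DualRepresentable} the functor $C^{*}$ is representable, whence $C^{**}\cong\Upsilon(Z)$ for some $Z\in\CC$. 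The composite $\Upsilon(Y)\twoheadrightarrow C\xrightarrow{\operatorname{ev}_C}\Upsilon(Z)$ is a natural transformation of representables, hence equal to $\Upsilon(p)$ for a unique $p\colon Y\to Z$; since $\operatorname{ev}_C$ is monic one reads off $\Upsilon(i)=\ker(\Upsilon(p))$ and $\coker(\Upsilon(p))\cong D:=\coker(\operatorname{ev}_C)\in\XX$, and full faithfulness of $\Upsilon$ gives $i=\ker(p)$ in $\CC$.

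What is then left is to see that $p=\coker(i)$ in $\CC$ and that $p$ is an $\XX$-deflation, and the first of these is the step I expect to be the main obstacle; this is where hypothesis \eqref{Property:2} is genuinely needed. By a diagram chase on $0\to C\to\Upsilon(Z)\to D\to0$, the relation $p=\coker(i)$ reduces to the vanishing $\Hom_{\Mod(\CC)}(D,\Upsilon(W))=0=\Ext^1_{\Mod(\CC)}(D,\Upsilon(W))$ for all $W\in\CC$, i.e.\ to $\operatorname{grade}D\ge2$. To establish this, apply $(-)^{*}$ to $C\to C^{**}\to D\to0$: this identifies $D^{*}$ with $\ker\bigl((\operatorname{ev}_C)^{*}\bigr)$, and since $C^{*}$ is representable the map $\operatorname{ev}_{C^{*}}$ is an isomorphism, so the triangle identity $(\operatorname{ev}_C)^{*}\circ\operatorname{ev}_{C^{*}}=\mathrm{id}$ forces $(\operatorname{ev}_C)^{*}$ to be an isomorphism and $D^{*}=0$; thus $\operatorname{grade}D\ge1$, and as $D\in\XX$ has no object of grade $1$ we conclude $\operatorname{grade}D\ge2$. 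For the $\XX$-deflation property, I would apply $(-)^{*}$ to $0\to\Upsilon(X)\to\Upsilon(Y)\xrightarrow{\Upsilon(p)}\Upsilon(Z)\to D\to0$ and again use that $(\operatorname{ev}_C)^{*}$ is an isomorphism to obtain an exact sequence $0\to\Hom_{\CC}(Z,-)\xrightarrow{\Hom_{\CC}(p,-)}\Hom_{\CC}(Y,-)\to\Hom_{\CC}(X,-)\to\operatorname{Tr}(C)\to0$ in $\Mod(\CC^{\operatorname{op}})$; as $\operatorname{Tr}(C)\in\operatorname{Tr}(\XX)$ and the representables lie in $\operatorname{Tr}(\XX)$, closure of the resolving subcategory $\operatorname{Tr}(\XX)\subseteq\PP^{2}(\CC^{\operatorname{op}})$ under kernels of deflations gives $\coker(\Hom_{\CC}(p,-))\in\operatorname{Tr}(\XX)$, so $p$ is an $\XX$-deflation.

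Finally, conditions \eqref{Property:1} and \eqref{Property:2} are symmetric under replacing $(\CC,\XX)$ by $(\CC^{\operatorname{op}},\operatorname{Tr}(\XX))$, because $\operatorname{Tr}^{2}\cong\mathrm{id}$; so the dual of the core claim shows that every $\XX$-deflation is the deflation part of a kernel--cokernel pair whose inflation part is an $\XX$-inflation. Hence, for a kernel--cokernel pair $(i,p)$ in $\CC$ the conditions ``$i$ is an $\XX$-inflation'' and ``$p$ is an $\XX$-deflation'' are equivalent; the class $\mathbb{C}$ singled out in the first paragraph is closed under isomorphism (as $\Upsilon$ preserves isomorphisms), every $\XX$-inflation occurs as an inflation of $\mathbb{C}$, and every $\XX$-deflation occurs as a deflation of $\mathbb{C}$. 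Together with the uniqueness above, this proves the proposition.
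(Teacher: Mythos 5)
Your proof is correct, and it rests on the same two ingredients as the paper's argument (\Cref{Lemma:WhenXandTr(X)AreResolving} for the representability of $F^{*}$, and \Cref{Lemma:EquivalentConditions} for $\operatorname{ev}_F$ being an inflation in $\XX$), but you arrange them differently in one step. The paper produces the cokernel of an $\XX$-inflation $i$ directly from the four-term sequence $0\to F^{*}\to\Hom_{\CC}(Y,-)\to\Hom_{\CC}(X,-)\to\operatorname{Tr}(F)\to0$: since $F^{*}\cong\Hom_{\CC}(Z,-)$ is by construction the kernel of $\Hom_{\CC}(i,-)$, Yoneda identifies $Z$ at once as $\coker(i)$ and exhibits $\coker(\Hom_{\CC}(p,-))=\im(\Hom_{\CC}(i,-))$ as the kernel of the epimorphism $\Hom_{\CC}(X,-)\to\operatorname{Tr}(F)$ in $\operatorname{Tr}(\XX)$, so that $p$ is an $\XX$-deflation; condition \eqref{Property:2} is then invoked only to make $\operatorname{ev}_F$ monic, i.e.\ to obtain $i=\ker(p)$. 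You instead define $p$ through $C\to C^{**}\cong\Upsilon(Z)$, get $i=\ker(p)$ for free from monicity of $\operatorname{ev}_C$, and then establish $p=\coker(i)$ via a grade computation on $D=\coker(\operatorname{ev}_C)$. That computation is valid, but it slightly misplaces where \eqref{Property:2} is essential: the triangle identity you use already shows that $(\operatorname{ev}_C)^{*}\colon\Hom_{\CC}(Z,-)\to C^{*}$ is an isomorphism under condition \eqref{Property:1} alone, and since $C^{*}(W)=\{h\colon Y\to W\mid h\circ i=0\}$ this is precisely the universal property of the cokernel, so the detour through $\operatorname{grade}D\ge 2$ can be bypassed; hypothesis \eqref{Property:2} is genuinely needed for the kernel half, not the cokernel half. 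Your explicit treatment of uniqueness, of closure under isomorphism, and of the equivalence (for a kernel--cokernel pair) between the inflation part being an $\XX$-inflation and the deflation part being an $\XX$-deflation is a welcome addition that the paper leaves implicit in the phrase ``together with the dual argument.''
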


\begin{proof}
Let $f$ be an $\XX$-inflation, and consider the exact sequences 
\begin{align*}
0 \to \Hom_{\CC}(-,X)\xrightarrow{\Hom_{\CC}(-,f)}\Hom_{\CC}(-,Y)\to F\to 0 \\
0\to F^*\to \Hom_{\CC}(Y,-)\xrightarrow{\Hom_{\CC}(f,-)}\Hom_{\CC}(X,-)\to \operatorname{Tr}(F)\to 0.
\end{align*}
Since $F^*\cong \Hom_{\CC}(Z,-)$ by \Cref{Lemma:WhenXandTr(X)AreResolving}.\eqref{Item:DualRepresentable}, we get that $f$ admits a cokernel $g\colon Y\to Z$ in $\CC$ which is an $\XX$-deflation. Now since $F$ has projective dimension $\leq 1$, the map $\operatorname{ev}_F\colon F\to F^{**}\cong \Hom_{\CC}(Z,-)$ is an inflation in $\XX$ by \Cref{Lemma:EquivalentConditions}. Hence the sequence $0\to \Hom_{\CC}(-,X)\to \Hom_{\CC}(-,Y)\to \Hom_{\CC}(-,Z)$ is exact, so $f$ is the kernel of $g$. This together with the dual argument shows that the $\XX$-inflations and $\XX$-deflations makes $\CC$ into a conflation category.
\end{proof}

\begin{proposition}\label{Proposition:ResolvingGivesExactStructure:2}
Assume $\XX$ is a subcategory of $\smod(\CC)$ satisfying condition \eqref{Property:1} and \eqref{Property:2} in \Cref{Theorem:ExactStructuresResolving}. Then the conflation category in \Cref{Proposition:ResolvingGivesExactStructure} is an exact category.
\end{proposition}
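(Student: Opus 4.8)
The plan is to recognise $(\CC,\mathbb C)$ as the category of projective objects of an Auslander exact category and then to appeal to the First Auslander Correspondence, \Cref{FirstAuslanderCorrespondence}, which already endows $\Proj$ of any object of $\AEx$ with an exact structure; the Auslander exact category will be $\XX$ itself.

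First I would equip $\XX$ with an exact structure and identify its projectives. Since $\PP^2(\CC)$ is extension-closed in $\smod(\CC)$, which is extension-closed in the abelian category $\Mod(\CC)$, and $\XX$ is in particular extension-closed in $\PP^2(\CC)$, the category $\XX$ inherits an exact structure whose conflations are the short exact sequences of $\Mod(\CC)$ with all three terms in $\XX$. Representables are projective in $\smod(\CC)$, hence in $\XX$; conversely, for $P\in\Proj(\XX)$ a projective presentation $\Hom_\CC(-,W_1)\to\Hom_\CC(-,W_0)\to P\to 0$ in $\smod(\CC)$ has syzygy $\Omega P\in\PP^2(\CC)$, so $\Hom_\CC(-,W_0)\twoheadrightarrow P$ is a deflation in $\PP^2(\CC)$ between objects of $\XX$; as $\XX$ is closed under kernels of deflations, $\Omega P\in\XX$ and this deflation lies in $\XX$, hence splits, so $P$ is a summand of a representable and thus representable because $\CC$ is idempotent complete. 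Therefore $\Proj(\XX)=\Upsilon(\CC)\cong\CC$; write $\PP$ for this subcategory.

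Next I would check that $\XX$ is an Auslander exact category. The bound $\gldim\XX\le2$ is immediate, since an object of $\XX\subseteq\PP^2(\CC)$ has a length-two resolution by representables whose syzygies remain in $\XX$. For the torsion pair $({}^{\perp}\PP,\cogen(\PP))$ I would apply the Auslander--Bridger sequence (\Cref{Proposition:AuslanderExactSeq}) to $F\in\XX$: by \Cref{Lemma:WhenXandTr(X)AreResolving}, $F^{**}\cong\Upsilon(Z)$ is representable and $\operatorname{ev}_F$ is admissible in $\XX$, so $\ker(\operatorname{ev}_F)\rightarrowtail F\twoheadrightarrow\im(\operatorname{ev}_F)$ is a conflation in $\XX$ with $\im(\operatorname{ev}_F)\in\cogen(\PP)$ (it embeds into $\Upsilon(Z)$) and $\ker(\operatorname{ev}_F)=\Ext^1_{\CC^{\mathrm{op}}}(\operatorname{Tr}F,\CC^{\mathrm{op}})\in{}^{\perp}\PP$ (it is a kernel of a deflation between objects of $\XX$, hence in $\XX$, and has $\operatorname{grade}\ge1$); the orthogonality $\Hom_\XX({}^{\perp}\PP,\cogen(\PP))=0$ is clear. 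For $\Ext^1_\XX({}^{\perp}\PP,\PP)=0$: any $T\in{}^{\perp}\PP$ has $T^{*}=0$, hence $\operatorname{grade}T\ge1$, hence $\operatorname{grade}T\ge2$ because $\XX$ has no objects of grade $1$ (condition \eqref{Property:2}), so $\Ext^1_\XX(T,\Upsilon(W))=\Ext^1_{\Mod(\CC)}(T,\Hom_\CC(-,W))=0$. The remaining axiom \ref{A2} for ${}^{\perp}\PP$ is the main obstacle and the only non-formal point: for $f\colon E\to E'$ with $E'\in{}^{\perp}\PP$ one picks a projective presentation $\Upsilon(A')\xrightarrow{\Upsilon(u)}\Upsilon(B')\to E'\to0$ with $u$ an epimorphism of $\CC$ admitting a kernel in $\CC$ (possible since $\pd E'\le2$), lifts $f$ along presentations, and runs the diagram chase of \Cref{Lemma:TheFamousDiagramChase} to factor $f$ as an epimorphism followed by a monomorphism whose image and cokernel are presented by epimorphisms of $\CC$ (so their $(-)^{*}$ vanish) and --- using the membership statements of \Cref{Lemma:WhenXandTr(X)AreResolving} and again condition \eqref{Property:2} --- lie in $\XX$; this yields admissibility of $f$ with image in ${}^{\perp}\PP$.

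Granting $\XX\in\AEx$, it remains to match the two exact structures on $\CC$. By \Cref{FirstAuslanderCorrespondence} and \Cref{Proposition:ProjectivesLeftExact}, $\XX$ is exactly equivalent to $\smodad$ of $\PP\cong\CC$ with the exact structure $\mathbb D$ transported from the quotient $\XX\to\XX/{}^{\perp}\PP$; unwinding this with \Cref{Theorem:QReflectsAdmissibles}, a sequence $X\xrightarrow{f}Y\xrightarrow{g}Z$ in $\CC$ is a $\mathbb D$-conflation exactly when $\Upsilon(g)$ is admissible in $\XX$ with kernel $\Upsilon(f)$ and $\coker(\Upsilon(g))\in{}^{\perp}\PP$. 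Using $\Ext^1_\XX({}^{\perp}\PP,\PP)=0$ to see that the induced $g$ is a genuine cokernel of $f$ in $\CC$ and comparing with the construction of \Cref{Proposition:ResolvingGivesExactStructure}, this is precisely the condition defining $\mathbb C$. Hence $\mathbb C=\mathbb D$, so $(\CC,\mathbb C)$ is an exact category.
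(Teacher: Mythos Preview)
Your strategy --- show that $\XX$ is itself an Auslander exact category and then transport the exact structure via \Cref{FirstAuslanderCorrespondence} --- is genuinely different from the paper's direct verification of axioms \ref{L2} and \ref{R2}, and it is an attractive idea. However, there is a real gap in your verification of axiom \ref{A2} for ${}^{\perp}\PP\subseteq\XX$, which you yourself flag as the ``only non-formal point''.

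Your appeal to \Cref{Lemma:TheFamousDiagramChase} does not go through as stated. That lemma requires the pullback of the presenting map $u\colon A'\to B'$ along the lift $\alpha\colon B\to B'$ to exist \emph{in $\CC$}, and nothing guarantees this: we have no exact structure on $\CC$ yet, and $u$ being an epimorphism with a kernel does not force pullbacks to exist in an arbitrary idempotent complete additive category. If instead you form the pullback in $\Mod(\CC)$, the resulting object is typically not representable (the map $\Upsilon(B\oplus A')\to\Upsilon(B')$ is not an epimorphism in $\Mod(\CC)$ unless $u$ is split), so the ``presentation'' of $\im(f)$ you obtain is not by objects of $\CC$ and says nothing about membership in $\XX$. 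Even for $\coker(f)$, presented by the epimorphism $(u,\alpha)\colon A'\oplus B\to B'$, you would still need this map to have a kernel in $\CC$ just to land in $\PP^2(\CC)$, and then a further argument to land in $\XX$; neither step is supplied, and it is not clear how \Cref{Lemma:WhenXandTr(X)AreResolving} would help. In effect, establishing \ref{A2} here demands exactly the kind of control over pullbacks in $\CC$ that \Cref{Proposition:ResolvingGivesExactStructure:2} is there to produce.

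The paper sidesteps all of this: it verifies \ref{L2} directly by forming the pushout $G$ of $\Hom_\CC(-,f)$ along $\Hom_\CC(-,g)$ in $\Mod(\CC)$, notes that $\pd G\le1$, invokes condition \eqref{Property:2} via \Cref{Lemma:EquivalentConditions} to make $\operatorname{ev}_G\colon G\to G^{**}$ an inflation in $\XX$, and reads off the pushout square in $\CC$ from the representability $G^{*}\cong\Hom_\CC(W,-)$ furnished by \Cref{Lemma:WhenXandTr(X)AreResolving}. This is shorter and avoids ever needing $\XX\in\AEx$ in advance.
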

\begin{proof}
Since the $\XX$-inflations and $\XX$-deflations clearly satisfy axioms \ref{L0}, \ref{L1} and \ref{R0}, \ref{R1} for an exact category, it only remains to show that axioms \ref{L2} and \ref{R2} hold. We only prove \ref{L2} since \ref{R2} is dual. To this end, let $f\colon X\to Y$ be an $\XX$-inflation and let $g\colon X\to Z$ be a morphism in $\CC$. Taking the pushout of $\Hom_{\CC}(-,f)\colon \Hom_{\CC}(-,X)\to \Hom_{\CC}(-,Y)$ along $\Hom_{\CC}(-,g)\colon \Hom_{\CC}(-,X)\to \Hom_{\CC}(-,Z)$, we get a commutative diagram 
\[\xymatrix@C=4em{
			 \Hom_{\CC}(-,X)\ar[d]^{\Hom_{\CC}(-,g)}\ar@{>->}[r]^{\Hom_{\CC}(-,f)} &  \Hom_{\CC}(-,Y)\ar[d]^{}\ar@{->>}[r]^{} & F\ar@{=}[d]\\
			 \Hom_{\CC}(-,Z)\ar@{>->}[r] & G\ar@{->>}[r]^{}& F
		}\]
where the rows are exact sequences. Since the left hand square is a pullback and a pushout square, we have an exact sequence  
\begin{equation}\label{ApplyDualToThis}
0\to \Hom_{\CC}(-,X)\xrightarrow{\begin{psmallmatrix}\Hom_{\CC}(-,f)\\ \Hom_{\CC}(-,g)\end{psmallmatrix}}  \Hom_{\CC}(-,Y)\oplus  \Hom_{\CC}(-,Z)\to G\to 0
\end{equation}
and hence $G$ has projective dimension $\leq 1$. Therefore by \Cref{Lemma:EquivalentConditions} the map $\operatorname{ev}_G\colon G\to G^{**}$ is an inflation in $\XX$. Furthermore, $G^{*}\cong \Hom_{\CC}(W,-)$ for some object $W\in \CC$ by \Cref{Lemma:WhenXandTr(X)AreResolving}.\eqref{Item:DualRepresentable}. Let $h\colon Z\to W$ and $k\colon Y\to W$ be the unique morphisms in $\CC$ so that $\Hom_{\CC}(-,h)$ and $\Hom_{\CC}(-,k)$ are equal to the composites $\Hom_{\CC}(-,Z)\inflation G\xrightarrow{\operatorname{ev}_G}G^{**}\cong \Hom_{\CC}(-,W)$ and $\Hom_{\CC}(-,Y)\to G\xrightarrow{\operatorname{ev}_G}G^{**}\cong \Hom_{\CC}(-,W)$, respectively. Since $\Hom_{\CC}(-,h)$ is a composite of inflations in $\XX$, it follows that $h$ is an $\XX$-inflation. Now consider the commutative square 
\[\xymatrix@C=3em{
		X\ar[r]^{f}\ar[d]^{g} & Y\ar[d]^{k}\\
		Z\ar[r]^{h} & W.
	}\] 
It only remains to show that this is a pushout square. But this is true since applying $(-)^*$ to	\eqref{ApplyDualToThis} gives the left exact sequence
\[
0\to \Hom_{\CC}(W,-)\xrightarrow{\begin{psmallmatrix}\Hom_{\CC}(k,-)\\ \Hom_{\CC}(h,-)\end{psmallmatrix}}  \Hom_{\CC}(Y,-)\oplus  \Hom_{\CC}(Z,-)\xrightarrow{\begin{psmallmatrix}\Hom_{\CC}(f,-)& \Hom_{\CC}(g,-)\end{psmallmatrix}} \Hom_{\CC}(X,-).\qedhere
\]
\end{proof}

\begin{proposition}\label{Proposition:AdmissibleMorphisms}
Assume $\XX$ is a subcategory of $\smod(\CC)$ satisfying condition \eqref{Property:1} and \eqref{Property:2} in \Cref{Theorem:ExactStructuresResolving}, and let $(\CC,\mathfrak{C})$ be the exact category given by the $\XX$-inflations and $\XX$-deflations. The following hold:
\begin{enumerate}
\item\label{Item:X-admissible morphisms} The $\XX$-admissible morphisms in $\CC$ are precisely the admissible morphisms in $(\CC,\mathfrak{C})$.
\item $\smodad(\mathfrak{C})=\XX$.
\end{enumerate}
\end{proposition}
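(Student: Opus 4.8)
The plan is to translate both statements into functor language and then prove the nontrivial inclusion. Since $\XX$ is resolving in $\PP^2(\CC)$, which is extension-closed in $\smod_{\infty}(\CC)$ and hence in the abelian category $\Mod(\CC)$, the exact structure $\XX$ inherits has as inflations (resp.\ deflations) exactly the monomorphisms (resp.\ epimorphisms) of $\Mod(\CC)$ between objects of $\XX$ whose cokernel (resp.\ kernel) again lies in $\XX$; in particular $\XX$ contains all representables. Using the uniqueness of the epi--mono factorisation in $\Mod(\CC)$, a morphism $f\colon X\to Y$ of $\CC$ is $\XX$-admissible if and only if $\ker\Upsilon f$, $\im\Upsilon f$ and $\coker\Upsilon f$ (formed in $\Mod(\CC)$) all lie in $\XX$. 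Granting part (1), part (2) follows at once: if $F\in\smodad(\mathbb C)$ then $F\cong\coker\Upsilon f$ with $f$ admissible in $(\CC,\mathbb C)$, hence $\XX$-admissible, hence $F\in\XX$; conversely, if $F\in\XX\subseteq\PP^2(\CC)$, one picks a resolution $0\to\Upsilon X\to\Upsilon Y\xrightarrow{\Upsilon b}\Upsilon Z\to F\to 0$ and observes that $\ker\Upsilon b=\Upsilon X\in\XX$, $\coker\Upsilon b=F\in\XX$, and $\im\Upsilon b\in\XX$ (it is the kernel of the $\PP^2(\CC)$-deflation $\Upsilon Z\twoheadrightarrow F$ between objects of $\XX$), so $b$ is $\XX$-admissible, hence admissible in $(\CC,\mathbb C)$ by part (1), whence $F\in\smodad(\mathbb C)$.

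For part (1), the implication ``admissible in $(\CC,\mathbb C)$ $\Rightarrow$ $\XX$-admissible'' is routine. Write $f=hg$ with $g$ an $\XX$-deflation and $h$ an $\XX$-inflation. First, every $\XX$-deflation $g\colon X\to W$ is itself $\XX$-admissible: its kernel $\ker_{\CC}g$ exists and is an $\XX$-inflation by \Cref{Proposition:ResolvingGivesExactStructure}, so $\ker\Upsilon g=\Upsilon(\ker_{\CC}g)$ is representable; $\coker\Upsilon g$ is (up to projective summands) the Auslander--Bridger transpose of $\coker\Hom_{\CC}(g,-)\in\operatorname{Tr}(\XX)$, hence lies in $\operatorname{Tr}(\operatorname{Tr}(\XX))=\XX$; and $\im\Upsilon g$, being the kernel of the $\PP^2(\CC)$-deflation $\Upsilon W\twoheadrightarrow\coker\Upsilon g$ between objects of $\XX$, lies in $\XX$. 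Composing with the $\XX$-inflation $h$ (so $\Upsilon h$ is monic with cokernel in $\XX$), the functors attached to $\Upsilon f=\Upsilon h\circ\Upsilon g$ satisfy $\ker\Upsilon f=\ker\Upsilon g\in\XX$, $\im\Upsilon f\cong\im\Upsilon g\in\XX$, and $\coker\Upsilon f$ is an extension of $\coker\Upsilon h$ by $\coker\Upsilon g$, hence in $\XX$; so $f$ is $\XX$-admissible by the criterion.

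The converse ``$\XX$-admissible $\Rightarrow$ admissible in $(\CC,\mathbb C)$'' is the heart of the matter. Assume $\ker\Upsilon f$, $\im\Upsilon f=:J$ and $\coker\Upsilon f=:C$ lie in $\XX$. Since $C\in\PP^2(\CC)$, its second syzygy $\ker\Upsilon f$ is projective, so $\ker\Upsilon f=\Upsilon K$ and the inclusion is $\Upsilon k$ for $k=\ker_{\CC}f$, which is an $\XX$-inflation ($\coker\Upsilon k=J\in\XX$); dually, using $\operatorname{Tr}(C)\in\operatorname{Tr}(\XX)\subseteq\PP^2(\CC^{\operatorname{op}})$, the cokernel $q=\coker_{\CC}f$ exists and is an $\XX$-deflation. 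By \Cref{Proposition:ResolvingGivesExactStructure} and its dual, $k$ has a cokernel $g\colon X\twoheadrightarrow W$ (an $\XX$-deflation, with $K\rightarrowtail X\twoheadrightarrow W$ a conflation) and $q$ has a kernel $W'\rightarrowtail Y$ (an $\XX$-inflation, with $W'\rightarrowtail Y\twoheadrightarrow Y'$ a conflation). As $fk=0$ and $qf=0$, $f$ factors as $X\xrightarrow{g}W\xrightarrow{\bar h}W'\xrightarrow{\iota}Y$; it remains to show the canonical map $\bar h\colon W=\coim f\to\im f=W'$ is an isomorphism, for then $f=\iota\circ(\bar h g)$ exhibits $f$ as a deflation followed by an inflation. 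A pullback of the deflation $g$ along any test map into $W$, together with $gk=0$ and $fk=0$, shows that $h=\iota\bar h$ — and hence $\bar h$ — is monic in $\CC$; dually, a pushout of the inflation $\iota$ shows that $h'=\bar h g$ — and hence $\bar h$ — is epic in $\CC$.

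Upgrading this mono-epi to an isomorphism is the main obstacle, and it is the only place where condition (2) of \Cref{Theorem:ExactStructuresResolving} is genuinely used. Set $E:=\coker\Upsilon\bar h$; since $\Upsilon\bar h$ is monic with projective source, $E\in\PP^1(\CC)$, and since $\Hom_{\CC}(\bar h,-)$ is monic ($\bar h$ epic) also $\operatorname{Tr}(E)\in\PP^1(\CC^{\operatorname{op}})$. Viewing $J\subseteq\Upsilon W\subseteq\Upsilon W'$ inside $\Upsilon Y$ produces a short exact sequence $0\to D\to G\to E\to 0$ with $D=\Upsilon W/J=\coker\Upsilon g\in\XX$ and $G=\Upsilon W'/J$, where $G\in\XX$ because $0\to G\to C\to\coker\Upsilon\iota\to 0$ displays $G$ as the kernel of a $\PP^2(\CC)$-deflation between objects of $\XX$. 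Because $g$ is epic we have $D^{*}=0$, so the Auslander--Bridger six-term sequence of $0\to D\to G\to E\to 0$ collapses to a short exact sequence $0\to\operatorname{Tr}(E)\to\operatorname{Tr}(G)\to\operatorname{Tr}(D)\to 0$, which is a $\PP^2(\CC^{\operatorname{op}})$-deflation between objects of $\operatorname{Tr}(\XX)$; hence $\operatorname{Tr}(E)\in\operatorname{Tr}(\XX)$. Now $(\operatorname{Tr}E)^{*}=\ker\Upsilon\bar h=0$ gives $\operatorname{grade}\operatorname{Tr}(E)\geq 1$, and since $\operatorname{Tr}(\XX)$ has no objects of grade $1$ we get $\operatorname{grade}\operatorname{Tr}(E)\geq 2$, i.e.\ $\Ext^{1}_{\CC^{\operatorname{op}}}(\operatorname{Tr}(E),\CC^{\operatorname{op}})=0$. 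Applying \Cref{Proposition:AuslanderExactSeq} to $E$ and using $E^{*}=0$ (again because $\bar h$ is epic) yields $E\cong\Ext^{1}_{\CC^{\operatorname{op}}}(\operatorname{Tr}(E),\CC^{\operatorname{op}})=0$. Therefore $\Upsilon\bar h$ is an isomorphism, so $\bar h$ is an isomorphism, completing part (1) and hence part (2). The bookkeeping that makes this go through is tracking which functors are representable, i.e.\ the reflexivity supplied by \Cref{Lemma:WhenXandTr(X)AreResolving} and \Cref{Lemma:EquivalentConditions}.
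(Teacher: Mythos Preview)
Your proof is correct, and it shares the same skeleton as the paper's: both reduce part~(1) to showing that the canonical map $\bar h\colon\coim f\to\im f$ is an isomorphism, after first producing the kernel and cokernel of $f$ as an $\XX$-inflation and $\XX$-deflation respectively. The pullback argument you use to show $h=\iota\bar h$ is monic is essentially the same diagram the paper draws to show $\ker h=0$.

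The two proofs diverge at the final step. The paper stays inside the exact category $(\CC,\mathbb C)$: it shows via a snake-lemma computation on the covariant side that $h$ is $\XX$-admissible, combines this with $\ker h=0$ to conclude $h$ is an $\XX$-inflation, and then invokes weak idempotent completeness of $\CC$ (\cite[Proposition~7.6]{Buhler10}) to deduce that $\bar h$ is an $\XX$-inflation; the dual argument makes $\bar h$ an $\XX$-deflation as well, and a map which is both is an isomorphism. Condition~\eqref{Property:2} is never invoked explicitly here---it has already been absorbed into the construction of the exact structure in \Cref{Proposition:ResolvingGivesExactStructure} and \Cref{Proposition:ResolvingGivesExactStructure:2}. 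Your route is more functor-theoretic: you pass to $E=\coker\Upsilon\bar h$, locate $\operatorname{Tr}(E)$ inside $\operatorname{Tr}(\XX)$ via the six-term $\operatorname{Tr}$-sequence associated to $0\to D\to G\to E\to 0$ (obtained from the snake lemma applied to a Horseshoe diagram), and then use the grade hypothesis on $\operatorname{Tr}(\XX)$ together with \Cref{Proposition:AuslanderExactSeq} to force $E=0$. This makes the role of condition~\eqref{Property:2} completely transparent at the crucial point, and it avoids the appeal to weak idempotent completeness; the price is the extra bookkeeping needed to track membership of $D$, $G$, and $\operatorname{Tr}(E)$ in the appropriate resolving subcategories and to justify the six-term sequence.
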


\begin{proof}
\begin{enumerate}
\item Let $f\colon X\to Y$ be a morphism in $\CC$. If $f$ is admissible in $(\CC,\mathfrak{C})$, then it can be written as a composite $f=f_2\circ f_1$ where $f_1$ is an $\XX$-deflation and $f_2$ is an $\XX$-inflation. By definition $\Hom_{\CC}(f_1,-)$ is an inflation in $\operatorname{Tr}(\XX)$, and hence $\Hom_{\CC}(-,f_1)$ must be admissible in $\XX$. Since $\Hom_{\CC}(-,f_2)$ is an inflation in $\XX$, the composite 
\[
\Hom_{\CC}(-,f)=\Hom_{\CC}(-,f_2)\circ \Hom_{\CC}(-,f_1)
\]
is admissible in $\XX$. Hence $f$ is $\XX$-admissible.

Now assume $f$ is $\XX$-admissible. First note that $f$ has a kernel which is an $\XX$-inflation and a cokernel which is an $\XX$-deflation, since $\Hom_{\CC}(-,f)$ and $\Hom_{\CC}(f,-)$ are admissible in $\XX$ and $\operatorname{Tr}(\XX)$, respectively, with kernels being representable functors. Therefore $f$ has a coimage $\operatorname{coim}(f)$ obtained by taking the cokernel of the inclusion $\ker (f)\inflation X$, and an image $\im (f)$ obtained by taking the kernel of the projection $Y\deflation \coker (f)$. Hence we get a canonical map $\operatorname{coim}(f)\to \im(f)$, and it suffices to show that this is an isomorphism. To this end, let $g\colon X\deflation \operatorname{coim}(f)$ denote the given $\XX$-deflation, and let $h\colon \operatorname{coim}(f)\to Y$ denote the composite $\operatorname{coim}(f)\to \im(f)\to Y$. Consider the commutative diagram with exact rows
\[\xymatrix@C=4em{
		& \Hom_{\CC}(Y,-)\ar[r]^{\Hom_{\CC}(f,-)}\ar[d]^{\Hom_{\CC}(h,-)} & \Hom_{\CC}(X,-)\ar[r]^{}\ar@{=}[d]  & F\ar[d]^{p} \ar[r]&  0\\
		0 \ar[r] & \Hom_{\CC}(\operatorname{coim}(f),-)\ar[r]^{\Hom_{\CC}(g,-)} & \Hom_{\CC}(X,-)\ar[r]^{}  & G \ar[r] & 0.
	}\] 
Since $f$ and $g$ are $\XX$-admissible, $F$ and $G$ are in $\operatorname{Tr}(\XX)$. By the snake lemma $p$ is an epimorphism and $\coker (\Hom_{\CC}(h,-))\cong \ker (p)$. Since $\operatorname{Tr}(\XX)$ is closed under kernels of epimorphisms, $\ker (p)$ is contained in $\operatorname{Tr}(\XX)$. Hence $\Hom_{\CC}(-,h)$ is admissible in $\operatorname{Tr}(\XX)$ since $\operatorname{Tr}(\XX)$ is closed under kernels of deflations. This implies that $h$ is $\XX$-admissible, see \Cref{Remark:self-dualityXadmissible}. Let $i\colon \ker (h)\to \operatorname{coim}(f)$ denote the inclusion of the kernel, and let $j\colon E\to X$ denote the pullback of $i$ along $g$. Then we get a commutative diagram
\[\xymatrix{
			\ker(f)\ar@{=}[d]\ar@{>->}[r] & E\ar@{>->}[d]^j\ar@{->>}[r]^k & \ker(h)\ar@{>->}[d]^i\\
			\ker(f)\ar@{=}[d]\ar@{>->}[r] & X\ar@{=}[d] \ar@{->>}[r]^g& \operatorname{coim}(f)\ar[d]^h\\
			\ker(f)\ar@{>->}[r] & X\ar[r]^f &Y
		}\] 
where the two upper rows are conflations in $(\CC,\mathfrak{C})$. Hence $f\circ j=0$, so $j$ factors through $\ker (f)$. It follows that $i$ must be $0$, so $\ker (h)\cong 0$ since $i$ is an inflation in $(\CC,\mathfrak{C})$. Hence $h$ is an $\XX$-inflation. Since $h$ can be written as a composite $\operatorname{coim}(f)\to \im (f)\to Y$ and $\CC$ is weakly idempotent complete, the map $\operatorname{coim}(f)\to \im (f)$ must be an $\XX$-inflation by \cite[Proposition 7.6]{Buhler10}. Dually, one shows that the map $\operatorname{coim}(f)\to \im (f)$ is an $\XX$-deflation. Since a map in an exact category which is both an inflation and a deflation must be an isomorphism, the claim follows.
\item This follows immediately from part \eqref{Item:X-admissible morphisms}. \qedhere
\end{enumerate} 
\end{proof}

\begin{proof}[Proof of \Cref{Theorem:ExactStructuresResolving}]
If $(\CC,\mathfrak{C})$ is an exact category, then $\smodad(\mathfrak{C})$ and $\operatorname{Tr}(\smodad(\mathfrak{C}))=\smodad(\mathfrak{C}^{\operatorname{op}})$ are resolving by \Cref{Proposition:SmallestResolvingSubcategory}. Also any $F\in \smodad(\mathfrak{C})$ and $G\in \smodad(\mathfrak{C}^{\operatorname{op}})$ of grade $>0$ must satisfy 
\[
\Ext^1_{\Mod(\CC)}(F,\Hom_{\CC}(-,X))=0 \quad \text{and} \quad \Ext^1_{\Mod(\CC^{\operatorname{op}})}(G,\Hom_{\CC}(X,-))=0
\] 
for all $X\in \CC$ by \Cref{Proposition:TorsionTheoryProperties}.\eqref{Proposition:TorsionTheoryPropertiesC}. Therefore $F$ and $G$ must have grade $>1$. This shows that $\smodad(\mathfrak{C})$ satisfies \eqref{Property:1} and \eqref{Property:2} in \Cref{Theorem:ExactStructuresResolving}. The fact that the association $\mathfrak{C}\mapsto \smodad(\mathfrak{C})$ is a bijection follows from \Cref{Proposition:ResolvingGivesExactStructure}, \Cref{Proposition:ResolvingGivesExactStructure:2} and \Cref{Proposition:AdmissibleMorphisms}.
\end{proof}

We end this section by considering exact structures on an abelian category $\AA$. 

\begin{corollary}\label{Corollary:ExactStructuresResolving}
Let $\AA$ be an abelian category. Then there exists a bijection between the following:
\begin{enumerate}
\item Exact structures on $\AA$.
\item Resolving subcategory $\XX$ of $\smod (\AA)$ for which $\operatorname{Tr}(\XX)$ is a resolving subcategory of $\smod (\AA^{\operatorname{op}})$. 
\end{enumerate}
\end{corollary}

\begin{proof}
Since $\AA$ is abelian, $\smod (\AA)=\PP^2(\AA)$ and $\smod (\AA^{\operatorname{op}})=\PP^2(\AA^{\operatorname{op}})$, and $\smod (\AA)$ and $\smod (\AA^{\operatorname{op}})$ have no objects of grade $1$. Hence, the claim follows from \Cref{Theorem:ExactStructuresResolving}.
\end{proof}

\begin{example}\label{example:MainTheorem1}
Let $Q$ be the $A_2$ quiver $1 \to 2$ and let $\AA = \rep_k(Q)$ be the category of $k$-linear representations.  We use this example to illustrate \Cref{Corollary:ExactStructuresResolving}.  The Auslander-Reiten quiver of $\AA$ is
\[\begin{tikzcd}[row sep=small, column sep=small]
& P_1 \arrow[rd] \\
P_2 \arrow[ru] && I_1 
\end{tikzcd}\]
while the Auslander-Reiten quivers of $\mod(\AA)$ and $\mod(\AA^{\operatorname{op}})$ are given in \Cref{figure:FirstARQuivers}.  We have $\Tr(X) \cong X'$ and $\Tr(Y) \cong Y'$.  It is now easy to verify that $\mod(\AA)$ has two resolving subcategories $\XX$ which satisfy the conditions of \Cref{Corollary:ExactStructuresResolving}.
\begin{enumerate}
	\item The first choice is where $\XX$ is the category of projective objects of $\mod(\AA)$.  The category of effaceable objects is zero, and we find $\AA \simeq \XX$ as exact categories, that is to say, the exact sequences in $\AA$ are precisely the split exact sequences.
	\item The second choice is where $\XX = \mod(\AA)$.  Here, we find $\eff(\AA) = \add Y$.  It follows from $\AA \simeq \XX / \eff(\AA)$ that the exact structure we recover on $\AA$ is the one induced by the abelian structure, that is, all kernel-cokernel pairs are conflations.
\end{enumerate}
\end{example}

\begin{figure}
\[\begin{tikzcd}[row sep=small, column sep=small]
& \Yoneda(P_1) \arrow[rd] && \Yoneda(I_1)\arrow[rd] & && & \Yoneda(P_1)\arrow[rd] && \Yoneda(P_2)\arrow[rd] & \\
\Yoneda(P_2) \arrow[ru] && X \arrow[ru]&& Y && \Yoneda(I_1)\arrow[ru] && {Y'}\arrow[ru] && {X'} 
\end{tikzcd}\]
	\caption{The Auslander-Reiten quivers of $\mod(\rep_k(Q))$ (left) and $\mod(\rep_k(Q)^{\operatorname{op}})$ (right) from \Cref{example:MainTheorem1}.}
	\label{figure:FirstARQuivers}
\end{figure}

\begin{example}\label{example:MainTheorem2}
For the second example, we consider the bound quiver $Q$, given as $1 \xrightarrow{\alpha} 2 \xrightarrow{\beta} 3 \xrightarrow{\gamma} 4$ with relation $\beta \alpha = 0$.  Let $P_i$ be the projective indecomposable corresponding with the vertex $i$ of $Q$. Let $\EE = \add (P_1 \oplus P_2 \oplus P_3 \oplus P_4)$ be the category of projective objects in $\rep_k(Q)$.  The Auslander-Reiten quivers of $\mod(\EE)$ and $\mod(\EE^{\operatorname{op}})$ are given in \Cref{figure:SecondARQuivers}.

Here, we have $\Tr(U) = U'$, and similar notation for $V,W$, and $X$.  Note that $U'$ and $V'$ have grade one.  Again, we find two possible exact structures on $\EE$, corresponding to two resolving subcategories $\XX$ of $\mod(\EE)$ satisfying the conditions of \Cref{Theorem:ExactStructuresResolving}.
\begin{enumerate}
	\item The first case is where $\XX$ is the category of projective objects of $\mod(\EE)$.  The category of effaceable objects is zero, and we find $\EE \simeq \XX$ as exact categories, meaning that the exact sequences in $\AA$ are precisely the split exact sequences.
		\item The second choice is where $\XX = \add(P_1 \oplus P_2 \oplus P_3 \oplus P_4 \oplus W \oplus X)$.  We have $\eff(\EE) = \add X$.  In this exact structure, the only conflations in $\EE$ are direct sums of $P_1 \inflation P_2 \deflation P_3$ and split conflations.
\end{enumerate}
\end{example}

\begin{figure}
	\centering
	\[\begin{tikzcd}[row sep=tiny, column sep=tiny]
&& \Yoneda(P_2)\arrow[rd] &&& \\
& \Yoneda(P_3) \arrow[rd]\arrow[ru] && V \arrow[rd]&& \Yoneda(P_1) \arrow[rd]&  \\
\Yoneda(P_4) \arrow[ru] && U \arrow[ru] && W \arrow[ru] && X \\
&&&& \Yoneda(P_4)\arrow[rd] && \\
& \Yoneda(P_2)\arrow[rd] && \Yoneda(P_3) \arrow[rd] \arrow[ru]&& V' \arrow[rd]\\
\Yoneda(P_1)  \arrow[ru]&& X'  \arrow[ru]&& W'  \arrow[ru]&& U'
\end{tikzcd}\]
	\caption{The Auslander-Reiten quivers of $\mod(\EE)$ and $\mod(\EE^{\operatorname{op}})$ from \Cref{example:MainTheorem2}.}
	\label{figure:SecondARQuivers}
\end{figure}

Let $\ic{\EE}$ be the idempotent completion of an additive category $\EE$.  If the category $\EE$ has an exact structure $\mathfrak{C}$, then $\ic\EE$ naturally has the structure of an exact category as well: the conflations in $\ic{\mathfrak{C}}$ are direct summands of conflations in $\EE$ (see \cite[Proposition~6.13]{Buhler10}).

It might be tempting to apply \Cref{Theorem:ExactStructuresResolving} to a category $\EE$ which is not idempotent complete by first passing to the idempotent completion.  The following proposition describes what can be expected. 

\begin{proposition}
Let $\EE$ be an additive category.  The correspondence $(\EE, \mathfrak{C}) \mapsto (\ic\EE, \ic{\mathfrak{C}})$ is an injection.
\end{proposition}

\begin{proof}
Let $\mathfrak{C}$ and $\mathfrak{C}'$ be exact structures on $\EE$ satisfying $\ic{\mathfrak{C}} = \ic{\mathfrak{C}}'$.  Let $\fe\colon A \inflation B \deflation C$ be a conflation in $\mathfrak{C}$.  As $\ic{\mathfrak{C}} = \ic{\mathfrak{C}}'$, we know that there is a conflation $\fe'\colon A' \inflation B' \deflation C'$ in $(\ic\EE, \ic{\mathfrak{C}}) = (\ic\EE, \ic{\mathfrak{C}}')$ such that $\fe \oplus \fe' \in \mathfrak{C}'$.  It now follows from \cite[Proposition~2.6]{HenrardVanRoosmalen19b} that $\fe \in \mathfrak{C}'$.  This shows that $\mathfrak{C} \subseteq \mathfrak{C}'$.  The other inclusion can be shown in a similar fashion.
%
%
\end{proof}

In general, the completion $\ic\EE$ can admit strictly more exact structures than $\EE$ (see also \cite[Proposition~3.3]{Crivei12}), even when $\EE$ is weakly idempotent complete.

\begin{example}
Let $Q$ be the $A_2$ quiver $1 \to 2$ and let $\AA = \rep_k(Q)$ be the category of $k$-linear representations.  Let $\EE$ be the full subcategory of $\AA$ consisting of objects not isomorphic to the projective-injective indecomposable $P_1$ (see \Cref{example:MainTheorem1}).  As $\ic\EE \simeq \AA$, we know that there are two exact structures on $\ic\EE$: we have $\mathfrak{C}_\text{all}$ coming from the abelian structure on $\AA$, and we have $\mathfrak{C}_\text{split}$ consisting only of the split kernel-cokernel pairs.

The embedding $\EE \to \ic\EE$ of an additive category into its idempotent completion preserves and reflects pullbacks and pushouts (and hence also kernels and cokernels, see \cite[Lemma~2.2]{Crivei12}).  Hence, we know that every kernel-cokernel pair in $\EE$ is of the form: direct sums of split kernel-cokernel pairs and copies of $\fe\colon P_2 \inflation P_1 \deflation I_1$.  Let $\ff\colon X \inflation Y \deflation Z$ be any non-split kernel-cokernel pair in $\EE$.  Necessarily, we can find a retract of the form $\fe$.  This gives a coretraction $p\colon I_1 \to Z$ and a retraction $i\colon X \to P_2$.  Taking the pullback of $\fe$ along $p$ in $\ic\EE$, we find a kernel-cokernel pair $\ff p\colon X \inflation Y' \deflation I_1$; subsequently taking the pushout along $i$ in $\ic\EE$, we recover the kernel-cokernel pair $i(\ff p) = \fe\colon P_2 \inflation P_1 \deflation I_1$.  As this last kernel-cokernel pair does not lie in $\EE$, we find that $\ff$ cannot be a conflation in any exact structure on $\EE$.  This shows that the maximal exact structure on $\EE$ is the split exact structure.  Hence, the correspondence $(\EE, \mathfrak{C}) \mapsto (\ic\EE, \ic{\mathfrak{C}})$ is not surjective.
\end{example}

\begin{example}
Let now $\AA = \rep_k(Q) \oplus (\mod k)$, where $Q$ is as above.  The abelian category $\AA$ thus has, up to isomorphism, four indecomposable objects: $P_1, P_2, I_1$, and $S$, where $S$ is a simple object from $\mod k.$  Let $\EE$ be the additive category generated by $P_2, I_1$, and $S \oplus P_1$.  We remark that $\EE$ is weakly idempotent complete and that $\AA \simeq \ic\EE$.

Let $\mathfrak{C}_\AA$ be the exact structure coming from the abelian structure of $\ic\EE \simeq \AA$.  We verify that $\mathfrak{C}_\AA$ is not of the form $\ic{\mathfrak{C}}$ for any exact structure $\mathfrak{C}$ on $\EE$.  It follows from \cite[Lemma~A.14]{HenrardVanRoosmalen19b} that $\EE$ lies extension-closed in $(\ic\EE, \ic{\mathfrak{C}})$, for any exact structure $\mathfrak{C}$ on $\EE$.  However, the conflation $P_2 \inflation P_1 \deflation I_1$ in $\mathfrak{C}_\AA$ shows that $\EE$ does not lie extension-closed in $(\ic\EE, \mathfrak{C}_\AA)$, hence $\mathfrak{C}_\AA \neq \ic{\mathfrak{C}}$, for any exact structure $\mathfrak{C}$ on $\EE$.
\end{example}


\bibliographystyle{amsplain}

\providecommand{\bysame}{\leavevmode\hbox to3em{\hrulefill}\thinspace}
\providecommand{\MR}{\relax\ifhmode\unskip\space\fi MR }
\providecommand{\MRhref}[2]{%
  \href{http://www.ams.org/mathscinet-getitem?mr=#1}{#2}
}
\providecommand{\href}[2]{#2}

\end{document}